\documentclass[11pt, 
]{amsart}

\newcommand{\hide}[1]{}

\usepackage{amssymb}
\usepackage{graphicx}

\usepackage{geometry}
\usepackage[colorlinks=true,urlcolor=blue]{hyperref}
\usepackage[mathscr]{euscript}

\def\textcolor#1{}

\newcommand{\N}{\mathbb{N}}
\newcommand{\R}{\mathbb{R}}
\newcommand{\C}{\mathbb{C}}

\newcommand{\disk}{\mathbb{D}}
\newcommand{\Cc}{\widehat{{\C}}}

\newcommand{\inter}{\mathring}
\newcommand{\Sspace}{\mathscr{S}}

\DeclareMathOperator{\Dom}{\mathcal R} 
\DeclareMathOperator{\DomE}{\mathcal L}		
\DeclareMathOperator{\DomL}{\hat{\mathcal L}}	

\DeclareMathOperator{\PC}{PC}

\newcommand{\U}{\mathcal U}
\newcommand{\V}{\mathcal V}
\newcommand{\X}{\mathcal X}
\newcommand{\W}{\mathcal W}
\newcommand{\LL}{\ell}

\DeclareMathOperator{\modulus}{mod}

\renewcommand{\tilde}{\widetilde}
\renewcommand{\rho}{\varrho}
\renewcommand{\phi}{\varphi}
\newcommand{\eps}{\varepsilon}
\renewcommand{\theta}{\vartheta}

\DeclareMathOperator{\id}{id}
\DeclareMathOperator{\fib}{fib}
\DeclareMathOperator{\cfib}{cf}

\DeclareMathOperator{\Crit}{Crit}
\DeclareMathOperator{\orb}{orb}
\DeclareMathOperator{\diam}{diam}

\DeclareMathOperator{\area}{meas}

\newcommand{\CritNF}{\Crit_\textit{nf}}

\newcommand{\Kwi}{K_{\text{well-inside}}}
\newcommand{\Jwi}{J_{\text{well-inside}}}

\newcommand{\Koc}{K_{\text{off-crit}}}
\newcommand{\Joc}{J_{\text{off-crit}}}



\newcommand\ovl[1]{\overline{#1}}
\newcommand{\sm}{\setminus}

\renewcommand{\ge}{\geqslant}
\renewcommand{\le}{\leqslant}

\theoremstyle{theorem}
\newtheorem{theorem}{Theorem}[section]
\newtheorem{lemma}[theorem]{Lemma}

\newtheorem{proposition}[theorem]{Proposition}
\newtheorem{corollary}[theorem]{Corollary}

\newtheorem{MainTheorem}{Theorem}

\newtheorem*{RatRigidityPrinciple}{Rational Rigidity Principle}

\theoremstyle{definition}
\newtheorem{definition}[theorem]{Definition}

\theoremstyle{remark}
\newtheorem*{remark}{\textsc{Remark}}

\newcounter{reminder} \newcounter{reminderCold}

\usepackage{tikz-cd}

\newtheoremstyle{claim}
  {}
  {}
  {\itshape}
  {0pt}
  {\scshape}
  {.}
  { }
  {\thmname{#1}\thmnumber{ #2}\thmnote{ (#3)}}

\theoremstyle{claim}

\numberwithin{equation}{section}

\hyphenation{mul-ti-pli-ci-ties mul-ti-pli-ci-ty mul-ti-cur-ve mul-ti-cur-ves orien-ta-tion Thurs-ton Schlei-cher}


\newcounter{stepctr}


\newenvironment{step}[1][]
{
	\smallskip
	\refstepcounter{stepctr}
	\ifx\hfuzz#1\hfuzz
  		\textit{Step \thestepctr. }\ignorespaces
 	\else
  		\textit{Step \thestepctr: #1. }\ignorespaces
	\fi
}{}  


\newcommand{\Newpage}{}

\title{Rigidity of Newton dynamics}
\subjclass[2000]{30D05, 37F10, 37F20}
\author[Kostiantyn Drach]{Kostiantyn Drach}
\author[Dierk Schleicher]{Dierk Schleicher}
\address{Aix--Marseille Universit\'e, Institut de Math\'ematiques de Marseille, 163 Avenue de Luminy, 13009 Marseille, France}
\email{drach.kostiantyn@univ-amu.fr}
\email{dierk.schleicher@univ-amu.fr}

\thanks{\textbf{Acknowledgements.}
We are grateful to a number of colleagues for helpful and inspiring discussions during the time when we worked on this projects, in particular Dima Dudko, Misha Hlushchanka, John Hubbard, Misha Lyubich, Oleg Kozlovski, and Sebastian van Strien. 
Finally, we would like to thank our dynamics research group for numerous helpful and enjoyable discussions: Konstantin Bogdanov, Roman Chernov, Russell Lodge, Steffen Maa{\ss}, David Pfrang, Bernhard Reinke, Sergey Shemyakov, and Maik Sowinski.
We gratefully acknowledge support by the Advanced Grant ``HOLOGRAM'' of the European Research Council, as well as hospitality of Cornell University in the spring of 2018 while much of this work was prepared.
}

\begin{document}

\begin{abstract} 
We study rigidity of rational maps that come from Newton's root finding method for polynomials of arbitrary degrees. We establish dynamical rigidity of these maps: each point in the Julia set of a Newton map is either rigid (i.e.\ its orbit can be distinguished in combinatorial terms from all other orbits), or the orbit of this point eventually lands in the filled-in Julia set of a polynomial-like restriction of the original map. As a corollary, we show that the Julia sets of Newton maps in many non-trivial cases are locally connected; in particular, every cubic Newton map without Siegel points has locally connected Julia set.

In the parameter space of Newton maps of arbitrary degree we obtain the following rigidity result: any two combinatorially equivalent Newton maps are quasiconformally conjugate in a neighborhood of their Julia sets provided that they either non-renormalizable, or they are both renormalizable ``in the same way''. 

Our main tool is the concept of complex box mappings due to Kozlovski, Shen, van Strien; we also extend a dynamical rigidity result for such mappings so as to include irrationally indifferent or renormalizable situations.   
\end{abstract}

\maketitle

\section{Introduction and main results}

\subsection{Local connectivity, topological models, and rigidity}
\label{SSec:RRPPhil}

We investigate the fine structure in the dynamical systems formed by iteration of Newton maps of polynomials: the goal is to show that any two points within any given dynamical system can be distinguished in combinatorial terms (``dynamical rigidity''), and similarly that any two Newton dynamical systems can be distinguished combinatorially as well (``parameter rigidity''). Analogous rigidity results are known to be false for polynomial dynamics, and our main result is that they hold for the Newton dynamics everywhere except when embedded polynomial dynamics interferes (both in the dynamical plane and in parameter space). These results are strongest possible: embedded non-rigidity of polynomial dynamics makes rigidity in the Newton dynamics impossible.

This research connects to and builds upon a deep body of research on polynomial dynamics, initiated by Douady and Hubbard in their seminal Orsay Notes \cite{Orsay} and extended in celebrated work by Yoccoz \cite{HY}, Lyubich and coauthors (see e.g.\ \cite{KLUnicr, Lyu,Pacman2}), Kozlovski--van Strien \cite{KvS09}, and numerous others. The goal in much of this work is often phrased as showing that polynomial Julia sets are locally connected (many of these are, but not all; see for instance Milnor~\cite{MiLocConn}). The importance of local connectivity of Julia sets comes from several closely connected aspects: if a Julia set is locally connected, then
\begin{itemize}
\item
it has a simple and satisfactory topological model, for instance in terms of Thurston laminations \cite{ThurstonLaminations,ThurstonLaminationsDS} or Douady's pinched disks \cite{DouadyPinchedDisks};
\item
any two points in the Julia set can be distinguished in terms of symbolic dynamics, for instance in the complement of pairs of dynamic rays that land at common periodic or preperiodic points.
\end{itemize}
For instance, Yoccoz' theorem on quadratic polynomials can be phrased as saying that all quadratic polynomials that are non-renormalizable and for which both fixed points are repelling have locally connected Julia sets, or equivalently that any two points in the Julia set can be distinguished in terms of their itineraries with respect to the unique fixed point that disconnects the Julia set (usually called the $\alpha$ fixed point).

Meanwhile, it is known that the Julia set of a polynomial of any degree is locally connected, and all its points can be distinguished in terms of symbolic dynamics, when the dynamics is not infinitely renormalizable and no periodic points are irrationally indifferent \cite{KvS09}. In many cases with irrationally indifferent periodic points, or in the infinitely renormalizable setting, the Julia sets are locally connected anyway (compare e.g.\ \cite{deZottiRoesch}); however, there are explicitly known examples when local connectivity fails, especially in the presence of Cremer points \cite[\S18]{MiIntro} and in certain infinitely renormalizable cases \cite{MiLocConn}.

The research on local connectivity of polynomial Julia sets is among the deepest in all of dynamical systems. It has often been thought that the dynamics of rational maps must be even more complicated because polynomials have a basin of infinity that provides a simple and good coordinate system for the study of the dynamics, in particular through dynamic rays and their landing properties. In this paper, we propose a rather opposite point of view, at least for the dynamics of rational maps that are Newton maps of polynomials, that we phrase as the following principle:

\begin{RatRigidityPrinciple}[dynamical version]
In the dynamics of any polynomial Newton map, the orbit of every point $z$ in the Julia set can be distinguished by symbolic dynamics from every other point $z'$, unless the Newton dynamics is renormalizable and admits an embedded polynomial Julia set that fails to be rigid, and that contains the two points $z$ and $z'$.
\end{RatRigidityPrinciple}

Here we say that a polynomial Julia set is embedded in the Newton dynamics when the latter is renormalizable and a domain of renormalization has a Julia set (called a \emph{little Julia set}) that is quasiconformally conjugate to the given polynomial Julia set.

There is a parallel discussion in parameter space that has also started with the work by Douady and Hubbard \cite{Orsay} on the Mandelbrot set: if it is locally connected, then 
\begin{itemize}
\item
it has a simple and satisfactory topological model, for instance in terms of Thurston laminations \cite{ThurstonLaminations,ThurstonLaminationsDS} and Douady's pinched disks \cite{DouadyPinchedDisks};
\item
any two parameters in its boundary (the bifurcation locus) have Julia sets that can be distinguished in combinatorial terms;
\item
hyperbolic dynamics is open and dense in the space of quadratic polynomials. 
\end{itemize}

For spaces of polynomial maps beyond quadratic polynomials, local connectivity of the connectedness locus is false \cite{LavaursThesis}, but is not the right concept (see the discussion below); instead the goal is to establish \emph{rigidity} for instance in the form that any two polynomials for which the Julia sets are combinatorially indistinguishable are already quasiconformally conjugate. This rigidity conjecture is false in general \cite{Henr}, but it holds for instance when the polynomial dynamics is not renormalizable. Again, the study of parameter spaces of rational maps seems harder than for polynomials, but still we propose an analogous rigidity principle also in parameter space:

\begin{RatRigidityPrinciple}[parameter space version]
Any two polynomial Newton maps that are combinatorially equivalent are quasiconformally conjugate provided these Newton maps are either non-renormalizable, or they are both renormalizable ``in the same way'': the little Julia sets are hybrid equivalent and embedded into the Newton dynamics in combinatorially the same way.
\end{RatRigidityPrinciple}

Both versions of our rational rigidity principle, in the dynamical plane and in parameter space, can be interpreted as saying that ``the Newton dynamics behaves well unless embedded polynomial dynamics interferes'', so contrary to frequent belief the dynamics of rational maps does not exhibit any additional complications beyond those known from polynomials, once a good combinatorial structure is established. This is true at least in the case of polynomial Newton maps, which are the first family of rational maps for which a good combinatorial structure has been established \cite{LMS1, LMS2}.

\subsection{Statement of results on rigidity}
After this overview, we now provide a more precise statement of results. 
The \emph{Newton map} of a polynomial $p \colon \Cc \to \Cc$ is defined to be the rational map $N_p(z) := z - p(z)/p'(z)$; we call such a map a \emph{polynomial Newton map}. 

Our goal is to distinguish all orbits of $N_p$ in terms of symbolic dynamics. For polynomials, this issue is closely related to the topology of the Julia set, in the sense that in many cases the distinction of all orbits is possible when the Julia set is locally connected. In analogy to \cite{Fibers,Fibers2}, we define the \emph{fiber} of a point $z\in\Cc$ as the set of points whose orbits are combinatorially indistinguishable from that of $z$; this is a compact connected set (see Definition~\ref{Def:Fiber} for the general definition and also Section~\ref{Sec:Puzzles}, where this notion is discussed specifically for Newton maps). We say that \emph{the fiber of $z$ is trivial} if it consists of $z$ alone. Providing sufficient conditions for triviality of fibers is one of the chief goals of this paper, and it will imply local connectivity.

The purpose of Newton's method is to find the roots of $p$. Each root is an attracting fixed point of $N_p$ and the points with orbits converging to the roots form the \emph{basins of roots}. The dynamics in the basins is hence well-understood, and it is more interesting to look at their complement. This complement consists of the points that are either in the Julia set $J(N_p)$ of $N_p$, or are contained in Fatou components that eventually have period $2$ or higher. Every Fatou component of period~$1$ is the basin of a root because every fixed point is either attracting or the repelling fixed point at $\infty$, and there cannot be Herman rings either \cite{Shishikura}. 

Our first main theorem (Theorem~\ref{Thm:RRP}) says that for every polynomial Newton map every point that is not attracted to a root can fail to have trivial fiber only if it belongs to (or is mapped to) an embedded quasiconformal copy of the filled Julia set of an actual polynomial mapping. 

\begin{MainTheorem}[Dynamical Rigidity for Newton maps]
\label{Thm:RRP}
Let $N_p$ be a polynomial Newton map of degree $d \geqslant 2$. Then for every point $z \in \Cc$ at least one of the following possibilities holds true:
\begin{enumerate}
\item[(B)]
\label{It:B}
$z$ belongs to the \textbf{B}asin of attraction of a root of $p$;
\item[(T)]
\label{It:T}
$z$ has \textbf{T}rivial fiber;
\item[(R)]
\label{It:R}
$z$ belongs, or is mapped by some finite iterate, to the filled Julia set of \textbf{R}enormalizable dynamics (a polynomial-like restriction of $N_p$ with connected Julia set). 
\end{enumerate}
\end{MainTheorem}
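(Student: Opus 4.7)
The plan is to reduce the claim to a rigidity statement for complex box mappings (in the sense of Kozlovski--Shen--van Strien) extracted from the puzzle structure for Newton maps developed in \cite{LMS1, LMS2}, and to use the extension of that rigidity statement promised in the abstract.

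First I would dispose of the case when $z$ lies in the Fatou set of $N_p$. If $z$ is attracted to a root, we are in case (B); otherwise, since $N_p$ has no Herman rings by Shishikura and its only period-one Fatou components are the immediate root basins, the grand orbit of $z$ meets a periodic Fatou component of period at least $2$, which belongs to a (super)attracting, parabolic, or Siegel cycle. In each situation a standard thickening argument produces a polynomial-like restriction of an iterate of $N_p$ whose filled Julia set contains a forward image of $z$, giving case (R). From now on I assume $z \in J(N_p)$.

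Given $z \in J(N_p)$, I would work within the puzzle framework of \cite{LMS1, LMS2}: the associated dynamically defined graph yields, by iterated pullback, nested puzzle pieces $P_n(z)$ of depth $n$ containing $z$. By Definition~\ref{Def:Fiber}, the fiber of $z$ is the intersection of the closures of these pieces; if it equals $\{z\}$ we are in case (T). Assume therefore that the fiber of $z$ is nondegenerate. The goal is then to show that some forward image of $z$ lies in the filled Julia set of a polynomial-like restriction of $N_p$.

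The central technical step is to build, from the nest of puzzle pieces around the critical set relevant to $z$, a complex box mapping $\phi \colon \DomE \to \Dom$, and to invoke the extension of the Kozlovski--Shen--van Strien rigidity theorem. The classical form of that theorem yields trivial fibers under the hypothesis that $\phi$ has only repelling cycles and is non-renormalizable; the extended form (which I expect to be the main novelty of the paper) must provide the trichotomy mirroring Theorem~\ref{Thm:RRP}: each fiber of $\phi$ is either trivial, contained in the filled Julia set of a renormalization of $\phi$, or contained in the closure of the rotation domain of an irrationally indifferent cycle of $\phi$. Transporting this trichotomy back along the semi-conjugacy between $\phi$ and the first-return map to the deepest relevant puzzle piece, and promoting each of the two non-trivial alternatives to a polynomial-like restriction of $N_p$ on $\Cc$ (via Douady--Hubbard straightening in the renormalizable case and a Douady--Ghys-type thickening of the indifferent cycle), one places a forward image of $z$ in the filled Julia set of a polynomial-like restriction of $N_p$, giving case (R).

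The hardest part will be the construction of the box mapping and the verification that the alternative it detects descends to a genuine polynomial-like restriction of $N_p$ on the Riemann sphere: one must show that the puzzle pieces at the relevant depth are compactly contained in their parents and that the combinatorial data --- the landing patterns of the rays defining the puzzle, the relative positions of the free critical points, and the interaction with the immediate root basins --- is compatible with a proper disk-to-disk branched covering. A secondary difficulty is controlling critical orbits that are persistently recurrent yet not genuinely renormalizable; here I expect to rely on the quantitative geometric estimates of \cite{KvS09} (moduli growth along the enhanced nest and the resulting \emph{a priori} bounds) to force trivial fibers whenever the combinatorial obstructions --- renormalization or irrationally indifferent cycles --- are absent.
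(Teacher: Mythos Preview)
Your overall architecture --- extract a complex box mapping from the Newton puzzle and invoke an extended Kozlovski--van Strien rigidity result --- matches the paper. However, two genuine gaps would block the argument as written.

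First, your guess at the shape of the extended box-mapping rigidity theorem is off. You expect a trichotomy with a separate ``irrationally indifferent'' alternative, to be handled by a Douady--Ghys thickening. The paper's Theorem~\ref{Thm:RigidityComplexBox} has no such case: non-repelling cycles are already subsumed under the renormalizable alternative (a sufficiently deep puzzle piece around such a cycle yields a polynomial-like map of degree $\ge 2$). The genuinely new alternatives are \eqref{Item:CB} (the orbit converges to $\partial\U$) and \eqref{Item:NE} (the orbit lands in a periodic component mapped onto itself). Ruling these out for the Newton-induced box mapping is real content: the paper shows the constructed box mapping is \emph{dynamically natural} (Definition~\ref{Def:NaturalBoxMapping}, Lemma~\ref{Lem:NewtonBox}), which eliminates \eqref{Item:NE} and controls \eqref{Item:CB}. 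Your plan contains no mechanism for this.

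Second, and more seriously, the box mapping only sees orbits that accumulate on the fibers of the critical points in $\CritNF(g)$. Orbits in $J(N_p)$ whose $\omega$-limit set avoids all such critical fibers (including orbits that land at $\infty$ or a prepole, and orbits that merely accumulate on the puzzle boundary) are simply not captured by the first-return construction, so no box-mapping statement tells you anything about their fibers. The paper handles these separately: triviality of fibers at $\infty$ and prepoles is Lemma~\ref{Lem:FibersInfinity}, and the general ``orbit avoids critical fibers $\Rightarrow$ trivial fiber'' statement is Lemma~\ref{Lem:AvoidingCritPtsNewton}, proved by direct annulus pullback and a Koebe-type argument. Your plan assumes nondegenerate fiber and then goes straight to the box mapping, which leaves this entire class of points untreated; the case split in the paper's proof of Theorem~\ref{Thm:RRP} (cases (2) and (3) there) is not optional.
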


Theorem~\ref{Thm:RRP} implies the following two corollaries.

\begin{corollary}[Basins have locally connected boundaries]
\label{Cor:BasinsLocConn}
For every Newton map of degree $d\ge 2$, every connected component of the basin of every root has locally connected boundary.
\end{corollary}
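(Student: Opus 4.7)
The plan is to combine the trichotomy of Theorem~\ref{Thm:RRP} with standard prime-end theory for simply connected Fatou components. Fix a component $U$ of the basin of a root $r$ of $p$. Two classical facts set the stage: every basin component of a root of a Newton map is simply connected (by Przytycki's theorem for the immediate basin, extended by a Riemann--Hurwitz count to preimages), and $\partial U\subset J(N_p)$. By Carath\'eodory's theorem, $\partial U$ is locally connected if and only if every impression of a prime end of a Riemann map $\phi\colon\disk\to U$ reduces to a single point. Two points belonging to the same impression cannot be separated by any Newton puzzle graph enriched by the internal rays of $U$, so every such impression lies inside a single fiber; thus it suffices to show that the impression at every $z\in\partial U$ is a singleton.

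For $z\in\partial U$, Theorem~\ref{Thm:RRP} offers three options. Option~(B) is ruled out since $z\in J(N_p)$. In option~(T), the fiber of $z$ is trivial, so every impression at $z$ collapses to $\{z\}$, which is precisely what is wanted. In option~(R), some iterate $N_p^{m}(z)$ lies in the filled Julia set $K'$ of a polynomial-like restriction of $N_p$ with connected Julia set; pulling back along univalent branches of $N_p^{-m}$ we may assume $z\in K'$. Here I would observe that $K'\cap U=\varnothing$: any point of the intersection would have its orbit under the polynomial-like iterate remaining in $K'$ while converging to $r$, forcing $r\in K'$; this contradicts the fact that the codomain $V'$ of the polynomial-like restriction is, by construction for polynomial Newton maps, a topological disk disjoint from the roots of $p$ (cf.~\cite{LMS1,LMS2}). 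Hence $z\in\partial K'$.

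The last task is to show that an impression of a prime end of $U$ landing in $\partial K'$ is a single point. The fundamental annulus $V'\setminus\ol{U'}$ of the polynomial-like structure and its iterated pull-backs supply a nested sequence of disjoint topological annuli that separate $K'$ from $U$ and shrink to $K'$; if the moduli of these annuli diverge to infinity, a standard modulus estimate applied to $\phi$ forces the impression at $z$ to be $\{z\}$, while if they converge, one descends into a further renormalization and iterates the argument. The main obstacle I anticipate is organizing this across infinitely many renormalization levels, which is precisely the scenario that the extension of the Kozlovski--Shen--van Strien box-mapping rigidity theorem to irrationally indifferent and renormalizable complex box mappings (announced in the abstract) is designed to handle.
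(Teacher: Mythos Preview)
Your reduction to prime ends and the handling of case~(T) are fine and parallel the paper's treatment. The genuine gap is in case~(R). Your claimed annuli do \emph{not} separate $K'$ from $U$: since $z\in\partial U\cap\partial K'$, the basin component $U$ penetrates every one of the nested annuli $V'\sm\ol{U'}$, $U'\sm\ol{(g^k)^{-1}(U')}$, \dots, all the way down to $K'$. Even if you salvage the modulus argument to conclude that the impression lies in $K'$, that only says the impression is a continuum in $\partial K'$; nothing forces it to be a point. Your fallback ``if moduli converge, renormalize deeper'' is also off target: the successive pull-back annuli of a fixed fundamental annulus all have comparable moduli, so the Gr\"otzsch sum diverges regardless, and no further renormalization appears. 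The difficulty here has nothing to do with infinite renormalization or the box-mapping machinery you invoke at the end.

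What the paper actually does in case~(R) is prove the much sharper statement that $K'\cap\partial U_\xi$ consists of a single point, necessarily periodic. The argument takes place in the annulus $A=\inter P_n(z)\sm K'$ equipped with its hyperbolic metric. First, the internal rays of $U_\xi$ bounding the successive puzzle pieces $P_{n+sk}(z)\cap U_\xi$ converge from both sides to a single internal ray $R$, fixed by $g^k$, landing at a periodic point $p\in\partial U_\xi$. Second, one builds two further curves $\Gamma,\Gamma'$ in $A\sm\ol{U_\xi}$ by a standard ``pull-back and concatenate'' construction for the polynomial-like map $g^k$ on $A$; because $\Gamma_0$ and a sub-arc of $R$ stay at bounded hyperbolic distance in $A$, and this distance contracts under pull-back, $\Gamma$ and $\Gamma'$ land at the same point $p$ through the same access. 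The arc $\Gamma\cup\{p\}\cup\Gamma'$ then separates $K'\sm\{p\}$ from $U_\xi$ inside $P_{n+k}(z)$, forcing $K'\cap\partial U_\xi=\{p\}=\{z\}$. This hyperbolic-geometry separation argument is the missing idea in your proposal.
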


Corollary~\ref{Cor:BasinsLocConn} was also shown recently by \cite{WYZ} as their main theorem.

For $k \ge 2$, let ${\Sspace}_k$ be the set of  polynomials $f \colon \C \to \C$  of degree at most $k$ and with connected Julia set such that for each $f \in {\Sspace}_k$
\begin{itemize}
\item
most of the Fatou components of $f$ are small: for every $\eps > 0$ there exists only finitely many Fatou components with spherical diameter exceeding $\eps$;
\item
if $f$ has Siegel periodic points, then the boundaries of the corresponding Siegel disks are {Jordan curves}.
\end{itemize}
Moreover, let $\Sspace$ be the union of all $\Sspace_k$ for all $k$. (The letter ${\Sspace}$ in the notation stands for \textbf{S}mall Fatou and  circular \textbf{S}iegel boundaries.) 

The Julia set of a polynomial in ${\Sspace}_k$ need not be locally connected (for instance, it may have Cremer points). If the degree of a Newton map is $d \ge 3$, then a polynomial-like restriction of the map can have up to $d-2$ critical points, and if they are in different periodic components then this polynomial-like map can have degree at most $2^{d-2}$. 

\begin{corollary}[Local connectivity of Newton Julia sets, general case]
\label{Cor:LC}
Every Newton map $N_p$ of degree $d \ge 3$ has locally connected Julia set provided every polynomial-like restriction of $N_p$ straightens to a polynomial in ${\Sspace}$. 
\end{corollary}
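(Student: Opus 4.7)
My plan is to establish local connectivity of $J(N_p)$ at each $z \in J(N_p)$ by combining Theorem~\ref{Thm:RRP} with Corollary~\ref{Cor:BasinsLocConn} and a classical Whyburn-type topological criterion. Fix $z \in J(N_p)$. Since $z$ cannot lie in the basin of a root, alternative (B) of Theorem~\ref{Thm:RRP} is vacuous, so $z$ is in case (T) or case (R). In case (T), triviality of the fiber at $z$ directly produces arbitrarily small connected neighborhoods of $z$ in $J(N_p)$ via the standard puzzle-piece argument, hence local connectivity at $z$.

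I then focus on case (R). Some iterate $N_p^m(z)$ lies in a filled Julia set $K$ of a polynomial-like restriction $(U,V,N_p^k)$ with connected Julia set. Because $N_p$ is continuous and open on $\Cc$ and rational maps preserve local connectivity of continua under preimage, LC at $N_p^m(z)$ descends to LC at $z$; so I may assume $z \in K$. Since the interior of $K$ is contained in Fatou components of $N_p$, necessarily $z \in \partial K$. By hypothesis the polynomial-like map straightens to a polynomial $f \in \Sspace$ via a quasiconformal homeomorphism $\phi$ that identifies $J(f)$ with $\partial K$.

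The final step is a Whyburn-type LC criterion: a continuum $X \subset \Cc$ is LC iff every component of $\Cc \sm X$ has LC boundary and, for every $\eps > 0$, only finitely many such components have diameter greater than $\eps$. Applied to $X = J(N_p)$, the complementary components are Fatou components of $N_p$, which I split into (i) components of basins of roots of $p$, whose boundaries are LC by Corollary~\ref{Cor:BasinsLocConn}, and (ii) components contained in some little filled Julia set $K'$; under the straightening these correspond to Fatou components of the associated $f' \in \Sspace$, which must be attracting, superattracting, parabolic, or Siegel (Cremer points lie in the Julia set). All four types have LC boundary — the first three classically, Siegel by the $\Sspace$-hypothesis — and this transfers to $N_p$ via the QC straightening. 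The diameter-shrinking condition combines the small-Fatou clause of $\Sspace$ (transferred by QC) across all renormalization levels with a finiteness count for basins of roots and for combinatorial renormalization depths.

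The main obstacle will be managing Cremer periodic points of the polynomial-like restrictions, which $\Sspace$ permits and at which the embedded little Julia set $\partial K$ itself fails to be locally connected. The saving observation — which must be made rigorous — is that Whyburn's criterion demands only LC Fatou boundaries and shrinking diameters, not LC of any embedded subcontinuum of $J(N_p)$; near a Cremer periodic point, basins of roots of $p$ accumulate on the little Julia set and should bridge the hedgehog-type structure that makes $\partial K$ not LC. Carrying this out rigorously — in particular, organizing the shrinking-diameter estimate uniformly across an infinite tower of renormalizations while handling Cremer-type defects of each embedded polynomial Julia set — will be the principal technical difficulty.
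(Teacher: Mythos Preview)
Your overall architecture---Whyburn's criterion, splitting Fatou components into root-basin components versus components living inside a little filled Julia set, and invoking Corollary~\ref{Cor:BasinsLocConn} and the $\Sspace$ hypotheses for local connectivity of their boundaries---matches the paper's. The structural detour through ``local connectivity at each $z$'' is unnecessary (Whyburn is a global criterion and subsumes the trivial-fiber case), but harmless.

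The genuine gap is in the shrinking-diameter condition. You write that this ``combines the small-Fatou clause of $\Sspace$ \dots\ with a finiteness count for basins of roots and for combinatorial renormalization depths'', and you anticipate difficulty ``across an infinite tower of renormalizations''. But the hard case is not an infinite tower; it is a sequence of \emph{root-basin} components $(X_i)$ accumulating on a single little filled Julia set $K=\fib(x)$. These $X_i$ are not Fatou components of the straightened polynomial, so the small-Fatou clause of $\Sspace$ says nothing about them, and there is no ``finiteness count'' available: infinitely many distinct preimage components of an immediate basin can meet any neighborhood of $K$. The paper resolves this with a hyperbolic-geometry argument: the polynomial-like restriction $g^k\colon \inter P_{n+k}(x)\sm K\to \inter P_n(x)\sm K$ is weakly expanding for the hyperbolic metric of $\inter P_n(x)\sm K$; since $K$ is part of the ideal boundary, sets with bounded hyperbolic diameter that approach $K$ must have spherical diameter tending to zero. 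One then either pushes the $X_i$ forward until they sit in a fixed fundamental annulus (getting a diameter bound there, contradiction), or, if they persistently straddle the nested puzzle boundaries, observes that $\partial P_{n+k}(x)$ meets only finitely many root-basin components, so the $X_i$ are iterated preimages of finitely many domains and the same weak-expansion bound applies. Your proposal does not contain this idea, and without it the diameter claim for root-basin components accumulating on $K$ is unproved.

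A secondary remark: your Cremer worry is a red herring. Whyburn's criterion never asks about local connectivity of $\partial K$ itself, only of boundaries of Fatou components of $N_p$; a Cremer point of the polynomial-like restriction has no bounded Fatou component attached to it, so it creates no boundary to check. The ``enhanced connectivity'' you allude to is real, but it is a consequence of the argument, not an additional obstacle to overcome.
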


This corollary establishes local connectivity of Julia sets of Newton maps in many non-trivial cases. For example, ${\Sspace}$ contains all polynomials without bounded Fatou components (which includes many examples of polynomials with non-locally connected Julia sets), as well as all geometrically finite polynomials. The latter polynomials have locally connected Julia sets \cite{TLYin}, and thus most of their Fatou components are small by a well-known criterion for local connectivity of sets \cite{Why}. In particular, this implies that ${\Sspace}_2$ contains all quadratic polynomials $f$ without Siegel disks: if the only critical point of $f$ is not in the attracting or parabolic basin (i.e.\ $f$ is not sub-hyperbolic), then the critical point must be in the Julia set, and hence $f$ has no bounded Fatou components at all. Corollary~\ref{Cor:LC} combined with the last observation yields

\begin{corollary}[Local connectivity of cubic Newton maps]
The Julia set of every cubic Newton map without Siegel disks is locally connected. \qed
\end{corollary}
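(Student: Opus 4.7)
The plan is to reduce the statement to Corollary~\ref{Cor:LC} by verifying its hypothesis for every polynomial-like restriction of a cubic Newton map $N_p$ without Siegel disks.

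Since $d = 3$, every polynomial-like restriction of $N_p$ contains at most $d-2 = 1$ critical point, so it is quadratic-like, and by the Douady--Hubbard straightening theorem it is hybrid equivalent to some quadratic polynomial $f$. It suffices to show $f \in \Sspace_2$.

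The crucial step is to exclude a Siegel disk for $f$. Hybrid equivalence is a quasiconformal conjugacy whose complex dilatation vanishes on the filled Julia set, so it preserves multipliers of periodic points there. Since linearizability of an irrationally indifferent periodic germ is determined by the multiplier alone, a Siegel periodic point of $f$ would correspond, under straightening, to an irrationally indifferent periodic point of $N_p$ with the same (linearizable) multiplier, i.e.\ a Siegel periodic point of $N_p$ -- contradicting the hypothesis. Once $f$ is free of Siegel disks, the remark preceding Corollary~\ref{Cor:LC} places $f$ in $\Sspace_2$: either the unique critical point of $f$ lies in an attracting or parabolic basin (so $f$ is geometrically finite, hence its Fatou components are small by local connectivity of $J(f)$), or that critical point lies in $J(f)$ and $f$ has no bounded Fatou components at all; either way the Siegel-boundary condition is vacuous.

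The delicate part is the transfer of Siegel behaviour from $f$ back to $N_p$, which rests on the combination of the facts that hybrid equivalence respects multipliers of periodic orbits in the filled Julia set, and that linearizability of an irrationally indifferent germ depends only on its multiplier. The remaining verification is bookkeeping, and Corollary~\ref{Cor:LC} then yields local connectivity of $J(N_p)$.
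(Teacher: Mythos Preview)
Your proposal is correct and follows the same route as the paper: reduce to Corollary~\ref{Cor:LC} by noting that for $d=3$ every polynomial-like restriction is quadratic-like, straighten to a quadratic polynomial $f$, and invoke the observation (stated just before the corollary) that $\Sspace_2$ contains all quadratic polynomials without Siegel disks. The paper treats the corollary as immediate from that observation and places a \qed; you have made explicit the step the paper leaves implicit, namely that a Siegel disk of $f$ would force one for $N_p$.

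One small remark on that step: your justification via preservation of multipliers is valid here because a Siegel center lies in the interior of the filled Julia set, where the hybrid conjugacy is genuinely holomorphic (by Weyl's lemma). A slightly cleaner phrasing, avoiding multipliers altogether, is that any topological conjugacy sends rotation domains to rotation domains, so a Siegel disk of $f$ yields a Siegel disk of the polynomial-like restriction and hence of $N_p$.
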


This corollary generalizes a result of Roesch~\cite{Roe} and provides a positive answer to Conjecture 8.7 in that paper modulo the Siegel case. 

Corollary~\ref{Cor:LC} demonstrates the phenomenon of ``enhanced connectivity'' within Newton Julia sets, as was first observed by Roesch: even if non-locally connected polynomial Julia sets are embedded in the Newton dynamical plane, the Julia set of the Newton map can still be locally connected. (However, this does not imply that the Newton Julia set enjoys the advantages usually associated with local connectivity of polynomial Julia sets, for instance a satisfactory topological model.)

\begin{remark}
It might well be that ${\Sspace}$ contains all polynomials without Siegel disks. Using the axiomatic approach towards rigidity from \cite{Dr} and triviality of fibers proven in~\cite{Kiwi} it is possible to establish this claim partially: most of the Fatou components of polynomials without irrationally neutral periodic points are small, and hence such polynomials belong to ${\Sspace}$.
\end{remark}

\begin{remark}
Many polynomial Julia sets are known to be locally connected, and even have all their fibers trivial. The corresponding results can be imported to the Newton dynamics, for example, as follows. Suppose $z$ belongs to some renormalizable polynomial-like restriction of $N_p$ for which the polynomial Julia set has trivial fiber at the point corresponding to $z$, then the Julia set of $N_p$ has trivial fiber at $z$, and in particular is locally connected at $z$. The idea of proof for this statement is that two points $w, w'$ in a polynomial Julia set are in different fibers if and only if there is a pair of (pre)periodic dynamic rays landing at the same point in the polynomial Julia set that separates $w$ from $w'$. Within the dynamics of $N_p$ these rays can be replaced by ``bubble rays'' consisting of sequences of components of root basins that converge to the same landing point with the same separation properties. (Note, however, that it is not clear that if a polynomial Julia set is locally connected at some point, then its fiber must be trivial, and this point can be separated from every other point in the Julia set; compare \cite{Fibers,Fibers2}.) 
\end{remark}

It may come as a surprise that for local connectivity of Newton maps, the only potential issue might be Siegel disks, not Cremer points or infinite renormalizability. If there existed a polynomial with non-locally connected Siegel disk boundary, then it would be possible to construct a Newton map with non-locally connected Julia set by embedding the Julia set of that polynomial, in agreement with our rigidity principle. However, it is conjectured that Siegel disks of polynomials always have locally connected boundaries (see, for example, \cite{Cheritat, PetZakeri, Zhang, SY}). We view this as a strong hint that perhaps all polynomial Newton maps have locally connected Julia sets.

\begin{remark}
The degree of a Newton map $N_p$ is always equal to the number of distinct roots of $p$ (ignoring multiplicities). It is well known that if $N_p$ has degree $d=2$, then the Julia set $J(N_p)$ is a quasi-circle through $\infty$ (in particular, it is a straight line if $p$ has degree $2$ as well), and the two complementary domains are the basins of the two roots. This case is trivial, and the case $d=1$ is even more trivial, so they are excluded from our discussions and we assume $d\ge 3$. 
\end{remark}

Our second main result (Theorem~\ref{Thm:QCRigidity}) is a parameter space counterpart to Theorem~\ref{Thm:RRP}. We say that two Newton maps are \emph{combinatorially equivalent} if their \emph{Newton graphs} coincide (see Definition~\ref{Def:CombEquiv} for details); an equivalent way of saying this is that all the components of the basins of the roots are connected to each other in the same way (some examples of Newton dynamical planes are shown in Figure~\ref{Fig:TouchingBasins}).

\begin{MainTheorem}[Parameter rigidity for Newton maps]
\label{Thm:QCRigidity}
If two polynomial Newton maps are combinatorially equivalent, then they are quasiconformally conjugate in a neighborhood of the Julia set provided
\begin{enumerate}
\item
either they are both non-renormalizable, 
\item
or they are both renormalizable, and there is a bijection between their domains of renormalization that respects hybrid equivalence between the little Julia sets as well as their combinatorial position.
\end{enumerate}
The domain of this quasiconformal conjugation can be chosen to include all Fatou components not in the basin of the roots, and its antiholomorphic derivative vanishes on those Fatou components as well as on the entire Julia set.

Moreover, if these Newton maps are normalized so that they attracting-critically-finite (as defined below), then they are even affine conjugate.
\end{MainTheorem}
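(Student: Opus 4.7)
The plan is to build the quasiconformal conjugation $\phi\colon\mathbb{C}\to\mathbb{C}$ stepwise, starting from the combinatorial skeleton and extending inward toward the Julia set using complex box mappings and the dynamical rigidity of Theorem~\ref{Thm:RRP}.

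\emph{Step 1: Conjugation on the Newton graph and basins of roots.} Since $N_p$ and $N_{\tilde p}$ are combinatorially equivalent, their Newton graphs $\Gamma$ and $\tilde\Gamma$ are isomorphic as dynamically marked graphs. Inside each immediate basin of a root the dynamics is quasiconformally conjugate to a local (super)attracting model via linearizing/B\"ottcher-type coordinates; using the Newton graph isomorphism to match channels, one can build a first conjugation $\phi_0$ on a neighborhood of the immediate basins. The conjugation then spreads to all components of the root basins by pulling back along $N_p$, yielding a quasiconformal homeomorphism $\phi_1$ defined on (a neighborhood of) the union of the root basins, equivariant with respect to the dynamics, and conformal in each Fatou component.

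\emph{Step 2: Puzzles and the non-renormalizable case.} The Newton graph, together with its iterated preimages, organizes the Julia set into a puzzle, and by the framework of Kozlovski--Shen--van Strien that the paper invokes, one obtains combinatorially matched complex box mappings for $N_p$ and $N_{\tilde p}$. The combinatorial equivalence promotes $\phi_1$ to a boundary-marked qc conjugation between matching puzzle pieces at every depth. In the non-renormalizable case, Theorem~\ref{Thm:RRP} forces every point of the Julia set not in a root basin to have trivial fiber, so the puzzle pieces containing a given Julia point shrink to that point. A standard compactness/normal-families argument (the qc dilatation is uniformly bounded because it is bounded on the boundaries of the puzzle at each depth and equivariance preserves it) yields a continuous extension $\phi\colon\mathbb{C}\to\mathbb{C}$ which is qc globally, conformal on every Fatou component, and whose $\bar\partial$ vanishes on the Julia set by removability across the countable union of shrinking puzzle boundaries.

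\emph{Step 3: The renormalizable case.} For each pair of matched renormalization domains, the hypothesis gives a hybrid equivalence $h$ between the polynomial-like restrictions, which is conformal on the interior of the little filled Julia set. On the complement of the renormalization domains, one runs Step~2 using triviality of fibers outside those domains (again from Theorem~\ref{Thm:RRP}). The challenge is to glue the exterior qc conjugation with the various interior hybrid equivalences along the boundaries of the renormalization domains. This is accomplished by modifying the interior hybrid equivalences by an interpolating qc isotopy in a collar, chosen so as to match the boundary values coming from the exterior conjugation; the combinatorial position hypothesis ensures that the boundary identifications are consistent across the dynamics. One then spreads the resulting map over all iterated preimages of the renormalization domains using the equivariance, and extends to the remaining Julia points by the fiber-triviality argument of Step~2. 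This gluing, and in particular keeping the dilatation bounded at all depths while matching boundary markings, is the main obstacle.

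\emph{Step 4: Vanishing $\bar\partial$ and affine conjugacy.} By construction $\phi$ is conformal on each Fatou component outside the root basins (it is either holomorphic via hybrid equivalence or has no Fatou content there), and the $\bar\partial$ vanishes on the Julia set by Step~2. If both maps are attracting-critically-finite, the only Fatou components where $\phi$ is not already conformal are the root basins; on each of them $\phi$ is a qc conjugation between two superattracting (or attracting) basins whose critical orbits are finite and match combinatorially, and B\"ottcher/K\oe nigs coordinates force $\phi$ to be conformal there as well. Then $\phi$ is a global conformal conjugation between two rational maps of the same degree with matching fixed-point data at $\infty$, which forces it to be affine.
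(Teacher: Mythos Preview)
Your outline has the right shape but glosses over the two places where the actual work happens.

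\textbf{Uniform dilatation.} In Step~2 you assert that ``the qc dilatation is uniformly bounded because it is bounded on the boundaries of the puzzle at each depth and equivariance preserves it.'' This is exactly the hard step, and it is not automatic. The paper devotes Proposition~\ref{Prop:KeyClaim} and its proof (Section~\ref{SSec:ProofKeyClaim}) to building, for every non-renormalizable critical point $c$ and every depth $n$, a $K$-quasiconformal map $\phi_{n,c}\colon \inter P_n(c)\to\inter\tilde P_n(\tilde c)$ with $K$ independent of $n$. This requires (i) extracting combinatorially equivalent non-renormalizable box mappings (Lemma~\ref{Lem:ExtrCombEquivBoxMappings}), (ii) invoking the qc rigidity Theorem~\ref{Thm:QC} for those box mappings, and (iii) handling the ``run-away'' critical points in $S$ via the QC-Criterion (Theorem~\ref{Thm:QCCr}) together with geometric bounds on landing domains (Proposition~\ref{Prop:Cor63}). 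Only then does the Spreading Principle (Lemma~\ref{Lem:Spreading}) give a sequence $\Phi_n$ with uniformly bounded dilatation whose limit is the conjugacy. Triviality of fibers from Theorem~\ref{Thm:RRP} tells you where the limit map sends each Julia point, but it does not by itself bound the dilatation of the approximating maps.

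\textbf{Vanishing of $\bar\partial$ on the Julia set.} Your claim in Step~2 that $\bar\partial\phi$ vanishes on $J(N_p)$ ``by removability across the countable union of shrinking puzzle boundaries'' does not work: the Julia set may have positive measure, and shrinking puzzles do not make it removable. The paper's argument (Step~2 of the proof in Section~\ref{SSec:QCRig}) is that the Beltrami coefficient of $\Phi_\infty$ is $g$-invariant, so if it were nonzero on a positive-measure subset of $J(g)$ it would define an invariant line field. This is ruled out on the part of $J(g)$ captured by the non-renormalizable box mapping by Theorem~\ref{Thm:Linefield}\,\eqref{It:Linefield}, on the renormalizable fibers by the hybrid-equivalence hypothesis, and on the remainder by the measure-zero Lemma~\ref{Lem:NothingLeft}. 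Your Step~4 then follows, but only after this line-field argument, not from B\"ottcher coordinates alone.
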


The conditions in the renormalizable case mean the following: the renormalizable ``little Julia sets'' should correspond to the same polynomial dynamics (up to a quasiconformal conjugation that is conformal on the filled-in Julia set of the polynomials), and they should be connected to the Newton graph at the same combinatorial position. This will be made precise in Section~\ref{Sec:ProofParamRigidity}. 

Finally, a Newton map is called \emph{attracting-critically-finite} if the orbit of every critical point in the basin of a root is eventually fixed; this can be accomplished by a routine quasiconformal surgery on a compact subset of the basins of the roots (see Section~\ref{Sec:Puzzles}).

\begin{figure}[htbp]
\begin{center}
\includegraphics[width=1.\textwidth]{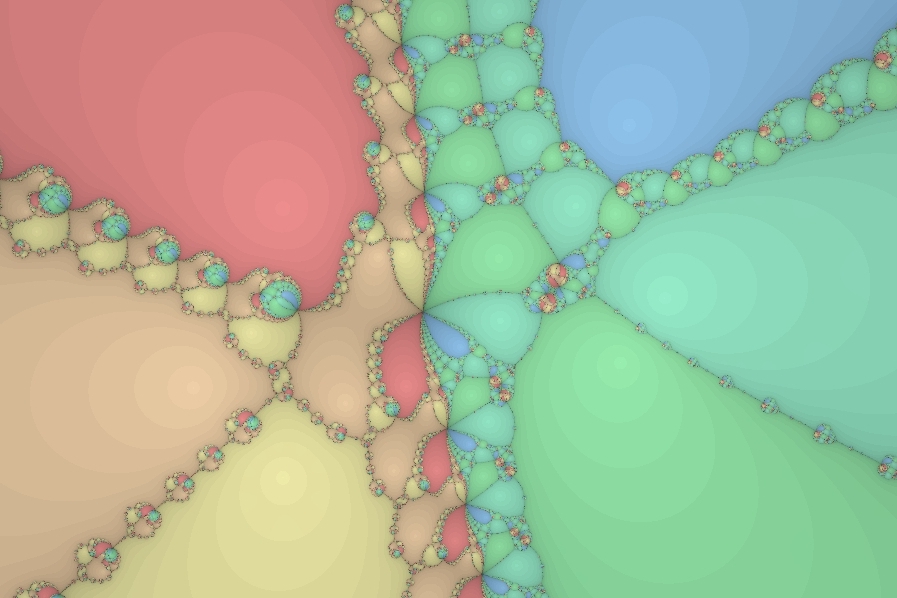}\hfill
\vskip .015\textwidth
\includegraphics[width=.49\textwidth]{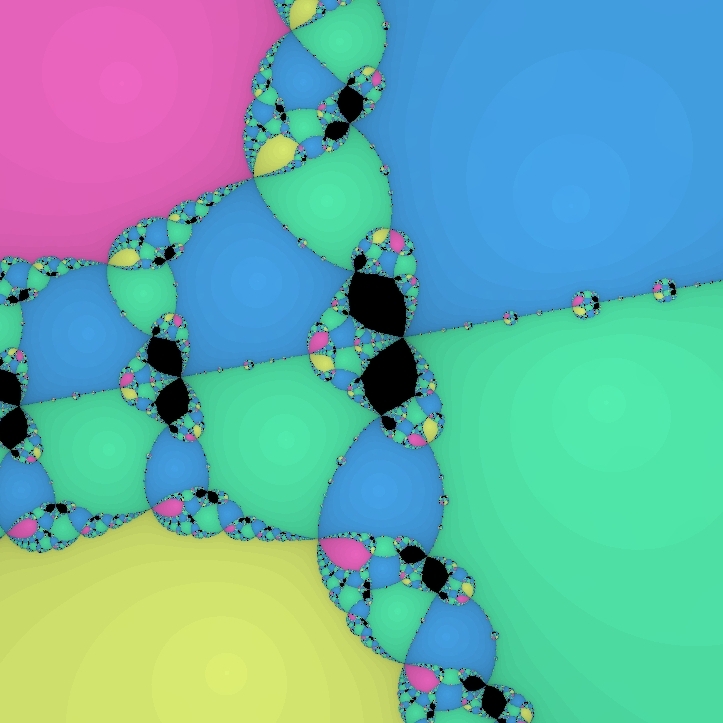}\hfill
\includegraphics[width=.49\textwidth]{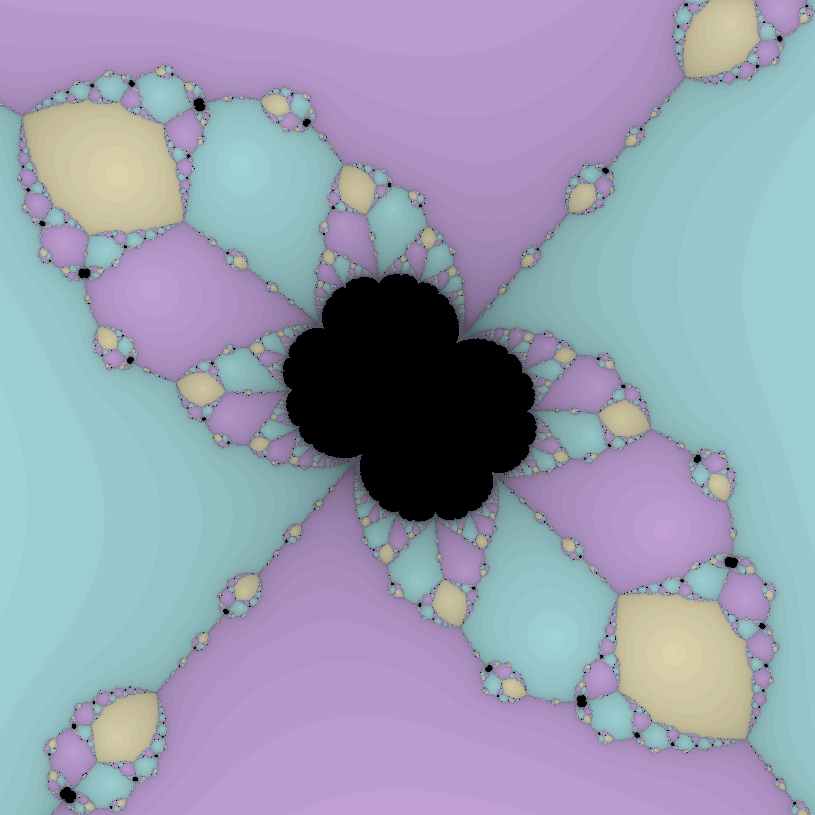}
\caption{The dynamical planes of various Newton maps (of degree 6 (top), 4 (bottom left), and 3 (bottom right)); the basins of different roots are shown in different colors. Different Newton maps can often be distinguished combinatorially in terms of the combinatorial structure of touching components of the basins of roots. Renormalizable parts of the dynamical plane are shown in black.}
\label{Fig:TouchingBasins}
\end{center}
\end{figure}

\subsection{Box mappings}
One of our key tools are \emph{complex box mappings} as introduced by Kozlovski and van Strien \cite{KvS09} (see also~\cite{GS}, and~\cite[Appendix~A]{ALdM} for the similar notion of \emph{puzzle mappings}). These maps are natural generalizations of polynomial-like maps to the case when the domains of definition and the ranges are disconnected (see Figure~\ref{Fig:ExampleBoxMapping}). For any point $z$ in such a box mapping $F$, the fiber $\fib(z)$ is the component containing $z$ of the set of points that have the same symbolic dynamics as $z$ with respect to the connected components for the domain of definition of $F$: that is, $\fib(z)$ consists of points with the same itinerary through all these components. Again, a precise definition will come later (Section~\ref{Sec:ComplexBoxMappings}). Kozlovski and van Strien give sufficient conditions for box mappings to have all their fibers trivial (in different language; see \cite[Theorem 1.4 (1)]{KvS09}, and also~\cite{DKvS}). Our third main result (Theorem~\ref{Thm:RigidityComplexBox}) is an upgrade to their theorem: our result applies to \emph{all} box mappings and provides sufficient conditions for most individual fibers to be trivial. Similarly as for polynomial-like maps, some points can only be iterated finitely many times; we say that such points \emph{escape} (from the box mapping). There also might be components of the domain of a box mapping without escaping points; we call them \emph{\eqref{Item:NE} components} (components with \textbf{N}o \textbf{E}scape). For a cycle of such components, the orbits of all points on this cycle remain there forever. 

\makeatletter
\write\@auxout{\string\newlabel{Item:T}{{T}{\thepage}{bla}{bla}{} }}
\write\@auxout{\string\newlabel{Item:R}{{R}{\thepage}{bla}{bla}{} }}
\write\@auxout{\string\newlabel{Item:NE}{{NE}{\thepage}{bla}{bla}{} }}
\write\@auxout{\string\newlabel{Item:CB}{{CB}{\thepage}{bla}{bla}{} }}
\makeatother

\begin{MainTheorem}[Generalized rigidity for complex box mappings]
\label{Thm:RigidityComplexBox}
Consider an arbitrary box mapping and an arbitrary non-escaping point $z$. Then at least one of the following cases occurs:
\begin{itemize}
\item[\eqref{Item:T}]
$z$ has \textbf{T}rivial fiber;
\item[\eqref{Item:R}]
$z$ belongs, or is mapped by some finite iterate, to the filled Julia set of \textbf{R}enormalizable dynamics (a polynomial-like restriction of the given box mapping with connected Julia set); 
\item[\eqref{Item:CB}]
the orbit of $z$ \textbf{C}onverges to the \textbf{B}oundary of the domain of definition of the box mapping.
\item[\eqref{Item:NE}]
(\textbf{N}o \textbf{E}scape:) the domain of the box mapping contains a periodic component that maps surjectively to itself by some iterate of the box mapping, and $z$ eventually maps to such a component (so the fiber of $z$ is equal to the closure of its component); 
\end{itemize}
\end{MainTheorem}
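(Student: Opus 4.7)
The plan is a case analysis on the forward orbit of $z$, peeling off the exceptional cases (NE) and (CB) and then appealing to the rigidity theorem for complex box mappings of Kozlovski--van Strien \cite{KvS09} in the remaining situation, where the output is either (T) or (R). As a first reduction I would identify all periodic components of $\DomE(F)$ that are mapped surjectively onto themselves by some iterate of $F$. If a forward iterate of $z$ lies in such a component $U$, then every point of $U$ shares the itinerary of $z$ with respect to the partition of $\DomE(F)$ into components (since no orbit starting in $U$ ever leaves $U$), so $\fib(z) = \overline{U}$ and case (NE) holds. I may therefore assume no forward iterate of $z$ lies in such a component.

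Next I consider the $\omega$-limit set $\omega(z)$ of $z$ under $F$. If $\omega(z) \subset \partial \DomE(F)$, case (CB) is satisfied. Otherwise, after replacing $z$ by a forward iterate, $z$ lies in some range component $V_0$ and its forward orbit returns infinitely often to a compact set $K \Subset V_0$. The iterated-pullback procedure along the subsequence of returning iterates produces a nested sequence of puzzle pieces
\[
    P_0 \supset P_1 \supset P_2 \supset \cdots \ni z, \qquad \fib(z) \subset \bigcap_n P_n,
\]
together with a first-return complex box mapping $\widetilde F$ on a neighborhood of the orbit closure of $z$ that captures all the recurrent dynamics relevant to the fiber of $z$.

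To $\widetilde F$ I would apply the enhanced nest construction of Kahn--Lyubich and Kozlovski--van Strien (\cite{KLUnicr, KvS09}; see also \cite{DKvS}). The standard dichotomy in this framework yields either that the conformal moduli $\modulus(P_n \setminus P_{n+1})$ sum to $+\infty$ along a subsequence, forcing $\bigcap_n P_n = \{z\}$ by a Gr\"otzsch-type estimate and giving case (T); or that the first-return maps eventually stabilize on a polynomial-like map with connected Julia set containing some iterate of $z$, placing $z$ in case (R). The main obstacle will be executing this last step in our generality: the standard KvS hypotheses preclude orbits that neither escape nor get trapped in no-escape cycles, so one must check that once (NE) and (CB) have been excluded the first-return box mapping $\widetilde F$ genuinely fits into the KvS setup. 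Concretely, the pullbacks producing the nest $(P_n)$ must be prevented from drifting toward $\partial \DomE(F)$ -- otherwise the moduli dichotomy is inconclusive -- which requires combining the compactness of the orbit closure of $z$ with the real and complex bounds derived inside the KvS framework for $\widetilde F$.
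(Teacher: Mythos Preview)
Your overall architecture---strip off (NE) and (CB), build a first-return box mapping, and invoke the Kozlovski--van Strien machinery---matches the paper's. But there is a genuine gap in your case analysis, and you misidentify which KvS hypothesis is the real obstruction.

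The problematic case is when the orbit of $z$ accumulates on the fiber $\fib(c)$ of a \emph{renormalizable} critical point $c$ without ever landing in it. Your first-return box mapping $\widetilde F$ is built on a neighborhood of the orbit closure of $z$, so it must include a neighborhood of $c$; hence $\widetilde F$ is itself renormalizable around $c$. The KvS rigidity result (Theorem~\ref{Thm:RigidityJo} in the paper, from \cite{KvS09,DKvS}) requires the box mapping to be non-renormalizable with all periodic points repelling---this is not incidental, it is what makes the enhanced-nest complex bounds go through. Your ``standard dichotomy'' is not available in the literature in the form you invoke: the enhanced nest around a renormalizable $c$ stabilizes on a polynomial-like map whose filled Julia set contains $c$, not $z$, and nothing in that picture forces the puzzle pieces around $z$ to shrink. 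So in this case neither branch of your dichotomy fires.

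The paper handles this with two separate moves you are missing. First, a standalone lemma (Lemma~\ref{Lem:AccumAtPeriod}): if $\omega(z)$ meets a periodic fiber $\fib(y)$ but $\orb(z)$ never enters it, then $\fib(z)=\{z\}$. The proof is an explicit annulus-pullback argument, controlling degrees via first-entry estimates (Lemmas~\ref{Lem:FirstTime} and~\ref{Lem:FirstTimeToUnion}) and exploiting that the orbit of $z$ spends arbitrarily long stretches near $\fib(y)$ before escaping; this yields infinitely many disjoint annuli of definite modulus around $z$. Second, once all accumulation on renormalizable fibers has been disposed of, the paper constructs (via Lemma~\ref{Lem:Simple}) an auxiliary first-return box mapping $F'$ on a puzzle neighborhood chosen specifically to \emph{exclude} all renormalizable critical points and their preimages. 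This $F'$ is then genuinely non-renormalizable with only repelling periodic points, and Theorem~\ref{Thm:RigidityJo} applies cleanly to give (T).

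You correctly flag that verifying the KvS setup is the crux, but the hypothesis at stake is non-renormalizability, not the absence of (NE)- or (CB)-type behavior.
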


Observe that the first two possibilities here exactly match the two possibilities in Theorem~\ref{Thm:RRP} for points not in basins of the roots. The last case does not naturally arise in many cases where box mappings are extracted from dynamical systems on $\Cc$ and is admitted by the fairly general definition of box mappings (see Definition~\ref{Def:BoxMap}). We will provide a definition of ``dynamically natural'' box mappings in Definition~\ref{Def:NaturalBoxMapping}.

The concept of renormalization will be discussed in Section~\ref{Sec:ComplexBoxMappings}. Note that there are different  concepts of renormalization: in the sense of Douady--Hubbard, as well as of Kozlovski--van Strien.

\bigskip

\noindent
\emph{Earlier work on Newton's method}.
Newton's method as a dynamical system has been studied by various people for a long time, in many cases with a focus on the cubic case. For cubic Newton maps there is a single free critical point and the parameter space is complex one-dimensional, like the well-studied case of the dynamics of quadratic polynomials and the Mandelbrot set. In particular, we would like to mention the classical work by Tan Lei \cite{TanLeiNewton} with a combinatorial study of the Newton parameter space, with a recent refinement by Roesch, Wang, and Yin in \cite{RWY}. In \cite{Roe}, Roesch has shown that the Newton map of a cubic polynomial has locally connected Julia set in many cases, even when it is renormalizable and the embedded polynomial Julia set is not locally connected. 

There are two recent manuscripts that study Newton's method of arbitrary degrees in a similar spirit as we do here. The main result of Wang, Yin, and Zeng \cite{WYZ} is that immediate basins have locally connected boundaries. 
Roesch, Yin, and Zeng show in \cite{RYZ} that all non-renormalizable Newton maps are rigid (in parameter space). Both are corollaries of our results: the first is Corollary~\ref{Cor:LC}, and the second is Theorem~\ref{Thm:QCRigidity} in the special case of non-renormalizability.

There is also recent work on Newton's method as an efficient root finder. Among the early results are a paper by Przytycki \cite{Pr} that shows that immediate basins are always simply connected (with a generalization by Shishikura \cite{ShishikuraSimplyConnected}), and one by Manning \cite{Manning} that shows where to start the Newton iteration to find some ``exposed roots''. A sufficient small set of starting points that always finds all roots was constructed in \cite{HSS} with a probabilistic version in \cite{BLS}. Estimates on the necessary number of iterations were given in \cite{NewtonIterations,NewtonEfficient,NewtonTodor}; see also the overview in \cite{NewtonOverview}. Finally, experiments that highlight the efficiency of Newton's method for certain polynomials of degrees exceeding one billion were described in \cite{NewtonRobin1,NewtonRobinMarvin}. 

\bigskip

\textbf{Notation.} In order to lighten notation, we write $f^k$ for the $k$-fold iterate of a map $f$, that is $f^k := f^{\circ k} = \underbrace{f \circ \ldots \circ f}_{k \text{ times}}$.

We will also write $\Crit(f)$ for the set of critical points of a map $f$ and $\orb(z) := \left\{f^k(z) \colon k \geqslant 0\right\}$ for the orbit of a point $z$ under the dynamics of $f$. The $\omega$-limit set of $\orb(z)$ is defined as 
\[
\omega(z) := \bigcap_{n \in \mathbb N} \ovl{\{f^k(z) \colon k > n\}}
\;.
\]
Note that $\overline{\orb(z)} = \orb(z) \cup \omega(z)$.  

A set $X$ is \emph{nice} if the orbit of the boundary of $X$ does not intersect the interior of $X$, i.e.\ $f^k(\partial X) \cap \inter X = \emptyset$ for all $k \ge 0$ \cite[$\S 31$]{LyuBook}.

A \emph{component} of a set $X$ is a connected component of $X$. A \emph{critical component} of $X$ is a connected component containing a critical point.

The Lebesgue measure in $\C$ will be denoted as $\area(\cdot)$.

By the diameter of a set $X$ we will usually understand Euclidean or spherical diameter, depending whether $X$ lies in $\C$ or $\Cc$; we will denote it as $\diam X$.

An \emph{annulus} is a doubly-connected domain in $\Cc$; we will write $\modulus(B)$ for the modulus of an annulus $B$.

\section{On general puzzles}
\label{Sec:GeneralPuzzles}

In this preparatory section we start with some general discussion and fix terminology concerning puzzles. The results of this section will be used as a toolbox in the proofs of our main results (Theorems~\ref{Thm:RRP},~\ref{Thm:QCRigidity}, and~\ref{Thm:RigidityComplexBox}).

\subsection{Puzzle pieces, fibers, and the Markov property}

Let $g \colon U \to V$ be a holomorphic map between two open sets $U \subseteq V \subset \Cc$ so that connected components of $U$ resp.\,$V$ have disjoint closures; we do not require that the components of $U$ or $V$ be simply connected. Further assume that $g$ has only finitely many critical points. We describe a setting of \emph{puzzles} in the spirit of the well known Yoccoz puzzles, adapted to the needs of our Newton dynamics. 
Suppose that there exists a nested sequence $\left(S_n\right)_{n=0}^\infty$ of open sets such that $V=S_0 \supset U=S_1\supset S_2\dots$, every component of $S_{n+1}$ is either compactly contained in or coincides with the corresponding component of $S_n$ and for every $n \geqslant 0$  the restriction $g \colon S_{n+1} \to S_n$ is a proper map. Further assume that the closure of each $S_n$ can be represented as a (not necessarily finite) union of closed topological disks  $P_n^i$ ($i$ runs over some finite or countable index set $I_n$) that can only intersect along their boundaries (see Figure~\ref{Fig:GeneralPuzzles}). We call each $P_n^i$ a \emph{puzzle piece of depth $n$}. The union of all puzzle pieces of depth $n$ comprises the \emph{puzzle partition (of $\ovl S_n$) of depth $n$}. An \emph{open puzzle piece of depth $n$} is the interior $\inter P_n^i$ of a puzzle piece of depth $n$. We will call the topological graph $\Gamma_n := \bigcup_{i \in I_n} \partial P_n^i$ the \emph{puzzle boundary of depth $n$}: \emph{vertices} of this graph are either points on $\partial S_n$ where at least two puzzle pieces meet, or points in $S_n$ where at least three puzzle pieces meet (note here that $\partial S_n \subset \Gamma_n$ for every $n$); an \emph{edge} of $\Gamma_n$ is a vertex-free set homeomorphic to an interval that connects two vertices. For simplicity we assume that all edges in all $\Gamma_n$ are smooth and the boundary of each puzzle piece contains finitely many vertices. In general, every puzzle piece of depth $n$ shares an edge with one or several further puzzle piece of the same depth $n$, and hence every component of $\ovl S_n$ contains many puzzle pieces of depth $n$. We allow the special case that a component of $\ovl S_n$ consists of a single puzzle piece, say $Y$; in this case the definition of edges and vertices does not apply, and we choose an  arbitrary point on $\partial Y$ as a vertex and let the rest of $\partial Y$ be an edge that connects the vertex to itself.  

\begin{figure}[htbp]
\begin{center}
\includegraphics[scale=.5, trim=0 0 0 0, clip]{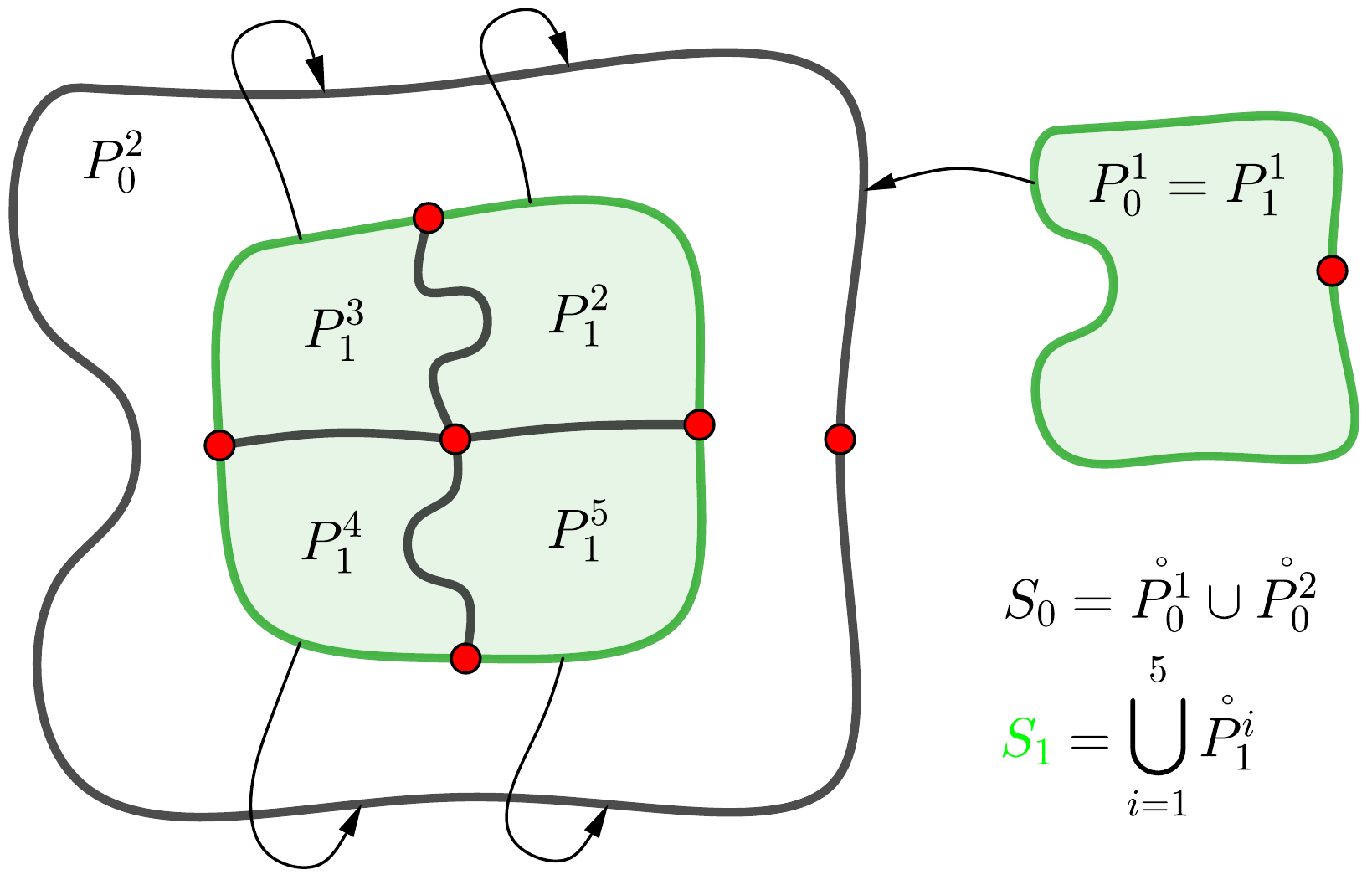}
\caption{An schematic picture of general puzzles. The puzzle partition of depth $0$ consists of $2$ puzzle pieces, both are connected components of $\ovl S_0$; the puzzle partition of depth $1$ consists of $5$ puzzle pieces (filled in green). The red dots indicate vertices on the puzzle boundaries.}
\label{Fig:GeneralPuzzles}
\end{center}
\end{figure}

\begin{remark}
The previous paragraph describes, in fairly large generality, a construction that includes not only the setting of the well known Yoccoz puzzles (where each $S_n$ consists of finitely many puzzle pieces), but it also caters for two settings that will be specified in the upcoming sections. First, we will be interested in puzzles for complex box mappings (see Definition~\ref{Def:BoxMap}). In that case, $g$ will be a box mapping, where each $S_n$ will be a (possibly infinite) union of open topological disks with disjoint closures; the closure of each of the disks will serve as a puzzle piece of depth $n$. In other words, for a box mapping and for any given $n$ the set of puzzle pieces of depth $n$ equals the set of closures of the connected components of $S_n$; see Definition~\ref{Def:BoxPuzzle} for details. For the second time the construction in the previous paragraph will be specified for polynomial Newton maps $N_p$ (see Section~\ref{Sec:Puzzles}). There $g$ will stand for a particularly chosen iterate of the Newton map, while $S_n$ will be the Riemann sphere minus finitely many suitably chosen closed topological disks bounded by equipotentials in the respective basins of roots of $p$; each of these removed disks is a neighborhood of either a root of $p$ or an iterated preimage of such a root for a bounded number of iterations (see Definition~\ref{Def:NewtonPuzzles} for details).
\end{remark}

\begin{definition}[Markov property]
\label{Def:Markov}
The union of all puzzle pieces of all depths has the \textit{Markov property} if:
\begin{enumerate}
\item
any two puzzle pieces are either nested or have disjoint interiors; in the former case the puzzle piece of bigger depth is contained in the puzzle piece of a smaller depth;
\item
the image of each puzzle piece $P_n^i$ of depth $n>0$ is a puzzle piece $P_{n-1}^j$ of depth $n-1$, and the restriction $g \colon P_n^i \to P_{n-1}^j$ is a branched covering.
\end{enumerate}
Equivalently, the Markov property can be stated in terms of puzzles: the union of all puzzle pieces of all depths has the Markov property if $g(\Gamma_n \cap S_{n+1}) \subset \Gamma_n$ for all $n \geqslant 0$. (Note that for puzzles coming from box mappings this condition is automatically satisfied since $\Gamma_n \cap S_{n+1} = \emptyset$, see Definition~\ref{Def:BoxPuzzle}). 
\end{definition}

We will say that $g$ is a \emph{holomorphic map with well-defined Markov partition} if $g \colon U \to V$, with $U \subseteq V \subset \Cc$, is a holomorphic map as described at the beginning of the section and for which there exists a nested sequence of open sets $V = S_0 \supset U=S_1 \supset \ldots$ with a well-defined puzzle partition into puzzle pieces the union of which has the Markov property.

For a holomorphic map $g$ with a well-defined Markov partition, each puzzle piece $P_n$ of depth $n$ is a nice set: the orbit of the boundary of $P_n$ does not intersect the interior of $P_n$. This follows from the conditions $g(\Gamma_n \cap S_{n+1}) \subset \Gamma_n$ and $S_n \supset S_{n+1}$ ($n=0,1,\ldots$).

\begin{remark}
Alternatively, one can define puzzle pieces as pull-backs of  a certain initial set that is nice. The components of this initial set are declared to be the puzzle pieces of zero depth, and puzzle pieces of depth $n$ are components of the $n$-fold pull-back of components of the initial set.
\end{remark}

\begin{remark}
An important example of a nice set, apart from individual puzzle pieces, is a union of puzzle pieces of the same depth. On the other hand, if $P$ and $Q$ are puzzle pieces such that $\inter P \cap \inter Q = \emptyset$ and $g^k(P) \subsetneq Q$ for some $k \ge 0$, then $P \cup Q$ is \emph{not} a nice set. 
\end{remark}

\begin{definition}[Puzzle piece centered at a point]
\label{Def:PuzzleCentered}
Given a point $x \in S_n$, define $P_n(x)$ to be the union of all puzzle pieces of depth $n$ containing $x$.
\end{definition}

From the definition above it is clear that if $x$ is not on the boundary of any puzzle piece of depth $n$ (equivalently, if $x \not \in \Gamma_n$), then $P_n(x)$ is the unique puzzle piece of depth $n$ containing the point $x$. Otherwise, $P_n(x)$ is a union of puzzle pieces with $x$ in their common boundary. Note that these sets do not form a Markov partition: it may be that $P_n(x)$ and $P_n(y)$ are different with intersecting interiors if $x$ or $y$ are in $\Gamma_n$. However, it is still true that the restriction $g \colon P_n(x) \to P_{n-1}\left(g(x)\right)$ is a branched covering. 

Let us spell out an elementary argument that will be used several times below without explicit mention. If $x$ is a point that does not belong to the puzzle boundary of any depth, then every point on the orbit of $x$ also does not belong to the puzzle boundary of any depth, and hence $P_n(g^k(x))$ is a puzzle piece (of depth $n$) for all $n$ and $k$ (as long as $x$ can be iterated $k$ times).

We say that a point $x \in U$ \emph{escapes} if $x \in S_n \sm \ovl S_{n+1}$ for some $n \geqslant 0$. Thus the set of non-escaping points of $g$ (the \emph{non-escaping set of $g$}) is precisely 
 $\bigcap_{n=0}^\infty \ovl S_n$; this is the set of points that can be iterated infinitely often. 
 
\begin{definition}[Fiber, trivial fiber]
\label{Def:Fiber}
For a non-escaping point $x$, the set 
\[
\fib(x) := \bigcap_{n \geqslant 0} P_n(x)
\]
is called the \emph{fiber} of $x$ (with respect to the partition of $S_n$).
We say that $x$ has \emph{trivial fiber} if $\fib(x) = \{x\}$.
\end{definition}

The Markov property of puzzle partitions is a powerful combinatorial property allowing us to study maps from the point of view of symbolic dynamics.
We define the \emph{itinerary at level $n$ of a point $x$} as the sequence  $\left(P_n(g^i(x))\right)_{i=0}^\infty$.  Two points have the same itinerary if their itineraries at all levels coincide.  In particular, the fiber $\fib(x)$ consists of all points that have the same itineraries at all levels. 
In other words, $\fib(x)$ consists of all points that are \emph{dynamically indistinguishable} with respect to our puzzle partition. Hence, if the fiber is trivial, then the dynamics of $g$ at $x$ is \emph{rigid}: there is no point other than $x$  with the same itinerary as $x$.

We will also say that the fiber $\fib(x)$ is \emph{periodic} if $x$ has periodic itinerary; this property is independent of a particular choice of a point in the fiber. 

A point $x$ is called \emph{combinatorially recurrent} if $x$ does not belong to the puzzle boundary of any depth and the orbit of $g(x)$ under $g$ intersects $\inter P_n(x)$ for every $n$. This implies that the orbit of $g(x)$ intersects every $\inter P_n(x)$ infinitely often: if this was not true, then for some $n$ there was a largest $k$ with $g^k(x)\in \inter P_n(x)$, so $g^k(x)\in\inter P_m(x)$ for all $m\ge n$; hence $g^k(x)\in \fib(x)$. But then $g^j(x)$ and $g^{k+j}(x)$ are in the same fiber for every $j\ge 0$, so $g^{jk}(x)\in\fib(x)$ for all $j$.

\subsection{The first return construction}

In many instances, while working with puzzle pieces one can control critical orbits by considering the first moment these orbits enter a given collection of puzzle pieces. The first lemma in this subsection, although fairly easy, provides a first example of this strategy and is the basis for many constructions of partial maps that we will later work with.

\begin{lemma}[First entry maps have uniformly bounded degrees]
\label{Lem:FirstTime}
For every holomorphic map $g$ with well-defined Markov partition there is a constant $D\in\N$ with the following property: for every puzzle piece $Y$ of any depth $n$ and for every point $z$ that does not belong to the puzzle boundary of any depth, if $k \ge 1$ is the least index so that $g^{k}(z)\in Y$, then the map $g^{k} \colon P_{n+k}(z) \to P_n(g^{k}(z)) = Y$ has degree bounded by $D$ (independent of $n$ and $k$). 
\end{lemma}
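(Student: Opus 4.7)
The candidate constant is $D := \prod_{c \in \Crit(g)} \deg_c(g)$, which is finite because $g$ has only finitely many critical points. The plan is to factor $g^k \colon P_{n+k}(z) \to Y$ into its $k$ single-step pullbacks and to show that each critical point of $g$ contributes nontrivially to the degree of at most one of these pullbacks; the bound $D$ then drops out of Riemann--Hurwitz together with an elementary inequality.

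As preliminary bookkeeping: because preimages of puzzle pieces are unions of puzzle pieces (by the Markov property), the hypothesis $z \notin \bigcup_m \Gamma_m$ propagates forward, so $g^j(z) \notin \bigcup_m \Gamma_m$ for every $j \ge 0$. Hence $U_j := P_{n+k-j}(g^j(z))$ is a single puzzle piece of depth $n+k-j$ for each $j = 0, 1, \ldots, k$, with $U_0 = P_{n+k}(z)$ and $U_k = Y$; moreover $g \colon U_j \to U_{j+1}$ is a proper branched covering of topological disks of some degree $d_j$. Riemann--Hurwitz yields
\[
d_j \;=\; 1 + \sum_{c \in \inter U_j \cap \Crit(g)} \bigl(\deg_c(g) - 1\bigr)
\quad\text{and}\quad
\deg\bigl(g^k|_{P_{n+k}(z)}\bigr) \;=\; \prod_{j=0}^{k-1} d_j.
\]

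The heart of the argument---and the step I expect to be the main obstacle---is the combinatorial claim that for every $c \in \Crit(g)$ the set $\{j \in \{0, \ldots, k-1\} : c \in \inter U_j\}$ contains at most one element. Suppose to the contrary that $c \in \inter U_{j_1} \cap \inter U_{j_2}$ with $0 \le j_1 < j_2 \le k-1$. Since $U_{j_1}$ and $U_{j_2}$ are single puzzle pieces of depths $n+k-j_1 > n+k-j_2$ sharing the interior point $c$, the Markov nesting property (item (1) of Definition~\ref{Def:Markov}) rules out disjoint interiors and forces $U_{j_1} \subset U_{j_2}$; in particular $g^{j_1}(z) \in U_{j_2}$. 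Applying $g^{k-j_2}$, which maps $U_{j_2}$ onto $Y$, produces $g^{k-j_2+j_1}(z) \in Y$ with $0 < k - j_2 + j_1 < k$, contradicting the minimality of $k$. This is where the ``first entry'' hypothesis is essential.

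Given the claim, the elementary inequality $1 + \sum_i a_i \le \prod_i (1 + a_i)$ for $a_i \ge 0$ yields $d_j \le \prod_{c \in \inter U_j \cap \Crit(g)} \deg_c(g)$, and because each critical point appears in at most one such $U_j$, the factors assemble into
\[
\deg\bigl(g^k \colon P_{n+k}(z) \to Y\bigr) \;=\; \prod_{j=0}^{k-1} d_j \;\le\; \prod_{c \in \Crit(g)} \deg_c(g) \;=\; D,
\]
uniformly in $n$, $k$, $z$, and $Y$. A minor technicality to dispatch is that critical points happening to lie on some puzzle boundary do not enter the Riemann--Hurwitz counts for interiors of the $U_j$'s, so the bound is unaffected by them, and restricting the combinatorial claim to $\Crit(g) \setminus \bigcup_m \Gamma_m$ suffices.
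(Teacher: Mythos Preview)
Your proof is correct and follows essentially the same approach as the paper: the key step---showing that the interiors of the puzzle pieces $U_j = P_{n+k-j}(g^j(z))$ for $j=0,\dots,k-1$ can share a critical point at most once, via nesting and the minimality of $k$---is exactly the paper's argument (the paper in fact proves the slightly stronger statement that these interiors are pairwise disjoint, but by the same contradiction). Your presentation is more explicit about the Riemann--Hurwitz bookkeeping and the inequality $1+\sum a_i \le \prod (1+a_i)$, which the paper leaves implicit.
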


\begin{proof}
Consider the sequence of puzzle pieces $\left(P_{n+k - i}(g^{i}(z))\right)_{i=0}^{k-1}$. We claim that these $k$ puzzle pieces have disjoint interiors. If not, then by the Markov property (Definition \ref{Def:Markov}) we have $P_{n+k-i}(g^{i}(z)) \subset P_{n+k-j}(g^{j}(z))$ for some $i<j<k$ and $g^{k-j}(P_{n+k-j}(g^{j}(z)))=P_n(g^{k}(z))=Y$, hence $g^{k-j}(P_{n+k-i}(g^{i}(z)))\subset Y$, so $g^{k-j+i}(z)\in Y$ in contradiction to minimality of $k$ (see Figure~\ref{Fig:FirstReturnDegree}).

\begin{figure}[htbp]
\begin{center}
\includegraphics[scale=.9, trim=30 35 30 35]{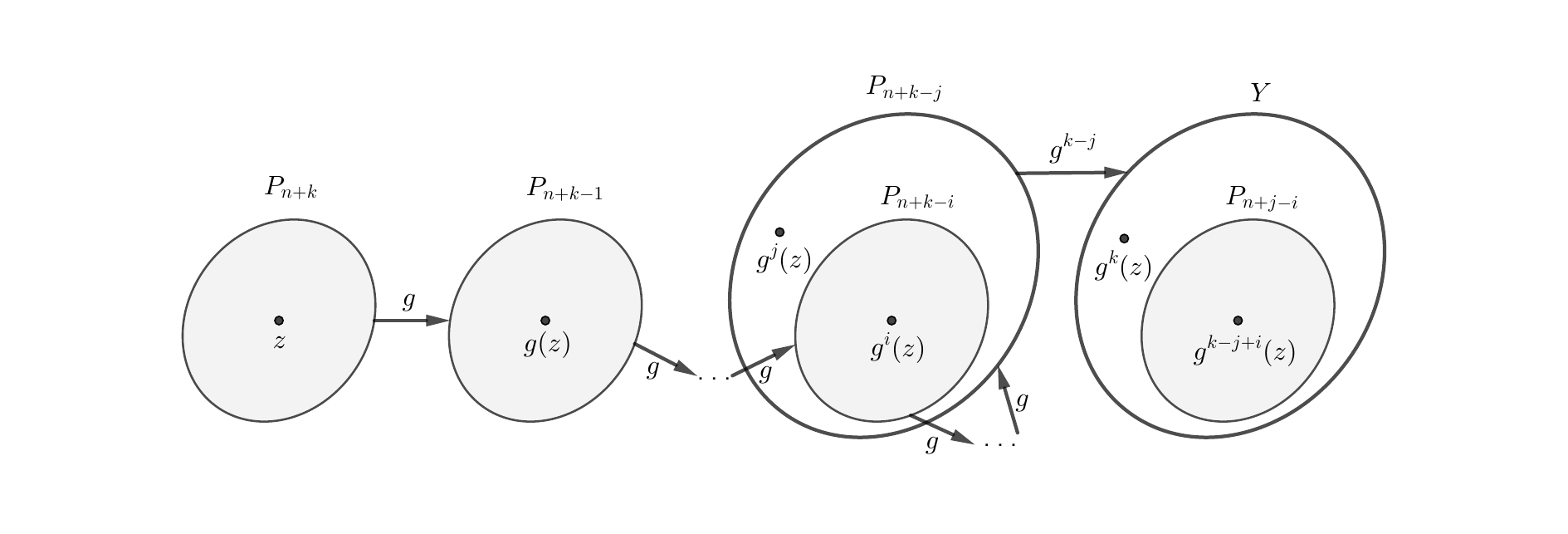}
\caption{An illustration on how to conclude contradiction to minimality of $k$ in the proof of Lemma~\ref{Lem:FirstTime}.}
\label{Fig:FirstReturnDegree}
\end{center}
\end{figure}

In particular, each critical point of $g$ can lie in the interior of at most one puzzle piece in this sequence. Therefore, the claim follows with $D$ equal to the product of the degrees of all critical points of $g$.
\end{proof}

In the coming sections we will use the notions of \emph{first return map} and \emph{first entry map} as follows. For an open set $W\subset U$, following \cite{KSS} we define 
\[
\DomE(W) := \left\{z \in U \colon \exists n \ge 1: g^n(z) \in W \right\} \text{ and } \Dom(W) := \DomE(W) \cap W.
\]
Whenever the set $W$ is understood, we will drop it from the notation and simply write $\DomE$ and $\Dom$. We define the \emph{first entry map} (to $W$) as the map $E\colon \DomE\to W$  via $E(z):=g^n(z)$  for the smallest possible $n$, and in particular the \emph{first return map} $R\colon\Dom \to W$ as $R:=E|_{\DomE \cap W}$. Finally, we define the \emph{first landing map} $L\colon\DomE\cup W\to W$ via $L=\id$ on $W$ and $L=E$ on $\DomE \sm W$; we will write $\DomL(W) = \DomE \cup W$ for the domain of the first landing map to $W$.

\begin{remark}
The first return map construction is a key tool that we will use in order to produce box mappings (see Definition~\ref{Def:BoxMap}).
\end{remark}

\begin{lemma}[First return, entry and landing maps to a nice set]
\label{Lem:FirstReturnConstruction}
Let $g \colon U \to V$ be a holomorphic map with well-defined Markov partition, and let $ W \subset U$ be a nice set consisting of the interiors of puzzle pieces. Then the first entry map $E \colon \DomE \to W$, the first return map $R\colon\Dom\to W$, and the first landing map $L\colon\DomL \to W$ have the following properties.:
\begin{enumerate}
\item
\label{It:FRM1}
$\DomE$ and $ \Dom$ are disjoint unions of interiors of puzzle pieces, and hence are open;
\item
\label{It:FRM2}
for every component $Y'$ of $\DomE$ there is a component $W'$ of $W$ and an integer $n$ so that $E|_{Y'} = g^n|_{Y'}$, and $E\colon Y'\to W'$ is a proper map.
The same property follows for $R$ and $L$ by restriction.

\item
\label{It:FRM3}
the local degrees of the  maps $R$, $E$, and $L$ are bounded in terms of $g\colon U\to V$. Moreover, if $\Crit(g) \subset  W$, then $\Crit(R)$, $\Crit(E)$ and $\Crit(L)$ are contained in  $\Crit(g)$;
\item
\label{It:FRM4}
every $g$-orbit that intersects $ W$ infinitely often lands in the set $\{z \in  \Dom \colon R^n(z) \in  \Dom \text{ for all }n\ge0\}$ (the \emph{non-escaping set} of $R$).  
\end{enumerate}
\end{lemma}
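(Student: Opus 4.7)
The plan is to prove everything for the first entry map $E$, since $R$ is the restriction $E|_{\DomE \cap W}$ and $L$ extends $E$ by the identity on $W$; all four assertions reduce to corresponding statements for $E$. The one substantive input is niceness of $W$, i.e.\ $g^k(\partial W)\cap W=\emptyset$ for all $k\ge 0$.

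For parts \ref{It:FRM1} and \ref{It:FRM2}, I would proceed as follows. Pick any $z\in\DomE$ with first entry time $n\ge 1$, let $W_z$ be the component of $W$ containing $g^n(z)$ --- this is the interior of a single puzzle piece of some depth $d$, because interiors of distinct puzzle pieces can only meet on common boundary edges and so cannot be joined inside the open set $W$ --- and let $Y_z$ be the component of $g^{-n}(W_z)$ containing $z$. Pulling back along the Markov partition identifies $Y_z$ with the interior of a depth-$(d+n)$ puzzle piece $\tilde P$, and $g^n\colon\tilde P\to\ol{W_z}$ is a branched cover. The crux is the claim that \emph{every} $w\in Y_z$ has first entry time exactly $n$. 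I would prove this by a path argument: if some $w\in Y_z$ had $g^m(w)\in W$ for $1\le m<n$, join $z$ to $w$ by a path $\gamma\subset Y_z$; the curve $g^m\circ\gamma$ then runs from $g^m(z)\notin W$ to $g^m(w)\in W$ and so must meet $\partial W$ at some parameter $t_0$, but then $g^{n-m}(g^m(\gamma(t_0)))=g^n(\gamma(t_0))\in W_z\subset W$ contradicts niceness. This shows $E|_{Y_z}=g^n|_{Y_z}$ and $E\colon Y_z\to W_z$ is proper; since first entry time is then locally constant, the $Y_z$'s are precisely the components of $\DomE$. The structure of $\Dom=\DomE\cap W$ follows because the Markov property forces each $Y_z$ either to be contained in a component of $W$ or to have interior disjoint from $W$.

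Claims \ref{It:FRM3} and \ref{It:FRM4} are then essentially formal. The degree bound for $E$ is a direct invocation of Lemma~\ref{Lem:FirstTime} applied to the puzzle piece $W_z$, since $E|_{Y_z}=g^n|_{P_{n+d}(z)}$; it passes to $R$ and $L$ by restriction. For the critical-point statement, assume $\Crit(g)\subset W$: if $c\in Y_z$ is critical for $E=g^n$ then some $g^i(c)\in\Crit(g)\subset W$ with $0\le i<n$, and minimality of $n$ forces $i=0$, so $c\in\Crit(g)$. For \ref{It:FRM4}, I would list the times $n_1<n_2<\cdots$ at which the $g$-orbit of $z$ meets $W$ and set $z_k:=g^{n_k}(z)\in W$; because $(n_k)$ exhausts these times, the first return time of $z_k$ is exactly $n_{k+1}-n_k$, so $z_k\in\Dom$ and $R(z_k)=z_{k+1}$, and the whole $R$-orbit of $z_1$ stays in $\Dom$.

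The main obstacle in the lemma is the local-constancy claim for the first entry time on $Y_z$: the path-and-niceness argument is the heart of the proof, and once it is in place everything else reduces to bookkeeping on top of Lemma~\ref{Lem:FirstTime}.
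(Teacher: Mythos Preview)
Your proof is correct and follows essentially the same route as the paper's: both identify the component of $\DomE$ through $z$ with $\inter P_{n+d}(z)$ via the Markov property, use niceness of $W$ to see this is a full component on which the entry time is constant, and then dispatch \eqref{It:FRM3} and \eqref{It:FRM4} exactly as you do. The only stylistic difference is that where you run an explicit path argument to get local constancy of the first entry time, the paper argues more tersely via the puzzle structure (showing $\inter P_{n+d}(z)\subset\DomE$ and $\partial P_{n+d}(z)\cap\DomE=\emptyset$ directly from niceness); the content is the same.
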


\begin{remark}
If the set $W$ in the lemma above does not satisfy the condition $\Crit(g) \subset W$, then it might happen that the first return map to $W$, as well as the first entry and landing maps, have infinitely many critical points, even though by hypothesis $g$ does not. 
\end{remark}

\begin{proof}
For given $z \in  \DomE$, let $n$ be minimal such that $g^n (z) \in  W$; let $W'$ be the component of $ W$ containing $g^n(z)$, and let $k > 0$ be the depth of $\ovl{W'}$. Then by the Markov property $g^n$ maps $\inter P_{n+k}(z)$ onto $W'$. If $z \in  \Dom  \sm W'$, then $\inter P_{n+k}(z) \subset  \Dom $ because $ W$ is a nice set. 
If $z \in  \DomE  \sm ( \Dom  \sm W')$, then $\inter P_{n+k}(z) \subset  \DomE $ by the definition of the domain of a first entry map and the Markov property. Hence, it follows that $\inter P_{n+k}(z) \subset  \DomE $; moreover, $\partial P_{n+k}(z) \cap  \DomE  = \emptyset$. Therefore, every component of $ \DomE $, and thus of $ \Dom  \subset  \DomE $, is the interior of a puzzle piece of the original map $g$, and by the Markov property, they are disjoint. This establishes property \eqref{It:FRM1}. Observe that some components of $ \Dom $ might coincide with some components of $ W$. 

Property \eqref{It:FRM2} follows directly from \eqref{It:FRM1} by choosing the least $n$ for every $z$; therefore $E$ restricted to a component $Y'$ of $ \DomE$ is a proper map $g^n \colon Y' \to W'$ from $Y'$ onto a component $W'$ of $W$. Similarly for $R$ and $L$ by restriction.

For property \eqref{It:FRM3}, observe that by construction every $g$-orbit of a component of $ \Dom $, resp.\ $ \DomE $, can intersect every critical point of $g$ at most once until it reaches $ W$
(compare the proof of Lemma~\ref{Lem:FirstTime}). This implies the first claim. Since, by hypothesis, all critical points of $g$ are already in $ W$, we conclude that every critical point of $R$, resp.\ $E$ and $L$, must be a critical point of $g$. The claim follows. 

Finally, suppose that $\orb(z)$ intersects $ W$ infinitely often. Then for every $x \in \orb(z) \cap  W$ there is a minimal $s > 0$ such that $g^s(x) \in  W$. By construction of $R$ it follows that $x \in  \Dom $, and $R$ restricted to the component of $ \Dom $ containing $x$ is equal to $g^s$. Hence, $\orb(z) \cap  W$ lies in the non-escaping set of $R$ (equivalently, $\orb(z) \cap ( W \sm  \Dom ) = \emptyset$), and property \eqref{It:FRM4} follows.
\end{proof}


We say that a puzzle piece $P_n$ of depth $n$ is \emph{weakly protected by a puzzle piece $P_m$}, necessarily of depth $m < n$, if $P_n \subset \inter P_m$. If $m=n-1$, then we say that $P_n$ is \emph{protected}. The following lemmas guarantee compact containment of pullbacks of certain weakly protected puzzle pieces.

\begin{lemma}[First return to weakly protected puzzle piece]
\label{Lem:FRD}
\label{Lem:FRD2}
Let $g$ be a holomorphic map with well-defined Markov partition, and $z$ be a point that does not belong to the puzzle boundary of any depth. Suppose that there exists a depth $n$ and an integer $k\ge 1$ so that $P_{n+k}(z)$ is weakly protected by $P_n(z)$ and one of the following is true:
\begin{enumerate}
\item
\label{It:NeverReturns}
the orbit of $g(z)$ never enters $P_n(z)$;
\item
\label{It:Returns}
the orbit of $g(z)$ intersects $P_n(z)$ and $k$ is minimal so that $g^k(z)\in \inter P_n(z)$.
\end{enumerate}
Then every connected component of $\Dom\left(\inter P_{n+k}(z)\right)$ is compactly contained in $\inter P_{n+k}(z)$. 
\end{lemma}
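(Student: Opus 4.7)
The plan is to reduce the statement to a claim about the first return time of generic points in $Y$, and then pull back the weak protection $P_{n+k}(z)\subset\inter P_n(z)$ through the Markov dynamics. Write $W:=\inter P_{n+k}(z)$ and let $Y$ be a component of $\Dom(W)$. By Lemma~\ref{Lem:FirstReturnConstruction}(1)--(2), $Y$ is the interior of a puzzle piece, and there is an integer $s\ge 1$ such that $g^s$ maps $Y$ properly onto $W$ (which is connected, being the interior of a topological disk). Choosing a point $v\in Y$ that does not belong to the puzzle boundary of any depth (such points are generic in $Y$), the Markov property identifies $Y=\inter P_{n+k+s}(v)$, and $s$ is precisely the first return time of $v$ to $W$. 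Since $v$ and all its forward iterates stay off the puzzle boundary, every $P_m\bigl(g^i(v)\bigr)$ is a single puzzle piece.

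The crux of the proof is the claim $s\ge k$. Suppose for contradiction that $s<k$. Since $v$ lies in $\inter P_{n+k}(z)\cap\inter P_{n+s}(v)$, the Markov nesting forces these two puzzle pieces to be nested, and because $n+s<n+k$ the shallower one contains the deeper: $P_{n+k}(z)=P_{n+k}(v)\subset P_{n+s}(v)$. In particular $z\in P_{n+s}(v)$. On the other hand $g^s(v)\in W\subset\inter P_n(z)$, so $P_n\bigl(g^s(v)\bigr)=P_n(z)$, and by Markov $g^s\colon P_{n+s}(v)\to P_n(z)$ is a proper branched covering; hence $g^s(z)\in P_n(z)$. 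In case~\eqref{It:NeverReturns} this directly contradicts the hypothesis that the orbit of $g(z)$ never enters $P_n(z)$. In case~\eqref{It:Returns}, the same passage-level fact that orbits of non-boundary points remain off the puzzle boundary forces $g^s(z)\in\inter P_n(z)$, contradicting the minimality of $k$. Either way, $s\ge k$.

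Knowing $s\ge k$, nesting gives $P_{n+s}(v)\subseteq P_{n+k}(v)=P_{n+k}(z)$, and therefore $\inter P_{n+s}(v)\subseteq W$. To upgrade this to compact containment of $\ovl Y=P_{n+k+s}(v)$, I use the proper branched covering $g^s\colon P_{n+s}(v)\to P_n(z)$ together with the weak protection $P_{n+k}(z)\subset\inter P_n(z)$. The set $(g^s)^{-1}\bigl(P_{n+k}(z)\bigr)\cap P_{n+s}(v)$ is compact by properness, and it is disjoint from $\partial P_{n+s}(v)=(g^s)^{-1}\bigl(\partial P_n(z)\bigr)\cap P_{n+s}(v)$ because $P_{n+k}(z)\cap\partial P_n(z)=\emptyset$; hence it lies compactly in $\inter P_{n+s}(v)$. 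By the Markov property this preimage is a union of depth-$(n+k+s)$ puzzle pieces, one of which is $P_{n+k+s}(v)$. Consequently $\ovl Y=P_{n+k+s}(v)\subset\inter P_{n+s}(v)\subseteq W$, as required.

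The only step that genuinely uses the hypotheses \eqref{It:NeverReturns} or \eqref{It:Returns} is the inequality $s\ge k$, and this will be the main obstacle: it is exactly here that one must rule out the possibility of a component $Y$ returning to $W$ too early, which would violate the combinatorial constraint placed on the orbit of $z$. Once $s\ge k$ is secured, the compact containment is a standard pull-back argument using properness of $g^s$ and the weak protection.
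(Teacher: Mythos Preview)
Your proof is correct and follows essentially the same approach as the paper: establish that the first return time $s$ (the paper calls it $l$) is at least $k$, and then pull back the weak protection $P_{n+k}(z)\subset\inter P_n(z)$ through the proper map $g^s$ to conclude compact containment. Your presentation is slightly more streamlined than the paper's---you derive $s\ge k$ in both cases by the single device of showing $g^s(z)\in P_n(z)$ (resp.\ $\inter P_n(z)$) and contradicting the hypothesis, and you use one unified pull-back argument at the end, whereas the paper argues the two cases separately (obtaining the sharper $l\ge k+1$ in case~\eqref{It:NeverReturns} and using a niceness argument in case~\eqref{It:Returns})---but these are organizational differences, not a different route.
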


\begin{proof}

By Lemma~\ref{Lem:FirstReturnConstruction}, the closure of any component of the domain of the first return map to $\inter P_{n+k}(z)$ is a puzzle piece of depth at least $n+k+1$. Let $Y \subset P_{n+k}(z)$ be one of these puzzle pieces, say at depth $n+k+l$ with $l \geqslant 1$. By construction, $g^l \colon Y \to P_{n+k}(z)$ is a (branched) covering. Let us show that $Y\subset \inter P_{n+k}(z)$. The argument is slightly different in cases \eqref{It:NeverReturns} and \eqref{It:Returns}.

If the orbit of $g(z)$ never enters $P_n(z)$, then for every $s \in \{1,\ldots, k\}$ the puzzle piece $P_{n+k-s}\left(g^s(z)\right) = g^s\left(P_{n+k}(z)\right)$ is disjoint from $\inter P_n(z)$. The same is true for $g^s(Y)$ because $Y \subset P_{n+k}(z)$; hence $l \ge k+1$. Since $P_{n+k}(z)$ is weakly protected by $P_n(z)$ and $g^l(Y) = P_{n+k}(z)$, by pulling back $P_n(z)$ we conclude that $Y$ is weakly protected by the puzzle piece of depth $n+l$. Since $l \ge k+1$, this puzzle piece lies in $P_{n+k}(z)$. Therefore, $Y\subset\inter P_{n+k}(z)$.

If the orbit of $g(z)$ intersects $P_n(z)$ and $k$ is the first time this orbit enters $\inter P_n(z)$, then the puzzle pieces $(P_{n+k-i}(g^i(z)))_{i=0}^{k-1}$ have disjoint interiors. Therefore, $l \ge k$. Assume that $Y$ is not contained in $\inter P_{n+k}(z)$. Then $\partial g^k(Y)$ intersects $\partial g^k\left(P_{n+k}(z)\right) =\partial P_n(g^k(z))= \partial P_n(z)$ at some point $w \in \partial (g^k(Y)) \cap \partial P_n(z)$. Since puzzle pieces are nice sets, i.e.\ $g^m(\partial P_n(z)) \cap \inter P_n(z) = \emptyset$ for all $m\ge 0$, we have in particular $g^{l-k}(w)\not\in \inter P_n(z)$. But $g^{l-k}(w)\in \partial g^{l-k}(g^k(Y))=\partial g^l(Y)=\partial P_{n+k}(z)\subset \inter P_n(z)$, a contradiction.
\end{proof}

\begin{remark}
It is possible to show that if $P_n$ is protected, then every component of the first return domain to $\inter P_n$ is compactly contained in $\inter P_n$, see \cite[$\S 31$]{LyuBook}.
\end{remark}

\subsection{Some standard pullback and Koebe-type lemmas}
\label{SSec:Shape}

In Subsection~\ref{SSec:AccumAtPer} we will show that certain fibers are trivial. This, as well as many other constructions later in the paper, will be done by controlling moduli of annuli under pullbacks. In the present subsection, we collect some of the standard results in this direction.  

\begin{lemma}[Annulus pull-back under branched covering]
\label{Lem:AnnulusLemma}
Let $f \colon Y' \to Y$ be a branched covering of degree at most $D$ between two closed topological disks. Suppose $Y_2\subset Y_1\subset Y$ are two further closed topological disks so that $A := \inter Y_1 \sm Y_2 \subset Y$ is an annulus with $\modulus(A) = \mu > 0$. Moreover, assume that $Y_1'$ and $Y_2'$ are preimage components of $Y_1$, resp.\ $Y_2$ under $f$ such that $Y_1' \supset Y_2'$. Set $A':=\inter Y_1' \sm Y_2'$. Then $\modulus(A') \geqslant \mu/D^2$.
\end{lemma}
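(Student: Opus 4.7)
The plan is to run an extremal-length push-forward argument, which in fact yields the sharper bound $\modulus(A') \ge \mu/d_1$, where $d_1 \le D$ denotes the degree of the restriction $f \colon Y_1' \to Y_1$; this implies the claimed $\modulus(A') \ge \mu/D^2$ a fortiori. Recall the extremal-length characterization: $\modulus(A) = \inf_\rho \int_A \rho^2\,|dz|^2$, with the infimum over Borel-measurable conformal metrics $\rho$ on $A$ admissible for the essential curve class $\Gamma_A$, i.e., satisfying $\int_\gamma \rho\,|dz| \ge 1$ for every $\gamma \in \Gamma_A$ separating the two boundary components of $A$. The modulus of $A'$ admits the analogous description in terms of its essential class $\Gamma_{A'}$.

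Given any admissible $\rho'$ on $A'$, I would define the push-forward
\[
\rho(z) := \sum_{w \in f^{-1}(z) \cap A'} \frac{\rho'(w)}{|f'(w)|},
\]
summing over the $d_1$ preimages of $z$ counted with multiplicity; all of these lie in $A'$, since $z \notin Y_2$ precludes any preimage in $Y_2'$. The crux is to verify that $\rho$ is admissible for $\Gamma_A$. Any $\gamma \in \Gamma_A$ bounds a topological disk $D_\gamma \supset Y_2$ in $Y_1$, and exactly one connected component $C_0$ of $f^{-1}(D_\gamma) \cap Y_1'$ contains $Y_2'$. The boundary $\partial C_0$, viewed as a $1$-cycle in $A'$ made of arcs of $f^{-1}(\gamma)$, winds exactly once around any point of $Y_2'$ (being the oriented boundary of an open set containing it), while the boundaries of the other components of $f^{-1}(D_\gamma)$ wind zero times around $Y_2'$ (their interiors being disjoint from $Y_2'$). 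Hence $f^{-1}(\gamma) \cap A'$ is a $1$-cycle in $A'$ of total winding one around $Y_2'$.

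Decomposing this cycle into its connected components with signed windings $n_k$ (summing to $1$), admissibility of $\rho'$ gives that each component has $\rho'$-length at least $|n_k|$, so the total $\rho'$-length of $f^{-1}(\gamma) \cap A'$ is at least $\sum_k |n_k| \ge \bigl|\sum_k n_k\bigr| = 1$. A direct change of variables identifies this total length with $\int_\gamma \rho\,|dz|$, so $\rho$ is admissible for $\Gamma_A$. For the area, the Cauchy--Schwarz inequality applied to the sum defining $\rho$ yields $\rho(z)^2 \le d_1 \sum_w (\rho'(w))^2/|f'(w)|^2$, and the change-of-variables formula $z = f(w)$ then gives $\int_A \rho^2\,|dz|^2 \le d_1 \int_{A'} (\rho')^2\,|dw|^2$. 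Combined with the admissibility bound $\mu \le \int_A \rho^2\,|dz|^2$, this produces $\int_{A'}(\rho')^2\,|dw|^2 \ge \mu/d_1$; taking the infimum over admissible $\rho'$ yields $\modulus(A') \ge \mu/d_1 \ge \mu/D \ge \mu/D^2$, as desired.

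The main obstacle is the topological step showing that $f^{-1}(\gamma) \cap A'$ has total winding exactly one around $Y_2'$: this requires a careful analysis of the connected components of $f^{-1}(D_\gamma) \cap Y_1'$, particularly when critical values of $f$ lie in $D_\gamma \setminus Y_2$, so that some components of $f^{-1}(D_\gamma)$ are nontrivial branched covers of $D_\gamma$ with complicated topology. Once this topological step is in place, all remaining ingredients are routine applications of Cauchy--Schwarz and change of variables.
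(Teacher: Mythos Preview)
Your argument is correct and in fact recovers the sharper bound $\modulus(A')\ge \mu/D$ that the paper only cites in the remark following the lemma (attributing it to~\cite{KL09}). One minor point: in your setting every component of $f^{-1}(D_\gamma)\cap Y_1'$ is a disk (as a branched cover of the disk $D_\gamma$, by Riemann--Hurwitz), so its boundary is a single simple closed curve with winding number $0$ or $+1$ around $Y_2'$; hence your general claim that a curve of winding $n_k$ has $\rho'$-length at least $|n_k|$ is only invoked for $n_k\in\{0,1\}$, where it is immediate from admissibility. The worry you flag about critical values in $D_\gamma\sm Y_2$ is therefore harmless.

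The paper's own proof takes a much more elementary route. It observes that $f$ has at most $D-1$ critical values, slices $A$ into $D$ parallel sub-annuli, and picks one sub-annulus $B$ of modulus $\mu/D$ that avoids all of them. The preimage of $B$ is then a disjoint union of annuli covering $B$ unbranched with degree at most $D$, one of which is essential in $A'$ and hence has modulus at least $\mu/D^2$. This avoids extremal length entirely and needs no topological winding argument, at the cost of the weaker bound. Your push-forward argument is the ``right'' proof if one wants the optimal constant, while the paper's slicing argument is the quickest way to get a bound that suffices for all later applications.
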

\begin{proof}
The branched cover $f \colon Y' \to Y$ has at most $D-1$ critical points. Hence the annulus $A$ has a parallel sub-annulus $B$ of modulus $\mu/D$ that avoids all critical values (recall that $B$ is a parallel sub-annulus of an annulus $A$ if a biholomorphic map that uniformizes $A$ to a round annulus $A_0$ sends $B$ to a concentric round sub-annulus $B_0$ of $A_0$). Then all $f$-preimages of $B$ are annuli that map to $B$ by unbranched covering maps of degrees at most $D$. One of them, say $B'$, is an essential sub-annulus in $A'$, and thus $\modulus(A') \geqslant \modulus(B') \geqslant \mu/D^2$.
\end{proof}

\begin{remark}
In fact, in the previous lemma one can prove the stronger bound $\modulus(A') \geqslant \mu/D$, see \cite[Lemma 4.5]{KL09}.
\end{remark}

An open topological disk $U$ in $\C$ is said to have \emph{$\eta$-bounded geometry} \cite{KSS} if it contains a Euclidean disk of radius $\eta \cdot \diam U$. The lemma above together with the Koebe Distortion Theorem implies the following fact that gives control over geometric shapes of disks under pullbacks (see, for example, \cite[Fact 6.2]{KSS}).

\begin{lemma}[Easy geometry control]
\label{Lem:Fact}
Let $f \colon U' \to V'$ be a branched covering of degree at most $D$ between open topological disks, and suppose $V \subset V'$ is a topological disk that has $\eta$-bounded geometry and $\modulus(V' \sm \ovl V) \ge \delta > 0$. Let $U$ be a component of $f^{-1}(V)$. Then $U$ has $\eta' = \eta'(\eta, \delta, D)$-bounded geometry. \qed
\end{lemma}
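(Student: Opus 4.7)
The plan is to combine Lemma~\ref{Lem:AnnulusLemma} with the Koebe distortion theorem.

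For the modulus-collar half of bounded geometry, I would apply Lemma~\ref{Lem:AnnulusLemma} directly (with $Y_1 = \ovl V$ suitably enlarged by a thin annular neighborhood of $\partial V$, or via an exhaustion of $V'$ from inside) to obtain $\modulus(U' \sm \ovl U) \ge \delta/D^2$. This provides a definite-modulus annulus surrounding $U$ inside $U'$.

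For the inscribed round disk, the plan proceeds as follows. By Riemann--Hurwitz the branched cover $f \colon U' \to V'$ has at most $D - 1$ critical values in $V'$. Split the annulus $V' \sm \ovl V$ (of modulus $\ge \delta$) into $D$ parallel sub-annuli, each of modulus $\ge \delta/D$; at least one, call it $B$, avoids every critical value. Let $\tilde V \subset V'$ be the topological disk bounded by the outer boundary of $B$ and let $\tilde U \subset U'$ be the component of $f^{-1}(\tilde V)$ containing $U$; then $f \colon \tilde U \sm \ovl U \to B$ is an unramified covering of degree $\le D$. From the inscribed disk $D(w_0, \eta \diam V) \subset V$ guaranteed by $\eta$-bounded geometry, a pigeonhole argument on the at most $D - 1$ critical values inside it yields a round subdisk $D(w_0', r') \subset V$ with $r' \ge c(\eta, D) \diam V$ avoiding all critical values of $f$. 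Pick any preimage $z_0' \in U$ of $w_0'$, and let $\phi$ be the univalent inverse branch of $f$ at $z_0'$. This branch extends across $B$ to a simply connected region $\Omega \subset \tilde V$ containing $D(w_0', r')$, on which Koebe furnishes a distortion bound depending only on $\delta$ and $D$. Consequently $\phi(D(w_0', r'/2)) \subset U$ contains a Euclidean disk of radius $\asymp |\phi'(w_0')| \cdot r'$, and a final Koebe-type comparison using the collar $\modulus(\tilde U \sm \ovl U) \ge \delta/D^2$ bounds this radius from below by a constant times $\diam U$, yielding $\eta' = \eta'(\eta, \delta, D)$.

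The main obstacle is the careful construction of the simply connected region $\Omega$ on which $\phi$ is defined with a quantitative Koebe distortion bound, together with the final comparison between $|\phi'(w_0')| \cdot r'$ and $\diam U$. Both ingredients are standard applications of the Koebe theorem in the rigidity literature (compare \cite[Fact~6.2]{KSS}, which is essentially the present statement).
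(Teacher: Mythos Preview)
Your proposal is correct and follows exactly the approach the paper indicates: the paper does not give a proof but simply states that the result follows from Lemma~\ref{Lem:AnnulusLemma} together with the Koebe Distortion Theorem, citing \cite[Fact~6.2]{KSS}. Your sketch fleshes out precisely this combination and even references the same source, so there is nothing to compare.
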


\begin{remark}
Let $U \subset \C$ be an open topological disk, and $x \in U$ be a point. Denote by $R(x)$  the radius of the smallest Euclidean disk centered at $x$ that contains $U$, and by $r(x)$ the radius of the largest Euclidean disk centered at $x$ that is contained in $U$. We say that $U$ has \emph{$C$-bounded shape with respect to $x$} if $R(x)/r(x) \le C$ \cite[$\S 4$]{LyuBook}. More generally, $U$ has \emph{$C$-bounded shape} if this is so with respect to some point in $U$. It is straightforward to see that $U$ has $\eta$-bounded geometry if and only if it has $1/\eta$-bounded shape.  
\end{remark}

Another application of the Koebe Theorem is the following.

\begin{lemma}[Easy diameter control]
\label{Lem:Diam}
Let $f \colon U \to V$ be a branched covering of degree at most $D$ between two open topological disks. Suppose $B \subset B' \subset V$ is a pair of open round disks of radii $0<r<r'$ and so that $\modulus (V \sm \ovl B') = \delta' > 0$, $\modulus(B' \sm \ovl B) = \delta > 0$. Let $Y'$ be a component of $f^{-1}(B')$ and $Y$ be a component of $f^{-1}(B)$ such that $Y \subset Y'$ (see Figure~\ref{Fig:DiamControl}). Then there exists $C = C(r,r',\delta,\delta',D)> 0$ such that
\[
C \cdot \diam Y' \le \diam Y \le C^{-1} \cdot \diam Y'.
\] 
\end{lemma}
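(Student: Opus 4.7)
The upper bound $\diam Y \le \diam Y'$ is immediate from the containment $Y \subset Y'$, so one may take $C^{-1} = 1$ on that side. The task is to produce the lower bound $\diam Y \ge C \cdot \diam Y'$. My strategy is to extract bounded geometry from Lemma~\ref{Lem:Fact}, pass to a standard model on the unit disk via Riemann maps, and transfer the resulting estimate back using the Koebe Distortion Theorem.

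First, I apply Lemma~\ref{Lem:Fact} twice. The round disk $B'$ has $(1/2)$-bounded geometry and $\modulus(V \setminus \overline{B'}) \ge \delta'$, so $Y'$ has $\eta_{Y'}$-bounded geometry for some $\eta_{Y'} = \eta_{Y'}(\delta', D)$. Likewise, $B$ has $(1/2)$-bounded geometry and $\modulus(V \setminus \overline{B}) \ge \delta + \delta'$, so $Y$ has $\eta_{Y}$-bounded geometry for some $\eta_{Y} = \eta_{Y}(\delta, \delta', D)$. Thus both $Y$ and $Y'$ are shaped like Euclidean disks up to uniformly controlled distortion.

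Second, set up the disk model. Let $\Phi(\zeta) := c_{B'} + r' \zeta$ be the affine parametrization $\disk \to B'$, and let $\Psi \colon \disk \to Y'$ be a Riemann map normalized so that $\Psi(0)$ is a preimage of the center $c_B$ of $B$ lying in $Y$ (such a preimage exists because $f \colon Y \to B$ is proper and surjective). The composition $\tilde f := \Phi^{-1} \circ f \circ \Psi \colon \disk \to \disk$ is a proper holomorphic self-map of $\disk$ of degree at most $D$, hence a Blaschke product. By construction $\tilde f(0) = c^* := (c_B - c_{B'})/r'$, which is the center of the disk $\Phi^{-1}(B) = \{\zeta \in \disk : |\zeta - c^*| < r/r'\}$; in particular $0$ lies in the component $\tilde Y := \Psi^{-1}(Y)$ of $\tilde f^{-1}(\Phi^{-1}(B))$. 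Writing the local normal form $\tilde f(\zeta) = c^* + a_k \zeta^k + O(\zeta^{k+1})$ at $0$, with local degree $k \le D$ and coefficient $|a_k|$ bounded uniformly in $D$ by Cauchy's estimate applied to the bounded function $\tilde f$, a direct estimate yields $\tilde Y \supset \{|\zeta| < r_*\}$ for some $r_* = r_*(r/r', D) > 0$.

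Third, transfer back to $Y$. The bounded geometry of $Y'$ together with Koebe $1/4$ allows one to replace $\Psi$ by $\Psi \circ m$ for a M\"obius automorphism $m$ of $\disk$ so that $|\Psi'(0)|$ becomes comparable to $\diam Y'$; Koebe distortion on the fixed compact set $\{|\zeta| \le r_*\}$ then shows that $\Psi$ is bi-Lipschitz there with constants comparable to $|\Psi'(0)|$, so that
\[
\diam Y \ge \diam \Psi(\{|\zeta| < r_*\}) \ge C(r,r',\delta,\delta',D) \cdot \diam Y'.
\]
The main obstacle I foresee is precisely this simultaneous normalization: $\Psi(0)$ must lie in $Y$ (so that $0 \in \tilde Y$ and the local analysis of $\tilde f$ applies) and yet $|\Psi'(0)|$ must be comparable to $\diam Y'$ (so that Koebe converts the model estimate into a Euclidean one of the correct size). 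The reconciliation uses bounded geometry of both $Y$ and $Y'$: the hyperbolic distance in $Y'$ between any point of $Y$ and a ``deep'' point of $Y'$ is uniformly bounded in terms of $\eta_Y, \eta_{Y'}, \delta, \delta', D$, so switching basepoints in $\disk$ via a M\"obius map costs only a uniform multiplicative constant in Koebe distortion.
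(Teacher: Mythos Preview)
The paper does not give its own proof of this lemma; it simply cites \cite[Lemma~11.2]{KvS09}. So there is no in-paper argument to compare against, and the question is whether your outline is complete on its own.

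Your Steps~1 and~2 are fine: Lemma~\ref{Lem:Fact} does give bounded geometry of $Y$ and $Y'$, and in the disk model the Schwarz--Pick contraction (you do not really need the local normal form or the degree $k$) yields $\tilde Y \supset D(0,r_*)$ with $r_* = r_*(r/r')$.

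The gap is in Step~3. The assertion that ``the hyperbolic distance in $Y'$ between any point of $Y$ and a `deep' point of $Y'$ is uniformly bounded in terms of $\eta_Y,\eta_{Y'},\delta,\delta',D$'' does \emph{not} follow from the ingredients you have extracted. Here is a configuration satisfying all of them in which the conclusion fails: take $Y' = \disk$, $Y = D(1-\eps,\eps/2)$, and $p_0 = 1-\eps$. Then $Y$ and $Y'$ are round (so $\eta_Y = \eta_{Y'} = 1/2$), the hyperbolic diameter of $Y$ in $Y'$ is $\approx \log 3$ (independent of $\eps$), hence $\modulus(Y'\sm\ovl Y)$ is bounded above and below independently of $\eps$, and in the disk model $\tilde Y$ is a fixed hyperbolic disk about $0$, so $\tilde Y \supset D(0,r_*)$ with $r_*$ fixed. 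Yet $d_{Y'}(p_0,0)\to\infty$ and $\diam Y/\diam Y' = \eps/2 \to 0$. The point is that everything you derived uses only the restricted map $f\colon Y'\to B'$; you never use the ambient cover $f\colon U\to V$ with $\modulus(V\sm\ovl{B'})=\delta'>0$, and that hypothesis is exactly what rules out such configurations (a Blaschke product with a zero approaching $\partial\disk$ cannot extend to a degree-$\le D$ proper map onto a definite larger disk, since the pole $1/\bar a$ obstructs it).

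A clean repair is to work one level up: uniformize $U$ and $V$ (rather than $Y'$ and $B'$) by Riemann maps $\Theta\colon\disk\to U$, $\Xi\colon\disk\to V$ with $\Theta(0)=p_0$, $\Xi(0)=c_B$. Then $\modulus(U\sm\ovl{Y'})\ge\delta'/D^2$ forces $\Theta^{-1}(Y')\subset D(0,R_1)$ for some $R_1=R_1(\delta',D)<1$, so both $\Theta^{-1}(Y)$ and $\Theta^{-1}(Y')$ live in a fixed compact of $\disk$, where Koebe distortion applies uniformly and converts the model estimate into the desired Euclidean diameter comparison.
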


\begin{figure}[htbp]
\begin{center}
\includegraphics[scale=.55, trim=0 0 0 0, clip]{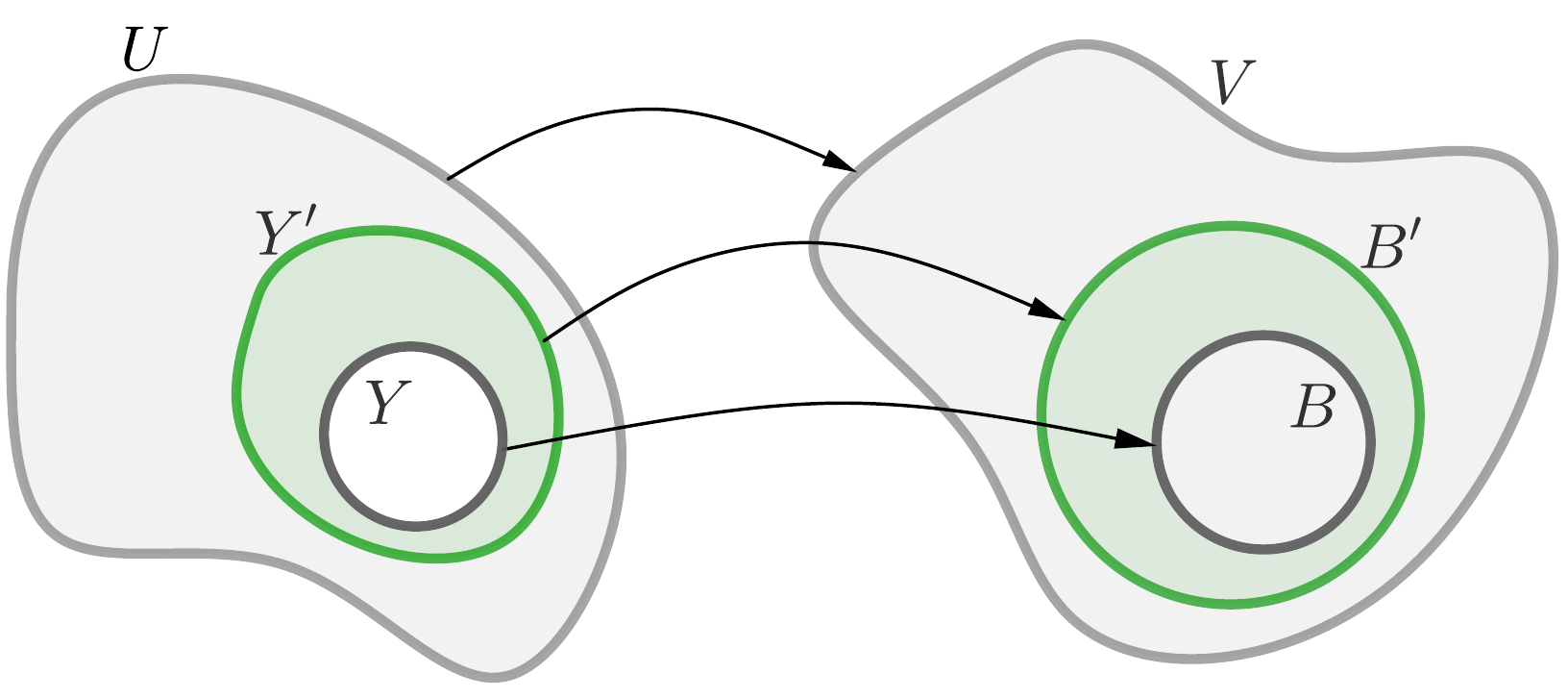}
\caption{An illustration for Lemma~\ref{Lem:Diam}.}
\label{Fig:DiamControl}
\end{center}
\end{figure}

\begin{proof}
The proof follows from~\cite[Lemma 11.2]{KvS09}.
\end{proof}

\subsection{Triviality of fibers of points accumulating on periodic fibers}
\label{SSec:AccumAtPer}

In this subsection we prove that fibers are trivial if they accumulate at periodic fibers under iteration. To do this, we need a refinement of Lemma~\ref{Lem:FirstTime} in order to gain some additional control on the degree of the first entry map to the union of puzzle pieces that contains all critical puzzle pieces. 

We will say that a fiber is \emph{critical} if it contains a critical point. 

\begin{lemma}[First entry to union of critical puzzle pieces has uniformly bounded degree]
\label{Lem:FirstTimeToUnion}
Let $(x_i)_{i \in I}$ be a finite set of points with distinct fibers that includes all critical fibers of $g$. Suppose that there exists a depth $m \ge 0$ so that all puzzle pieces $P_m(x_i)$ of depth $m$ are pairwise disjoint, and an integer $s > 0$ so that all  $(\inter P_m(x_i) \sm P_{m+s}(x_i))_{i \in I}$ are non-degenerate annuli. Then there is a constant $\mu>0$ with the following property: for every $y\in U$ for which there exists a $k\ge 0$ so that $g^k(y) \in \bigcup_i P_{m+s}(x_i)$, let $k=k(y)$ be minimal with this property; then there exists an essential open annulus $A \subseteq \inter P_{m+k}(y) \sm P_{m+s+k}(y)$ such that 
$\modulus(A) \geqslant \mu$.
\end{lemma}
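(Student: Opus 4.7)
\textbf{The plan} is to pull back the annulus $A_0 := \inter P_m(x_{i^*}) \sm P_{m+s}(x_{i^*})$ — where $i^*$ denotes the index such that $g^k(y) \in P_{m+s}(x_{i^*})$ — by the branched cover $g^k \colon P_{m+k}(y) \to P_m(x_{i^*})$ and apply Lemma~\ref{Lem:AnnulusLemma}. Setting $\mu_0 := \min_{i \in I} \modulus(\inter P_m(x_i) \sm P_{m+s}(x_i)) > 0$, the lemma will then produce an essential open sub-annulus in $\inter P_{m+k}(y) \sm P_{m+s+k}(y)$ of modulus at least $\mu_0/D^2$, provided the degree $D$ of $g^k|_{P_{m+k}(y)}$ admits a uniform bound independent of $y$ and $k$. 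Establishing such a bound is the heart of the argument.

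\textbf{Uniform degree bound.} Multiplicativity yields
\[
\deg\bigl(g^k \colon P_{m+k}(y) \to P_m(x_{i^*})\bigr) = \prod_{j=0}^{k-1} \deg\bigl(g \colon P_{m+k-j}(g^j(y)) \to P_{m+k-j-1}(g^{j+1}(y))\bigr),
\]
and each factor equals $1$ unless the source piece contains a critical point of $g$. Writing $W := \bigcup_i \inter P_m(x_i)$ and $W'' := \bigcup_i \inter P_{m+s}(x_i)$, every critical point of $g$ lies in $W$ (since the $x_i$'s cover every critical fiber), so a nontrivial factor requires $g^j(y) \in W$. Minimality of $k$ forces $g^j(y) \notin W''$ for all $j < k$, so $g^j(y) \in W$ in fact means $g^j(y) \in \inter P_m(x_i) \sm P_{m+s}(x_i)$ for some $i \in I$. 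Letting $\sigma_j \in \{m, \ldots, m+s-1\}$ denote the largest depth at which $g^j(y)$ and $x_i$ still share a puzzle piece, the source $P_{m+k-j}(g^j(y))$ coincides with $P_{m+k-j}(x_i)$ — and therefore contains a critical point (in the fiber of $x_i$) — precisely when $m+k-j \le \sigma_j$, i.e., when $j \ge k - (\sigma_j - m) \ge k-s+1$. Consequently, nontrivial factors arise only at the at-most $s-1$ indices $j \in \{k-s+1, \ldots, k-1\}$, and each such factor is bounded by the finite constant
\[
D_g := \max \bigl\{ \deg(g \colon P_n(x_i) \to P_{n-1}(g(x_i))) : i \in I,\ n \in \{m+1, \ldots, m+s\} \bigr\}.
\]
Therefore $\deg\bigl(g^k \colon P_{m+k}(y) \to P_m(x_{i^*})\bigr) \le D_g^{\,s-1}$, a bound independent of $y$ and $k$.

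\textbf{Conclusion.} Applying Lemma~\ref{Lem:AnnulusLemma} to $f = g^k|_{P_{m+k}(y)}$ with $Y_1 = P_m(x_{i^*})$, $Y_2 = P_{m+s}(x_{i^*})$, $Y_1' = P_{m+k}(y)$, and $Y_2' = P_{m+s+k}(y)$ produces the desired essential open sub-annulus $A \subseteq \inter P_{m+k}(y) \sm P_{m+s+k}(y)$ with $\modulus(A) \ge \mu := \mu_0/D_g^{\,2(s-1)}$. The main obstacle is precisely the uniform degree bound: a priori the orbit of $y$ could visit $W$ many times before first landing in $W''$, and each visit has the potential to raise the local degree. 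The key insight that rescues the argument is that minimality of $k$ confines all ``premature'' visits to the shallow annular shells $\inter P_m(x_i) \sm P_{m+s}(x_i)$, and only those visits occurring within the last $s-1$ iterates are at times $j$ for which the nested piece $P_{m+k-j}(g^j(y))$ is still shallow enough to see the critical fiber of $x_i$ — which is exactly what caps the total degree at $D_g^{\,s-1}$.
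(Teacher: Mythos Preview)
Your proof is correct and follows essentially the same approach as the paper's. Both arguments hinge on the same observation: along the orbit $y, g(y), \ldots, g^{k-1}(y)$, the surrounding piece $P_{m+k-j}(g^j(y))$ can contain a critical point only when its depth $m+k-j$ is at most $m+s-1$ (since by minimality of $k$ the point $g^j(y)$ never reaches $P_{m+s}(x_i)$, while all critical points do), i.e.\ only for the last $s-1$ iterates. The paper presents this by splitting into the cases $k\le s$ (bounded directly) and $k>s$ (the first $k-s$ iterates are shown to be biholomorphic, reducing to $k'=s$), whereas you argue in one stroke by factoring the degree and bounding the number of nontrivial factors; your use of Lemma~\ref{Lem:AnnulusLemma} to produce the essential sub-annulus is also slightly cleaner than the paper's explicit discussion of ``disk with several disks removed''. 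The only cosmetic wrinkle is the parenthetical ``and therefore contains a critical point (in the fiber of $x_i$)'', which is literally true only when $\fib(x_i)$ is itself critical --- but since you only need the implication in the other direction (nontrivial factor $\Rightarrow$ $m+k-j\le\sigma_j$), this does not affect the argument.
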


\begin{proof}
Consider an arbitrary $y'\in U$ for which there exists a $k'\le s$ so that $g^{k'}(y')\in \bigcup_i P_{m+s}(x_i)$, and suppose again that $k'$ is minimal with this property, and that $y'$ is not on the boundary of a puzzle at any depth. To fix notation, suppose that $x_0$ is a point in $(x_i)_{i\in I}$ with $g^{k'}(y')\in P_{m+s}(x_0)$.

We claim that then the set $\inter P_{m+k'}(y') \sm P_{m+s+k'}(y')$ contains an annulus that separates $P_{m+s+k'}(y')$ from $\partial P_{m+k'}(y') $ and that has modulus bounded below.

We have $P_{m+s}(g^{k'}(y'))=P_{m+s}(x_0)$ and hence $\inter P_{m}(g^{k'}(y')) \sm P_{m+s}(g^{k'}(y')) = \inter P_{m}(x_0) \sm P_{m+s}(x_0)$, and by hypothesis this is a non-degenerate annulus of some modulus, say $\mu(x_0)>0$. 

Now we take a preimage of this annulus under $g^{k'}$. The map $g^{k'}$ sends $\inter P_{m+k'}(y')$ to the puzzle piece $\inter P_m(g^{k'}(y'))=\inter P_m(x_0)$ at depth $m$, and this is a branched cover of degree bounded in terms of $m$ and $k'\le s$ and the degrees of the critical points of $g$. 

 Therefore,  
\[
g^{-k'}\left( \inter P_m(g^{k'}(y'))\sm P_{m+s}(g^{k'}(y')) \right)\cap \inter P_{m+k'}(y' )
\] 
will in general not be an annulus, but an open disk with several closed disks removed. However, it does contain an annulus that separates $P_{m+s+k'}(y')$ from $\partial P_{m+k'}(y')$, and that is an essential annulus with modulus bounded below in terms of $\mu(x_0)$, $m$, $s$ and the degrees of the critical points of $g$. Since there are only finitely many $x_i$, this modulus is bounded below by a number $\mu>0$ that depends on $g$, $s$, $m$ and the set $\{x_i\}$, but not on $y'$.

Now consider a point $y$ for which there exists a minimal $k=k(y)$ as required in the lemma, and so that $y$ is not on the puzzle boundary at any depth. If $k \le s$, then $y$ is one of the $y'$ discussed earlier.

The proof for the case $k>s$ is similar to Lemma~\ref{Lem:FirstTime}. Again, consider the ``orbit of puzzle pieces'' $\left(P_{m+k-t} \left(g^t(y)\right)\right)_{t=0}^{k}$. For  $t<k$, the point $g^t(y)$ does not visit any critical puzzle piece of depth $m+s$. Since for $t<k-s$, the depth of the surrounding pieces  $P_{m+k-t} \left(g^t(y)\right)$ exceeds $m+s$, the entire pieces $P_{m+k-t} \left(g^t(y)\right)$ are non-critical. Therefore, the map
\[
g^{k-s}\colon \inter P_{m+k}(y)\to \inter P_{m+s}(g^{k-s}(y))
\]
is biholomorphic. In particular, $\inter P_{m+k}(y) \sm P_{m+s+k}(y)$ is conformally equivalent to 
\[
\inter P_{m+s}(g^{k-s}(y)) \sm P_{m+2s}(g^{k-s}(y))
\;.
\] 
The claim now follows from the first part, applied to $y'=g^{k-s}(y)$ and $k'=s$.
\end{proof}

\begin{lemma}[Accumulation at periodic fiber implies trivial fiber]
\label{Lem:AccumAtPeriod}
Let $g$ be a holomorphic map with well-defined Markov partition. Suppose that $z$ is a non-escaping point of $g$  so that the $\omega$-limit set of $z$ intersects the fiber $\fib(y)$ of some periodic point $y$ but the orbit of $z$ is disjoint from $\fib(y)$. Assume additionally that $\fib(y)$, as well as all those critical fibers of $g$ that intersect $\omega(z)$, are contained in the interiors of the corresponding puzzle pieces of any depth. Then $\fib(z) = \{z\}$. 
\end{lemma}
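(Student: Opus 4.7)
The plan is to produce around $\fib(z)$ a sequence of pairwise disjoint, nested essential annuli of uniform positive modulus, so that Gr\"otzsch's inequality forces $\fib(z)=\{z\}$.

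\textbf{Setup and reduction.} Let $\{x_i\}_{i\in I}$ consist of $y$ together with one representative from each critical fiber of $g$ that intersects $\omega(z)$. By hypothesis each $\fib(x_i)$ lies in the interior of every $P_n(x_i)$, so I can choose $m$ large enough that the $\{P_m(x_i)\}$ are pairwise disjoint and then $s$ large enough that each $\inter P_m(x_i)\setminus P_{m+s}(x_i)$ is a non-degenerate annulus. Set $W:=\bigcup_i\inter P_{m+s}(x_i)$. Since critical fibers of $g$ disjoint from $\omega(z)$ are visited only finitely often by $\orb(z)$, after replacing $z$ by $g^N(z)$ for a sufficiently large $N$ I may assume that $\orb(z)$ never enters a critical puzzle piece of depth $m+s$ outside $W$; this is harmless because fibers are connected (nested intersections of connected puzzle pieces) and $g^N(\fib(z))\subseteq\fib(g^N(z))$, so triviality of the fiber of $g^N(z)$ implies that of $z$.

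\textbf{Construction of annuli.} Lemma~\ref{Lem:FirstTimeToUnion} supplies $\mu>0$ such that every $w\in U$ admitting a first entry time $k(w)\ge 0$ to $W$ carries an essential annulus of modulus $\ge\mu$ inside $\inter P_{m+k(w)}(w)\setminus P_{m+s+k(w)}(w)$. By $\omega(z)\cap\fib(y)\neq\emptyset$ and $\fib(y)\subset W$, the orbit of $z$ enters $W$ at an infinite strictly increasing sequence of times $k_1<k_2<\cdots$; using $\orb(z)\cap\fib(y)=\emptyset$ combined with the reduction, $k_j\to\infty$. For each $j\ge 1$ I apply the lemma to the shifted starting point $w_{j-1}:=g^{k_{j-1}+1}(z)$ (with convention $w_0:=z$, $k_0:=-1$), whose first entry time to $W$ equals $\ell_{j-1}:=k_j-k_{j-1}-1$; this yields an essential annulus $B^{(j)}$ of modulus $\ge\mu$ in $\inter P_{m+\ell_{j-1}}(w_{j-1})\setminus P_{m+s+\ell_{j-1}}(w_{j-1})$. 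Pulling $B^{(j)}$ back under $g^{k_{j-1}+1}$ and invoking Lemma~\ref{Lem:AnnulusLemma} produces an essential annulus $A^{(j)}$ around $z$ inside $\inter P_{m+k_j}(z)\setminus P_{m+s+k_j}(z)$ of modulus $\ge\mu/D_j^{\,2}$, where $D_j:=\deg\bigl(g^{k_{j-1}+1}\colon\inter P_{m+k_j}(z)\to\inter P_{m+\ell_{j-1}}(w_{j-1})\bigr)$. Consecutive annuli $A^{(j)}$ and $A^{(j+1)}$ are nested precisely when $k_{j+1}-k_j\ge s$; since $k_j\to\infty$ there exists a subsequence $j_1<j_2<\cdots$ for which $k_{j_{r+1}}-k_{j_r}\ge s$, and the $A^{(j_r)}$ are then pairwise disjoint and nested around $\fib(z)$.

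\textbf{Main obstacle and conclusion.} The genuinely delicate step is to bound $D_j$ uniformly in $j$ by some constant $D_0$ depending only on $g$. A priori each of the previous visits $k_1,\ldots,k_{j-1}$ could multiply the degree by a critical ramification factor, so the naive bound grows exponentially in $j$. My plan is to exploit the Markov structure in the spirit of Lemma~\ref{Lem:FirstTime}, adapted from a single target piece to a finite union: for each $x_\alpha\in\{x_i\}$, the sub-family of puzzle pieces $(P_{m+k_j-t}(g^t(z)))_{t=0}^{k_{j-1}}$ whose image under the appropriate iterate of $g$ lands in $P_{m+s}(x_\alpha)$ must have pairwise disjoint interiors---any overlap would, via the Markov property, yield an iterate $g^{t'}(z)\in P_{m+s}(x_\alpha)\subset W$ at a time $t'$ smaller than some recorded visit time $k_i$, contradicting the indexing. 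Combined with the finite number of $x_i$ and of critical points of $g$, this should give $D_j\le D_0$ independently of $j$. Once the uniform bound is in hand, $\modulus(A^{(j_r)})\ge\mu/D_0^{\,2}=:\eta>0$ for all $r$, Gr\"otzsch's inequality gives $\modulus\bigl(\inter P_{m+k_{j_1}}(z)\setminus\fib(z)\bigr)\ge\sum_r\eta=\infty$, and hence $\fib(z)=\{z\}$. The periodicity of $y$ is expected to play an essential role in closing the uniform degree estimate, possibly via a first-return construction to a weakly protected neighborhood of $\fib(y)$ as in Lemma~\ref{Lem:FRD}.
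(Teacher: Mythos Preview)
Your overall strategy---manufacture nested essential annuli of uniform modulus and apply Gr\"otzsch---is the same as the paper's, and your setup (the collection $\{x_i\}$, the depth $s$, the invocation of Lemma~\ref{Lem:FirstTimeToUnion}) matches almost exactly. The essential difficulty is precisely the uniform bound on $D_j$, and here your proposed fix does not work.

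Your disjointness argument mimics Lemma~\ref{Lem:FirstTime}, but that lemma relies in an irreducible way on $k$ being a \emph{first} entry time. In your situation the pullback $g^{k_{j-1}+1}$ passes through the earlier visit times $k_1,\dots,k_{j-1}$, and at each such time the relevant intermediate piece $P_{m+k_j-k_i}(g^{k_i}(z))$ can genuinely contain a critical point of $g$. Concretely, suppose $P_{m+k_j-k_i}(g^{k_i}(z))\subset P_{m+k_j-k_{i'}}(g^{k_{i'}}(z))$ for some $i<i'$. Running the Markov argument, $g^{k_j-k_{i'}}$ carries the larger piece onto $P_m(g^{k_j}(z))=P_m(x_\alpha)$ (depth drops to $m$, not $m+s$), so you only conclude $g^{k_j-k_{i'}+k_i}(z)\in P_m(x_\alpha)$. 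This does \emph{not} contradict the indexing of the $k_i$, which records entries to the deeper set $W\subset\bigcup_\alpha P_{m+s}(x_\alpha)$. So there is no obstruction to the intermediate critical pieces being nested rather than disjoint, and $D_j$ can in principle grow like $d^{\,j}$.

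The paper resolves this by exploiting periodicity of $y$ in exactly the way you suspected at the end. After passing to an iterate making $y$ fixed, it does not record all visits to $W$; instead, for each large $n$ it lets $k_n$ be the \emph{first} time the orbit enters $P_n(y)$, lets $m_n$ be such that $f^{k_n}(z)\in P_{m_n-1}(y)\setminus P_{m_n}(y)$, and lets $l_n$ be the first subsequent entry to $\mathcal P$. The pullback is then split into three legs: the last leg ($l_n$ iterates) is a first entry to $\mathcal P$, so Lemma~\ref{Lem:FirstTimeToUnion} already produces the annulus; the middle leg ($m_n$ iterates) is \emph{conformal}, because the orbit stays inside $P_0(y)$ while the relevant pieces avoid $P_{m_n}(y)\supset\fib(y)$, and by choice of depth all critical points in $P_0(y)$ lie in $\fib(y)$; and the first leg ($k_n$ iterates) is a first entry to $P_n(y)$, so Lemma~\ref{Lem:FirstTime} bounds its degree by $D$. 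The conformality of the middle leg is where periodicity is essential---it is what prevents the degree from accumulating over the many visits to a neighborhood of $\fib(y)$. You should reorganize your pullback along these lines rather than attempting to control all visits at once.
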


\begin{proof}
\setcounter{stepctr}{0}

Our proof goes along the lines of the proof of \cite[Lemma~3]{RY08}, except for the final step where Lemma~\ref{Lem:FirstTimeToUnion} will provide us with the suitable annuli to pull back. 

The proof itself may look a bit technical in notation, but the underlying idea is simple: as long as the orbit of $z$ stays sufficiently close to $\fib(y)$, that is in some fixed puzzle piece $P_{n_0}(y)$ that contains no further critical points other than those are already in $\fib(y)$, the puzzle pieces along this orbit are mapped forward injectively. When the orbit leaves $P_{n_0}(y)$ and later returns back ($z$ accumulates on $\fib(y)$ by hypothesis), it does so with uniformly bounded degree by Lemma~\ref{Lem:FirstTime}. This allows us, by pulling back suitable annuli (given by Lemma~\ref{Lem:FirstTimeToUnion}), to conclude that $\fib(z) = \{z\}$,  whether or not the fiber of $y$ is trivial. 

Up to passing to an iterate of $g$, assume that $y$ is a fixed point, and let us adopt the notation $f$ for this iterate of $g$; thus $f^k (P_{n+k}(y)) = P_n(y)$ for all $n$ and $k$.

Let $(\cfib_i)_{i \in I}$ be the set of all critical fibers of $f$, different from $\fib(y)$, that intersect $\omega(z)$ (if the fiber of $y$ is not critical, then this is just the set of all critical fibers of $f$ that intersect $\omega(z)$; here $I$ is some finite index set, which in the simplest case might be empty). For every critical fiber $\cfib_i$ pick a critical point $c_i \in \cfib_i$ representing this fiber (this choice might not be unique). Let us choose $n_0$ so that $P_{n_0}(y) \cap \Crit(f) \subset \fib(y)$ and $P_{n_0}(c_i) \cap \Crit(f) \subset \cfib_i$ for every $i \in I$; this is possible by definition of a fiber. By increasing $n_0$ if necessary, we also assume that $\orb(z)$ does not intersect any critical puzzle piece of depth $n_0$ except those around $c_i$, $i \in I$ and, possibly, $y$. Up to an index shift, assume $n_0=0$. Further on, fix a depth $s > 0$ such that all the annuli $\inter P_0 (y) \sm P_s(y)$ and $\inter P_0(c_i) \sm P_s(c_i)$ are non-degenerate. The depth $s$ exists by the assumption of the lemma: all the fibers $\fib(y)$ and $\cfib_i$ are contained in the interiors of the corresponding puzzle pieces of any depth. Define $\mathcal P := P_s(y) \sqcup \bigsqcup_{i \in I} P_s(c_i)$; this is the union of the puzzle pieces of depth $s$ containing $y$ and all critical fibers on which the orbit of $z$ accumulates.

For given $n$, let $k_n$ be the smallest integer such that $f^{k_n}(z) \in P_n(y)$; such an index exists because $z$ accumulates on $\fib(y)$. However, since the orbit of $z$ never enters $\fib(y)$ by hypothesis, there exists a smallest integer $m_n>n$ such that $f^{k_n}(z) \not \in P_{m_n}(y)$, hence $f^{k_n}(z)\in P_{m_n-1}(y)\sm P_{m_n}(y)$. Finally, let $l_n\ge 0$ be minimal so that $f^{k_n+m_n+l_n}(z)\in \mathcal P$; again, such an index exists because $z$ accumulates on $\fib(y)$, critical fibers $\fib(c_i)$, $i \in I$, and by the choice of what we call the zero depth puzzle pieces. However, it might happen that $f^{k_n+m_n+l_n}(z)$ lands not in $P_s(y)$ but in a critical puzzle piece in $\mathcal P$; denote by $c=c(n)$ the point from the set $\{y\} \cup \bigcup_{i \in I} \{c_i\}$ such that $f^{k_n+m_n+l_n}(z) \in P_s(c)$ (see Figure~\ref{Fig:AccumOnFiber} for a schematic drawing of the puzzle pieces involved).

\begin{figure}[htbp]
\begin{center}
\includegraphics[scale=.6, trim=90 35 30 35]{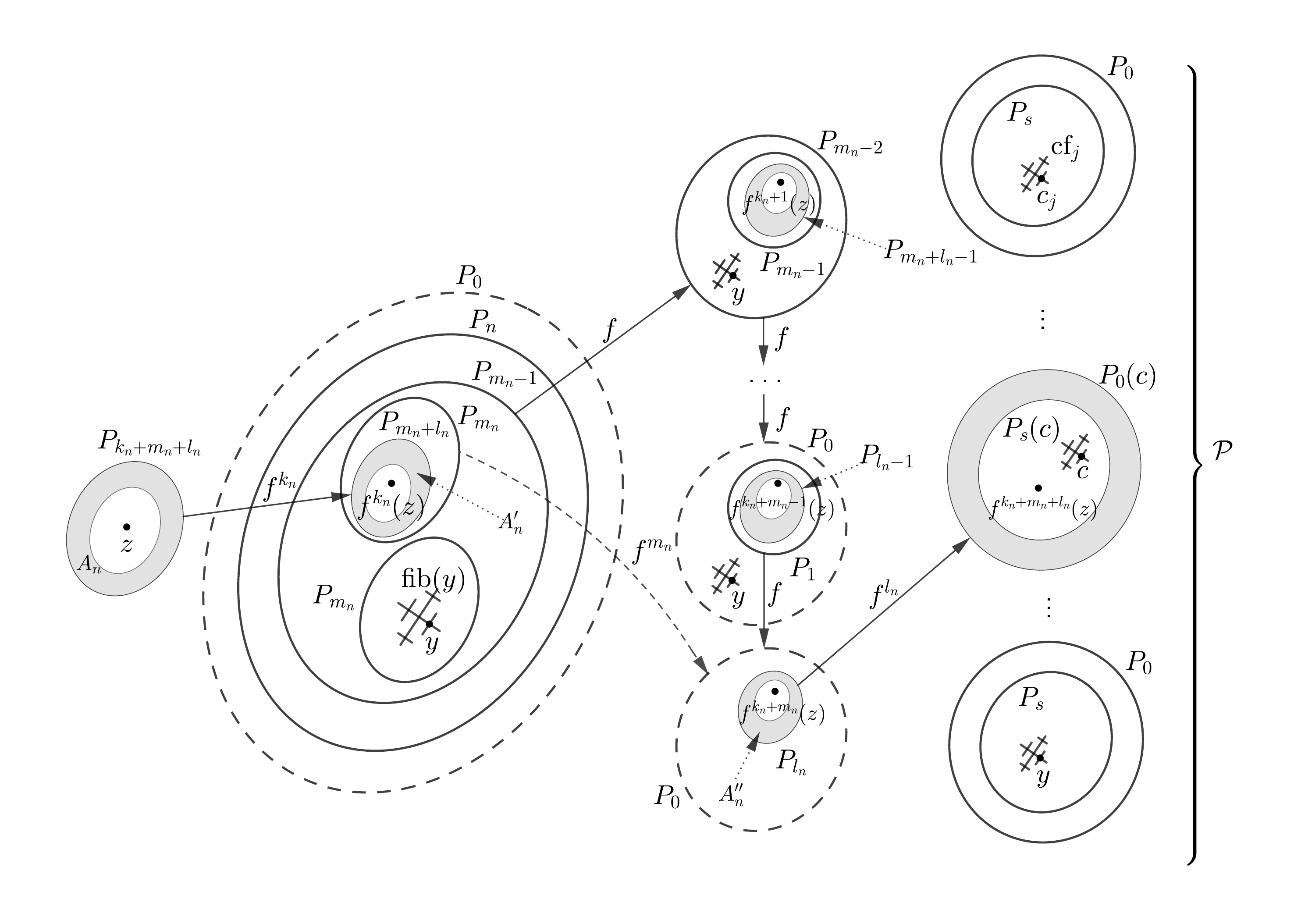}
\caption{Puzzle pieces of various depths involved in the proof of Lemma~\ref{Lem:AccumAtPeriod}. Observe that the fiber $\fib(y)$ may or may not belong to some of the puzzle pieces in the sequence $\left(P_{m_n - i}\left(f^{k_n+i}\left(z\right)\right)\right)_{i=1}^{m_n-1}$; the picture shows the case when it never happens. Moreover, the first landing of the orbit of $z$ after the time $k_n+m_n$ to the union $\mathcal P = P_s(y) \cup \bigcup_j P_s(c_j)$ may or may not be in the puzzle piece $P_s(y)$; the picture shows the case when the orbit of $z$ lands in some other puzzle piece $P_s(c) \in \mathcal P$. }
\label{Fig:AccumOnFiber}
\end{center}
\end{figure}

We claim that there exists an essential open sub-annulus 
\[
A_n \subset \inter P_{k_n+m_n+l_n}(z) \sm P_{k_n+m_n+l_n + s}(z)
\]
such that 
\begin{equation}
\label{Eq:ModBound}
\modulus (A_n) \geqslant \frac{\mu}{D^2}, 
\end{equation}
where $D$ is given by Lemma~\ref{Lem:FirstTime} and $\mu$ is given by Lemma~\ref{Lem:FirstTimeToUnion}, and hence the factor $\mu/D^2$ is independent of $z$ and $n$. We will do this in three steps; since we are pulling back, they come in reverse order. The third step is $f^{k_n}\colon P_{k_n + m_n+l_n}(z)\to P_{m_n+l_n}(f^{k_n}(z))$ controlled by Lemma~\ref{Lem:FirstTime}; the second step is a sequence of $m_n$ conformal iterates to $P_{l_n}(f^{k_n+m_n}(z))$; and in the first step this puzzle piece is sent by $f^{l_n}$ to $P_0(f^{k_n+m_n+l_n}(z))$, controlled by Lemma~\ref{Lem:FirstTimeToUnion} again. These three steps are illustrated in Figure~\ref{Fig:AccumOnFiber} (left, center, and right); the annuli we are pulling back are contained in the shaded rings. 

\Newpage

\begin{step}
\label{St:Ann1}
Since $l_n$ was chosen to be minimal so that $f^{k_n+m_n+l_n}(z)\in \mathcal P$, and $c$ is such that $f^{k_n+m_n+l_n}(z) \in P_s(c) \subset \mathcal P$, Lemma~\ref{Lem:FirstTimeToUnion} guarantees that there exists an essential open sub-annulus 
\[
A_n'' \subset \inter P_{l_n}\left(f^{k_n+m_n}(z)\right) \sm P_{l_n+s}\left(f^{k_n+m_n}(z)\right)
\]
such that
\begin{equation}
\label{Eq:FirstAnnulus}
\modulus(A_n'') \geqslant \mu,
\end{equation}
where $\mu$ does not depend on $z$ and $n$. (Strictly speaking, in order to apply Lemma~\ref{Lem:FirstTimeToUnion}, we have to enlarge $\mathcal P$ so that it would contain all critical puzzle pieces of depth $s$; but since, by construction, the orbit of $z$ visits only those critical puzzles already in $\mathcal P$, this enlargement of $\mathcal P$ does not alter the conclusion.)
\end{step}

\begin{step}
\label{St:Ann2}
We claim that there exists an open essential sub-annulus
\[
A_n' \subset \inter P_{m_n+l_n}\left(f^{k_n}(z)\right) \sm  P_{m_n+l_n + s}\left(f^{k_n}(z)\right)
\]
such that $A_n'$ is a conformal copy of $A_n''$, and hence
\begin{equation}
\label{Eq:SecondAnnulus}
\modulus(A_n') = \modulus(A_n'').
\end{equation}

We argue as follows. The puzzle pieces around $y$ are, as always, nested like $P_0(y)\supset P_1(y)\supset P_2(y)\supset\dots$, and since $y$ is a fixed point, each one is the image of the next one under $f$. Since $f^{k_n}(z)\in P_{m_n-1}(y)\sm P_{m_n}(y)$, all the points $f^{k_n}(z)$, $f^{k_n+1}(z)$, \dots, $f^{k_n+m_n}(z)$ are in $P_0(y)\sm P_{m_n}(y)$. But by construction all critical points in $P_0(y)$ are already in $\fib(y)$ and hence in $P_{m_n}(y)$. Therefore, for $i\in\{0,1,\dots,m_n-1\}$, the puzzle pieces of depth $m_n-i$ around $f^{k_n+i}(z)$ do not contain critical points. Together, this shows that the map $f^{m_n}\colon P_{m_n}\left(f^{k_n}(z)\right)\to P_0\left(f^{k_n+m_n}(z)\right)$ has degree $1$, and the same is true for its restriction $f^{m_n}\colon P_{m_n+l_n}\left(f^{k_n}(z)\right)\to P_{l_n}\left(f^{k_n+m_n}(z)\right)$, and hence the claim in {Step~\ref{St:Ann2}} follows with $A_n'$ as the conformal pull-back of $A_n''$ under this restricted map.
\end{step}

\begin{step}
\label{St:test}
Similarly to {Step~\ref{St:Ann1}}, since $k_n$ is the first iterate so that $f^{k_n}(z)\in P_n(y)$, the map $f^{k_n}\colon P_{k_n+n}(z)\to P_n(y)$ has degree at most $D$ by Lemma~\ref{Lem:FirstTime}. The same is then true for its restriction 
\begin{equation*}
f^{k_n}\colon P_{k_n+m_n+l_n}(z)\to P_{m_n+l_n}\left(f^{k_n}(z)\right).
\end{equation*}
We will construct an annulus $A_n$ in order to apply Lemma~\ref{Lem:AnnulusLemma} as follows: let $Y_1$ and $Y_2$ be the closed disks such that $A'_n=\inter Y_1\sm Y_2$, pick two more disks $Y'_{1}\subset P_{k_n+m_n+l_n}(z)$ and $Y'_{2}\supset P_{k_n+m_n+l_n+s}(z)$ such that $Y'_2\subset Y'_1$ and $f^{k_n}(Y'_{i})=Y_{i}$ for $i=1,2$, and set $A_n:=\inter Y'_1\sm Y'_2$. Then $A_n$ is an essential sub-annulus in $\inter P_{k_n+m_n+l_n}(z) \sm P_{k_n+m_n+l_n+s}(z)$. Then by Lemma~\ref{Lem:AnnulusLemma},
\begin{equation}
\label{Eq:ThirdAnnulus}
\modulus(A_n) \geqslant \frac{\modulus(A_n')}{D^2}. 
\end{equation}
\end{step}

Combining (\ref{Eq:FirstAnnulus}), (\ref{Eq:SecondAnnulus}) and (\ref{Eq:ThirdAnnulus}) we obtain (\ref{Eq:ModBound}).

This argument can be carried out for infinitely many $n$: we choose a sequence $n_j$ so that once $A_{n_{j-1}}$ is fixed, the value of $n_j$ is chosen so that $P_{k_{n_j}+m_{n_j}+l_{n_j}}(z)$ is contained in the bounded component of $\C\sm A_{n_{j-1}}$. This way, we obtain infinitely many disjoint annuli with moduli bounded below that all separate $z$ from all previous annuli, and using the standard Gr\"otzsch inequality this implies that the fiber of $z$ is trivial.
\end{proof}

\begin{corollary}[Accumulation at periodic fiber implies trivial fiber, revisited]
\label{Cor:AccumAtPeriodDeg}
Under the hypothesis of Lemma~\ref{Lem:AccumAtPeriod}, there exist an increasing sequence of integers $(\nu_j)_{j \ge 0}$ and two puzzle pieces $P_s, P_0$ with $\inter P_s \subset P_0$ such that $g^{\nu_j}(z) \in \inter P_s$ for every $j \ge 0$ and the degrees of the maps $g^{\nu_j} \colon \inter P_{\nu_j}(z) \to \inter P_0$ are uniformly bounded.
\end{corollary}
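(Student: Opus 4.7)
The plan is to extract the desired data directly from the construction already appearing in the proof of Lemma~\ref{Lem:AccumAtPeriod}, then upgrade that proof with explicit degree bookkeeping. Concretely, for every $n$ the proof produces an integer $\nu_n := k_n + m_n + l_n$ such that $f^{\nu_n}(z) \in \inter P_s(c(n))$ for some $c(n)$ in the finite set $\{y\}\cup\{c_i : i\in I\}$ (here $f$ denotes the iterate of $g$ under which $y$ is a fixed point; translating between $f$-depths and $g$-depths is routine via the shared puzzle filtration $(S_n)$). By the pigeonhole principle, some value $c$ is attained by $c(n_j)$ along a subsequence $(n_j)$; I set $P_s := P_s(c)$, $P_0 := P_0(c)$, and $\nu_j$ to be the corresponding iterate. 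The containment $\inter P_s \subset P_0$ is immediate from the Markov nesting of puzzle pieces, and $f^{\nu_{n_j}}(z) \in \inter P_s$ is built into the construction.

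The main step is to bound the degree of $g^{\nu_j}\colon \inter P_{\nu_j}(z)\to \inter P_0$ uniformly in $j$. I would decompose $f^{\nu_n} = f^{l_n}\circ f^{m_n}\circ f^{k_n}$ and control the three factors separately:
\begin{itemize}
\item
$f^{k_n}\colon \inter P_{\nu_n}(z)\to \inter P_{m_n+l_n}(f^{k_n}(z))$ has degree at most $D$ by Lemma~\ref{Lem:FirstTime}, since $k_n$ is by definition the first entry time of the orbit of $z$ into $P_n(y)$;
\item
$f^{m_n}\colon \inter P_{m_n+l_n}(f^{k_n}(z))\to \inter P_{l_n}(f^{k_n+m_n}(z))$ is biholomorphic, as already established in Step~2 of the proof of Lemma~\ref{Lem:AccumAtPeriod}: by the choice of zero depth, every critical point in $P_0(y)$ lies in $P_{m_n}(y)$, and the orbit segment in question stays in $P_0(y)\sm P_{m_n}(y)$;
\item
$f^{l_n}\colon \inter P_{l_n}(f^{k_n+m_n}(z))\to \inter P_0(c(n))$ has degree at most $D^s$, by the argument already present in the proof of Lemma~\ref{Lem:FirstTimeToUnion}: for $0\le t < l_n - s$ the puzzle piece $P_{l_n-t}(f^{k_n+m_n+t}(z))$ cannot be critical (otherwise the orbit would enter $\mathcal{P}$ before time $l_n$, contradicting minimality of $l_n$), so $f^{l_n-s}$ is biholomorphic onto a depth-$s$ puzzle piece, and the remaining length-$s$ tail contributes at most $D^s$.
\end{itemize}
Multiplying the three bounds gives a uniform estimate on the degree of $g^{\nu_j}$ that is independent of $j$.

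The only remaining point is strict monotonicity of $(\nu_j)$, which follows by passing to a further subsequence if necessary: since $z$ accumulates on $\fib(y)$ without ever landing there, $k_n\to\infty$, and hence $\nu_{n_j}\to\infty$, so a strictly increasing subsequence exists. I do not expect a substantive obstacle beyond bookkeeping; the one subtle place is the third degree bound, because $l_n$ is minimal for the orbit hitting the \emph{union} $\mathcal{P}$ rather than a single puzzle piece, so Lemma~\ref{Lem:FirstTime} does not apply verbatim and one must instead rely on the critical-avoidance observation above.
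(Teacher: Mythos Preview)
Your proposal is correct and takes essentially the same approach as the paper: the paper's own proof is a one-liner saying the claim follows from the proof of Lemma~\ref{Lem:AccumAtPeriod} by pigeonholing on the finitely many target pieces $P_s(c)$, and you have simply made explicit the degree bookkeeping (the $D$, $1$, $D^s$ bounds on the three factors) that is already implicit in Steps~1--3 of that proof and in the proof of Lemma~\ref{Lem:FirstTimeToUnion}. The only thing worth tightening in your third bullet is to note that the index shift chosen at the start of Lemma~\ref{Lem:AccumAtPeriod} guarantees that any critical point appearing along the orbit segment must already lie in $\{y\}\cup\{c_i\}$, so criticality of $P_{l_n-t}(f^{k_n+m_n+t}(z))$ genuinely forces entry into $\mathcal P$; with that remark the argument is complete.
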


\begin{proof}
The claim follows from the proof of Lemma~\ref{Lem:AccumAtPeriod} with $(\nu_j)$ a subsequence of $(k_{n_j} + m_{n_j} + l_{n_j})$ (in the notation of the lemma) chosen so that the corresponding iterate of $z$ lands in the same puzzle piece $P_s \subset P_0$ for all $j$. Clearly, such a subsequence exists because we have a finite choice of ``target'' puzzle pieces. 
\end{proof}

\section{Complex box mappings and rigidity (Theorem~\ref{Thm:RigidityComplexBox})}
\label{Sec:ComplexBoxMappings}

In this section we review the notion of \emph{complex box mappings} introduced in \cite{KSS, KvS09} and prove a generalized version of triviality of fibers for such mappings (Theorem~\ref{Thm:RigidityComplexBox}). This result is of interest in its own right, and it is a key ingredient in the proof of our Dynamical Rigidity for Newton maps (Theorem~\ref{Thm:RRP}). In what follows, we use definitions and results from \cite{KvS09}, clarified and spelled out in~\cite{DKvS}. 

\begin{definition}[Complex box mapping]
\label{Def:BoxMap}
A holomorphic map $F \colon \mathcal U \to \mathcal V$ between two open sets $\mathcal U \subset \mathcal V \subset \Cc$ is a \emph{complex box mapping} if the following holds:
\begin{enumerate}
\item
\label{It:BM1}
$F$ has finitely many critical points;
\item
\label{It:BM2}
$\mathcal V$ is the union of finitely many open Jordan disks with disjoint closures, while $\U$ is the union of finitely or infinitely many open Jordan disks with disjoint closures;
\item
\label{It:BM3}
every component $W$ of $\mathcal V$ is either a component of $\mathcal U$, or $W \cap \mathcal U$ is a union of Jordan disks with pairwise disjoint closures, each of which is compactly contained in $W$;
\item
\label{It:BM4}
for every component $U$ of $\mathcal U$ the image $F(U)$ is a component of $\mathcal V$, and the restriction $F \colon U \to F(U)$ is a proper map.
\end{enumerate}
\end{definition}

\begin{figure}[htbp]
\begin{center}
\includegraphics[scale=.7, trim=60 35 30 35]{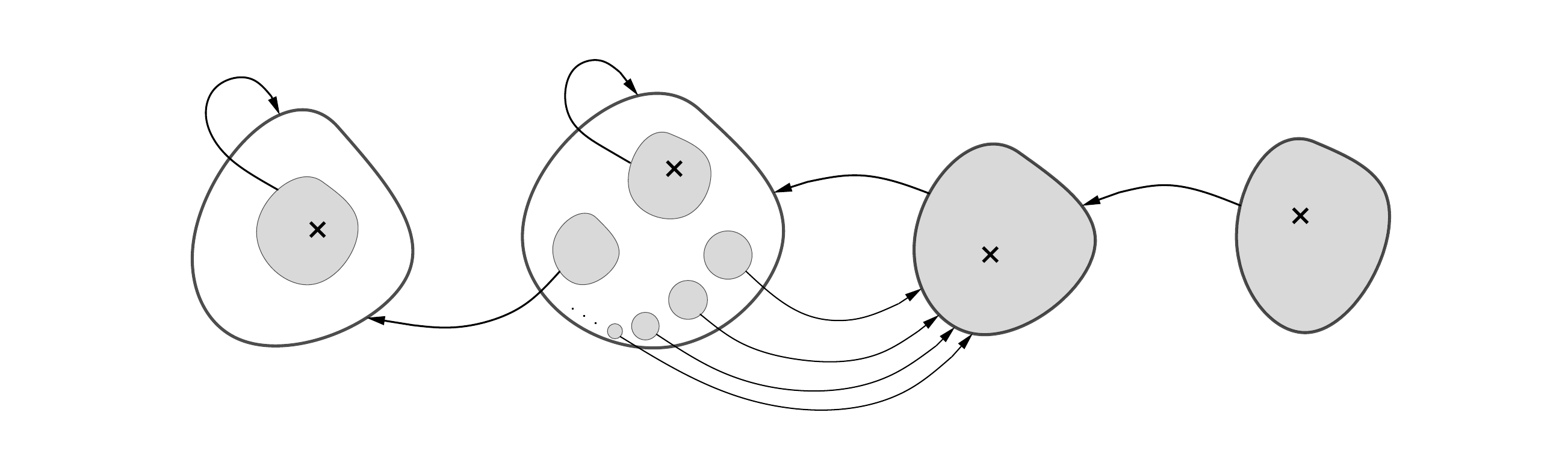}
\caption{An example of a box mapping $F \colon \U \to \V$. The components of $\U$ are shaded in grey, there might be infinitely many of those; the components of $\V$ are drawn with thick black boundary. The critical points of $F$ are marked with a cross.}
\label{Fig:ExampleBoxMapping}
\end{center}
\end{figure}

Following Douady and Hubbard~\cite{PolyLike}, a proper holomorphic map $f \colon U \to V$ of degree $d \geqslant 2$ between two open topological disks $U$ and $V$ with $\ovl U \subset V \subset \C$ is called a \emph{polynomial-like map}. By the straightening theorem, such a polynomial-like map is hybrid equivalent to a polynomial of degree $d$, and this polynomial is unique (up to affine conjugation) if the filled Julia set $K(f) := \bigcap_{n\geqslant 1} f^{-n}(V)$ is connected. Moreover, connectivity of $K(f)$ is equivalent to the condition that all critical points of $f$ are contained in $K(f)$. 

When $\mathcal V$ is connected and $\mathcal U$ has only finitely many components, and all of these are compactly contained in $\mathcal V$, then the corresponding box mapping may be regarded as a polynomial-like map in the sense of Douady--Hubbard (generalized to several components of $\U$). For general box mappings, however, $\mathcal U$ is allowed to have infinitely many components, and in many applications this is important. Such generality in the definition of a box mapping results in phenomena that do not occur for polynomial-like maps. For example, a box mapping might have wandering domains, or it might have a filled Julia set that is all of $\mathcal U$ (see~\cite{DKvS}). 

\begin{definition}[Puzzle piece of box mapping]
\label{Def:BoxPuzzle}
For a box mapping $F \colon \mathcal U \to \mathcal V$ and $n \ge 0$, we define a \emph{puzzle piece of depth $n$} to be the closure of a component of $F^{-n}(\mathcal V)$. 
A puzzle piece is called \emph{critical} if it contains at least one critical point.
\end{definition}

\begin{remark}
We keep our convention for puzzle pieces to be closed sets, following Yoccoz and Hubbard \cite{HY}. This is in contrast to~\cite{KvS09, KSS}, and also \cite{DKvS}, where puzzle pieces are defined to be open sets. The transition between the two conventions is straightforward: a puzzle piece in the sense of the cited papers is the interior of a puzzle piece in the sense of Definition~\ref{Def:BoxPuzzle}, and vice versa, the closure of a puzzle pieces in the sense of \cite{KvS09, KSS, DKvS} is a puzzle piece in our sense.   
\end{remark}

The set $K(F) := \{z \in \mathcal U \colon F^n(z) \in \mathcal U \text{ for all }n \geqslant 0\}$ is the \emph{filled Julia set} of the box mapping $F \colon \mathcal U \to \mathcal V$; this is the set of \emph{non-escaping points}. Similarly, the \emph{Julia set} is defined as $J(F) := \partial K(F) \cap \U$.

The following lemma describes an easy way how to construct a box mapping using the first return construction.
Recall that a set is nice if the forward orbit of its boundary does not intersect its interior. By the Markov property, every union of puzzle pieces of the same depth is a nice set in this sense. However, a union of puzzle pieces not necessarily of the same depth might or might not be nice.

\begin{lemma}[Simple construction of box mappings]
\label{Lem:Simple}
Let $g \colon U \to V$ be a holomorphic map with well-defined Markov partition as in Section~\ref{Sec:GeneralPuzzles}, and let $W$ be a disjoint union of the interiors of finitely many puzzle pieces. Suppose $W$ is nice. Let $\V$ be the union of $W$ and all the components of $\DomE(W)$ that intersect $\Crit(g)$, and let $F \colon \Dom(\V) \to \V$ be the first return map to $\V$. Then $F$ is a complex box mapping and $\Crit(F) \subset \Crit(g)$.
\end{lemma}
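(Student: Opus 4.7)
The plan is to verify the four conditions in Definition~\ref{Def:BoxMap} for the map $F$, using Lemma~\ref{Lem:FirstReturnConstruction} as the main tool and Lemma~\ref{Lem:FRD} for compact containment, and then to deduce $\Crit(F)\subset\Crit(g)$ by a first-return-time argument.

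First I would analyze the target set $\V$. Applying Lemma~\ref{Lem:FirstReturnConstruction}\eqref{It:FRM1} to the nice set $W$ shows that every component of $\DomE(W)$ is the interior of a puzzle piece of $g$. Because $g$ has only finitely many critical points, only finitely many of these components can intersect $\Crit(g)$. Combined with the fact that $W$ is itself a union of interiors of finitely many puzzle pieces, it follows that $\V$ is a finite disjoint union of interiors of puzzle pieces, each of which is an open Jordan disk; the pairwise disjointness of their closures is inherited from the Markov structure, in which distinct puzzle pieces meet only along their boundaries. This yields the first half of condition~\eqref{It:BM2}.

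Next I would describe the components of $\Dom(\V)$ and verify the remaining half of~\eqref{It:BM2} together with~\eqref{It:BM4}. On $\Dom(\V)$ the first-return time $n(z)$ and the target component $V_0'$ of $\V$ are locally constant, so on each component $U_0$ one has $F=g^{n}$ mapping $U_0$ onto a single component $V_0'$. Iteratively pulling back $V_0'$ along the orbit $U_0,\, g(U_0),\,\dots,\, g^{n}(U_0)=V_0'$ and using the Markov property at each step shows that $U_0$ is the interior of a puzzle piece whose depth equals the depth of $V_0'$ plus $n$. Hence $U_0$ is an open Jordan disk, distinct components of $\Dom(\V)$ have disjoint closures, and $F\colon U_0\to V_0'$ is a branched cover, and in particular a proper map. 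For condition~\eqref{It:BM3} I would show that whenever a component $U_0$ of $\Dom(\V)$ is contained in but not equal to a component $V_0$ of $\V$, one has $\overline{U_0}\subset V_0$. Since each component of $\V$ is the interior of a puzzle piece, hence a nice set, Lemma~\ref{Lem:FRD} applies with $V_0$ in the role of the weakly protecting piece: depending on whether the $g$-orbit of points of $U_0$ re-enters $V_0$ before returning to $\V$, one is in case~(1) or case~(2) of that lemma, and in either case the corresponding first-return components lie compactly inside $V_0$.

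Finally I would establish $\Crit(F)\subset\Crit(g)$, which also yields the finiteness condition~\eqref{It:BM1}. Suppose $z\in \Dom(\V)$ is critical for $F=g^{n}$ on its component, so that $g^{i}(z)\in\Crit(g)$ for some $0\le i<n$. If $0<i<n$, the forward orbit of $g^{i}(z)$ reaches $\V$ at time $n-i$, and from there it reaches $W$ (either immediately, if the landing is in $W$, or after one further descent through an added critical component, which lies in $\DomE(W)$). Therefore $g^{i}(z)\in \DomE(W)$, and since $g^{i}(z)\in\Crit(g)$, the component of $\DomE(W)$ containing $g^{i}(z)$ is a critical component and was placed inside $\V$ by construction of $\V$. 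Consequently $g^{i}(z)\in \V$, which contradicts the minimality of $n$ as the first-return time to $\V$. Thus $i=0$ and $z\in\Crit(g)$. The principal obstacle is condition~\eqref{It:BM3} and its compact-containment statement, which relies on Lemma~\ref{Lem:FRD}; the remaining verifications are routine consequences of Lemma~\ref{Lem:FirstReturnConstruction}.
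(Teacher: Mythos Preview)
Your argument for $\Crit(F)\subset\Crit(g)$ is essentially the paper's, and the overall shape is right, but the proof has a genuine gap in the part you yourself flag as the principal obstacle.

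The paper's key step, which you omit, is to verify that $\V$ is \emph{nice}: the forward orbit of $\partial\V$ never meets $\inter\V$. This is not automatic, since $\V$ contains not only the given nice set $W$ but also the added critical components of $\DomE(W)$; the paper checks that boundaries of these added pieces map forward to $\partial W$ without passing through other added pieces, whence niceness of $W$ carries over. Once $\V$ is nice, Lemma~\ref{Lem:FirstReturnConstruction} applies directly to $\V$ (not just to $W$), and properties \eqref{It:BM2}, \eqref{It:BM3}, \eqref{It:BM4} of Definition~\ref{Def:BoxMap} follow at once. In your write-up you instead assert that the first-return time to $\V$ is locally constant and argue by direct pullback; that assertion is exactly what niceness of $\V$ buys you, and without it the assertion is unjustified (a component of $\V$ could in principle sit strictly inside an intermediate pullback).

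Your appeal to Lemma~\ref{Lem:FRD} for condition~\eqref{It:BM3} does not work as stated. Lemma~\ref{Lem:FRD} has a very specific shape: it requires a pair $P_{n+k}(z)\subset\inter P_n(z)$ with $k$ either the first return time to $P_n(z)$ or with the orbit never returning, and its conclusion is about compact containment of components of $\Dom(\inter P_{n+k}(z))$ inside $\inter P_{n+k}(z)$. You try to take $V_0$ as ``the weakly protecting piece'' $P_n(z)$, but you never specify $P_{n+k}(z)$, and the conclusion you need (that a component $U_0$ of $\Dom(\V)$ is compactly contained in $V_0$) is not what the lemma delivers. The cases you describe (``whether the orbit re-enters $V_0$ before returning to $\V$'') do not match the hypotheses of Lemma~\ref{Lem:FRD}, which concerns first return to a \emph{single} piece, not to a union. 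The clean route is the paper's: show $\V$ is nice and invoke Lemma~\ref{Lem:FirstReturnConstruction} for $\V$ itself.

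A minor further point: your sentence ``the pairwise disjointness of their closures is inherited from the Markov structure, in which distinct puzzle pieces meet only along their boundaries'' is self-contradictory as written --- pieces meeting along their boundaries do \emph{not} have disjoint closures.
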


\begin{proof}
Let us show that $F$ satisfies Definition~\ref{Def:BoxMap}. First notice that $\V$ is a nice open set. Indeed, by definition of $\DomE(W)$ the orbit of the boundary of a component of $\V \sm W$ maps over the boundary of some component of $W$ without intersecting other components of $\V \sm W$; the claim then follows because $W$ is nice by hypothesis. Since $\V$ is nice, we can apply Lemma~\ref{Lem:FirstReturnConstruction}, and the properties \eqref{It:BM2}, \eqref{It:BM3}, and \eqref{It:BM4} of Definition~\ref{Def:BoxMap} follow automatically by that lemma.

In order to see that $F$ has only finitely many critical points (property~\eqref{It:BM1} of Definition~\ref{Def:BoxMap}), and those critical points are among the critical points of $g$, let us pick a critical component of $\Dom(\V)$ for $F$, say $Y$, and assume $F|_Y = g^k|_Y$ for some $k \ge 1$. Since $F$ is the first return map to $\V$, each critical point of $g$ can be seen at most once in the sequence $Y, g(Y), \ldots, g^{k-1}(Y)$ of open puzzle pieces (see Lemma~\ref{Lem:FirstTime}). Since $Y$ is a critical component for $F$, there exists $m \in \{0, \ldots, k-1\}$ so that $g^m(Y)$ contains a critical point of $g$; call this point $c$. The $g$-orbit of $c$ intersects $W$ and hence $c$ belongs to $\DomE(W)$. By definition of $\V$, this implies $g^m(Y) \subset \V$ and thus $m=0$. We conclude that each critical point of $F$ is a critical point of $g$.
\end{proof}

\begin{definition}[Box renormalizable box mappings]
\label{Def:RenComplexBoxMap}
We call a complex box mapping $F\colon \mathcal U\to\mathcal V$  \emph{box renormalizable around a critical point $c \in \mathcal U$} if there exists a puzzle piece $W$ at some depth containing $c$, and an integer $s \ge 1$ such that $F^{sk}(c') \in \inter W$ for every critical point $c'\in W$ and every $k \ge 0$; the minimal such $s$ is called the \emph{period of the renormalization}. The \emph{filled Julia set of this box renormalization} is defined analogously as $\left\{z\in\mathcal U\colon F^{sk}(z)\in \inter W \text{ for all }k\ge 0\right\}$. In this situation we call $c$ a \emph{box renormalizable critical point}. 

A complex box mapping $F \colon \mathcal U \to \mathcal V$ is called \emph{box renormalizable} if it is box renormalizable around at least one critical point in $\mathcal U$, and \emph{non-box renormalizable} otherwise.
\end{definition}

\begin{definition}[Renormalizable box mappings]
\label{Def:RenComplexBoxMapDH}
We call a box mapping $F\colon\U\to\V$ \emph{renormalizable around a critical point $c\in\U$} if there exists a puzzle pieces $W$ and an integer $s\ge1$ such that $F$ is box renormalizable around $c$ with period $s$ in the sense of Definition~\ref{Def:RenComplexBoxMap}, and such that there exists a puzzle piece $Y\subset \inter W$ with $F^s(Y)=W$, and $F^{sk}(c') \in \inter Y$ for all critical points $c'$ in $\inter Y$, and for all $k\ge 0$. 
\end{definition}

When we speak of renormalizable box mappings, we mean Definition~\ref{Def:RenComplexBoxMapDH} unless explicitly speaking of ``box renormalization''.

\begin{remark}
In the case that a puzzle piece $W_0$ contains several critical points among which some have their entire $F^s$-orbits in $W_0$ and others do not, then one can shrink $W_0$ to a puzzle piece $W$ of greater depth that contains only those critical points that do not escape, and then $F$ is renormalizable around these critical points.
\end{remark}

\begin{remark}
Let us clarify the difference between these two definitions. If $W$ is a puzzle piece of some depth $n$ so that $F$ is box renormalizable around $c$ with period $s$, and $Y$ is the puzzle piece of depth $n+s$ containing $c$, then either $Y = W$ or $Y \subset \inter W$ by the puzzle set-up. Both cases are allowed for box renormalization according to \cite[Definition 1.3]{KvS09}. In the second case, the restriction $F^s\colon \inter Y \to \inter W$ is a polynomial-like map in the sense of Douady--Hubbard. It follows that all critical points $c'\in Y$ have $F^{sk}(c') \in \inter Y$ for all $k\ge 1$, and hence the filled Julia set of the renormalization around $c$ is connected by the standard theory of polynomial-like maps mentioned above.

If $Y = W$, then $F^s \colon \inter Y \to \inter W$ is a proper self-map of a disk without escaping points (hence an \eqref{Item:NE} component), and thus the filled Julia set of $F^s$, restricted to $Y$, is equal to $Y$. This is a situation that may occur for box renormalizable maps, but it is not included in Definition~\ref{Def:RenComplexBoxMapDH}. It does not occur in a number of interesting cases arising from dynamics on $\Cc$. We will thus introduce, in Definition~\ref{Def:NaturalBoxMapping}, the notion of \emph{dynamically natural box mappings} for which this is excluded.
\end{remark}

The following lemma relates fibers of renormalizable critical points to little filled Julia sets of the corresponding renormalizations. 

\begin{lemma}[Renormalizable fibers equal little filled Julia sets]
\label{Lem:FibersJulia}
If $c$ is a renormalizable critical point of a complex box mapping $F$, and $\rho:= F^s \colon \inter Y \to \inter W$ is the corresponding polynomial-like renormalization of $F$, then the filled Julia set of $\rho$ is equal to $\fib(c)$.
\end{lemma}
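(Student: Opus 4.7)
The plan is to prove the equality $K(\rho)=\fib(c)$ by establishing the two inclusions, using the Markov property to translate between the polynomial-like renormalization and the puzzle structure. Let $n_0$ denote the depth of $W$ and write $c_k:=F^{sk}(c)=\rho^k(c)\in\inter Y$. Since $F^s(Y)=W$, the piece $Y$ has depth $n_0+s$, and more generally the Markov property gives that $F^{ks}$ maps $P_{n_0+s+ks}(c)$ onto $P_{n_0+s}(c_k)=Y$, identifying $P_{n_0+s+ks}(c)$ as the connected component of $\rho^{-k}(Y)$ containing $c$. In particular the subsequence $\{P_{n_0+s+ks}(c)\}_{k\ge 0}$ is cofinal in the nested family $(P_n(c))_{n\ge 0}$, so it suffices to control the fiber at these depths. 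Recall also that every critical point of $\rho$ lies in $K(\rho)$ by Definition~\ref{Def:RenComplexBoxMapDH}, so by standard polynomial-like theory $K(\rho)$ is connected.

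For the inclusion $K(\rho)\subseteq\fib(c)$, fix $k\ge 0$ and consider $K(\rho)\cap P_{n_0+s+ks}(c)$; this intersection is closed in $K(\rho)$ and contains $c$. I claim it is also open in $K(\rho)$, i.e.\ $K(\rho)\cap\partial P_{n_0+s+ks}(c)=\emptyset$: if $z$ were in this boundary intersection, then by the Markov property $\rho^k(z)=F^{ks}(z)\in\partial Y$, contradicting the fact that every $\rho^j(z)$ lies in $\inter Y$ whenever $z\in K(\rho)$. Connectedness of $K(\rho)$ then forces $K(\rho)\subseteq P_{n_0+s+ks}(c)$, and taking intersections over $k$ gives $K(\rho)\subseteq\fib(c)$.

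For the reverse inclusion $\fib(c)\subseteq K(\rho)$, take $z\in\fib(c)$. Since $z\in P_{n_0+s+ks}(c)$ for every $k\ge 0$, the Markov property yields $\rho^k(z)\in P_{n_0+s}(c_k)=Y$, so the $\rho$-orbit of $z$ stays in $Y\subset\inter W$. The main subtlety is to ensure that each $\rho^k(z)$ in fact lies in $\inter Y$ (not merely in $Y$), so that the iteration is well-defined; this is where the polynomial-like compact containment $Y\subset\inter W$ becomes essential. Indeed, properness of $\rho\colon\inter Y\to\inter W$ applied to the compact set $Y$ shows that $\rho^{-1}(Y)\Subset\inter Y$, so each puzzle piece $P_{n_0+2s}(c_k)$, being the component of $\rho^{-1}(Y)$ containing $c_k$, is compactly contained in $\inter Y$. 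Hence $\fib(c_k)\subseteq P_{n_0+2s}(c_k)\Subset\inter Y$, and since the same Markov argument gives $\rho^k(z)\in\fib(c_k)$ for all $k\ge 0$, the orbit stays in $\inter Y$, proving $z\in K(\rho)$.
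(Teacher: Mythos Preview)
Your proof is correct and follows essentially the same approach as the paper's: both arguments use connectedness of $K(\rho)$ together with the identification of the puzzle pieces $P_{n_0+s+ks}(c)$ as the closures of $\rho^{-k}(\inter Y)$, and then intersect over the cofinal subsequence of depths $n_0+s+ks$. The paper compresses this into a single line by noting directly that $K(\rho)=\bigcap_k \overline{\rho^{-k}(\inter Y)}$ is a nested intersection of $F$-puzzle pieces (connectedness of $K(\rho)$ ensures each preimage is a single disk), whereas you unpack the two inclusions separately, using a clopen argument for $K(\rho)\subseteq\fib(c)$ and an explicit check that iterates remain in $\inter Y$ for $\fib(c)\subseteq K(\rho)$; the content is the same, and your version has the virtue of making the role of the compact containment $Y\subset\inter W$ explicit in the second inclusion.
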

\begin{proof}
Since by our definition of renormalization the filled Julia set $K(\rho)$ of $\rho$ is connected (see the remark above), $K(\rho)$ is equal to the nested intersection of open disks $\rho^{-n}(\inter W)$. As $Y \subset \inter W$, the same is true for the closures of these disks. Each of these closed disks is a puzzle piece of $F$ because $\rho$ is a restriction of $F$ and $Y$, $W$ are puzzle pieces of $F$. Hence $\fib(c)$, as a nested intersection of all puzzle pieces containing $c \in K(\rho)$, is equal to $K(\rho)$.
\end{proof}

\begin{lemma}[Fibers contained in puzzle interior]
\label{Lem:FibersCompactlyContained}
Consider a box mapping $F\colon \U\to\V$ and a non-escaping point $z$. If the orbit of $z$ does not eventually land in an \eqref{Item:NE} component, then the fiber of $z$ is contained in the interior of every puzzle piece it is contained in. 
\end{lemma}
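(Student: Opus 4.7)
My plan is to reduce to a dichotomy for the consecutive inclusion $P_{n+1}(z)\subset P_n(z)$ along the orbit of $z$, and then apply the pigeonhole principle to the finitely many components of $\V$. Since distinct puzzle pieces of the same depth have disjoint closures (a direct induction on depth using Definition~\ref{Def:BoxMap}(2)--(3) and properness of $F$), the unique puzzle piece of a given depth $n$ containing $z$ (hence containing $\fib(z)=\bigcap_m P_m(z)$) is $P_n(z)$. It therefore suffices to prove $\fib(z)\subset\inter P_n(z)$ for every $n$; this will follow once I produce some $m\ge n$ with $P_{m+1}(z)\subset\inter P_m(z)$, because then
\[
\fib(z)\subset P_{m+1}(z)\subset \inter P_m(z)\subset \inter P_n(z),
\]
the last inclusion using $P_m(z)\subset P_n(z)$.

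To locate such an $m$, I would examine the depth-$0$ puzzle piece $Q_k$ containing $z_k:=F^k(z)$. By Definition~\ref{Def:BoxMap}(3), the depth-$1$ piece inside $Q_k$ containing $z_k$ is either all of $Q_k$ (exactly when $Q_k$ is simultaneously a component of $\U$ and of $\V$; call such a $Q_k$ \emph{self-mapping}) or has its closure in $\inter Q_k$. Pulling this back through the proper map $F^k\colon \inter P_k(z)\to\inter Q_k$ yields the dichotomy: either $P_{k+1}(z)=P_k(z)$, or $P_{k+1}(z)\subset\inter P_k(z)$.

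To finish, assume for contradiction that every $Q_k$ with $k\ge n$ is self-mapping. Since $\V$ has only finitely many components, the sequence $(Q_k)_{k\ge n}$ is eventually periodic: there exist $k_0\ge n$ and $s\ge 1$ with $Q_{k_0+s}=Q_{k_0}$, and $F$ restricts to a proper map of each self-mapping component along the way. Composing these restrictions, $F^s$ becomes a proper surjective self-map of $Q_{k_0}$, so $Q_{k_0}$ is an \eqref{Item:NE} periodic component into which the orbit of $z$ lands at time $k_0$, contradicting the hypothesis on $z$. Hence the required $m$ exists. The main step is the dichotomy itself; the only mild technicality is the containment of the pullback inside $\inter P_k(z)$ in the second case, which is immediate from properness of $F^k$ on $\inter P_k(z)$ together with the fact that the depth-$1$ piece at $z_k$ has closure in $\inter Q_k$.
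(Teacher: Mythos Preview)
Your proof is correct and follows essentially the same approach as the paper: look at the depth-$0$ piece around each $F^k(z)$, use Definition~\ref{Def:BoxMap}~\eqref{It:BM3} to set up the dichotomy ``compactly contained versus coincides'', pull back the compact containment via properness when it occurs, and otherwise use finiteness of the components of $\V$ to produce a cycle of \eqref{Item:NE} components. Your write-up makes the dichotomy and the pigeonhole step more explicit than the paper's, but the argument is the same.
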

\begin{proof}
It suffices to prove that the interior of every puzzle piece around $z$ contains another puzzle piece around $z$ at greater depth.   To do this, let $W_n$ be the puzzle piece of some depth $n$ containing $z$ and for $k\le n$ denote by $W_{n-k}=F^k(W_n)$ the puzzle piece around $F^k(z)$ at depth $n-k$, so that $W_{0}$ is a component of $\U$.

If any $W_{n-k}$ contains a puzzle piece around $F^k(z)$ at greater depth than $n-k$ that is a proper subset, then this proper subset must  be contained in $\inter W_{n-k}$, and the claim follows by pull-back to $W_n$. Otherwise, in particular $W_0$ is not only a component of $\U$ but also a component of $\V$. As we iterate forward, we cannot keep visiting components of $\U$ that are also components of $\V$ (by finiteness of $\V$ this would yield a cycle of \eqref{Item:NE}-components which is excluded by hypothesis), so we must reach a component of $\U$ that is compactly contained in its component of $\V$, and the claim follows.
\end{proof}

From Lemma~\ref{Lem:FibersCompactlyContained} it follows that if a box mapping $F$ has no \eqref{Item:NE} components, then every component of $K(F)$ is compact and $J(F) = \partial K(F)$. This is of course what is expected in dynamics, and it is the case for dynamically natural box mappings (see Definition~\ref{Def:NaturalBoxMapping}). 

This is true, in particular, for non-renormalizable box mappings (which are, in turn, non-box renormalizable), as well as for the box mappings that we will extract in Section~\ref{Sec:RRPProof} for Newton maps. 

We will be using one of the rigidity theorems by Kozlovski and van Strien. This result plays a crucial role in their study of rigidity for multicritical complex \cite{KvS09} and real \cite{KSS} polynomials (see also \cite{KL09} for the original proof of \emph{the Kahn--Lyubich Covering Lemma}, a crucial technical ingredient used to obtain the all-important complex bounds in the complex case). In order to quote it, we need some additional notation.

For a point $x \in \U$, set $m_F(x) := \modulus (\inter P_0(x) \sm P_1(x))$. Define 
\begin{equation*}
\begin{aligned}
K_\delta (F) &:= \left\{y \in K(F) : \limsup_{k \ge 0} m_F(F^k(y)) > \delta \right\} \text{ and }  
J_\delta(F) := \partial K_\delta (F) \cap \U; \\
&\Kwi(F) := \bigcup_{\delta>0} K_\delta(F) \text{ and } \Jwi(F) := \bigcup_{\delta>0} J_\delta(F).
\end{aligned}
\end{equation*}
The set $\Kwi(F)$ is invariant under $F$ and consists of all points in $K(F)$ whose orbits visit from time to time some components of $\U$ that are ``well inside'' the corresponding components of $\V$. In other words, a point $x \in K(F)$ fails to lie in $\Kwi(F)$ if the orbit of $x$ converges to the boundary of $\U$ in such a way that the modulus of the largest annulus within $\V$ around the components of $\U$ through which the orbit travels goes to zero. Clearly, $\Kwi(F)$ does not contain points that are mapped to an \eqref{Item:NE} component.  

Using this notation, there is the following rigidity result for complex box mappings from \cite{DKvS} (rephrased using the language of fibers):

\begin{theorem}[Dynamical rigidity for complex box mappings]
\label{Thm:RigidityJo}
If $F \colon \mathcal U \to \mathcal V$ is a non-renormalizable complex box mapping without \eqref{Item:NE} components and so that all periodic points are repelling, then each point in its Julia set has trivial fiber or converges to the boundary of\/ $\mathcal U$. In particular, 
each point in $\Jwi(F)$ has trivial fiber. \qed
\end{theorem}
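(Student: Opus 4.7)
The plan is to reduce triviality of the fiber of an arbitrary non-escaping point to triviality of critical fibers, and then to establish the latter via complex bounds obtained from an enhanced nest of puzzle pieces.

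For the reduction, suppose $z \in J(F)$ is non-escaping and its orbit does not converge to $\partial \mathcal U$. Since $F$ has no \eqref{Item:NE} components, Lemma~\ref{Lem:FibersCompactlyContained} puts every fiber in the interior of each puzzle piece containing it; in particular, every periodic point has its fiber equal to itself by the hypothesis that such points are repelling (a standard linearization argument). If $\omega(z)$ meets a periodic fiber but the orbit avoids it, Lemma~\ref{Lem:AccumAtPeriod} gives $\fib(z) = \{z\}$ directly. If $\omega(z) \cap \Crit(F) = \emptyset$, then beyond some iterate the orbit avoids all critical puzzle pieces at a suitable depth, and a Koebe-distortion argument (together with Lemmas~\ref{Lem:Fact} and~\ref{Lem:Diam}) shrinks the nest of puzzle pieces around $z$ to a point, using that the surrounding annuli retain uniformly bounded geometry along the orbit because it does not escape to $\partial \mathcal U$. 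One is therefore reduced to the case $\omega(z) \cap \Crit(F) \neq \emptyset$.

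The crux is then to show that every critical point $c$ whose orbit stays away from $\partial \mathcal U$ has $\fib(c) = \{c\}$. I would use the enhanced-nest construction of Kozlovski, Shen, and van Strien: a sequence of successor puzzle pieces $P^{(k)}(c)$ generated by carefully chosen first-return and first-landing operators, so that each containment $P^{(k+1)}(c) \subset \inter P^{(k)}(c)$ is realized by a map of uniformly bounded degree. Combined with the Kahn--Lyubich Covering Lemma, this produces an infinite subsequence of annuli $\inter P^{(k)}(c) \sm P^{(k+1)}(c)$ with moduli bounded below by a positive constant depending only on $F$. The non-renormalizability hypothesis is precisely what prevents the degenerate alternative in which moduli collapse along an invariant subnest, since such a subnest would yield a polynomial-like renormalization with connected filled Julia set by the straightening theorem (compare Lemma~\ref{Lem:FibersJulia}). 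The Gr\"otzsch inequality then forces $\fib(c) = \{c\}$. Having critical fibers trivial, a general non-escaping $z$ accumulating on some $c$ is handled by pulling back the definite-modulus annuli around $c$ along the successive close approaches of its orbit: Lemma~\ref{Lem:FirstTimeToUnion} bounds the degrees, Lemma~\ref{Lem:AnnulusLemma} controls the modulus loss, and non-convergence of the orbit to $\partial \mathcal U$ guarantees the pullbacks do not degenerate. The second assertion is immediate, since $z \in \Jwi(F)$ forbids convergence of the orbit to $\partial \mathcal U$ by the very definition of $K_\delta(F)$.

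The principal obstacle is the complex-bounds step for critical fibers: the enhanced nest and the Covering Lemma are the deep ingredients, and it is precisely here that non-renormalizability enters essentially. The surrounding reductions are comparatively routine combinatorial pullback arguments in the spirit of Lemma~\ref{Lem:AccumAtPeriod} and Corollary~\ref{Cor:AccumAtPeriodDeg}.
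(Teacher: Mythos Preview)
The paper does not prove this theorem at all: the \qed is a citation marker, and the statement is explicitly attributed to \cite{DKvS} (with the underlying complex bounds coming from \cite{KvS09} and the Kahn--Lyubich Covering Lemma \cite{KL09}). Your sketch correctly identifies the architecture of the argument as it appears in that line of work---reduction to critical fibers, enhanced nest, Covering Lemma for complex bounds, non-renormalizability to rule out the degenerate branch---so there is nothing to compare against within this paper. If anything, be aware that the reduction from a general non-escaping $z$ to critical fibers is slightly more delicate than you indicate (one typically passes through successive first-return constructions rather than a single Koebe argument), but this is handled in the cited sources and is not something the present paper supplies.
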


Now we are able to prove the first of our three main results, Theorem~\ref{Thm:RigidityComplexBox} which claim that for any box mapping and an arbitrary point $z$, at least one of the following holds: $z$ has trivial fiber, it is renormalizable, it converges to the boundary, or it lands in an \eqref{Item:NE} component. 


\begin{proof}[Proof of Theorem~\ref{Thm:RigidityComplexBox}] 

Let $F \colon \U \to \V$ be the given box mapping, and $z \in K(F)$ be a point in the non-escaping set of $F$. If the orbit of $z$ lands in one of the \eqref{Item:NE} components of $F$, or if it converges to the boundary, then we are in case \eqref{Item:NE} or \eqref{Item:CB} of the theorem, and we are done. Excluding these two possibilities, we will show that either the orbit of $z$ lands in a renormalizable fiber (case \eqref{Item:R}), or $\fib(z)$ is trivial (case \eqref{Item:T}). 

If $F$ has at least one renormalizable critical point $c$, and the orbit of $z$ lands in $\fib(c)$ (which is the filled Julia set of the corresponding renormalization by Lemma~\ref{Lem:FibersJulia}), then we are in case \eqref{Item:R} of the theorem. If the orbit accumulates on $\fib(c)$ but does not land there, then we are in case \eqref{Item:T}: since $\orb(z)$ and hence $\omega(z)$ are disjoint from \eqref{Item:NE} components, the fiber of each point in $\omega(z)$ is contained in the interior of every puzzle piece in which it is contained (by Lemma~\ref{Lem:FibersCompactlyContained}), 
so by Lemma~\ref{Lem:AccumAtPeriod} it follows that $\fib(z) = \{z\}$.

Otherwise, defining $\mathcal C \subset \Crit(F)$ as the set of all renormalizable critical points, as well as those critical points whose orbits land in renormalizable fibers, there is a depth $s \ge 1$ such that the orbit of $z$ is disjoint from the set $B := \bigcup_{c \in \mathcal C} P_s(c)$. Since $\orb(z)$ does not converge to the boundary of $\U$, it must have at least one accumulation point $x \in \U$ such that $P_s(x)$ is disjoint from $B$. By choosing $r > s$, we can assure that the orbits of the critical points in $\mathcal C$ are disjoint from $P_r(x)$.

Write $V := \inter P_r(x)$ and construct a box mapping as in Lemma~\ref{Lem:Simple}: define $\V'$ to be the union of $V$ and all the components of $\DomE(V)$ intersecting $\Crit(F)$, set $\U' := \Dom(\V')$, and let $F' \colon \U' \to \V'$ be the first return map to $\V'$. By Lemma~\ref{Lem:Simple}, $F'$ is a complex box mapping. Since $F'$-puzzle pieces are also $F$-puzzle pieces, the notions of fiber for $F$ and for $F'$ coincide for $K(F')$.

By Lemma~\ref{Lem:FirstReturnConstruction}~\eqref{It:FRM4}, since $F'$ is a first return map, the $F$-orbit of $z$ intersects $K(F')$. Moreover, $\Crit(F') \subset \Crit(F)$ (Lemma~\ref{Lem:Simple}), and the choice of $r$ implies that all the critical points of $F'$ are non-renormalizable.

The box mapping $F'$ has no \eqref{Item:NE} components, it is not renormalizable and has only repelling periodic points: the latter property follows because every sufficiently deep puzzle piece around a non-repelling periodic point gives rise to a polynomial-like map of degree at least $2$, and this requires a renormalizable critical point. 
Hence, by Theorem~\ref{Thm:RigidityJo}, each point in $J(F')$ either has trivial fiber, or converges to $\partial \U'$. Since $F^k(z) \in K(F') = J(F')$ for some $k$, and since we have already excluded case~\eqref{Item:CB}, we conclude that $\fib(F^k(z)) = \{F^k(z)\}$, and the same is true for $z$. \end{proof}

We end this section by quoting some further definitions from~\cite{DKvS}. We will use these notions in later sections.

For a complex box mapping $F \colon \U \to \V$, let $A \subset K(F)$ be a finite set and $W$ be a union of finitely many open puzzle pieces of $F$. We say that $W$ is an \textit{open puzzle neighborhood} of $A$ if $A \subset W$ and each component of $W$ intersects the set $A$.

Define $\Koc(F)$ to be the set of all points $x \in K(F)$ such that the orbit of $x$ is disjoint from some open puzzle neighborhood of $\Crit(F)$. Similarly, define $\Joc(F) = J(F) \cap \Koc(F)$. 

\begin{definition}[Dynamically natural complex box mapping]
\label{Def:NaturalBoxMapping}
A complex box mapping $F \colon \U \to \V$ is \emph{dynamically natural} if it satisfies the following assumptions:
\begin{enumerate}
\item
\label{It:Nat1}
$F$ does not have components of \eqref{Item:NE} type;
\item
\label{It:Nat2}
$K(F) = \Kwi(F)$;
\item
\label{It:Nat3}
the Lebesgue measure of the set $\Koc(F)$ is zero.
\end{enumerate}
\end{definition}

As a heuristic principle, the box mappings that arise \emph{naturally} as restrictions of some globally defined rational maps are dynamically natural in the sense of the definition above. This is the case for box mappings constructed for complex polynomials  in \cite{KvS09} and \cite{ALdM}. In Section~\ref{Sec:RRPProof}, we show that the box mappings that are induced by the Newton dynamics are also dynamically natural. We use this fact in Section~\ref{Sec:ProofParamRigidity} to conclude parameter rigidity for Newton maps.

Dynamically natural box mappings may have orbits that converge to the boundary, i.e.\ of \eqref{Item:CB} type. However, condition~\eqref{It:Nat2} imposes a restriction on those orbits: the convergence to the boundary cannot be ``sudden''.

\section{Puzzles for Newton maps}
\label{Sec:Puzzles}

\subsection{Construction of Newton puzzles.}
\label{SSec:ConstructionOfNewtonPuzzles}

The proof of the Dynamical Rigidity for Newton maps (Theorem~\ref{Thm:RRP}) will rely on the puzzle construction for Newton maps introduced in \cite{DLSS}. We will review the key steps of this construction, together with some of its properties that we will use in the further sections.

In the polynomial case, puzzles were constructed by Branner--Hubbard  and Yoccoz (with much further work since then) starting with neighborhoods $S_n$ of the filled-in Julia set that are bounded by suitably chosen equipotential curves. These are subdivided by finitely many pairs of dynamic rays landing at common repelling periodic or preperiodic points (for quadratic polynomials, usually at the $\alpha$-fixed point and its iterated preimages). The closures of complementary components of $\ovl{S_n}$ minus the ray pairs are called puzzle pieces of depth $n$) and they form a Markov partition. 

However, for rational maps it is not at all clear how to carry over such a construction; in particular, there are no obvious substitutes for the basin of infinity and the B\"ottcher coordinates available in the polynomial case that give rise to good Markov partitions.


A notable exception are Newton maps of polynomials. For these, puzzles with similar properties as in the polynomial case have recently been constructed in \cite[Theorem~B]{DLSS}. This result will be one of the main ingredients in the construction leading to our Theorem~\ref{Thm:RRP}.

\begin{theorem}[Newton puzzles for Newton maps of polynomials]
\label{Thm:NewtonPuzzles}
Every Newton map $N_p$ has an iterate $g = N_p^M$ for which there exists a finite graph $\Gamma \subset \Cc$  that is $g$-invariant (except possibly in a Fatou neighborhood of the roots), and so that for every $n \geqslant 0$  the complementary components of $g^{-n}(\Gamma)$ that intersect the Julia set are Jordan disks that satisfy the Markov property under $g$. These disk components define a Newton puzzle partition of depth $n$. \qed
\end{theorem}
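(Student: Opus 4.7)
The strategy is to produce $\Gamma$ as the union of finitely many $g$-invariant arcs joining every root of $p$ to the repelling fixed point $\infty$, and then to check that pullback by $g$ yields a Markov partition of the sphere (outside Fatou neighborhoods of the roots). Three structural features of polynomial Newton maps underpin the construction: every immediate root basin is simply connected (Przytycki~\cite{Pr}, Shishikura~\cite{ShishikuraSimplyConnected}); there are no Herman rings \cite{Shishikura}; and the repelling fixed point $\infty$ is accessible from every immediate root basin.

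First, in each immediate basin $U$ of a root $\xi$, I fix a B\"ottcher-type coordinate conjugating $N_p|_U$ to $z\mapsto z^{d_U}$ on $\disk$, and use it to define \emph{internal rays}. The accesses to $\infty$ from $U$ form a finite nonempty set that is permuted by $N_p$ with some period $M_U$. Let $M$ be a common multiple of the periods $M_U$ as $U$ ranges over the immediate basins of all roots, and set $g:=N_p^M$; then every internal ray accessing $\infty$ from an immediate basin is $g$-invariant. Truncate each basin at a $g$-invariant equipotential of sufficiently high potential, and define $\Gamma$ as the union of all roots, the point $\infty$, the truncated fixed internal rays to $\infty$, and finitely many additional $g$-fixed (pre)periodic arcs bridging accesses from different immediate basins (passing to an even higher iterate if necessary to make these bridging arcs $g$-invariant). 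By construction $\Gamma$ is a finite connected graph that is $g$-invariant outside the cut-off Fatou neighborhoods of the roots.

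Next, I consider the pullbacks $\Gamma_n:=g^{-n}(\Gamma)$. The $g$-invariance of $\Gamma$ gives a nested sequence $\Gamma\subset\Gamma_1\subset\Gamma_2\subset\cdots$ outside the truncated neighborhoods. The complementary components of $\Gamma$ intersecting $J(N_p)$ are Jordan disks because $\Gamma$ is a connected graph in $\Cc$ and the root basins it enters are simply connected. Inductively, the Riemann--Hurwitz formula applied componentwise to $g$ propagates the Jordan-disk property to every depth, using that critical points of $g$ in the basins are absorbed by the truncated Fatou neighborhoods and so do not disturb the lifts of the complementary components meeting $J(N_p)$. The Markov property is then immediate from the nesting: a depth-$(n+1)$ piece is mapped properly by $g$ onto a depth-$n$ piece.

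The main obstacle lies in the very first step: assembling a \emph{single} finite, $g$-invariant, connected graph that contains $\infty$ as a vertex and reaches every immediate basin. Accessibility of $\infty$ from each immediate basin already requires the simply-connectedness theorems cited above; more seriously, connecting the accesses from different immediate basins into one connected graph requires constructing finitely many preperiodic ``channel-connecting'' arcs whose orbits close up after bounded iteration, which is the substantive combinatorial content of the Newton-graph construction of \cite{DLSS}. Once the graph $\Gamma$ is in place, the pullback and Markov-partition arguments are largely routine.
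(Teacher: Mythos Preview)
Your proposal has the right architecture but contains a real gap at the step you treat as automatic, and it misidentifies both the role of the iterate and the main obstacle. The accesses to $\infty$ from each immediate basin are already \emph{invariant} under $N_p$ (up to homotopy) --- there are $k(\xi)-1$ of them and each is individually fixed \cite{HSS} --- so your common-multiple-of-access-periods yields $M=1$ and does not explain why a high iterate is required. Moreover the channel diagram $\Delta$ (roots joined to $\infty$ by the fixed internal rays) is already a connected star centred at $\infty$, so ``connecting accesses from different immediate basins'' is not the difficulty either.

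The genuine gap is the assertion that the complementary components of $\Gamma$ meeting $J(N_p)$ are Jordan disks ``because $\Gamma$ is a connected graph in $\Cc$.'' Complementary components of a finite connected planar graph are simply connected, but their boundaries need not be Jordan curves: the unbounded component of a figure-eight has a boundary that passes through the cut vertex twice. For Newton graphs this is exactly what goes wrong. When an immediate basin has $k(\xi)-1\ge 2$ channels to $\infty$, the (pre)poles on the boundary of a complementary component of $\Delta_n$ can be accessible from within that single component in more than one way, so one obtains pinched disks rather than Jordan disks. The paper singles this out explicitly as ``the main technical difficulty,'' and your Riemann--Hurwitz induction can only propagate the Jordan-disk property once it already holds at depth zero --- it cannot manufacture it.

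The actual remedy in \cite{DLSS}, sketched in the paper immediately after the theorem, is not bridging arcs between basins but augmenting $\Delta$ by a finite union of topological \emph{circles} $X_V\subset\Delta_K$, one per component $V$ of $\Cc\sm\Delta$, each passing through the roots on $\partial V$ and separating $\infty$ from all critical values in $V$ (Theorem~\ref{Thm:MagicLevels}~(\ref{Prop:Circles})); these circles resolve the pinch points. The iterate $M=N-1+K$ (with $N$ the least level at which $\Delta_N$ contains all poles) is then chosen so that $g^{-1}(\Delta^+_{nM})=\Delta^+_{(n+1)M}$ for $n\ge 2$, and it is that identity which simultaneously secures the Markov property and the Jordan-disk property at every depth.
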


We will unwrap they key steps in Theorem~\ref{Thm:NewtonPuzzles}, in particular, how we construct $\Gamma$. But first let us discuss the possible exception to forward invariance of $\Gamma$: every root has a compact and forward invariant neighborhood within its Fatou component (the immediate basin) in which $\Gamma$ may fail to be  invariant. 

It turns out that it  is much more convenient to work with an important and only mildly restricted class of Newton maps:


\begin{definition}[Attracting-critically-finite Newton maps]
\label{Def:AttractingCriticallyFinite}
A polynomial Newton map is called \emph{attracting-critically-finite} if all critical points that converge to a root actually land on the root after finitely many iterations.
\end{definition}

Theorem~\ref{Thm:NewtonPuzzles} can be strengthened for attracting-critically-finite Newton maps: in this case, the graph $\Gamma$ can be chosen to be $g$-invariant without exception (see \cite[Theorem~3.10]{DLSS}).  

The reason that attracting-critically-finite Newton maps are no serious restriction is that, by the means of a standard quasiconformal surgery, every polynomial $p$ has an associated polynomial $\tilde p$ so that the associated Newton maps $N_p$ and $N_{\tilde p}$ are related as follows \cite[Proposition 2.8]{DLSS}: there exists a quasiconformal homeomorphism $\tau \colon \Cc \to \Cc$ with $\tau(\infty)=\infty$ such that
\begin{enumerate}
\item
$N_{\tilde p}$ is attracting-critically-finite;
\item
$\deg p\ge \deg N_p = \deg N_{\tilde p}=\deg \tilde p $;
\item
$\tau$ conjugates $N_p$ and $N_{\tilde p}$ in neighborhoods of the Julia sets of $N_p$ and $N_{\tilde p}$ union all Fatou components (if any) that do not belong to the basins of roots; moreover, $\tau$ has non-zero dilatation only in the basin of the roots, but not on the Julia set (if it has positive measure) or on Fatou components away from the root basins.
\end{enumerate}

Recall that the \emph{immediate basin} $U_\xi$ of a root $\xi$ is the component of the basin of $\xi$ that contains $\xi$. The quasiconformal surgery that turns $N_p$ into $N_{\tilde p}$ is described in \cite{DMRS}; the idea is as follows. For every root $\xi$, the immediate basin $U_\xi$ is simply connected \cite{Pr}, and the restriction of $N_p$ to $U_\xi$ is a proper self-map of some degree $k=k(\xi)\ge 2$. The quasiconformal surgery consists of replacing a disk neighborhood of the root within $U_\xi$ by a disk with dynamics $z\mapsto z^k$; the degree of the self-map on $U_\xi$ is unchanged by this procedure, but afterwards there is a single critical point in $U_\xi$, and it is a fixed point (hence a simple root of $\tilde p$). The degree of $N_p$ will drop if $\xi$ was a multiple root of $p$. Moreover, the dynamics in the preimage components of $U_\xi$ is adjusted so as to make sure that all critical points in any preimage component coincide and land exactly on the root. All this can be accomplished by a quasiconformal surgery within a compact subset of finitely many components of the basin. 

The condition that $N_{\tilde p}$ is attracting-critically-finite assures that the dynamics of $N_{\tilde p}$ restricted to the basins of the roots is postcritically finite, while keeping the dynamics elsewhere unchanged (there may for instance still be critical points with dense orbits in the Julia set). This condition implies that the roots of $\tilde p$ are simple, so they are critical fixed points of $N_{\tilde p}$, and these are the only critical points in the immediate basins (possibly of higher multiplicities as critical points, but not as roots). Condition (3) implies that the dynamics of fibers of points in the Julia sets are the same, up to quasiconformal conjugation; in particular, triviality of fibers is preserved.

It is known that for every immediate basin $U_\xi$ the boundary point $\infty$ is always accessible through one or several accesses that are invariant up to homotopy; this number of invariant accesses equals $k(\xi)-1$, i.e.\ is one less than the degree of $N_p$ as a self-map of the immediate basin of the root \cite[Proposition 6]{HSS}. For the modified Newton map $N_{\tilde p}$, the accesses to $\infty$ are in fact invariant as curves, without need for a homotopy; this is the point of the surgery. Since $\tau$ must map basins and in particular immediate basins to basins and immediate basins, and it is a conjugation in a neighborhood of $\infty$, it must respect accesses to $\infty$ up to homotopy within immediate basins, and in particular the circular order at $\infty$ of these accesses (quasiconformal homeomorphisms preserve orientation). 

From now on we will assume that our Newton maps are attracting-critically-finite. This property allows us to define the basic combinatorial object associated to a Newton map, the channel diagram and eventually fibers. By definition (see \cite{HSS, DMRS} for a detailed discussion), the \emph{channel diagram} is a finite topological graph $\Delta$ such that its vertex set consists of all fixed points of $N_p$ (that is, $\infty$ and the roots of $p$), and each edge  of $\Delta$ is invariant under the dynamics and connects $\infty$ to some root within the respective immediate basin. The union of the interiors of all  such fixed rays across all immediate basins gives the edge set of $\Delta$. 
 
By construction, the graph $\Delta$ is invariant (as a set) under $N_p$; it encodes the mutual locations of the immediate basins. The \emph{Newton graph at level $n$} (denoted by $\Delta_n$) is the connected component of $N_p^{-n}(\Delta)$ containing $\infty$. Each edge of the Newton graph is an iterated preimage of a fixed ray in an immediate basin, and hence the Newton graph intersects the Julia set of $N_p$ only at $\infty$ and its iterated preimages, and intersects the Fatou set of $N_p$ along basins of roots. By construction, $\Delta_n \subset \Delta_{n+1}$ for all $n \ge 0$.
 
The Newton graph at level $n$ is the foundation for the definition of the puzzle of depth $n$. The construction rests on the following theorem, which is a compilation of two results, \cite[Theorem 3.4]{DMRS} and \cite[Proposition 3.1]{DLSS}.

\begin{theorem}[Connectivity properties of Newton graphs]
\label{Thm:MagicLevels}
Let $N_p$ be an attracting-critically-finite Newton map; then
\begin{enumerate}
\item
\label{Thm:PolesInGraph}
there exists a least integer $N$ so that $\Delta_N$ contains all poles of $N_p$;
\item
\label{Prop:Circles}
there exists a least integer $K > N$ so that for every component $V$ of $\Cc \sm \Delta$ there exists a topological circle $X_V \subset \Delta_K \cap \overline V \cap \C$ that passes through all finite fixed points in $\partial V$, separates $\infty$ from all critical values of $N_p$ in $V$, and does not contain a point on a critical orbit, except the roots (see Figure~\ref{Fig:Circles}). 
\qed
\end{enumerate}
\end{theorem}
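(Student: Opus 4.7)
For part~(1), the strategy is to show that every pole of $N_p$ eventually lies in a connected component of some iterated preimage of $\Delta$ that contains $\infty$. Each pole $q$ satisfies $N_p(q) = \infty \in \Delta$, so certainly $q \in N_p^{-1}(\Delta)$; the real question is whether $q$ lies in the specific component $\Delta_1$ or, failing that, in some $\Delta_n$ with $n$ finite. I would analyze each immediate basin $U_\xi$: by~\cite{HSS} it carries exactly $k(\xi)-1$ invariant accesses to $\infty$ (the fixed rays forming the edges of $\Delta$ in $U_\xi$), and any $N_p$-preimage component of $U_\xi$ inherits analogous accesses to one of the poles. Pulling back the fixed rays step by step and using simple connectivity of basin components (Przytycki~\cite{Pr}), one links every pole to $\infty$ through a finite chain of edges inside $N_p^{-n}(\Delta)$; since the number of poles is bounded by $\deg N_p - 1$, a single uniform $N$ captures all of them.

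For part~(2), the construction of $X_V$ in each complementary component $V$ of $\Delta$ exploits the fact that once all poles lie in $\Delta_N$, the graph $\Delta_K$ for $K > N$ develops an increasingly rich net of preimage edges inside $\overline V$. The boundary $\partial V$ is a closed polygonal curve made of $\Delta$-edges meeting at $\infty$ and at a finite collection of roots of $p$. I would build $X_V$ by walking around $\partial V$: between any two consecutive roots $\xi_1, \xi_2 \in \partial V$ I concatenate preimages of fixed rays landing at $\xi_1$ and $\xi_2$ inside $\overline V$, and join them through a pole lying in $\Delta_N \cap \overline V$. Iterating this around $\partial V$ yields a closed loop through all the finite fixed points on $\partial V$.

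Two additional requirements then have to be arranged: separation of $\infty$ from the critical values in $V$, and avoidance of non-root critical orbit points. The first is achieved because as $K$ increases, the preimage edges can be taken arbitrarily close to $\partial V$, so the bounded region cut out by $X_V$ inside $V$ grows and eventually traps the finitely many critical values of $N_p$ in $V$. The second is a genericity argument: the set of non-root critical orbit points is at most countable, while at each sufficiently large depth $K$ one has multiple choices of preimage edges at each pole, so one can route $X_V$ around these finitely many problematic points.

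The main technical obstacle is gluing the local preimage pieces at each root of $\partial V$ into one global \emph{simple} closed curve with the prescribed separation property. Concretely, one must consistently track the Markov structure of $\Delta_K$ and the cyclic order of preimage edges around each pole in $\overline V$, so that the concatenated loop does not self-intersect and does not enclose $\infty$ on the wrong side. This combinatorial bookkeeping is exactly where \cite[Theorem~3.4]{DMRS} (controlling preimages of $\infty$ in the Newton graph) and \cite[Proposition~3.1]{DLSS} (producing the circles $X_V$) do their real work, and any honest proof must reproduce this argument in detail.
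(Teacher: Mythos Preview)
The paper does not supply a proof of this theorem at all: it is stated as a quotation, explicitly introduced as ``a compilation of two results, \cite[Theorem 3.4]{DMRS} and \cite[Proposition 3.1]{DLSS}'', and closed with a bare \qed. So there is no ``paper's own proof'' to compare your proposal against; the authors simply import the result.

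Your outline is a reasonable heuristic for how those cited proofs go, and you are honest about where the real difficulty lies. A couple of points where your sketch is looser than it should be: in part~(1), the assertion that pulling back fixed rays ``links every pole to $\infty$ through a finite chain of edges'' is exactly the nontrivial content of \cite[Theorem~3.4]{DMRS}, and it does not follow just from simple connectivity of basin components --- one needs an argument that the connected component of $N_p^{-n}(\Delta)$ through $\infty$ eventually absorbs every pole, which uses the structure of accesses and the fact that $\infty$ is a repelling fixed point. In part~(2), your claim that ``as $K$ increases, the preimage edges can be taken arbitrarily close to $\partial V$'' is not quite the mechanism; the circles $X_V$ are built at a fixed finite level, not as a limit, and the separation of critical values from $\infty$ comes from a specific combinatorial choice of edges rather than from approximation. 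You correctly flag the gluing into a \emph{simple} closed curve as the crux, and that is indeed where \cite[Proposition~3.1]{DLSS} does the work. As a proof proposal this is a fair roadmap, but it would not stand on its own without reproducing those arguments.
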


\begin{figure}[htbp]
\begin{center}
\includegraphics[scale=0.43]{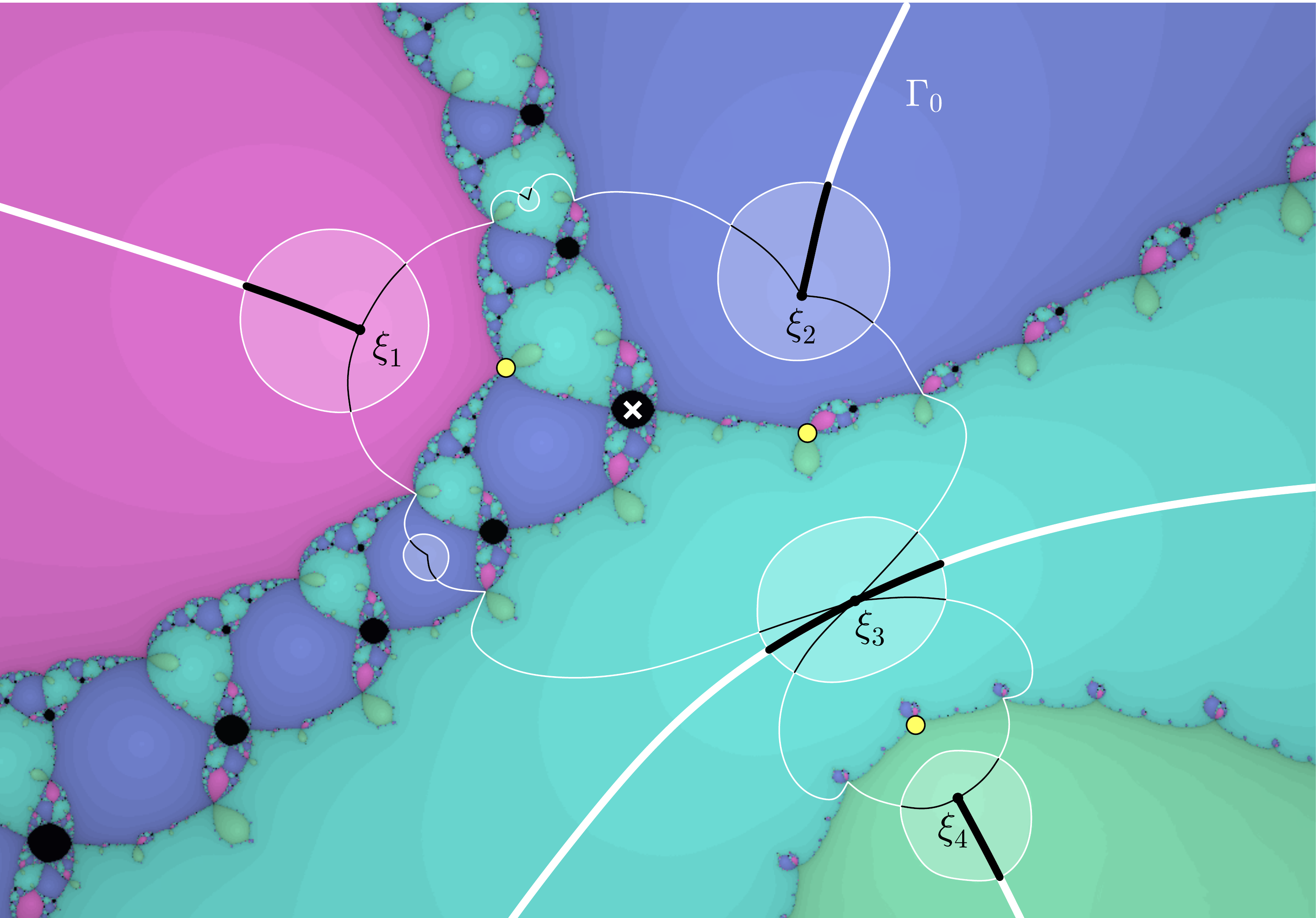}
\caption{Example of the dynamical plane of a degree 4 Newton map. The thick lines (in white and black) form the channel diagram $\Delta$; it connects the roots $\xi_1, \ldots, \xi_4$ to $\infty$. The three yellow dots mark the poles, and the white cross marks the ``free'' critical point: in this example, $\xi_1$, $\xi_2$, and $\xi_4$ are simple critical points, $\xi_3$ is a critical point of multiplicity two, it is responsible for two accesses to $\infty$ in $U_{\xi_3}$, and the remaining ``free'' critical point is renormalizable. The lighter-colored disks around the roots are bounded by suitably chosen equipotentials in the basins of roots. The topological circles passing through the roots, each in its own connected component of $\Cc \sm \Delta$, are shown in thin lines (black within the light disks around roots and their preimages, white otherwise). The set $S_0$ is the complement in $\Cc$ of the closed light disks. The Newton puzzle boundary of depth $0$, denoted $\Gamma_0$, consists of all white lines  (the disk boundaries, the rays connecting them to $\infty$, and the topological circles constructed in Theorem~\ref{Thm:MagicLevels}~\eqref{Prop:Circles}, except the parts within the disks around the roots and their preimages).}
\label{Fig:Circles}
\end{center}
\end{figure}

Finally, we are ready to complete the explanation of Theorem~\ref{Thm:NewtonPuzzles} and present the construction of puzzles that was carried out in \cite[Section 3]{DLSS}. Our puzzles will be defined for a suitable iterate of the Newton map $N_p$. Let $X := \bigcup_V X_V$, where the union is taken over all components $V$ of $\Cc \sm \Delta$ of the circles described in Theorem~\ref{Thm:MagicLevels}~(\ref{Prop:Circles}). Since $X \subset \Delta_K$, it follows that $N_p^K(X)=\Delta$, and $K$ is minimal with this property.

Define $M := N - 1 + K$. By Theorem~\ref{Thm:MagicLevels}~(\ref{Thm:PolesInGraph}), $M$ is the smallest index so that $\Delta_M$ contains all prepoles of level $K$ (where a point $z$ is called a \emph{(pre-)pole of level $n > 0$} if $n$ is minimal such that $N_p^{n}(z) = \infty$; with this definition, $K$ equals to the largest level of (pre-)poles in $X$. Indeed, since $\infty \in \Delta_N$, every component of $N_p^{-1}(\Delta_N)$ contains a pole. Hence, $\Delta_{N+1} = N_p^{-1}(\Delta_N)$ because $\Delta_N \subset \Delta_{N+1}$ and $\Delta_N$ contains all the poles. Therefore, $\Delta_{N+1}$ contains all the prepoles of level 2. Proceeding inductively, we see that $\Delta_M$ contains all prepoles of level $K$, and $M$ is the smallest with such property (as $N$ was also the smallest). Note that $X \subset \Delta_M$ because $X \subset \Delta_K$ and $\Delta_K \subset \Delta_M$. 

Write $g:= N_p^M$ for the $M$-th iterate of $N_p$. This is the iterate for which we construct Newton puzzles. Our basis is the Newton graph $\Delta_n$: the components of $\Cc \sm \Delta_n$ have the Markov property (Definition~\ref{Def:Markov}); this is discussed in \cite[Section 3]{DLSS}. However, and this is the main technical difficulty, these components do not necessarily have Jordan boundaries: some (pre-)poles on the boundary can be accessible in more than one way, like the point $\infty$ for some immediate basins. It turns out that this problem can be remedied by adding $X$ to $\Delta$ and pulling back; passing to the iterate $g$ is required to make the resulting graphs connected and forward invariant. The details are as follows. 

Define $\Delta_0^+ := \Delta \cup X$, and for $n>0$ let $\Delta_n^+$ be the component of $N_p^{-n}(\Delta_0^+)$ containing $\infty$; this is in analogy to the construction of $\Delta_n$. By \cite[Lemma 3.6~(3)]{DLSS},
\begin{equation}
\label{Eq:PuzzleLevels1}
g^{-1}(\Delta_{nM}^+) = N_p^{-M}(\Delta_{nM}^+) = \Delta_{(n+1)M}^+ \text{ for all }n \geqslant 2.
\end{equation}

We set $\Gamma_0' := \Delta_{2M}^+$, and similarly $\Gamma_n' := \Delta_{(n+2)M}^+$ for all $n > 0$. In this notation, Property~(\ref{Eq:PuzzleLevels1}) transforms to 
\begin{equation}
\label{Eq:PuzzleLevels2}
g^{-1}\left(\Gamma_n'\right) = \Gamma_{n+1}' \supset \Gamma_n' \text{ for all } n \geqslant 0,
\end{equation}
where the last inclusion follows from \cite[Lemma 3.6 (2)]{DLSS}. 

In order to construct puzzles, pick an equipotential curve in each of the immediate basins of roots, for some value of the potential, and consider the disks around the roots bounded by these curves. Define $S_0$ as the unique unbounded component in the complement of the union of these disks and all their pullbacks under $N_p$ to the components of root basins that intersect $\Gamma_0'$  (see Figure~\ref{Fig:Circles}). For a given $n \ge 1$, inductively define $S_n$ as the unbounded component of $g^{-1}(S_{n-1})$. Since the roots of $p$ are superattracting fixed points of $g$, we have $S_0 \supset S_1 \supset \ldots \supset S_n \supset \ldots$. Moreover, $\bigcap_{n \ge 0} S_n$ is the complement of the root basins, and hence it contains only those critical points of $g$ that are not mapped to the roots of $p$  by some iterate of $g$ (or equivalently, of $N_p$). 

\begin{definition}[Newton puzzles]
\label{Def:NewtonPuzzles}
For a given $n \geqslant 0$, the \emph{Newton puzzle of depth $n$} is the graph $\Gamma_n := \partial S_n \cup (\Gamma_n' \cap S_n)$.  The closure of a connected component of $\ovl S_n \sm \Gamma_n$ is a \emph{Newton puzzle piece of depth $n$}.
\end{definition}

The Newton puzzle pieces of depth $n$ provide a tilling of $\ovl{S_n}$. Theorem~\ref{Thm:NewtonPuzzles} guarantees that these puzzle pieces are closed topological disks with Jordan boundaries, and that they have the Markov property in the sense of Definition \ref{Def:Markov} (essentially, this follows from (\ref{Eq:PuzzleLevels2})). (See also \cite[Theorem~3.9]{DLSS} for a stronger result.) 

\subsection{Properties of Newton puzzles.}
\label{Subsec:PropNewtonPuz}

Let us review the properties of Newton puzzles proven in \cite{DLSS}. As mentioned above, we will be working with the iterate $g$ of an attracting-critically-finite Newton map $N_p$ for which we have well-defined puzzle pieces. For simplicity, we will keep calling $g$ a Newton map, even though it is an iterate of a Newton map. 

It follows directly from the construction that for every puzzle piece $P_n$ of depth $n$ the ``Julia boundary'' $\partial P_n \cap J(g)$ is a finite set consisting of (pre)poles of level at most $K + (n+2)M$, whereas $\partial P_n$ intersects the Fatou set of $g$ within the basins of the roots, and $\partial P_n$ intersects any particular Fatou component along two pieces of fixed or pre-fixed internal rays and an arc of some equipotential. 

From Definition~\ref{Def:PuzzleCentered} it is clear that if $x\in J(g)$ is neither $\infty$, nor a (pre)pole, then $P_n(x)$ is the unique puzzle piece of depth $n$ that contains $x$. Otherwise, $P_n(x)$ is a finite union of puzzle pieces with $x$ as their common boundary point. 

The following two lemmas are \cite[Theorem 3.9 (4) and (5)]{DLSS}.

\begin{lemma}[Infinity and (pre)poles have trivial fibers]
\label{Lem:FibersInfinity}
If $x$ is $\infty$, a pole or a prepole, then $\fib(x) = \{x\}$.\qed
\end{lemma}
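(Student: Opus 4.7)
The plan is to exploit the fact that $\infty$ is a repelling fixed point of $N_p$, hence of $g=N_p^M$, with multiplier $\lambda$ of modulus strictly greater than $1$, and that $g$ is a local biholomorphism at $\infty$. Let $\psi\colon U\to D$ be a K\"onig linearization of $g$ at $\infty$ on a small neighborhood $U$, so that $\psi(\infty)=0$ and $\psi\circ g\circ\psi^{-1}(w)=\lambda w$. Because $g$ is univalent near $\infty$, the only curves of $\Gamma_n$ that can land at $\infty$ are $g$-preimages of curves already landing at $\infty$; iterating shows these must be exactly the $g$-invariant edges of the channel diagram $\Delta$ accumulating at $\infty$. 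In particular, the family of curves in $\Gamma_n$ meeting $\infty$ is the \emph{same} for every $n$, and these curves divide a neighborhood of $\infty$ into the same finitely many sectors at every depth.

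First I would prove $\fib(\infty)=\{\infty\}$. In each sector, the intersection $P_n(\infty)\cap U$ is bounded by two fixed rays and a cross-cut in $\Gamma_n$; by the Markov property together with univalence of $g$ at $\infty$, this cross-cut is the local $g$-preimage of the corresponding cross-cut at depth $n-1$ in the appropriate image sector. Transported through $\psi$, pulling back by $g$ amounts to dividing by $\lambda$, so once a cross-cut has entered $U$ every subsequent one sits at $\psi$-distance of order $|\lambda|^{-n}$ from the origin. Consequently $P_n(\infty)\cap U$ shrinks to $\{\infty\}$ as $n\to\infty$.

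Since $\fib(\infty)=\bigcap_n P_n(\infty)$ is a nested intersection of compact connected sets, it is itself compact and connected. The previous step gives $\fib(\infty)\cap U=\{\infty\}$, so $\infty$ is isolated in $\fib(\infty)$; combined with connectedness this forces $\fib(\infty)=\{\infty\}$. This connectedness trick is what bridges the local and global pictures and lets me sidestep the subtle point of proving a priori that $P_n(\infty)\subseteq U$ for large $n$ (puzzle pieces meeting $\infty$ could in principle extend far from $\infty$ before eventually contracting, and controlling the ``outer'' parts of $P_n(\infty)$ outside the linearization domain is the main obstacle).

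For a (pre)pole $x$ with $g^k(x)=\infty$ (choose $k$ minimal), the map $g^k$ is a branched cover of finite local degree near $x$ onto a neighborhood of $\infty$, and $P_{n+k}(x)$ is contained in the connected preimage component under $g^k$ of $P_n(\infty)$ that contains $x$. Since $P_n(\infty)\to\{\infty\}$ by the previous step, choosing round disks $B\subset B'$ around $\infty$ with $P_n(\infty)\subset B$ eventually and applying the standard diameter-under-pullback estimate (Lemma~\ref{Lem:Diam}) to the branched cover $g^k$ shows that the preimage components around $x$ shrink to $\{x\}$, whence $\fib(x)=\{x\}$.
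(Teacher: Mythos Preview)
The paper does not prove this lemma; it is quoted verbatim from \cite[Theorem~3.9~(4)]{DLSS} (the \qed\ at the end of the statement signals this). So there is no in-paper argument to compare against, and the relevant question is whether your direct proof is complete.

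Your identification of the germ of $\Gamma_n$ at $\infty$ with the fixed rays of the channel diagram is correct and is the right starting point. The reduction from (pre)poles to $\infty$ by bounded-degree pullback is also fine once the $\infty$ case is done. The gap is in the core step, the claim that ``once a cross-cut has entered $U$ every subsequent one sits at $\psi$-distance of order $|\lambda|^{-n}$ from the origin''. Two things are being assumed here that you do not justify. First, for $\gamma_{n+1}^S$ to equal the \emph{local} preimage $h(\gamma_n^S)$ you need $g\colon P_{n+1}^S\to P_n^S$ to be univalent; but $P_n^S$ is a global puzzle piece which could contain critical values of $g$ far from $\infty$, making the degree larger than one. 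Second, even granting degree one, you never establish that $\gamma_n^S$ eventually lies in $U$: the \emph{endpoints} $a_n,b_n$ on the fixed rays do converge to $\infty$ (this follows from the ray dynamics), but the arc joining them could a priori wander far from $\infty$ inside $P_0^S$ while its endpoints shrink. Your connectedness trick correctly reduces $\fib(\infty)=\{\infty\}$ to $\fib(\infty)\cap U=\{\infty\}$, but the latter still needs the cross-cut to eventually separate every $z\in U\setminus\{\infty\}$ from $\infty$, which is exactly what is missing.

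The fix uses the structure you have not touched: by Theorem~\ref{Thm:MagicLevels}\,\eqref{Prop:Circles} the circles $X_V$ separate $\infty$ from all critical values of $N_p$, and by the construction of $\Gamma_0'=\Delta_{2M}^+$ the depth-zero pieces at $\infty$ sit on the $\infty$-side of (iterated preimages of) these circles. This is what forces the pullbacks $g\colon P_{n+1}^S\to P_n^S$ to be unbranched, after which a Denjoy--Wolff/Schwarz-type argument on $\inter P_0^S$ (rather than bare linearization) gives the shrinking. The details are carried out in \cite{DLSS}.
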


\begin{lemma}[Fibers contained in interior]
\label{Lem:FibersCompContained}
Every $x\in\Cc$ that is not in the basin of a root has the property that its fiber is contained in $\inter P_n(x)$ for every depth $n\ge 0$. Stronger yet, for every $n\ge 0$ there is an $m>n$ so that $P_m(x)\subset\inter P_n(x)$. 
\qed
\end{lemma}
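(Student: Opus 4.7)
The plan is to establish the stronger second assertion first; the first then follows immediately, since any inclusion $P_m(x)\subset\inter P_n(x)$ yields $\fib(x)\subset P_m(x)\subset\inter P_n(x)$.

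The proof of the stronger statement will proceed in two stages. In the first stage I will show that $\fib(x)\cap\partial P_n(x)=\emptyset$. This rests on the description of Newton puzzle boundaries recalled at the beginning of Subsection~\ref{Subsec:PropNewtonPuz}: every point of $\partial P_n(x)$ is either one of finitely many (pre)poles on the Julia set, or it lies inside a basin of a root (on the arcs of equipotentials and internal rays that make up the Fatou part of the boundary). A putative fiber-point of $x$ in a basin of a root would, in particular, have to lie in $\ovl{S_m}\supset P_m(x)$ for every $m$; but the construction of the sequence $(S_n)$ in Subsection~\ref{SSec:ConstructionOfNewtonPuzzles}, together with super-attraction of $g$ at each root, guarantees that any fixed point in a root basin is eventually expelled from $\ovl{S_m}$, giving a contradiction. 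A fiber-point $y$ that is a (pre)pole satisfies $g^k(y)=\infty$ for some $k\ge 0$, so by the shift-invariance of itineraries $g^k(x)\in\fib(\infty)$, and Lemma~\ref{Lem:FibersInfinity} then forces $g^k(x)=\infty$: that is, $x$ must itself be a (pre)pole (or $\infty$). But then the same lemma gives $\fib(x)=\{x\}$, and $x$ is an interior point of the union $P_n(x)$ of those puzzle pieces of depth $n$ that meet at it, so the stronger claim is even easier in this case.

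The second stage is a finite-intersection argument that upgrades the disjointness from the first stage to a compact containment. Fix $n$ and consider the family $K_m:=P_m(x)\cap\partial P_n(x)$ for $m>n$. By the Markov property $P_m(x)\subset P_n(x)$, so $K_m=P_m(x)\setminus\inter P_n(x)$; each $K_m$ is compact, and the family is decreasing in $m$. Its total intersection equals $\fib(x)\cap\partial P_n(x)=\emptyset$ by the first stage, so compactness of $P_{n+1}(x)$ forces some $K_m$ to be empty, which is exactly the required $P_m(x)\subset\inter P_n(x)$.

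The main obstacle lies in the first stage, and specifically in ruling out that $\fib(x)$ might contain a point in a basin of a root. This is a purely geometric statement about the nested sets $(S_n)$: the equipotentials chosen to define $S_0$ around the roots and their selected preimages are pulled back under $g$, and super-attraction at the roots forces these excluded neighborhoods to eventually exhaust every basin component. Once this geometric input is secured, the remainder of the argument is formal, combining only the (pre)pole rigidity of Lemma~\ref{Lem:FibersInfinity} with a standard compactness step.
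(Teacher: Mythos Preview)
Your argument is sound. The two-stage strategy is exactly right: first exclude boundary points from $\fib(x)$ using the structure of $\partial P_n(x)$ (finitely many (pre)poles plus arcs inside root basins), then use a nested-compact-sets argument to upgrade $\fib(x)\cap\partial P_n(x)=\emptyset$ to $P_m(x)\subset\inter P_n(x)$ for some $m$. Both stages are carried out correctly. In particular, the fiber--shift step $y\in\fib(x)\Rightarrow g^k(y)\in\fib(g^k(x))$ is valid (each depth-$n$ piece containing $x$ maps to a depth-$(n-1)$ piece containing $g(x)$), so a (pre)pole $y\in\fib(x)$ forces $g^k(x)\in\fib(\infty)=\{\infty\}$ by Lemma~\ref{Lem:FibersInfinity}, reducing to the trivial-fiber case. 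For the root-basin case, the statement in Subsection~\ref{SSec:ConstructionOfNewtonPuzzles} that $\bigcap_n S_n$ is the complement of the root basins, together with the fact that consecutive equipotential levels are strictly nested, gives $y\notin\ovl{S_m}$ for some $m$, as you need.

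There is no comparison to make with the paper's own argument: the lemma carries a \qed\ marker and is explicitly cited from \cite[Theorem~3.9]{DLSS}; the present paper does not prove it. Your proof is a legitimate self-contained substitute that uses only ingredients already established here (Lemma~\ref{Lem:FibersInfinity} and the description of Newton puzzle boundaries at the start of Subsection~\ref{Subsec:PropNewtonPuz}), so nothing external from \cite{DLSS} is really needed.
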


Here comes a result saying that the only possible obstruction for the fiber of a point $z$ to be trivial is when the orbit of $z$ accumulates at some critical fiber that is not pole or prepole. The second part of this result is a statement similar to \cite[Fact 5.1]{KSS}.

\begin{lemma}[Avoiding critical points implies trivial fiber and measure zero]
\label{Lem:AvoidingCritPtsNewton}
\label{Lem:ZeroArea}
Let $\CritNF(g) \subset \Crit(g)$ be the set of all critical points that are not (pre-)poles and do not lie in the basin of any root.

\begin{enumerate}
\item
\label{It:ZAass1}
If $z \in \C$ is not in the basin of any root and its orbit does not accumulate at a fiber of any critical point in $\CritNF(g)$, then $\fib(z) = \{z\}$.
\item
\label{It:ZeroArea}
If $U$ is a finite union of puzzle pieces so that $\inter U \cap \Crit(g) = \CritNF(g)$, and $E(U)$ is the union of all  orbits that avoid $\inter U$ and all root basins, then $E(U)$ is a nowhere dense compact set of zero Lebesgue measure.
\end{enumerate}
\end{lemma}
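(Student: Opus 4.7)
My plan is to prove both parts using the puzzle machinery of this section together with Theorem~\ref{Thm:RigidityComplexBox} for part~(1), and a Koebe/Lebesgue-density argument for part~(2).

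For part~(1) I extract a complex box mapping near an accumulation point of $\orb(z)$ and invoke Theorem~\ref{Thm:RigidityComplexBox}. If $\orb(z)$ hits a (pre-)pole, then Lemma~\ref{Lem:FibersInfinity} gives a trivial fiber at that iterate, which pulls back to $\fib(z)=\{z\}$ by connectedness of fibers (as nested intersections of the connected sets $P_n(\cdot)$). Otherwise, the hypothesis $\omega(z)\cap\bigcup_{c\in\CritNF(g)}\fib(c)=\emptyset$, combined with Lemma~\ref{Lem:FibersCompContained} and compactness of $\omega(z)$, yields a depth $n_0$ with $\omega(z)\cap P_{n_0}(c)=\emptyset$ for every $c\in\CritNF(g)$. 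I then pick $x\in\omega(z)$ off the Newton graph and a puzzle piece $W$ of sufficient depth around $x$ that is disjoint from every $P_{n_0}(c)$; applying Lemma~\ref{Lem:Simple} with $W$ produces a complex box mapping $F\colon\Dom(\V)\to\V$ with $\V=W\cup(\text{critical components of }\DomE(W))$ and $\Crit(F)\subseteq\Crit(g)$. Since $x\in\omega(z)\cap W$, the orbit of $z$ enters $\V$ infinitely often, so Lemma~\ref{Lem:FirstReturnConstruction}~\eqref{It:FRM4} places some $g^k(z)$ in $K(F)$.

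Now Theorem~\ref{Thm:RigidityComplexBox} at $g^k(z)$ offers four alternatives. Case~\eqref{Item:NE} is impossible because the $g$-depth of a Newton puzzle piece strictly decreases under each iterate of $g$, precluding an $F^s$-invariant periodic puzzle component. Case~\eqref{Item:CB} is ruled out via Lemma~\ref{Lem:FibersCompContained}, which forces the puzzle pieces along any non-basin orbit to nest strictly and hence prevents the $F$-orbit from converging to $\partial\V$. Case~\eqref{Item:R} would place $g^k(z)$ in the filled Julia set of a polynomial-like restriction of $F$ around some critical point $c$ of $F$; since (pre-)pole and root-basin critical points escape and hence do not lie in $K(F)$, such a $c$ must lie in $\CritNF(g)$. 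By Lemma~\ref{Lem:FibersJulia} this filled Julia set equals $\fib_F(c)$, and since it is a nested intersection of $g$-puzzle pieces along a subsequence of depths, it coincides with $\fib_g(c)$, yielding $\omega(z)\cap\fib_g(c)\neq\emptyset$ and contradicting the hypothesis. Thus case~\eqref{Item:T} holds: $\fib_F(g^k(z))=\{g^k(z)\}$, so $\fib_g(g^k(z))\subseteq\fib_F(g^k(z))=\{g^k(z)\}$, and pulling back gives $\fib_g(z)=\{z\}$.

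For part~(2), $E(U)$ is closed (an intersection of closed $g$-pullbacks of the closed complement of $\inter U\cup\text{root basins}$) and forward $g$-invariant. Nowhere-denseness follows from density in $J(g)$ of the set $\bigcup_n g^{-n}(\inter U\cap J(g))$, which is non-empty because $\CritNF(g)$ contains points of $J(g)$ and preimages of any non-exceptional open subset of $\Cc$ accumulate throughout $J(g)$; root basins are omitted by hypothesis. The zero-measure claim is the main technical step, and my plan is a standard Koebe/Lebesgue-density argument: every orbit in $E(U)$ avoids $\CritNF(g)$, so outside the countable set of orbits through critical (pre-)poles each map $g^n\colon P_n(y)\to P_0(g^n(y))$ is univalent, and Lemmas~\ref{Lem:Fact} and~\ref{Lem:Diam} then give uniformly bounded shape and controlled shrinking of the pullback pieces $P_n(y)$. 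A Lebesgue density point $y\in E(U)$ would, via bounded-distortion pullback along a subsequence of univalent iterates, force full density of $E(U)$ on one of the finitely many depth-zero puzzle pieces; but every depth-zero piece eventually meets $\inter U$ or a root basin under $g$-iteration, contradicting the $E(U)$-avoidance. The hardest step is this final density propagation, which must robustly exclude positive-measure sets hiding outside the root basins; the delicate handling of such sets is at the heart of why the Newton box mappings extracted in Section~\ref{Sec:RRPProof} have to be shown to be dynamically natural in the sense of Definition~\ref{Def:NaturalBoxMapping}.
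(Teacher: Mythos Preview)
Your approach to part~(1) differs substantially from the paper's. The paper gives an elementary, self-contained argument: it fixes a depth $n$ at which the orbit of $g(z)$ avoids all open critical-value puzzle pieces, then shows by direct univalent pullback plus Gr\"otzsch that $\fib(z)$ is trivial, treating separately the case where $\omega(z)$ contains a point off all puzzle boundaries and the case where $\omega(z)$ consists entirely of (pre-)poles. Your route through Theorem~\ref{Thm:RigidityComplexBox} is heavier machinery, and while not circular (Theorem~\ref{Thm:RigidityComplexBox} is proved earlier and independently), it has two genuine gaps.

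First, you assume you can pick $x\in\omega(z)$ ``off the Newton graph'', i.e.\ not a (pre-)pole. But $\omega(z)$ may consist \emph{entirely} of (pre-)poles: since $\omega(z)\subset J(g)$ and the puzzle boundary meets $J(g)$ only in (pre-)poles, this is exactly the situation the paper isolates as its second sub-case and handles by a separate argument near $\infty$. Your proof says nothing about it. Second, your dismissal of case~\eqref{Item:CB} via Lemma~\ref{Lem:FibersCompContained} is inadequate. That lemma gives that each Newton puzzle piece at sufficient depth is \emph{strictly} inside the one above, so each individual modulus $m_F(F^j(\cdot))$ is positive; but \eqref{Item:CB} means $\limsup_j m_F(F^j(\cdot))=0$, and strict nesting alone gives no uniform lower bound. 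Ruling out \eqref{Item:CB} for an induced box mapping is precisely the ``$K(F)=\Kwi(F)$'' part of dynamical naturality, which the paper establishes with real work in Lemma~\ref{Lem:NewtonBox}; it does not come for free from Lemma~\ref{Lem:FibersCompContained}.

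For part~(2) your outline is in the right spirit (Lebesgue density plus bounded distortion), and the paper's argument is indeed of this type. But your endgame is garbled: you claim density would be ``forced to full'' on some depth-zero piece, and then that ``every depth-zero piece eventually meets $\inter U$ or a root basin under $g$-iteration'', which is not a meaningful statement (depth-zero pieces do not iterate forward in the puzzle setup). The paper instead works locally at the density point: it uses the specific structure of Newton puzzle boundaries (they meet root basins along arcs of internal rays and equipotentials) to place, at every scale, a round sub-disk $B'\subset B$ lying entirely in a root basin, hence outside $J(g)$. Pulling back with uniformly bounded degree and invoking Lemmas~\ref{Lem:Fact} and~\ref{Lem:Diam} then produces a definite hole in $J(g)$ at every scale around $z$, contradicting density. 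Your sketch never identifies where the definite hole comes from. Also, your nowhere-denseness argument relies on $\CritNF(g)\neq\emptyset$; the paper's argument (totally disconnected from part~(1), hence nowhere dense) needs no such assumption.
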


\begin{proof}
\setcounter{stepctr}{0}
\begin{step}[proving assertion \eqref{It:ZAass1}]
\label{St:ZA1}
Since the orbit of $z$ does not accumulate on critical fibers, except, possibly, at critical (pre-)poles, there exists $n > 0$ such that $\orb(g(z))$ is disjoint from the interiors of all puzzle pieces of depth $n$  that contain critical values in their interiors. Furthermore, since the fiber of $\infty$ is trivial (Lemma~\ref{Lem:FibersInfinity}), we can enlarge $n$ if necessary so that $\inter P_n(\infty)$ contains no critical values.

If $z$ is $\infty$ or a (pre-)pole, then the claim follows from Lemma~\ref{Lem:FibersInfinity}. So we can assume, for the rest of the proof of the first assertion, that $z$ is not $\infty$ or a (pre-)poles.

We first consider the case that there exists a point $w \in \omega(z)$ that does not belong to the puzzle boundary of any depth. In particular, $w$ is neither $\infty$ nor a (pre-)pole; and $w$ cannot be in any root basin since $z$ is not.
By Lemma~\ref{Lem:FibersCompContained}, every $k> n$ has an $l>0$ so that $A := \inter P_k(w) \sm P_{k+l}(w)$ is a non-degenerate annulus. Fix some $k > n$. Since the orbit of $z$ accumulates at $w$, there exists an increasing sequence $(k_i)$ with $g^{k_i}(z) \in \inter P_{k+l}(w)$. 

We claim that for all $i$, the annulus $A_i := \inter P_{k+k_i}(z) \sm P_{k+l+k_i}(z)$ is a conformal copy of $A$. Indeed, we can pull back $\inter P_k(w)\ni g^{k_i}(z)$ univalently for $k_i$ iterates along the orbit from $z$ to $g^{k_i}(z)$ and obtain $\inter P_{k+k_i}(z)$: the only possible obstacle would be a critical value in $\inter P_{k+k_i-j}(g^j(z))$ for $j\in\{1,2,\dots,k_i \}$, but this would mean that $g^j(z)$ was in a puzzle piece of depth $k+k_i-j\ge k>n$ with a critical value in its interior, which is excluded by hypothesis. We thus have $\modulus(A_i)=\modulus(A)$, and the  $A_i$ are nested (possibly after passing to a subsequence of $(k_i)$); hence, by the Gr\"otzsch inequality, $\fib(z) = \{z\}$.

We are thus left with the case when $\omega(z)$ consists only of points in the puzzle boundary, but not in the basin of any root.  By construction of puzzles, this means that $\omega(z)$ consists only of (pre-)poles, and hence contains $\infty$. Since the fiber of $\infty$ is trivial (Lemma~\ref{Lem:FibersInfinity}), we can exclude the case that $z$ is a (pre-)pole itself.

Since the orbit of $z$ accumulates at $\infty$, and $\infty$ is a repelling fixed point, there exist infinitely many points $y \in \orb(z)$ such that $y \in \inter P_{n+2}(\infty)$ and $g(y) \in \inter P_{n+1}(\infty) \sm P_{n+2}(\infty)$. Let $w \in P_{n+1}(\infty) \sm \inter P_{n+2}(\infty)$ be an accumulation point of such $g(y)$'s. By our assumption on $\omega(z)$, the point $w$ must be a (pre-)pole. Let $Y \subset P_{n+1}(\infty) \sm \inter P_{n+2}(\infty)$ be a puzzle piece of depth $n+2$ containing $w$ and such that $\orb(z)$ intersects $\inter Y$ in an infinite set.  

We claim that $B:= \inter P_n(w) \sm Y$ is a non-degenerate annulus. To see this, let $W$ be the puzzle piece of depth $n+1$ such that $Y \subset W$ and $\infty \in \partial W$. From our choice of $n$ it follows that $P_{m+1}(\infty) \subset \inter P_m(\infty)$ for every $m \ge n$: the point $\infty$ is a repelling fixed point, and the set $\inter P_n(\infty)$ lies in the linearizing neighborhood around $\infty$ as it contains no critical values. Therefore, the set $P_n(w)$ is a puzzle piece of depth $n$ (rather than a union of puzzle pieces), $\infty \in \partial P_n(w)$, and we have the inclusion $Y \subset W \subset P_n(w)$. Furthermore, $\partial W \cap \partial P_n(w)$ consists of $\infty$ and two pieces of fixed rays meeting at $\infty$. But since $\partial Y$ is disjoint from $\infty$, and hence cannot contain pieces of fixed rays, it does not intersect $\partial W \cap \partial P_n(w)$. Therefore, $\inter Y \subset P_n(w)$, and hence the annulus $B$ is non-degenerate. The rest of the proof for $B$ is the same as for the annulus $A$. The first assertion is proven.
\end{step}

\begin{step}[proving assertion \eqref{It:ZeroArea}]
Clearly, since $\inter U$ is an open set, $E(U)$ is closed and hence compact. By the first claim of the lemma, $E(U)$ is totally disconnected and hence nowhere dense. It remains to show that it has zero Lebesgue measure.

We can assume that $U$ is composed of puzzle pieces of the same depth. Indeed, if $U' \subset U$ is a union of puzzle pieces of the same depth that satisfies $\inter U' \cap \Crit(g) = \CritNF(g)$, then $E(U)$ is a subset of $E(U')$. We can further assume that the components of $U$ are of depth $n$, with $n$ having the properties specified in {Step~\ref{St:ZA1}} of the proof: $\inter P_n(\infty)$ contains no critical values and $\orb(g(z))$ is disjoint from all open puzzle pieces of depth $n$ that contain at least one critical value. 

Let $E'(U)$ be $E(U)$ minus $\infty$ and all prepoles. Since the difference is countable, it does not affect the measure, so we can focus on $E'(U)$.
Let $z \in E'(U)$ be a Lebesgue density point of $E'(U)$. 

From the construction in part {(A)}, there exists a point $w$ in a puzzle piece $Y$ of depth at least $n+2$ so that $Y \subset \inter P_n(w)$ and the orbit of $z$ accumulates at $w$ from within $Y$. Pick an $\eps > 0$ large enough so that the open round disk $B:=B(w, \eps)$ of radius $\eps$ centered at $w$ intersects $\partial Y \sm J(g)$, but small enough so that $\ovl B\subset \inter P_n(w)$. Since $\orb(z)$ accumulates at $w$, there exists an increasing sequence of integers $(n_{k})_{k \ge 0}$ so that $g^{n_k}(z) \in B$. Let $D_k$ be the component of $g^{-n_k}(B)$ containing $z$; by construction, $D_k \subset \inter P_{n+n_k}(z)$ and the map $g^{n_k} \colon \inter P_{n+n_k}(z) \to \inter P_n(w)$ is univalent.

Since $B$ intersects the boundary of $Y$, from the boundary structure of Newton puzzle pieces it follows that there exists a point $\xi$ in one of the root basins and a small $\delta > 0$ so that $B':=B(\xi, \delta) \subset B \sm J(g)$ (see the discussion in the beginning of the current subsection). Let $D_k'$ be the component of $g^{-n_k}(B')$ contained in $D_k$.

By the result in {Step~\ref{St:ZA1}}, $\diam D_k \to 0$ as $k \to \infty$. By Lemma~\ref{Lem:Fact}, all $D_k$ and $D_k'$ have uniformly bounded shapes, and by Lemma~\ref{Lem:Diam}, the diameters of $D_k$ and $D_k'$ are comparable with the constant independent of $k$. As $k \to \infty$, it follows that
\[
\frac{\area(D_k \cap J(g))}{\area(D_k)} \,\,\not {\!\!\!\!\longrightarrow}\, 1,
\]    
because $\area(D_k \cap J(g)) \le \area (D_k \sm D_k') < C \cdot \area(D_k)$ for some constant $C < 1$ independent of $k$. This is a contradiction to our choice of $z$ as a density point of $E'(U)$. By the Lebesgue Density Theorem we conclude that $E'(U)$ has measure zero. The second assertion follows. 
\end{step}
\end{proof}

A critical point $c$ of a Newton map $g$ is called \emph{combinatorially periodic} if there exists a puzzle piece $W$ at some depth containing $c$, and an integer $s > 1$ such that $g^{sk}(c') \in \inter W$ for every critical point $c' \in W$ and every $k \ge 0$. Combinatorially periodic critical points are never mapped to $\infty$ and do not belong to the basins of roots. Moreover, the following lemma shows that all combinatorially periodic critical points are, in fact, renormalizable in the sense of Douady--Hubbard: each of them lies in the non-escaping set of a suitable polynomial-like restriction (with domain being the interior of a puzzle piece). This lemma is a slight modification of \cite[Proposition~3.16]{DLSS}, and essentially follows from Lemma~\ref{Lem:FibersCompContained}.    

\begin{lemma}[Polynomial-like renormalization at combinatorially periodic points]
\label{Lem:NonRepellingRenormalizable} \looseness-1
If $x$ is a combinatorially periodic critical point, then there exists $n > 0$ and a least $k = k(n) >0$ so that the map $g^k \colon \inter P_{n + k}(x) \to \inter P_{n}(x)$ is a polynomial-like map of degree $d \geqslant 2$ with connected filled Julia set equal to $\fib(x)$. Moreover, for sufficiently large $n$ and for any two combinatorially periodic points $x$ and $x'$ as above, either $\fib(x)=\fib(x')$ or $P_n(x)\cap P_n(x')=\emptyset$.\qed
\end{lemma}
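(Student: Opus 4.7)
The proof constructs a polynomial-like restriction of an iterate of $g$ around $\fib(x)$, using combinatorial periodicity to produce the first return map and Lemma~\ref{Lem:FibersCompContained} to guarantee compact containment. The key observation is that in the Newton setting, combinatorial periodicity (which a priori only asserts that the $g^s$-orbit of $x$ stays in $\inter W$) upgrades to genuine Douady--Hubbard renormalizability because, unlike in the general box-mapping setting, puzzle pieces around non-root-basin points become compactly nested.

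\emph{Setup.} Let $W = P_{n_0}(x)$ and $s \ge 2$ be the puzzle piece and integer witnessing combinatorial periodicity, so that $g^{sk}(c') \in \inter W$ for every critical $c' \in W$ and every $k \ge 0$. A combinatorially periodic critical point is neither at $\infty$, nor a (pre-)pole, nor in any root basin: attracting-critical-finiteness forces critical orbits in root basins to terminate at a root, and $\infty$ together with its preimages have trivial fibers by Lemma~\ref{Lem:FibersInfinity}. Hence Lemma~\ref{Lem:FibersCompContained} applies to $x$: for every $n \ge 0$, $\fib(x) \subset \inter P_n(x)$, and for every $n$ there exists $m > n$ with $P_m(x) \subset \inter P_n(x)$.

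\emph{Construction of the polynomial-like restriction.} Pick $n$ large enough that $P_n(x) \subset \inter W$; this is possible by Lemma~\ref{Lem:FibersCompContained}. Combinatorial periodicity together with the nested puzzle structure (and the fact that any accumulation point of the $g^s$-orbit of $x$ in $\ovl W$ must lie in $\fib(x) \subset \inter P_n(x)$) forces the orbit of $x$ to enter $\inter P_n(x)$; let $k = k(n) \ge 1$ be the least such first-return time. By the Markov property, $g^k \colon P_{n+k}(x) \to P_n(x)$ is a branched covering. Invoking Lemma~\ref{Lem:FRD} in its second case, after enlarging $n$ if needed so that $P_{n+k}(x)$ is weakly protected by $P_n(x)$ (which is guaranteed by Lemma~\ref{Lem:FibersCompContained}), yields $\ovl{P_{n+k}(x)} \subset \inter P_n(x)$, so $g^k \colon \inter P_{n+k}(x) \to \inter P_n(x)$ is a polynomial-like map. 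Its degree is at least $2$ because $x$ itself is a critical point of $g$ lying in the domain. The filled Julia set of this renormalization equals $\bigcap_{j \ge 0} P_{n+jk}(x)$, and since the subsequence $(n+jk)_{j\ge 0}$ is cofinal with $(n')_{n' \ge 0}$, Lemma~\ref{Lem:FibersCompContained} identifies this intersection with $\fib(x) = \bigcap_{n' \ge 0} P_{n'}(x)$; connectedness follows from standard Douady--Hubbard theory because the critical point $x$ is contained in the filled Julia set.

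\emph{Separation of distinct fibers, and the main obstacle.} If $x$ and $x'$ are combinatorially periodic critical points with $\fib(x) \ne \fib(x')$, then $\fib(x)$ and $\fib(x')$ are disjoint compact sets (distinct filled Julia sets of polynomial-like maps produced by the first part). Since by Lemma~\ref{Lem:FibersCompContained} $P_n(x)$ shrinks to $\fib(x)$ and $P_n(x')$ to $\fib(x')$, and since puzzle pieces of the same depth are either equal or share at most boundary, $P_n(x) \cap P_n(x') = \emptyset$ for sufficiently large $n$. The most delicate point of the proof is verifying that the $g$-orbit of $x$ returns to $\inter P_n(x)$ at arbitrarily deep levels: raw combinatorial periodicity gives returns only to $\inter W$ at the fixed depth $n_0$, and the translation into returns at every depth $n > n_0$ is where the compact containment of fibers in puzzle interiors (Lemma~\ref{Lem:FibersCompContained}) is essential, distinguishing the Newton setting from the general box-mapping framework.
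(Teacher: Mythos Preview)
Your overall strategy matches what the paper indicates: the lemma ``essentially follows from Lemma~\ref{Lem:FibersCompContained}'' (the paper gives no proof, citing \cite[Proposition~3.16]{DLSS}), and you correctly identify compact nesting of puzzle pieces as the mechanism that upgrades box renormalizability to Douady--Hubbard renormalizability. The identification of the filled Julia set with $\fib(x)$ via cofinality of $(n+jk)_j$ is also correct.

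However, the execution has two genuine gaps. First, your claim that ``any accumulation point of the $g^s$-orbit of $x$ in $\ovl W$ must lie in $\fib(x)$'' is unjustified and need not hold as stated. The definition of combinatorial periodicity allows $W$ to contain several critical points in \emph{distinct} fibers, all with $g^s$-orbits in $\inter W$; the $g^s$-orbit of $x$ may then cycle through components of the non-escaping set other than $\fib(x)$, so its accumulation set is not contained in $\fib(x)$. What one actually needs is that the $g$-orbit of $x$ returns to $\inter P_n(x)$ for every $n$, and this requires a separate argument (for instance, working with the first return map to $\inter W$, identifying the component of its non-escaping set through $x$ with $\fib(x)$, and showing this component is periodic rather than strictly preperiodic). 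Second, your invocation of Lemma~\ref{Lem:FRD} is circular: that lemma \emph{assumes} weak protection $P_{n+k}(x)\subset \inter P_n(x)$ and concludes something about first return domains to $\inter P_{n+k}(x)$, whereas you use it to \emph{deduce} $P_{n+k}(x)\subset\inter P_n(x)$. Lemma~\ref{Lem:FibersCompContained} guarantees $P_m(x)\subset\inter P_n(x)$ for \emph{some} $m>n$, but you have not argued that one can arrange $m=n+k(n)$ with $k(n)$ the first return time; as $n$ varies, so does $k(n)$, and this coupling needs to be addressed.

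A smaller point: for connectedness of the filled Julia set you need \emph{all} critical points of the polynomial-like map $g^k\colon\inter P_{n+k}(x)\to\inter P_n(x)$ to be non-escaping, not just $x$; the combinatorial periodicity hypothesis speaks of critical points of $g$ in $W$, which is not the same set.
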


We obtain that in the Newton setting for a critical point to be combinatorially periodic is equivalent to being renormalizable: the inclusion in one direction is given by Lemma~\ref{Lem:NonRepellingRenormalizable}; inclusion in the other direction follows by definition. (Note here that the \emph{renormalization period} $k = k(n)$ from Lemma~\ref{Lem:NonRepellingRenormalizable} can be larger than the least period coming from combinatorial periodicity.) In the sequel, we will use these terms interchangeably. 

As defined in Section~\ref{Sec:GeneralPuzzles}, a point $x$ is combinatorially recurrent if the orbit of $g(x)$ under $g$ intersects every puzzle piece at $x$ (we had shown in Section~\ref{Sec:GeneralPuzzles} that this implies that the orbit visits every such puzzle piece infinitely often). In this case, we can define a strictly increasing sequence $(n_i)$ as follows, starting at arbitrary $n_0\ge 0$: given $n_i$, let $k_i$ be minimal so that $g^{k_i}(x)\in P_{n_i}(x)$ and set $n_{i+1}:=n_i+k_i$. Then $g^{k_i}$ sends $P_{n_{i+1}}(x)$ to $P_{n_i}(g^{k_i}(x))=P_{n_i}(x)$. 

Every combinatorially periodic (and hence renormalizable, see Lemma~\ref{Lem:NonRepellingRenormalizable}) critical point is combinatorially recurrent. In this case, the sequence $k_i=n_{i+1}-n_i$ is eventually constant. There is a converse to this observation, as follows:

\begin{lemma}[First return times for the pullback nest]
\label{Lem:FirstReturnTimesNest}
For a given $n_0 \ge 0$, the sequence $k_i=n_{i+1}-n_i$ associated to a combinatorially recurrent critical point $x$ via the pullback construction above is monotonically increasing. It is bounded (and hence eventually constant) if and only if $x$ is renormalizable.
\end{lemma}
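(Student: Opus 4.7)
\emph{Plan.} The proof splits into three parts: (i) monotonicity of $(k_i)$, (ii) the implication ``renormalizable $\Rightarrow$ bounded'', and (iii) the converse ``bounded $\Rightarrow$ renormalizable''. Parts (i) and (ii) are short; part (iii) is the main content.

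For monotonicity, I would use only that the puzzle pieces are nested: $P_{n_{i+1}}(x) \subset P_{n_i}(x)$. Hence $g^{k_{i+1}}(x) \in P_{n_{i+1}}(x) \subset P_{n_i}(x)$, and by minimality of $k_i$ as the first positive return time of $x$ to $P_{n_i}(x)$, we conclude $k_{i+1} \ge k_i$. For ``renormalizable $\Rightarrow$ bounded'', Lemma~\ref{Lem:NonRepellingRenormalizable} supplies a period $k$ and a polynomial-like restriction $g^k \colon \inter P_{n+k}(x) \to \inter P_n(x)$ whose (connected) filled Julia set equals $\fib(x)$. Since $\fib(x)$ is forward-invariant under $g^k$ and contains $x$, we get $g^{jk}(x) \in \fib(x) \subset P_{n_i}(x)$ for all $i,j \ge 0$. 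In particular $g^k(x) \in P_{n_i}(x)$, so by minimality $k_i \le k$, and $(k_i)$ is bounded.

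For ``bounded $\Rightarrow$ renormalizable'', suppose $(k_i)$ stabilizes: $k_i = k$ for all $i \ge i_0$. Since $k_i \ge 1$ implies $n_i \to \infty$, we have $\bigcap_{i \ge i_0} P_{n_i}(x) = \bigcap_n P_n(x) = \fib(x)$. From $g^k(x) \in P_{n_i}(x)$ for every $i \ge i_0$ it follows that $g^k(x) \in \fib(x)$, so the itinerary of $x$ is $k$-periodic; consequently $g^{jk}(x) \in \fib(x)$ for all $j \ge 0$. Moreover, every critical point $c' \in \fib(x)$ shares the itinerary of $x$, hence $g^{jk}(c') \in \fib(x)$ for all $j \ge 0$. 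To now invoke Lemma~\ref{Lem:NonRepellingRenormalizable} it suffices to check that $x$ is \emph{combinatorially periodic}. Since $\Crit(g)$ is finite and $\bigcap_n P_n(x) = \fib(x)$, for every critical point $c'$ with $c' \notin \fib(x)$ there is an $n(c')$ so that $c' \notin P_{n(c')}(x)$; taking $m_*$ larger than all such $n(c')$ guarantees that every critical point lying in $P_{m_*}(x)$ already belongs to $\fib(x)$. By enlarging $i_0$ (using $n_i \to \infty$) we may take $m_* = n_{i_0}$. Setting $W := P_{m_*}(x)$ and $s := k$, Lemma~\ref{Lem:FibersCompContained} gives $\fib(x) \subset \inter W$, so that $g^{jk}(c') \in \fib(x) \subset \inter W$ for every critical $c' \in W$ and every $j \ge 0$. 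This is precisely combinatorial periodicity of $x$, and Lemma~\ref{Lem:NonRepellingRenormalizable} then promotes it to a polynomial-like renormalization.

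The only non-trivial point, and the expected main obstacle, is in the third part: one must guarantee that the puzzle piece $W$ used to witness combinatorial periodicity does not contain ``parasitic'' critical points whose $g^k$-orbits escape $W$. The resolution is to shrink $W$ using finiteness of $\Crit(g)$ together with $\bigcap_n P_n(x) = \fib(x)$, after which the compact containment required for the polynomial-like structure is obtained from Lemma~\ref{Lem:FibersCompContained} (via Lemma~\ref{Lem:NonRepellingRenormalizable}). Everything else reduces to the nesting of puzzle pieces and the $g^k$-invariance of fibers.
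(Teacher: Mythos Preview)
Your proof is correct and follows the same approach as the paper's: monotonicity from nesting of puzzle pieces, boundedness from renormalizability via the periodic fiber, and the converse by showing combinatorial periodicity and invoking Lemma~\ref{Lem:NonRepellingRenormalizable}. Your part (iii) is in fact more explicit than the paper's, which simply asserts that $x$ is combinatorially periodic with respect to $\inter P_{n_i}(x)$ for $i$ large; you spell out why one may shrink the puzzle piece so that all critical points it contains already lie in $\fib(x)$, which is exactly the detail the paper leaves to the reader.
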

\begin{proof}
Monotonicity of $k_i$ follows immediately from the definition that $k_i$ is minimal so that $g^{k_i}(x)\in P_{n_i}(x)$: a larger value of $i$ means a smaller set $P_{n_i}(x)$, and hence it can take only longer to return into the smaller set.

If the sequence $(k_i)$ is bounded, and hence eventually constant, then $x$ is combinatorially periodic with respect to $\inter P_{n_i}(x)$ for $i$ large enough. Thus $x$ is renormalizable by Lemma~\ref{Lem:NonRepellingRenormalizable}. Conversely, if $x$ is renormalizable, then $x$ is periodic with some period $k$ (which is equal to the renormalization period). This implies that eventually $k_i \le k$; hence the sequence is bounded. 
\end{proof}

For combinatorially recurrent critical points that are not renormalizable we can assure that the boundaries of the puzzle pieces in the pullback nest constructed above are disjoint for all sufficiently large depths, due to the following lemma. 

\begin{lemma}[Non-renormalizable recurrent points are well inside]
\label{Lem:NonRenNest}
For every $n_0 \geqslant 0$, if $x$ is a combinatorially recurrent critical point that is non-renormalizable, and $\left(P_{n_i}(x)\right)_{i \ge 0}$ is the nest obtained by pulling back $P_{n_0}(x)$ along the orbit of $x$, then there exists $j$ large enough so that $P_{n_{i+1}}(x) \subset \inter P_{n_i}(x)$ for all $i \ge j$.
\end{lemma}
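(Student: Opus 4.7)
My plan is to prove the lemma by combining three ingredients: (i) $k_i \to \infty$, coming from non-renormalizability via Lemma~\ref{Lem:FirstReturnTimesNest}; (ii) the existence of weak-protection depths for $x$, supplied by Lemma~\ref{Lem:FibersCompContained}; and (iii) the propagation of weak protection through the pullback nest via Lemma~\ref{Lem:FRD}. Concretely, I will reduce the claim to a base case ``$P_{n_{i_0+1}}(x) \subset \inter P_{n_{i_0}}(x)$ for some $i_0$'' and then invoke Lemma~\ref{Lem:FRD} inductively to extend it to all $i \ge i_0$.

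The inductive step will be routine. Assuming $P_{n_{i+1}}(x) \subset \inter P_{n_i}(x)$ at some index $i$, I apply Lemma~\ref{Lem:FRD} with $(n,k,z) := (n_i, k_i, x)$. Condition (2) of that lemma holds because $k_i$ is by construction minimal with $g^{k_i}(x) \in P_{n_i}(x)$, and $g^{k_i}(x)$ lies in $\inter P_{n_i}(x)$ since $x$ is combinatorially recurrent (if $g^{k_i}(x)$ were on the puzzle boundary, i.e.\ a (pre-)pole, recurrence would fail as (pre-)poles eventually map to $\infty$). The inductive hypothesis is precisely the required weak protection. The conclusion of Lemma~\ref{Lem:FRD} says every component of $\Dom(\inter P_{n_{i+1}}(x))$ is compactly contained in $\inter P_{n_{i+1}}(x)$; since the first return of $x$ to $\inter P_{n_{i+1}}(x)$ occurs at time $k_{i+1}$ and its pullback component is $\inter P_{n_{i+2}}(x)$, this yields $P_{n_{i+2}}(x) \subset \inter P_{n_{i+1}}(x)$, closing the induction.

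For the base case I will argue as follows. By Lemma~\ref{Lem:FibersCompContained}, for each depth $n$ there exists a minimal $M(n) > n$ with $P_{M(n)}(x) \subset \inter P_n(x)$; iterating this, one builds a sequence $n_0 = q_0 < q_1 < q_2 < \dots$ with $P_{q_{l+1}}(x) \subset \inter P_{q_l}(x)$. The plan is to locate an index $i_0$ in the pullback nest such that $n_{i_0+1} \ge M(n_{i_0})$, for then $P_{n_{i_0+1}}(x) \subset P_{M(n_{i_0})}(x) \subset \inter P_{n_{i_0}}(x)$, which furnishes the base case. The main obstacle is that $M(n) - n$ is a priori only finite, not uniformly bounded, so the condition $k_{i_0} \ge M(n_{i_0}) - n_{i_0}$ is not immediate from $k_i \to \infty$. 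I expect to resolve this by matching the pullback nest against the $q_l$-nest: if no such $i_0$ existed, then for every $i$ large, $n_{i+1} < M(n_i)$, so each $P_{n_{i+1}}(x)$ meets $\partial P_{n_i}(x)$; pushing this forward through Markov and the bounded-degree control of Lemma~\ref{Lem:FirstTime} would trap a boundary point of the puzzle pieces in a $g^{k_i}$-periodic configuration, forcing combinatorial periodicity of $x$ at some eventual level, hence renormalizability by Lemma~\ref{Lem:NonRepellingRenormalizable}, contradicting our hypothesis. This periodicity extraction argument is the delicate part; once the base case is in hand, the induction described above completes the proof.
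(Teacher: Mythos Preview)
Your inductive step via Lemma~\ref{Lem:FRD} is correct and clean: once you have a single index $i_0$ with $P_{n_{i_0+1}}(x)\subset\inter P_{n_{i_0}}(x)$, weak protection propagates forward through the nest exactly as you describe.

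The gap is in your base case. The sketch you give --- that persistent boundary intersections would ``trap a boundary point in a $g^{k_i}$-periodic configuration'' and thereby force combinatorial periodicity of $x$ --- does not work as stated. What the argument actually yields is this: if $z_i\in\partial P_{n_{i+1}}(x)\cap\partial P_{n_i}(x)$, then pushing forward along the nest gives a point $w_i:=g^{n_i-n_0}(z_i)\in\partial P_{n_0}(x)$ together with $g^{k_i}(w_i)\in\partial P_{n_0}(x)$. But $\partial P_{n_0}(x)$ is a Jordan curve, so having such pairs $(w_i,g^{k_i}(w_i))$ on it for every $i$ gives no contradiction by itself, and it certainly does not force $x$ to be combinatorially periodic: $x$ lies in the interior and its return times $k_i$ tend to infinity by hypothesis. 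Neither Lemma~\ref{Lem:FirstTime} nor the $q_l$-sequence from Lemma~\ref{Lem:FibersCompContained} supplies the missing compactness.

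What is missing is the Newton-specific structure of puzzle boundaries. The paper's proof first arranges (by enlarging $n_0$, using $\fib(\infty)=\{\infty\}$) that $\partial P_{n_0}(x)$ contains no periodic point of $g$. It then uses two facts specific to Newton puzzles: $\partial P_{n_0}(x)\cap J(g)$ is a \emph{finite} set of (pre-)poles, and any nonempty intersection $\partial P_{n_{i+1}}(x)\cap\partial P_{n_i}(x)$ must contain one of these (pre-)poles. Since finitely many non-periodic (pre-)poles cannot keep returning to $\partial P_{n_0}(x)$ under arbitrarily high iterates, there is a threshold $k$ with $g^r(w)\notin\partial P_{n_0}(x)$ for every (pre-)pole $w\in\partial P_{n_0}(x)$ and every $r\ge k$. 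Once $k_i\ge k$ (guaranteed by $k_i\to\infty$), the pair $w_i,g^{k_i}(w_i)\in\partial P_{n_0}(x)$ is impossible. This already gives $P_{n_{i+1}}(x)\subset\inter P_{n_i}(x)$ for \emph{every} $i\ge j$, so the induction you set up, while valid, becomes superfluous once the correct mechanism is in place.
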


\begin{proof}	
The proof is similar to the proof of \cite[Proposition 3.10]{DMRS}. First of all, possibly by increasing $n_0$ we can assume that $\partial P_{n_0}(x)$ is disjoint from $\infty$: this is possible since $\infty$ has trivial fiber (Lemma~\ref{Lem:FibersInfinity}), and hence the puzzle pieces containing $\infty$ cannot intersect all elements in the nest $\left(P_{n_i}(x)\right)_{i \ge 0}$. Hence we can assume that the boundaries of the puzzle pieces in the nest are disjoint from periodic points of $g$.

Since $\partial P_{n_0}(x)$ contains finitely many (pre-)poles and no periodic points, there exists $k$ large enough so that for all (pre-)poles $w \in \partial P_{n_0}(x)$ and all $r \ge k$ we have $g^r(w) \not \in \partial P_{n_0}(x)$. By Lemma~\ref{Lem:FirstReturnTimesNest}, the sequence $\left(k_i\right)_{i \ge 0}$ of first return times tends to infinity. Therefore, there is a minimal $j$ so that $k_j \ge k$. Let us show that the lemma holds for this $j$. If not, then there exists $i \geqslant j$ with $\partial P_{n_{i+1}}(x) \cap \partial P_{n_i}(x) \not= \emptyset$. This intersection, by construction of puzzle pieces (see the beginning of Subsection~\ref{Subsec:PropNewtonPuz}), must contain a (pre-)pole, say $z$. Hence $\partial P_{n_i}(x)$ contains the (pre-)poles $z$ and $g^{k_i}(z)$, and they are distinct (there are no periodic points on the boundary). Mapping these two points forward, it follows that $\partial P_{n_0}$ must contain a pair of distinct (pre-)poles $w$ and $g^{k_i}(w)$. But $k_i \geqslant k_j \ge k$, and this contradicts our choice of $k$.
\end{proof}

\Newpage

\section{Proof of dynamical rigidity for Newton maps (Theorem ~\ref{Thm:RRP})}
\label{Sec:RRPProof}

\subsection{Proof of Theorem~\ref{Thm:RRP}}

We will prove Theorem~\ref{Thm:RRP} \and its corollaries for a given Newton map $N_p$, or rather its iterate $g=N_p^{M}$. Unless mentioned otherwise, every orbit will be understood as an orbit under iteration of $g$, and the same applies to puzzles, fibers and their triviality. It will not be a loss of generality to assume that $N_p$ is attracting-critically-finite because the relevant properties are preserved by the surgery construction.

The overall plan is to extract, in the dynamical plane of $g$, a box mapping for which the non-escaping set ``captures'' most of the critical orbits that do not belong to the Newton graph at any level; this is done in Lemma~\ref{Lem:NewtonBox} below. For the points in $\Cc$ whose orbits intersect the non-escaping set of such a box mapping we will be able to conclude ``rigidity or polynomial-like dynamics'' using Theorem~\ref{Thm:RigidityComplexBox}. The remaining points in $\Cc$ will be treated by means of Lemma~\ref{Lem:FibersInfinity} and Lemma~\ref{Lem:AvoidingCritPtsNewton}.

Our strategy depends on the following distinction of the critical points
\begin{equation}
\label{Eq:CompleteCrit}
\Crit(g) = \Crit_B\sqcup \Crit_I \sqcup \Crit_{nR} \sqcup \Crit_{R}
\;:
\end{equation}
\begin{enumerate}
\item[(B)]
we say that $c\in \Crit_B$ if the orbit of $c$ converges to one of the roots of the polynomial $p$ (critical points in the \textbf{B}asin of a root); 

\item[(I)]
we say that $c\in \Crit_I$ if the orbit of $c$ lands at $\infty$ (critical points landing at \textbf{I}nfinity); 

\item[(R)]
we say that $c \in \Crit_{R}$ if $c$ is combinatorially \textbf{R}ecurrent (it may or may not be renormalizable);

\item[(nR)]
we say that $c \in \Crit_{nR}$ if $c \not\in \Crit_I \sqcup \Crit_B\sqcup \Crit_R$ ($c$ is combinatorially \textbf{n}on-\textbf{R}ecurrent).
\end{enumerate}
Let $\CritNF(g)$ be the set of all critical points of $g$ whose orbits do not land at $\infty$ or at a root (the critical points of the original Newton map that are \textbf{n}ot eventually \textbf{f}ixed), hence $\CritNF(g) = \Crit_{nR} \cup \Crit_{R}$. 
Finally, define $\Crit^a_{nR} \subset \Crit_{nR}$ as the set of all critical points in $\Crit_{nR}$ that \textbf{a}ccumulate at the fiber of at least one critical point in $\CritNF$ then $\Crit^a(g) := \Crit^a_{nR} \cup \Crit_{R}$.

\begin{lemma}[Newton box mapping]
\label{Lem:NewtonBox}
If $\CritNF(g) \neq \emptyset$, then there exists a complex box mapping $F \colon \mathcal U \to \mathcal V$ with the following properties:
\begin{enumerate}
\item
the components of $\U$ and $\V$ are the interiors of puzzle pieces of $g$, and for every component $U$ of $\U$ there exists $k \ge 1$ so that $F|_U = g^k|_U$;
\item
every $g$-orbit that accumulates at some point in $\V$ intersects $K(F)$;
\item
$\Crit(F) \subset \CritNF(g) \subset \V$ and $\Crit^a(g) \subset \Crit(F) \cap K(F)$;
\item
\label{It:ExactlyOne}
every critical component of\/ $\U$ contains exactly one critical fiber.
\end{enumerate}

Moreover, $F$ is dynamically natural in the sense of Definition~\ref{Def:NaturalBoxMapping}.
\end{lemma}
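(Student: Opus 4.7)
My strategy is to apply the first-return construction of Lemma~\ref{Lem:Simple} to a carefully chosen nice union $W$ of interiors of deep Newton puzzle pieces, one per critical fiber in $\CritNF(g)$. The key inputs are the compact containment of fibers (Lemma~\ref{Lem:FibersCompContained}), the pairwise separation of distinct critical fibers at sufficient depth (via Lemma~\ref{Lem:NonRepellingRenormalizable} for the renormalizable ones and the standard fact that distinct fibers are disjoint intersections of nested puzzle pieces for the rest), and the observation that every $c' \in \Crit_I \cup \Crit_B$ is combinatorially distinguishable from every $c \in \CritNF(g)$ at some finite depth: the orbit of $c'$ either lands at $\infty$ (whose fiber is trivial by Lemma~\ref{Lem:FibersInfinity}) or converges to a root of $p$ (which lies outside every puzzle piece), while the orbit of $c$ avoids both.

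Concretely, I choose $n_0 \ge 0$ large enough so that (a) for distinct critical fibers $\fib(c),\fib(c')$ in $\CritNF(g)$, the pieces $P_{n_0}(c)$ and $P_{n_0}(c')$ are disjoint, (b) each $\inter P_{n_0}(c)$ contains no critical point of $g$ outside $\fib(c)$ and is disjoint from the forward orbit of every $c' \in \Crit_I \cup \Crit_B$, and (c) each $P_{n_0}(c)$ is compactly contained in $\inter P_0(c)$. These are finitely many constraints, met by the items cited above. Setting $W := \bigsqcup_c \inter P_{n_0}(c)$, one $c$ per critical fiber of $\CritNF(g)$, gives a nice open set (union of puzzle pieces of a common depth). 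Lemma~\ref{Lem:Simple} then produces the box mapping $F \colon \U \to \V$, where $\V$ is the union of $W$ with the finitely many components of $\DomE(W)$ meeting $\Crit(g)$. Property~(1) is immediate from Lemma~\ref{Lem:FirstReturnConstruction}. For property~(3): critical points in $\Crit_I \cup \Crit_B$ have orbits disjoint from $W$ by~(b), so they lie in neither $W$ nor in any component of $\DomE(W)$, and hence not in $\V$; combined with $\Crit(F) \subset \Crit(g)$ (Lemma~\ref{Lem:Simple}) this yields $\Crit(F) \subset \CritNF(g) \subset W \subset \V$, and every $c \in \Crit^a(g)$ accumulates on a $\CritNF$-fiber, hence enters $W$ infinitely often, placing $c \in \Crit(F) \cap K(F)$. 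For property~(4): if two critical points of $F$ share a component of $\U$, they share a puzzle piece of depth $\ge n_0$, hence coinciding $P_{n_0}$-pieces, and~(a) forces them into the same fiber. Property~(2) is a direct application of Lemma~\ref{Lem:FirstReturnConstruction}\eqref{It:FRM4}: a $g$-orbit accumulating at a point in the open set $\V$ meets $\V$ infinitely often and so lands in the non-escaping set $K(F)$.

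Dynamical naturality requires verifying the three conditions of Definition~\ref{Def:NaturalBoxMapping}. Absence of \eqref{Item:NE}-components follows from Lemma~\ref{Lem:FibersCompContained}: the first-return puzzle piece inside any component of $\V$ is strictly deeper and properly contained, so no component of $\V$ coincides with a component of $\U$. The zero-measure condition $\area(\Koc(F)) = 0$ reduces to Lemma~\ref{Lem:ZeroArea}: every open puzzle neighborhood $U'$ of $\Crit(F) = \CritNF(g)$ of sufficient depth satisfies $\inter U' \cap \Crit(g) = \CritNF(g)$ by~(b), so the set of orbits avoiding $\inter U'$ has Lebesgue measure zero, and $\Koc(F)$ is contained in the countable union of such null sets (countable since the collection of puzzle pieces is). The main obstacle is the identity $K(F) = \Kwi(F)$. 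My plan here is to combine the finiteness of the collection of components of $\V$ with Lemma~\ref{Lem:FibersCompContained} (providing a definite annular modulus around each $\CritNF$-fiber at depth $n_0$) and the Koebe-type control of Subsection~\ref{SSec:Shape}, transported along the long conformal pullbacks between consecutive returns to $\V$, in order to show that every orbit in $K(F)$ revisits a $\U$-component with $m_F$-modulus bounded below for infinitely many iterates, yielding $\limsup_k m_F(F^k(y)) > 0$.
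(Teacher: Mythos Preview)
Your approach has a real gap at the step where you invoke Lemma~\ref{Lem:Simple}: taking $W=\bigsqcup_c\inter P_{n_0}(c)$ at a single depth does not by itself guarantee condition~\eqref{It:BM3} of Definition~\ref{Def:BoxMap}, namely that each component of $\U=\Dom(\V)$ is \emph{compactly contained} in its component of $\V$. For Newton puzzles, a first-return component $\inter P_{n_0+k}(z)\subset\inter P_{n_0}(c)$ can share a prepole on its boundary with $\partial P_{n_0}(c)$; niceness of $W$ does not exclude this. Your condition~(c), that $P_{n_0}(c)\subset\inter P_0(c)$, is a weak-protection statement at depth~$0$, not at depth~$n_0$, and does not feed into Lemma~\ref{Lem:FRD} (whose hypotheses require either that the orbit of $g(c)$ never enters $P_0(c)$, or that $n_0$ is the first return time to $\inter P_0(c)$). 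This is exactly why the paper does \emph{not} apply Lemma~\ref{Lem:Simple} here but instead builds $\V$ by an induction over $\Crit_R$: for each recurrent critical fiber it first finds a weakly protected piece $W_k^*\subset\inter W_k$ (via Lemma~\ref{Lem:NonRenNest} or the renormalization set-up), and for non-recurrent $c$ it uses Lemma~\ref{Lem:FibersCompContained} to arrange weak protection together with non-return. Lemma~\ref{Lem:FRD} then yields the compact containment required by~\eqref{It:BM3}. Your argument that ``no component of $\V$ coincides with a component of $\U$'' is fine for ruling out \eqref{Item:NE}-components, but ``properly contained'' is weaker than ``compactly contained'', and it is the latter that the box-mapping axioms demand.

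The same weakly-protected structure is what drives the proof of $K(F)=\Kwi(F)$: the paper pulls back the definite annulus $V'\sm\ovl V$ (with $V'$ the protecting piece) along the first-return branch $g^r\colon U\to V$ and bounds the degree of the slightly enlarged map $g^r\colon U'\to V'$ by counting how many critical pieces of depth at most the depth of $\V$ can be met (Lemma~\ref{Lem:AnnulusLemma} then gives the uniform modulus). Your plan for this step is only a sketch and, lacking the protecting pieces $V'$, has no obvious source of annuli with moduli bounded independently of the component $U$. Without that, there is no mechanism preventing $m_F(F^k(y))\to 0$ along the infinitely many $\U$-components.
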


\begin{proof}

\setcounter{stepctr}{0}
\begin{step}[construction of a box mapping]
By finiteness of critical points, there exists a depth $s > 0$ so that $P_s(c) \cap \Crit(g) \subset \fib(c)$ for every $c \in \CritNF(g)$. 
In this proof we assume that the critical puzzle pieces are chosen of depth at least $s$. In particular, this assumption guarantees that the interiors of these puzzle pieces do not intersect $\Crit_B \cup \Crit_I$. 

By passing to an iterate of $g$ (and keeping the notation for simplicity), and possibly increasing $s$, we can assume that for every renormalizable critical point $c$, if $g^k \colon \inter P_{s+k}(c) \to \inter P_s(c)$ is the corresponding polynomial-like restriction (with non-escaping set equal to $\fib(c)$), then $\inter P_{s+k}(c)$ is a component of the first return domain $\Dom(\inter P_s(c))$. Such an iterate exists by Lemma~\ref{Lem:NonRepellingRenormalizable}. By passing to an iterate we do not change the number of renormalizable fibers.  

\looseness-1
We construct open sets $\W$ and $\W_0$,\dots, $\W_m$ for $m\le|\Crit_R|$, using induction over the set $\Crit_R$ and starting with $\W_0:=\emptyset$ (if $\Crit_R=\emptyset$, then $m=0$ and $\W=\W_0 = \emptyset$). Set $W_0:=\emptyset$. 

For the inductive step, assume that for some $k \ge 1$ critical points $c_1,\dots,c_{k-1}\in\Crit_R$ are chosen, and we have constructed a set $\W_{k-1}$. 
If $\W_{k-1}$ intersects the orbits of all critical points in $\Crit_R$, then the induction is complete with $m=k-1$. Otherwise, choose another critical point $c_{k} \in \Crit_R$ so that its orbit does not intersect $\W_{k-1}$. Choose a puzzle piece $W_{k}$ with $c_{k} \in \inter W_{k}$ and of depth no less than $W_{k-1}$ (at least $s$). Let $X_{k}\subset\C$ be the union of those components in $\DomE(\inter W_{k})$ that contain a critical point.  Since $W_{k}$ has depth at least $s$, all critical points of $g$ in $\inter W_k$ are in the same fiber, so there is a single component of $\inter W_{k} \cap {X_{k}}$ and its closure is a single critical puzzle piece, say $W_{k}^*$. (In particular, this means that all critical points in $\fib(c_{k})$ are combinatorially recurrent.)

We claim that we can choose $W_{k}$ to be of sufficiently large depth so that $W_k^* \subset \inter W_{k}$.  For non-renormalizable critical points this follows from Lemma~\ref{Lem:NonRenNest}, whereas for renormalizable points it follows by our choice of $s$ and considering the iterate of $g$ described above. Setting $\W_{k} := \W_{k-1} \cup \inter W^*_{k}$ completes the inductive step.

Suppose that the induction for $\Crit_R$ ended with the set $\W_m$. Let $\W$  be the union of $\W_m$ and all components in $\DomE(\W_m) \sm \W_m$ that contain critical points. Again, since all puzzle pieces in question are of depth at least $s$, each component in $\W$ contains at most one critical fiber. The construction of $\W_m$ implies $\Crit_R \subset \W$.  

Finally, let $\Crit_{nR}' := \Crit_{nR} \sm \W$; these are all combinatorially non-recurrent critical points whose orbits are disjoint from  $\W_m$. Define the set 
\begin{equation}
\label{Eq:BMDom}
\mathcal V := \W \cup \bigcup_{c \in \Crit_{nR}'} \inter P_l(c),
\end{equation}
where $l \ge s$ is a depth at least as large as the depths of the puzzle pieces in $\ovl \W$ with the further properties that the orbit of each $c \in \Crit_{nR}'$ never re-enters the puzzle piece $P_l(c)$ and each of the puzzle pieces $P_l(c)$ for $c \in \Crit_{nR}$ is weakly protected. The existence of such a depth follows from Lemma~\ref{Lem:FibersCompContained} and because the critical points in question are {combinatorially} non-recurrent. Observe that $\CritNF(g) \subset \mathcal V$.

By construction, the set $\mathcal V$ is a nice open set with finitely many connected components, each of which is the interior of a puzzle piece, and each containing at least one critical point. Define $\U := \Dom(\V)$ and let $F \colon \mathcal U \to \mathcal V$ be the first return map to $\V$ as defined in Lemma~\ref{Lem:FirstReturnConstruction}. We claim that the map $F \colon \mathcal U \to \mathcal V$ is a box mapping with the desired  properties. 

Indeed, $F$ is a box mapping in the sense of Definition~\ref{Def:BoxMap}: Lemma~\ref{Lem:FirstReturnConstruction}~\eqref{It:FRM2} implies property \eqref{It:BM4} in Definition~\ref{Def:BoxMap}, while property \eqref{It:BM2} of the definition follows from the construction of~$\mathcal V$. 

For Property~\eqref{It:BM3}, observe first that since each of the puzzle pieces $W_k^*$ is weakly protected by $W_k$,  Lemma~\ref{Lem:FRD} implies  that the set of points in $\W_m$ that eventually return to $\W_m$ is compactly contained in $\W_m$. At the same time, the components of $\W \sm \W_m$ are simultaneously components of $\mathcal U$ and $\mathcal V$. Similarly, by Lemma~\ref{Lem:FRD2}, since each $P_l(c)$ for $c \in \Crit_{nR}'$ is weakly protected (by a puzzle piece of some depth) and the orbit of $c$ escapes from $P_l(c)$, the set of points in $\inter P_l(c)$ returning to $\inter P_l(c)$ is compactly contained in $\inter P_l(c)$. 

Finally, property~\eqref{It:BM1} follows from Lemma~\ref{Lem:FirstReturnConstruction}~\eqref{It:FRM3}: we have that $\CritNF(g) \subset \V$ and the orbits of points in $\mathcal V$ that return to $\V$ do not intersect $\Crit_B \sqcup \Crit_I$, as the latter points do not belong to the interiors of the puzzle pieces of depth $s$. Therefore, the map $F \colon \U \to \V$ is indeed a box mapping.
\end{step}

\begin{step}[properties of the box mapping]
\label{St:BoxProp}
Lemma~\ref{Lem:FirstReturnConstruction}~\eqref{It:FRM4} implies that every $g$-orbit that accumulates at some point in $\V$ also intersects $K(F)$. Since $\CritNF(g) \subset \mathcal V$, we conclude that $\Crit^a(g) \subset K(F)$. Moreover, since $\Crit(F) \subset \CritNF(g)$ (by Lemma~\ref{Lem:FirstReturnConstruction}~\eqref{It:FRM3}), it follows that $\Crit^a(g) \subset \Crit(F) \cap K(F)$. 

The claim that each critical component of $\U$ contains exactly one critical fiber follows from the construction of $F$. (Note that there is no ambiguity by referring to critical fibers without explicitly mentioning the map: since $\Crit(F) \subset \CritNF(g)$, each critical fiber of $F$ is a critical fiber of $g$.)

It remains to prove that our box mapping is dynamically natural in the sense of Definition~\ref{Def:NaturalBoxMapping}. 
The fact that the box mapping $F \colon \mathcal U \to \mathcal V$ has no \eqref{Item:NE} components follows again from the construction: any cycle of \eqref{Item:NE} components of $\mathcal U$ must pass either through the set $\W_m$, or through one of the sets $\inter P_l(c)$ for some $c \in \Crit_{nR}'$. The former is impossible since all components in $\W_m \cap \mathcal U$ are compactly contained in $\W_m$ by construction of the box mapping, while the latter is impossible since $c$ is an escaping point for $\inter P_l(c)$, again by construction.

The next condition for dynamical naturality that we verify is  $\area(\Koc(F))=0$. For $n \ge s$, define $U_n := \bigcup_{c \in \CritNF(g)} P_n(c)$. By Lemma~\ref{Lem:ZeroArea},  $\area(E(U_n))=0$. Since $\Koc(F)\subset \bigcup_n E(U_n)$, and the latter set is a countable union of sets of measure zero, the claim follows. 

The final condition is $K(F) = \Kwi(F)$. If $\U$ has only finitely many connected components, then this property is obvious. So assume that $\U$ has infinitely many components: all but finitely many of them are compactly contained in $\V$.

Define $\X$ to be the set of all components of $\U$ with the following properties:

\begin{itemize}
\item
each $U \in \X$ is compactly contained in the respective component of $\V$;
\item
for every $U \in \X$, if $g^r \colon U \to V$, $V \subset \V$ is the restriction of $F$ to $U$, then $r$ is larger than the maximal depth of the components in $\ovl \V$. 
\end{itemize}

The set $\X$ contains all but finitely many components of $\U$. Let us show that the elements of $\X$ are ``well-inside'' $\V$, i.e.\ there exists $\delta > 0$ such that for every $U \in \X$, if $W$ is the component of $\V$ containing $U$, then $\modulus(W \sm \ovl U) \ge \delta$. This would finish the proof of dynamical naturality of $F$ because by definition of $\Kwi(F)$ it then would follow that $K(F) = K_{\delta'}(F)$, where $\delta'$ is the minimum over $\delta$ and finitely many moduli of annuli that separate the components of $\U$ not accounted in $\X$ but that are still compactly contained in $\V$. 

Let $U \in \X$ and $W$ be the component of $\V$ such that $\ovl U \subset W$, and suppose $g^r \colon U \to V$ is the restriction of $F$ to $U$. By construction of $F$, the degree of the map $g^r \colon U \to V$ is bounded independently of $U$ (because $F$ is a first return map, see Lemma~\ref{Lem:FirstReturnConstruction}) and $V$ is a component in the decomposition~\eqref{Eq:BMDom}. The latter means that $\ovl V$ is weakly protected by some open puzzle piece $V'$. Let $U'$ be the component of $g^{-r}(V')$ containing $U$ (see Figure~\ref{Fig:PullingSpace}). Since $V$ is weakly protected by $V'$, i.e.\ $\ovl V \subset V'$, we conclude that $U' \sm \ovl U$ is a non-degenerate annulus. Furthermore, $U' \subset W$ because $\ovl U'$ is of depth at least $r$ that is larger than the depths of all the components of $\ovl \V$ (by definition of $\X$). We claim that the degree of the map $g^r \colon U' \to V'$ is uniformly bounded (independently of the choice of $U \in \X$).   

\begin{figure}[htbp]
\begin{center}
\includegraphics[scale=0.5, trim=0 2 2 2, clip]{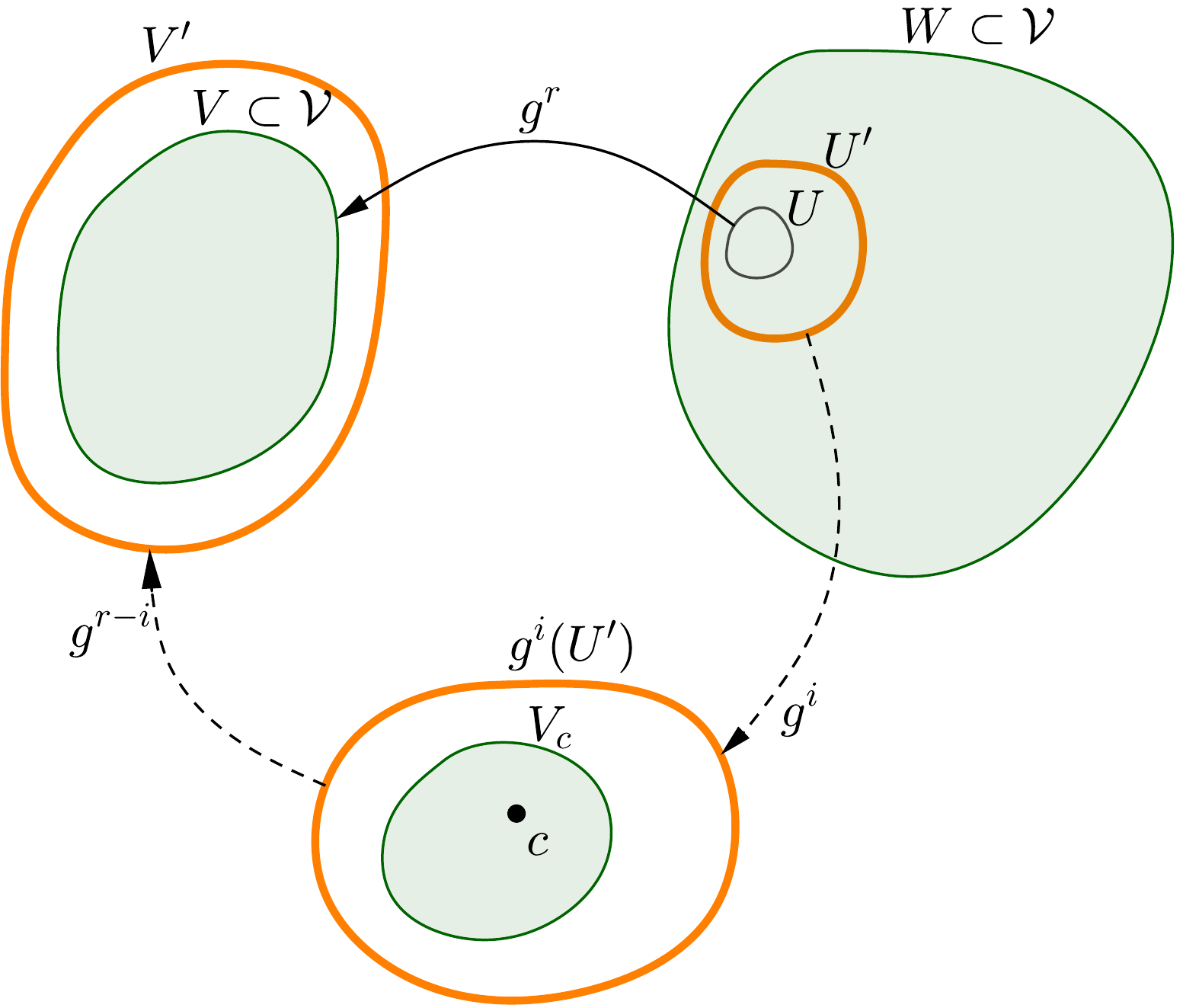}
\caption{An illustration for the proof of Lemma~\ref{Lem:NewtonBox}.}
\label{Fig:PullingSpace}
\end{center}
\end{figure}

Suppose $g^i(U')$ is an open puzzle piece containing a critical point $c \in \CritNF(g)$ for some $0 \le i < r$, and let $V_c$ be the component of $\V$ containing $c$. If the depth of $g^i(U')$ is larger than the depth of $V_c$, then $g^i(U') \subset V_c$, which is impossible: $i < r$ and hence $U$ would be contained in a component of $g^{-i}(V_c)$, a contradiction to the fact that $U$ is the domain of a branch of the first return map to $\V$ with the range equal to $V$. Therefore, the depth of $g^i(U')$ must be smaller than the depth of $V_c$. Now the claim follows because there are only finitely many critical puzzle pieces of depth smaller than the depths of the components of $\ovl \V$, and thus the number of such indices $i$ is bounded above independently of $r$ (and hence of $U$). 

The fact that the degree of the map $g^r \colon U' \to V'$ is uniformly bounded above, say by some $D > 0$, together with Lemma~\ref{Lem:AnnulusLemma} imply that the modulus of $U' \sm \ovl U$ is bounded below only in terms of $D$ and $\modulus(V' \sm \ovl V)$; since $W \sm \ovl U \supset U' \sm \ovl U$, the same is true for $\modulus(W \sm \ovl U)$. And since there are only finitely many components of $\V$ and our choice of $V'$ was independent of $U$ we conclude that the elements of $\X$ are all ``well-inside'' $\V$, as desired.
\end{step}
\end{proof}

\begin{proof}[Proof of Theorem \ref{Thm:RRP}]

The statement of Theorem \ref{Thm:RRP} is invariant under the quasiconformal surgery that makes the Newton map $N_p$ attracting-critically finite (see Section~\ref{Sec:Puzzles}), and also by passing to an iterate. Therefore, we assume that $N_p$ is attracting-critically-finite and consider the iterate $g = N_p^{M}$ that has a well-defined puzzle partition. 

Consider some $z \in \Cc$ and distinguish the following cases:

\begin{enumerate}
\item
\label{It:B1}
the point $z$ is in the basin of a root, or equivalently, $\omega(z) \cap \Crit_B \neq \emptyset$; 

\item
\label{It:I1}
$\orb(z) \ni \infty$, so $z$ is $\infty$ or a (pre-)pole; in particular,  $\orb(z)$ might intersect $\Crit_I$;

\item
\label{It:NoCrit}
$\omega(z) \cap \fib(c) = \emptyset$ for every $c \in \CritNF(g)$, $\omega(z) \cap \Crit_B = \emptyset$, and $\orb(z) \not \ni \infty$; in other words, $z$ is not $\infty$ or a (pre-)pole and the $\omega$-limit set of $z$ does not intersect the roots and the critical fibers of $g$, except, perhaps, at critical (pre-)poles.

\item
\label{It:R1}
$\omega(z) \cap \fib(c) \not= \emptyset$ for some $c \in \CritNF(g)$, that is the orbit of $z$ lands in, or accumulates at the fiber of a critical point that is not in $\Crit_I \sqcup \Crit_B$.
\end{enumerate}

Clearly, these four  cases cover all possibilities. Moreover, all the cases above are disjoint. Let us show in each of the cases which of the alternatives (\hyperref[It:B]{B}), (\hyperref[It:T]{T}), or (\hyperref[It:R]{R}) in Theorem~\ref{Thm:RRP} holds. 

(\ref{It:B1}) This is precisely case (\hyperref[It:B]{B}).

(\ref{It:I1}) If $\orb(z) \ni \infty$, then $\fib(z) = \{z\}$ by Lemma~\ref{Lem:FibersInfinity}, so we are in case (\hyperref[It:T]{T}).

(\ref{It:NoCrit}) Since $\omega(z)$ does not intersect the roots and the critical fibers of $g$, except possibly at poles or prepoles, we can apply Lemma~\ref{Lem:AvoidingCritPtsNewton}, so $z$ has trivial fiber and we are in case (\hyperref[It:T]{T}).  

(\ref{It:R1}) Since all the points in $\CritNF(g) \sm \Crit^a(g)$ fall into case \eqref{It:NoCrit} (by definition, these are all the points in $\CritNF(g)$, necessarily combinatorially non-recurrent, that do not accumulate on fibers of points in $\CritNF(g)$), case \eqref{It:R1} concerns only critical points in $\Crit^a(g)$. 

By Lemma~\ref{Lem:NewtonBox}, there exists a dynamically natural box mapping $F \colon \mathcal U \to \mathcal V$ such that the set of all non-escaping critical points of $F$ contains $\Crit^a(g)$. Therefore, if the orbit of $z$ accumulates on the fiber of a point in $\Crit^a(g)$, then it also intersects the filled-in Julia set $K(F)$ of $F$, again by Lemma~\ref{Lem:NewtonBox}. Since the box mapping $F$ contains no \eqref{Item:NE} components, and the orbit of $z$ is not of \eqref{Item:CB} type (those are taken care of in case (\ref{It:NoCrit})), the conclusion of Theorem~\ref{Thm:RRP} follows from Theorem~\ref{Thm:RigidityComplexBox}. 
\end{proof}

\subsection{Proof of Corollaries~\ref{Cor:BasinsLocConn}  and \ref{Cor:LC}}
\label{Sec:LC}

In this subsection, we will prove that every component of the basin of every root has locally connected boundary (Corollary~\ref{Cor:BasinsLocConn}), and  that the Julia set of every Newton map is locally connected provided that every polynomial-like restriction of $N_p$ with connected Julia set straightens to a polynomial in~${\Sspace}$ (Corollary~\ref{Cor:LC}). Theorem~\ref{Thm:RRP} will be our main ingredient for the proof. 
(Recall that ${\Sspace}$ stands for the set of all polynomials so that the two conditions are satisfied: 1) most of the Fatou components are small; 2) the boundaries of all Siegel disks are Jordan curves.)

\begin{proof}[Proof of Corollary~\ref{Cor:BasinsLocConn}]
As before, we assume that $N_p$ is attracting-critically-finite and replace  $N_p$ by its iterate $g:=N_p^M$, so we work in the setting of Section~\ref{Sec:Puzzles}.
By invariance, it suffices to prove that every immediate basin $U_\xi$ has locally connected boundary. We will prove the following somewhat more precise statement: \emph{every $z\in\partial U_\xi$ has trivial fiber, unless $z$ is eventually periodic and the fiber of $z$ intersects $\partial U_\xi$ in a single point, which is (pre)periodic. In both cases, $\partial U_\xi$ is locally connected at $z$.
}

By Theorem~\ref{Thm:RRP}, the point $z$ either has trivial fiber, or it maps after finitely many iterations to the little filled-in Julia set, say $K$, of a polynomial-like restriction of $g$ with connected Julia set, and so that $K$ is connected. In the first case, $U_\xi$ is locally connected at $z$ and we are done, so (after replacing $z$ by a point on its orbit) it suffices to assume that $z\in K$. The key step of the proof will be to show that $K\cap \partial U_\xi=\{z\}$, and as a byproduct that $z$ is a periodic point of $g$.

By Lemma~\ref{Lem:NonRepellingRenormalizable}, the set $K$ is the filled-in Julia set of a polynomial-like restriction $g^k \colon \inter P_{n+k}(z) \to \inter P_n(z)$ for some $n \ge 0$ and $k \ge 1$, and all critical points of $g^k$ on $\inter P_n(z)$ are already in $K$. Denote its degree by $\delta\ge 2$. Define $A:=\inter P_n(z)\sm K$; this is a non-degenerate annulus, again by Lemma~\ref{Lem:NonRepellingRenormalizable}.

For every $w_0\in A$, for each of the $\delta$ preimages $w_1\in (g^k)^{-1}(w_0)$,  and for every simple curve $\gamma_0\subset A$ connecting $w_0$ to $w_1$, one can construct a curve $\gamma$ that starts at $w_0$ and converges to $\partial K$ by connecting a preimage component $\gamma_1$ of $\gamma_0$ to $w_1$, then connecting another preimage component of $\gamma_1$ to the end of $\gamma_1$, and so on. A standard argument relating the hyperbolic and Euclidean geometries of $A$  shows that this curve must converge to a point in $\partial K$ that is fixed by $g^k$. After straightening, this curve is fixed by the polynomial and lies in the escaping set, so it lands at a fixed point and is homotopic (relative to the filled Julia set) to a fixed dynamic ray (by Lindel\"of's theorem).

There are only finitely many fixed points of $g^k$ on $\partial K$ and only finitely many homotopy classes of fixed rays, so many choices of $w_0$, $w_1$ and $\gamma_0$ will lead to homotopic curves. 

We are going to use the curves $\gamma_0$ and its extension $\gamma\subset A$ to construct three curves landing at $z$: one curve will be an internal ray of $U_\xi$ fixed by $g^k$, and two further curves together will separate $K\sm\{z\}$ from $U_\xi$. This will then establish local connectivity of $\partial U_\xi$ at $z$ (see Figure~\ref{Fig:RealBasilicaTouchBasin} for an illustration of the construction).  

\begin{figure}[htbp]
\begin{center}
\includegraphics[scale=0.5, trim={50 5 20 5},clip]{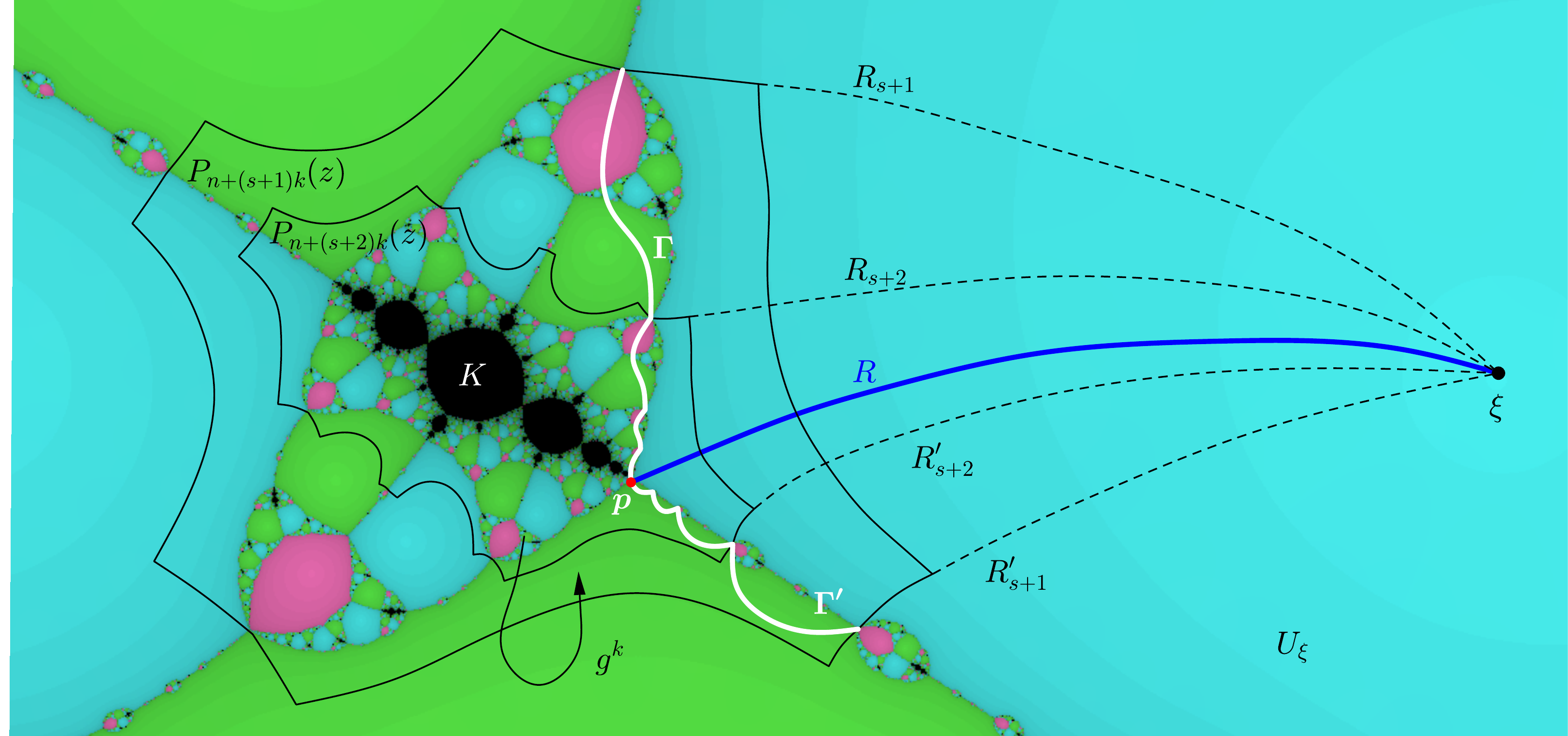}
\caption{Example of the separating curves $\Gamma$ and $\Gamma'$ in the proof of Corollary~\ref{Cor:BasinsLocConn}: their parts within $P_{n+(s+1)k}(z)$ are shown in white. The point $z$ is marked in red. The curves $\Gamma$, $\Gamma'$ and the internal ray $R$ land at the same point $p$, this point is fixed by $g^k$. The curve $\Gamma \cup \{p\} \cup \Gamma'$ separates $K \sm \{p\}$ and the immediate basin $U_\xi$ within the puzzle pieces.}
\label{Fig:RealBasilicaTouchBasin}
\end{center}
\end{figure}

The dynamics of $N_p$ on $U_\xi$ is conformally conjugate to $z\mapsto z^m$ on $\disk$ for some $m\ge 2$, so dynamic rays on $U_\xi$ are well defined together with their usual mapping properties, and periodic rays land at periodic points. By the construction of Newton puzzles (see Section~\ref{SSec:ConstructionOfNewtonPuzzles}), for every $s \ge 0$ the puzzle piece $P_{n+sk}(z)$ intersects $U_\xi$ in a domain, say $D_s$, so that $\partial D_s\cap U_\xi$ consists of pieces of two pre-fixed dynamic rays, say $R_s$ and $R'_s$, together with some equipotential in $U_\xi$. We clearly have $D_{s+1}\subset D_s$ for all $s$, with common boundary only on $\partial U_\xi$. Since $g^k(P_{n+(s+1)k}(z))=P_{n+sk}(z)$ and hence $g^k(D_{s+1})=D_s$, the rays $R_s$ and $R'_s$ must converge from both sides to a single ray, say $R$, that is fixed by $g^k$. This ray $R$ must land at a point $p\in\partial U_\xi$ that is also fixed by $g^k$. 

One particular curve $\gamma\subset A$ that lands at a fixed point in $\partial K$ can be constructed by choosing $w_0$, $w_1$ and $\gamma_0$ on $R$; then the entire curve $\gamma$ is a subset of $R$ and lands at $p$.

Construct another curve $\Gamma$ as follows. Start by connecting the landing points of the rays $R_{s+1}$ and $R_{s+2}$ within $A$ by a curve $\Gamma_0\subset A$ that lies outside of $\ovl U_\xi$. In particular, since $\Gamma_0$ avoids the ray $R$, this fixes the homotopy class of $\Gamma_0$ in $A$. Let us explain in more detail why such a curve exists. By the structure of the boundary of a Newton puzzle piece (see the discussion in the beginning of Section~\ref{Subsec:PropNewtonPuz}), there exists a component $V$ of the basin of some root such that $\ovl V \cap \ovl U_\xi$ is exactly the landing point of $R_{s+1}$ and $V \cap \partial P_{n+(s+1)k}(z)$ is a piecewise smooth curve consisting of two pieces of internal rays (for $V$) and an arc of an equipotential. 
Consider the set
$B := \inter P_{n+(s+1)k}(z) \sm P_{n+(s+2)k}(z) \subset A$, which is a non-degenerate annulus. Then  the landing point of $R_{s+1}$ is accessible from within the connected set $B \sm \ovl U_\xi$.  Likewise, the landing point of $R_{s+2}$ is accessible from within $B \sm \ovl U_{\xi}$. The curve $\Gamma_0$ can be now chosen as a curve connecting the landing points of $R_{s+1}$ and $R_{s+2}$ in $B \sm \ovl U_{\xi}$. 

Extend the curve $\Gamma_0$ as before; this will yield the curve $\Gamma$. The two curves $\Gamma_0$ and $\gamma_0$ (between $w_0$ and $w_1$) have finite hyperbolic distance between each other within $A$, and this distance is preserved by taking preimages with respect to the hyperbolic distance of preimage domains of $A$; hence the distance is contracted with respect to $A$. Therefore the extensions $\Gamma$ and $\gamma$ must land at the same point $p$. A third curve $\Gamma'$ that lands at $p$ can be constructed  analogously starting from the landing points of the rays $R'_{s+1}$ and $R'_{s+2}$. The hyperbolic distance argument shows that all three curves land through the same access to $p$ relative to $K$. Hence, the union $\Gamma\cup \{p\}\cup \Gamma'$ disconnects $P_{n+k}(z)$, but it does not disconnect $K$. Therefore, it separates $K\sm\{p\}$ from $U_\xi$ within $P_{n+k}(z)$. 

The conclusions now follow: the only point in $K\cap \partial U_\xi$ is $p=z$, so $z$ is fixed by $g^k$. Therefore the fiber of $z$, which is $K$, intersects $U_\xi$ only in $\{z\}$. This shows that $U_\xi$ is locally connected at $z$.. 
\end{proof}

\begin{proof}[Proof of Corollary~\ref{Cor:LC}]

Consider a Newton map $N_p$ that satisfies the hypothesis of the corollary. By \cite{Pr}, the Julia set $J(N_p)$ is connected. By a well known classical condition \cite{Why, MiIntro}, it is locally connected if and only if every complementary component (i.e.\ every Fatou component) has locally connected boundary, and moreover for every $\eps>0$, there are only finitely many components with spherical diameters exceeding $\eps$. 

By Sullivan's no-wandering-domain theorem, every Fatou component is eventually periodic, and it is either a component of the basin of some root, or a component of the attracting or parabolic basin of a non-repelling periodic point of period at least $2$, or a component of a Siegel disk, again of period at least $2$. By Corollary~\ref{Cor:BasinsLocConn}, components of basins of roots have locally connected boundaries. The other types of components are eventually mapped to bounded Fatou components of a polynomial-like restriction $f$ of $N_p$ (Theorem~\ref{Thm:RRP}). By \cite{RY08}, all the components in the basins of attracting or parabolic periodic points of $f$ have locally connected boundaries. On the other hand, all the Siegel disks of $f$ (if any), as well as their iterated preimages, have locally connected boundary by hypothesis ($f$ straightens to a polynomial in ${\Sspace}$). This establishes the first condition for local connectivity of $J(N_p)$, so it remains to show that there are only finitely many Fatou components of $N_p$ with spherical diameters exceeding any $\eps>0$.

Assuming the contrary, let $(X_i)_{i \ge 0}$ be a sequence of Fatou components with 
\begin{equation}
\label{Eq:DiamAssum}
\diam(X_i) \ge \eps > 0 \text{ for every } i \ge 0.
\end{equation}

Let $x \in J(N_p)$ be an accumulation point of $(X_i)$. Without loss of generality we can assume that $N_p$ is attracting-critically-finite; after passing to the iterate $g$ of $N_p$, we can use the puzzle construction from Section~\ref{Sec:Puzzles}. By Theorem~\ref{Thm:RRP}, we have either $\fib(x) = \{x\}$, or the orbit of $x$ belongs to or eventually lands in the filled Julia set of a suitable polynomial-like restriction of $g$. Let us consider these two cases separately.

In the first case there exists a nest $(P_n(x))_{n \ge 0}$ of puzzle pieces or unions of puzzle pieces (depending on whether $x$ is $\infty$, a (pre-)pole or not) with $\diam(P_n(x)) \to 0$ as $n \to \infty$, so there is an $N$ so that $\diam(P_n(x))<\eps$ for all $n\ge N$. 
The boundary of each individual $P_n(x)$ passes through finitely many touching components of the basins of roots. Therefore, infinitely many $X_i$ must be contained in $P_n(x)$, and this is a contradiction. 

In the second case, we can assume that $\fib(x)$ is the little filled Julia set of some polynomial-like map $f:= g^s \colon \inter P_{n+s}(x) \to \inter P_n(x)$ (Lemma~\ref{Lem:NonRepellingRenormalizable}). Up to a subsequence, we can further assume that all $X_i$ either lie in $\Cc \sm \fib(x)$, or in $\fib(x)$. 

Suppose first that all $X_i\subset\fib(x)$. By hypothesis, the polynomial-like map $f$ straightens to a polynomial in ${\Sspace}$, so most of its Fatou components are small, and most of the Fatou components of $f$ are small as well (homeomorphisms on compacts are uniformly continuous). This gives a contradiction to~\eqref{Eq:DiamAssum}.

Finally, suppose all $X_i\subset \Cc\sm\fib(x)$. The map $f \colon \inter P_{n+s}(x) \sm \fib(x) \to \inter P_n(x) \sm \fib(x)$ is a local isometry with respect to the hyperbolic metric in the domain and in the range. Since $\inter P_{n+s}(x) \sm \fib(x) \subset \inter P_n(x) \sm \fib(x)$, the restriction $f|_{\inter P_{n+s}(x) \sm \fib(x)}$ is weakly expanding with respect to the hyperbolic metric in $\inter P_n(x) \sm \fib(x)$. Let us use this weak expansivity to complete the proof.

Assume first, up to passing to a subsequence, that each $X_i$ is disjoint from the boundary of $P_{n+ks}(x)$ for every $k \ge 1$. In terms of the hyperbolic metric in $\inter P_n(x) \sm \fib(x)$ assumption~\eqref{Eq:DiamAssum} means that the hyperbolic diameters of $X_i$ are unbounded (because $\fib(x)$ is a part of the ideal boundary and $(X_i)$ accumulates on $\fib(x)$). Choose an $i_0$ such that the hyperbolic diameter of $X_{i_0}$ is larger than the hyperbolic diameter of the closed annulus $P_{n+s}(x) \sm \inter P_{n+2s}(x)$. If $X_{i_0}'$ is the iterated image of $X_{i_0}$ under $f$ that lies in the annulus, then by expansivity, the hyperbolic diameter of $X_{i_0}'$ is not smaller than the hyperbolic diameter of $X_{i_0}$, a contradiction to our choices.

If there are no subsequences of $(X_i)$ specified in the previous paragraph, then all but finitely many of $X_i$ intersect the boundaries of $P_{n+ks}(x)$, and hence all but finitely many $X_i$ lie in the basins of roots. In this case, since $\partial P_{n+s}(x)$ intersects only finitely many components of root basins, there exists another subsequence in $(X_i)$ consisting of $f$-iterated preimages of a component of a root basin that intersects $\partial P_{n+s}(x)$. Again by weak expansivity (or, equivalently, by weak contraction for the corresponding inverse branches of $f$), the hyperbolic diameters of the elements this subsequence are uniformly bounded above. But they accumulate on $\fib(x)$, the ideal boundary of $\inter P_n(x) \sm \fib(x)$ in its hyperbolic metric, which implies that their spherical diameters must go to zero. This again contradicts~\eqref{Eq:DiamAssum} and concludes the proof. 
\end{proof}

\Newpage

\section{Proof of parameter rigidity for Newton maps (Theorem ~\ref{Thm:QCRigidity})}
\label{Sec:ProofParamRigidity}

In this section we prove parameter rigidity for Newton maps (Theorem~\ref{Thm:QCRigidity}). This will be accomplished by combining the rigidity results of Kozlovski--van Strien, as described in~\cite{DKvS}, together with our results from Section~\ref{Sec:RRPProof}.

Our strategy is to decompose the Newton dynamics into a box mapping that captures the non-renormalizable part of the Newton dynamics (done in Subsection~\ref{SSec:NonRenBM}), as well as some renormalizable parts that will be treated separately, and so that what is left has measure zero. We then show in Subsection~\ref{SSec:ExtrCombEquiv} that for combinatorially equivalent Newton maps the corresponding non-renormalizable box mappings are combinatorially equivalent as well, and hence we can apply the result of Kozlovski--van Strien (spelled out in Subsection~\ref{SSec:QCBM}). In Subsections~\ref{SSec:QCRig} and~\ref{SSec:General}, we will piece together a global conjugation between combinatorially equivalent Newton maps that are renormalizable in the same way (defined in Subsection~\ref{Sub:CombinatoriallyEquivalent}), and show that the only way this can fail to be (parameter) rigid is coming from the renormalizable parts, i.e.\ from the dynamics of non-rigid polynomials. This will be done modulo a key proposition (Proposition~\ref{Prop:KeyClaim}), which we then prove is Subsection~\ref{SSec:ProofKeyClaim}.

\subsection{Constructing non-renormalizable box mappings for Newton maps}
\label{SSec:NonRenBM}

We start this section by showing how to upgrade the result of Lemma~\ref{Lem:NewtonBox} so that the resulting box mapping is non-renormalizable. 

\begin{lemma}[Extracting non-renormalizable box mappings]
\label{Lem:ExtrNonRenormBoxMappings}
Let $g$ be the iterate of an attracting-critically-finite Newton map as defined in Section~\ref{Sec:Puzzles}. Then there exists a dynamically natural complex box mapping $F \colon \U \to \V$ with the following properties: 
\begin{enumerate}
\item
\label{It:Extr1}
the components of\/ $\U$ and $\V$ are open puzzle pieces of $g$, and $F$ is the restriction of $g$ to the components of\/ $\U$;
\item
\label{It:Extr21}
every $g$-orbit that accumulates at a point in $\V$ also intersects $K(F)$;
\item
\label{It:Extr22}
$\Crit(F) \subset \CritNF(g) \sm \Crit_{er}(g) \subset \V$; 
\item
\label{It:Extr3}
$F$ is non-renormalizable with all periodic points repelling.
\item
\label{It:Extr23}
the set $\CritNF(g) \sm (\Crit(F) \cup \Crit_{er}(g))$ consists of critical points $c$ such that $c \not\in \Crit_{er}(g)$ and $\omega(c)$ is disjoint from $\CritNF(g) \sm \Crit_{er}(g)$;
\end{enumerate}
\end{lemma}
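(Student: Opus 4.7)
The plan is to revisit the construction of Lemma~\ref{Lem:NewtonBox} and remove all renormalizable critical dynamics \emph{before} taking first returns, so that the resulting box mapping is free of polynomial-like restrictions. To set this up I would take $\Crit_{er}(g)$ to be the set of critical points of $g$ whose orbits eventually land in the fiber of some renormalizable critical point of $g$ (the ``eventually renormalizable'' critical set); by Lemma~\ref{Lem:NonRepellingRenormalizable} every renormalizable critical point of $g$ lies in $\Crit_{er}(g)$, and by finiteness of $\Crit(g)$ only finitely many polynomial-like renormalizations of $g$ enter the definition. The complement $\CritNF(g)\sm\Crit_{er}(g)$ then splits into non-renormalizable combinatorially recurrent critical points on the one hand, and combinatorially non-recurrent critical points whose orbits never meet a renormalizable fiber on the other.

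With this dichotomy in hand I would re-run the inductive selection from the proof of Lemma~\ref{Lem:NewtonBox}, iterating only over critical points in $\CritNF(g)\sm\Crit_{er}(g)$. For each non-renormalizable combinatorially recurrent $c$, Lemma~\ref{Lem:NonRenNest} furnishes a sufficiently deep puzzle piece $W_c$ that weakly protects the first-return component $W_c^\ast$ containing $c$; for each combinatorially non-recurrent $c\in\CritNF(g)\sm\Crit_{er}(g)$, Lemma~\ref{Lem:FibersCompContained} produces a weakly protected puzzle piece $P_l(c)$ from which the orbit of $c$ never returns. Letting $\V$ be the union of these chosen critical pieces together with those components of the landing domains that still contain critical points from $\CritNF(g)\sm\Crit_{er}(g)$, and setting $\U:=\Dom(\V)$, the first return map $F\colon\U\to\V$ is a complex box mapping by Lemma~\ref{Lem:Simple}. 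Properties \eqref{It:Extr1}, \eqref{It:Extr21} and \eqref{It:Extr22} are then immediate from Lemma~\ref{Lem:FirstReturnConstruction} exactly as in Lemma~\ref{Lem:NewtonBox}; dynamical naturality is obtained by the same argument as in Step~\ref{St:BoxProp} of that lemma, using Lemma~\ref{Lem:ZeroArea}~\eqref{It:ZeroArea} for $\area(\Koc(F))=0$ and the bounded-degree annulus pull-back for the well-inside condition $K(F)=\Kwi(F)$. Property \eqref{It:Extr23} is automatic from the construction: any $c\in\CritNF(g)\sm\Crit_{er}(g)$ that fails to be a critical point of $F$ must lie in $\V$ but outside $\Dom(\V)$, so its orbit never returns to $\V$; in particular $\omega(c)$ is disjoint from the chosen critical pieces and therefore from $\CritNF(g)\sm\Crit_{er}(g)$.

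The main obstacle is verifying \eqref{It:Extr3}. For non-renormalizability I would argue by contraposition: if $F$ were renormalizable in the sense of Definition~\ref{Def:RenComplexBoxMapDH} around some $c\in\Crit(F)$, then Lemma~\ref{Lem:FibersJulia} would identify the fiber of $c$ with the filled Julia set of a polynomial-like restriction of an iterate of $F$. Because $F$ is built from first returns of $g$ and its puzzle pieces are open puzzle pieces of $g$, this polynomial-like restriction descends to a polynomial-like restriction of a suitable iterate of $g$ on puzzle pieces of $g$, making $c$ combinatorially periodic and hence renormalizable for $g$ by Lemma~\ref{Lem:NonRepellingRenormalizable}, so $c\in\Crit_{er}(g)$, contradicting $c\in\Crit(F)\subset\CritNF(g)\sm\Crit_{er}(g)$. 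The absence of non-repelling periodic points is handled exactly as in the last paragraph of the proof of Theorem~\ref{Thm:RigidityComplexBox}: any such cycle of $F$ would, on a sufficiently deep puzzle piece, give a polynomial-like self-map of degree at least $2$ and therefore again a renormalization of $g$ around some critical point in $\CritNF(g)\sm\Crit_{er}(g)$, which has just been excluded. The delicate point is ensuring that a Douady--Hubbard renormalization of the composite map $F$ genuinely yields a renormalization of $g$ (and not merely a box renormalization with $Y=W$ in the notation of Definition~\ref{Def:RenComplexBoxMapDH}); this is forced by the Markov property for $g$, since the components of $\V$ are open puzzle pieces of $g$ and any cycle of a component of $\U$ that is also a component of $\V$ would give an \eqref{Item:NE} component in $F$, which is ruled out by dynamical naturality.
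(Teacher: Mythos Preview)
Your overall strategy is sound and would work, but it differs from the paper's route and has one genuine gap that your write-up does not close.

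\textbf{Comparison with the paper.} The paper does \emph{not} rebuild the box mapping from scratch. It takes the box mapping $F'\colon\U'\to\V'$ already furnished by Lemma~\ref{Lem:NewtonBox}, lets $\V$ be the union of those components of $\U'$ that meet $\CritNF(g)\sm\Crit_{er}(g)$, and then defines $F$ as the first return to $\V$ \emph{under $F'$} (not under $g$). This two-step reduction buys a lot: dynamical naturality of $F$ is inherited from that of $F'$ without redoing the modulus and measure arguments, and the crucial critical-set containment is obtained in one line from Lemma~\ref{Lem:NewtonBox}\,\eqref{It:ExactlyOne} (each critical component of $\U'$ carries a single critical fiber) together with the observation that the $F'$-orbit of any $c'\in\Crit_{er}(g)$ never meets $\V$. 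Your approach rebuilds the whole machine over the smaller critical set; this is legitimate but forces you to re-verify everything, and in one place your verification is not yet complete.

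\textbf{The gap.} Your claim that property~\eqref{It:Extr22} is ``immediate from Lemma~\ref{Lem:FirstReturnConstruction} exactly as in Lemma~\ref{Lem:NewtonBox}'' does not go through as stated. In Lemma~\ref{Lem:NewtonBox} one has $\CritNF(g)\subset\V$, and the proof that $\Crit(F)\subset\CritNF(g)$ uses precisely that every critical point of $g$ encountered along a first-return orbit already lies in $\V$, forcing it to sit at time $0$. In your construction only $\CritNF(g)\sm\Crit_{er}(g)\subset\V$, so a first-return orbit $Y,\,g(Y),\dots,g^{k-1}(Y)$ might pass through some $c'\in\Crit_{er}(g)$ at an intermediate time; this would make $Y$ a critical component of $\U$ whose critical point of $g$ is eventually renormalizable, destroying~\eqref{It:Extr22}. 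Concretely: if such an intermediate hit occurs then $g^{k-m}(c')\in\V$, so the $g$-orbit of $c'$ visits a puzzle piece around some $c\in\CritNF(g)\sm\Crit_{er}(g)$. Nothing in the depths you borrowed from Lemma~\ref{Lem:NewtonBox} (which were chosen only so that distinct critical fibers sit in distinct pieces) rules this out.

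The fix is easy but must be stated: since every $c'\in\Crit_{er}(g)$ has an orbit that is eventually periodic at the fiber level, and no $c\in\CritNF(g)\sm\Crit_{er}(g)$ lies on any such eventually-renormalizable fiber orbit, one can deepen the pieces forming $\V$ so that $\orb(c')\cap\V=\emptyset$ for every $c'\in\Crit_{er}(g)$. With that extra depth choice, the Lemma~\ref{Lem:Simple} argument goes through and yields $\Crit(F)\subset\CritNF(g)\sm\Crit_{er}(g)$. The paper avoids having to make this choice explicitly by working inside $F'$, where Lemma~\ref{Lem:NewtonBox}\,\eqref{It:ExactlyOne} already separates the two kinds of critical fibers at the level of components of $\U'$.
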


\begin{remark}
In Lemma~\ref{Lem:ExtrNonRenormBoxMappings}~\eqref{It:Extr23}, $\omega(c)$ is disjoint from $\CritNF(g) \sm \Crit_{er}(g)$ if and only if it does not intersect fibers of points in $\CritNF(g) \sm \Crit_{er}(g)$ because the latter fibers are trivial by Theorem~\ref{Thm:RRP}.
\end{remark}

\begin{proof}
Let $F' \colon \U' \to \V'$ be the dynamically natural complex box mapping constructed in Lemma~\ref{Lem:NewtonBox} for the map $g$; it satisfies $\Crit(F') \subset \CritNF(g) \subset \V'$ and the set of non-escaping critical points of $F'$ contains $\Crit^a(g)$. 
Let $\V$ be the union of all components of $\U'$ intersecting $\CritNF(g) \sm \Crit_{er}(g)$. Consider the first return map $F \colon \U \to \V$ for $F'$ to $\V$. Since each critical component of $\U'$ contains exactly one critical fiber (Lemma~\ref{Lem:NewtonBox}~\eqref{It:ExactlyOne}), the orbits of the eventually renormalizable critical points do not intersect $\V$. Therefore, $F$ is a complex box mapping with the property that $\Crit(F) \subset \CritNF(g) \sm \Crit_{er}(g)$  (see Lemma~\ref{Lem:FirstReturnConstruction} \eqref{It:FRM3}); by Lemma~\ref{Lem:FirstReturnConstruction}~\eqref{It:FRM4}, every $g$-orbit that accumulates at a point in $\V$ also intersects $K(F)$. 

By construction, $F \colon \U \to \V$ is a non-renormalizable box mapping with all periodic points repelling. Since $F'$ is dynamically natural, the same is true for $F$ (compare the end of the proof of Lemma~\ref{Lem:NewtonBox}). 

Finally, property~\eqref{It:Extr23} follows from~\eqref{It:Extr21} and~\eqref{It:Extr22}.
\end{proof}

The dynamics on the Julia set of the box mapping $F$ constructed in Lemma~\ref{Lem:ExtrNonRenormBoxMappings}, together with the dynamics on all renormalizable fibers of $g$, describes the behavior of almost all orbits in the Julia set of the Newton map $g$. This is made precise in the following lemma.

\begin{lemma}[Almost nothing is left]
\label{Lem:NothingLeft}
If $F \colon \U \to \V$ is the dynamically natural box mapping constructed in Lemma~\ref{Lem:ExtrNonRenormBoxMappings} for $g$, then the set of points in $J(g)$ whose orbits are disjoint from $J(F)$ as well as from all renormalizable fibers of $g$ has Lebesgue measure zero. 
\end{lemma}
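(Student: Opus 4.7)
Let $R$ denote the union of all renormalizable fibers of $g$ and set
\[
E := \{z \in J(g) : \orb(z) \cap J(F) = \emptyset,\ \orb(z) \cap R = \emptyset\}.
\]
My plan is to decompose $E$ according to how the orbit of $z$ interacts with $\V$ and $R$, and show each piece has measure zero via the zero-measure criterion of Lemma~\ref{Lem:ZeroArea}\eqref{It:ZeroArea} or a Koebe-based Lebesgue density argument in the spirit of its proof. First, since $F$ is non-renormalizable, dynamically natural, and has only repelling periodic points (Lemma~\ref{Lem:ExtrNonRenormBoxMappings}\eqref{It:Extr3}), I will use the fact that $K(F)=J(F)$: any component of $\inter K(F)$ would be eventually $F$-periodic (wandering Fatou components are excluded because $F$ arises as a first-return restriction of the rational map $g$), and the associated cycle would contradict that every periodic orbit of $F$ is repelling. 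Combined with Lemma~\ref{Lem:ExtrNonRenormBoxMappings}\eqref{It:Extr21} (itself a consequence of Lemma~\ref{Lem:FirstReturnConstruction}\eqref{It:FRM4}), this forces $\orb(z)\cap\V$ to be finite for every $z\in E$. Since preimages under the rational map $g$ preserve null sets, it suffices to prove that $E' := \{z \in E : \orb(z) \cap \V = \emptyset\}$ has measure zero, and I split $E'=E'_a\sqcup E'_b$ with $E'_a:=\{z\in E':\omega(z)\cap R=\emptyset\}$.

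For $E'_a$, a short $g$-invariance argument gives $\omega(z)\cap \fib(c)=\emptyset$ for every $c\in\CritNF(g)$. Indeed, fibers of critical points in $\CritNF(g)\sm\Crit_{er}(g)$ are singletons contained in the open set $\V$, which $\omega(z)$ avoids (because $\orb(z)\cap\V=\emptyset$); renormalizable fibers lie in $R$; and if $\omega(z)$ met the fiber of some non-renormalizable $c\in\Crit_{er}(g)$, then forward $g$-invariance of $\omega(z)$ would push the intersection into a renormalizable fiber, contradicting $\omega(z)\cap R=\emptyset$. Using Lemma~\ref{Lem:FibersCompContained} (nested puzzle pieces shrink to their fibers) and Lemma~\ref{Lem:NonRepellingRenormalizable} (deep puzzle pieces around distinct renormalizable fibers are disjoint), for each sufficiently large $n$ I can build a disjoint union $U_n$ of puzzle pieces $P_n(c)$, one per fiber class in $\CritNF(g)$, satisfying $\inter U_n\cap\Crit(g)=\CritNF(g)$. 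Compactness of $\omega(z)$ yields some $n$ with $\omega(z)\cap U_n=\emptyset$, so a forward iterate $g^k(z)$ lies in $E(U_n)$, which is null by Lemma~\ref{Lem:ZeroArea}\eqref{It:ZeroArea}. Hence $E'_a$ is a countable union of $g$-preimages of null sets, so it is null.

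For $E'_b$, the orbit of $z$ accumulates on some renormalizable fiber $\fib(c)$ without landing in it, so Corollary~\ref{Cor:AccumAtPeriodDeg} produces an increasing sequence $(\nu_j)$ and puzzle pieces $\inter P_s\subset P_0$ with $g^{\nu_j}(z)\in\inter P_s$ and with $g^{\nu_j}\colon\inter P_{\nu_j}(z)\to\inter P_0$ of uniformly bounded degree; moreover $\diam P_{\nu_j}(z)\to 0$ because $\fib(z)=\{z\}$ by Lemma~\ref{Lem:AccumAtPeriod}. Since every Newton puzzle piece has boundary touching a root basin, I pick round disks $B\subset B'\subset P_0$ inside a single root basin, with $B$ compactly contained in $B'$ and $B'$ compactly contained in $P_0$. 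Their pull-backs $D_j\subset D'_j\subset P_{\nu_j}(z)$ under $g^{\nu_j}$ lie in the basin of a root (hence are disjoint from $E'_b\subset J(g)$), have uniformly bounded shape by Lemma~\ref{Lem:Fact}, and satisfy $\diam D_j$ comparable to $\diam P_{\nu_j}(z)$ by Lemma~\ref{Lem:Diam}. A bounded-shape variant of the Lebesgue Density Theorem, argued exactly as in the second step of the proof of Lemma~\ref{Lem:ZeroArea}\eqref{It:ZeroArea}, then rules out $z$ being a Lebesgue density point of $E'_b$, yielding $\area E'_b=0$. The main obstacle will be this final step: carefully verifying that the hypotheses of Corollary~\ref{Cor:AccumAtPeriodDeg} are met (the relevant critical and periodic fibers must lie in the interiors of their puzzle pieces, which is furnished by Lemmas~\ref{Lem:FibersCompContained} and~\ref{Lem:NonRepellingRenormalizable}) and that the Koebe-controlled disks $D_j$ indeed intercept a definite fraction of round balls around $z$ of comparable size.
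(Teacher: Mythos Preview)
Your overall strategy coincides with the paper's: split according to whether $\omega(z)$ meets a renormalizable fiber, handle the first case via Lemma~\ref{Lem:ZeroArea}\eqref{It:ZeroArea}, and handle the second via Corollary~\ref{Cor:AccumAtPeriodDeg} plus a Lebesgue density argument. The detour through $K(F)=J(F)$ and the reduction to $E'$ are unnecessary (the paper never makes them) but harmless; your justification of $K(F)=J(F)$ via ``no wandering domains for $g$'' is more roundabout than needed --- the direct reason is that $F$ is dynamically natural, non-renormalizable, and has only repelling periodic points, so every fiber is trivial (Theorem~\ref{Thm:RigidityJo}) and $K(F)$ is totally disconnected.

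There is, however, a genuine gap in your treatment of $E'_b$. You place both round disks $B\subset B'$ inside a single root basin and then assert that $\diam D_j$ is comparable to $\diam P_{\nu_j}(z)$ ``by Lemma~\ref{Lem:Diam}''. That lemma only compares the diameters of the pullbacks of two nested \emph{round} disks under a bounded-degree branched cover; it says nothing about the pullback of the ambient puzzle piece $P_0$, which is not round. With $B,B'$ placed as you describe, neither pullback need contain $z$, so the bounded-shape density argument from the proof of Lemma~\ref{Lem:ZeroArea}\eqref{It:ZeroArea} does not apply ``exactly as'' there: in that proof the crucial point is that $z\in D_k$ and $D_k$ has bounded shape, which gives a genuine shrinking family of neighborhoods of $z$.

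The paper repairs exactly this step: choose an accumulation point $w\in P_s$ of the sequence $g^{\nu_j}(z)$ and take $B=B(w,\eps)$ large enough to meet $\partial P_s$ yet with $\ovl B\subset\inter P_0$; then pick the root-basin disk $B'$ \emph{inside} $B$ (possible because $\partial P_s\cap J(g)$ is finite and the rest of $\partial P_s$ lies in root basins). After passing to a subsequence with $g^{\nu_j}(z)\in B$, the pullback $D_j\ni z$ has bounded shape by Lemma~\ref{Lem:Fact}, and now Lemma~\ref{Lem:Diam} legitimately compares $\diam D_j'$ to $\diam D_j$. This is the missing idea in your $E'_b$ argument; once you center $B$ at $w$ rather than inside a basin, the rest of your outline goes through.
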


\begin{proof}
The set of points $z$ so that $\omega(z)$ avoids the fibers of all $c\in \CritNF(g)$ has measure zero by Lemma~\ref{Lem:ZeroArea}~\eqref{It:ZeroArea}. We may thus exclude these and consider the set $A$ of points $z\in J(g)$ so that $\orb(z)$ is disjoint from $J(F)$ and from all renormalizable fibers of $g$, and so that $\omega(z)$ intersects the fibers of some $c\in \CritNF(g)$. 

If $\orb(z)$ accumulates on the fiber of some $c \in \CritNF(g) \sm \Crit_{er}(g)$, then by Lemma~\ref{Lem:ExtrNonRenormBoxMappings}~\eqref{It:Extr21} it accumulates on the fiber of a point in $\Crit(F)$, so $\orb(z)$ intersects $J(F)$, contrary to assumption. Therefore, for all $z\in A$ there is a $c\in\Crit_{er}(g)$ the fiber of which intersects $\omega(z)$ but not $\orb(z)$.

By Lemma~\ref{Lem:FibersCompContained}, all fibers are contained in the interior of each of its puzzle pieces, so Lemma~\ref{Lem:AccumAtPeriod}, and hence Corollary~\ref{Cor:AccumAtPeriodDeg} apply. 
Therefore, there is an increasing sequence of integers $(\nu_j)_{j \ge 0}$ and a pair of nested puzzle pieces $\inter P_s \subset P_0$ (possibly after an index shift) so that $g^{\nu_j}(z) \in \inter P_s$ for every $j \ge 0$ and  $g^{\nu_j} \colon \inter P_{\nu_j}(z) \to \inter P_0$ are branched coverings of uniformly bounded degrees (independent of $j$). Let $w \in P_s$ be an accumulation point of $(g^{\nu_j}(z))$, and let $B := B(w, \eps)$ be an open round disk that intersects $\partial P_s$ so that $\ovl B\subset \inter P_0$ (such a disk exists because $\inter P_0 \sm P_s$ is a non-degenerate annulus). 

Since, by the Newton puzzle construction, $\partial P_s \cap J(g)$ is a finite set of poles or prepoles, while the remaining part of $\partial P_s$ lies in the basins of roots, there is a round sub-disk $B' \subset B$ fully lying in one of the root basins and compactly contained in $B$. We can now transfer this ``hole'' $B'$ in the Julia set at fixed scale to ever-smaller scales with bounded distortion, so as to apply the Lebesgue Density Theorem. 

The construction of $w$ gives us a subsequence $(s_j)$ of $(\nu_j)$ so that $g^{s_j}(z) \in B$. Let $D_j \subset \inter P_{s_j}(z)$ be the pullback of $B$ under $g^{s_j}$ containing $z$, and $D_j' \subset D_j$ be a corresponding pullback of $B'$. From Lemma~\ref{Lem:AccumAtPeriod} it follows that $\diam D_j \to 0$ as $j \to \infty$. Since the degrees of the maps $g^{s_j} \colon \inter P_{s_j}(z) \to \inter P_0$ are uniformly bounded, $\ovl B \subset \inter P_0$, and $\ovl B' \subset B$, the disks $D_j$ and $D_j'$ have uniformly bounded shapes and uniformly comparable diameters (Lemmas~\ref{Lem:Fact} and \ref{Lem:Diam}). As in the proof of Lemma~\ref{Lem:ZeroArea}~\eqref{It:ZeroArea}, we conclude that $z$ is not a Lebesgue density point of $J(g)$. Therefore, $A$ has zero Lebesgue measure. 
\end{proof}

\subsection{Quasiconformal rigidity of complex box mappings}
\label{SSec:QCBM}

In this subsection we introduce a result from \cite{DKvS} on quasiconformal rigidity of combinatorially equivalent complex box mappings, including the required notation.

For a box mapping $F$, define $\PC(F) := \left\{F^n(c) \colon c \in \Crit(F), n \ge 0 \right\}$ to be the union of the forward orbits of all critical points of $F$ (the critical and postcritical set). 
Since for $F \colon \U \to \V$ the components of the sets $\U$ and $\V$ are Jordan disks, by the Carath\'eodory theorem there is a well-defined continuous extension $\ovl F \colon \ovl \U \to \ovl \V$ to the boundary. 

\begin{definition}[Itinerary of puzzle pieces relative to curve family]
\label{Def:CurveFamily}
Let $F \colon \U \to \V$ be a dynamically natural complex box mapping, and $X\subset \partial \V$ be a finite set with one point on each component of $\partial \V$. Let $\Gamma$ be a collection of simple curves in $\V \sm (\U \cup \PC(F))$, one for each $y \in \ovl F^{-1}(X)$, that connects $y$ to a point in $X$.  Then for every $n \ge 0$ and for each component $P$ of $F^{-n}(\U)$ there exists a simple curve connecting $\partial P$ to $X$ of the form $\gamma_0\ldots\gamma_n$ where $F^k(\gamma_k) \in \Gamma$. The word $(\gamma_0, F(\gamma_1), \ldots, F^n(\gamma_n))$ is called the \emph{$\Gamma$-itinerary} of $P$.
\end{definition}

Every orbit can intersect $\V\sm\U$ at most once, so $\PC(F)\cap (\V\sm \U)$ is finite. Therefore, the existence of the curves in $\Gamma$ is clear, but of course there is a choice involved. Hence the $\Gamma$-itinerary of $P$ is in general not uniquely defined (there is a unique finite word for every $y'\in \ovl F^{-n}(x)\cap \partial P$). However, different components of $F^{-n}(\U)$ have different $\Gamma$-itineraries.

\begin{definition}[Combinatorial equivalence of box mappings]
\label{Def:CombEquivBoxMappings}
Let $F \colon \U \to \V$ and $\tilde F \colon \tilde \U \to \tilde \V$ be two dynamically natural complex box mappings.
The maps $F$ and $\tilde F$ are called \emph{combinatorially equivalent with respect to a homeomorphism $H \colon \V \to \tilde \V$} if
\begin{enumerate}
\item
$H(\U)=\tilde \U$;
\item
$H$ is a bijection between the critical sets of $F$ and $\tilde F$;
\item
$H(\PC(F)\sm\U) = \PC(\tilde F) \sm \tilde \U$;
\item
there exists a curve family $\Gamma$ as in Definition~\ref{Def:CurveFamily} so that each critical point $c \in \Crit(F)$ is mapped by $H$ to a critical point $\tilde c \in \Crit(\tilde F)$ with the property that for every integer $k \ge 0$ and $n \ge 0$, the $\Gamma$-itineraries of the puzzle piece $P_n(F^k(c))$ coincide with the $\tilde \Gamma$-itineraries of $\tilde P_n(\tilde F^k(\tilde c))$ for $\tilde \Gamma = H(\Gamma)$.
\end{enumerate}
\end{definition}

Note that every critical point of $F$ may have several itineraries, and they should all coincide with the itineraries of the corresponding critical point of $\tilde F$. Roughly speaking, this equivalence says that there is a correspondence between components of the box mappings that is preserved by the dynamics (not in the  sense of conjugation), that  this  correspondence respects critical points and the postcritical set, and that within components containing critical points the  choice of preimages is respected. We do not require that  $H$ is a conjugation between the postcritical sets: the definition of combinatorial equivalence is not a dynamical condition, and upgrading it to a dynamical condition is one of the major goals that is included in the next theorem, which is the main parameter rigidity result for non-renormalizable box mappings (see \cite{DKvS}).

\begin{theorem}[Rigidity of complex box mappings]
\label{Thm:Linefield}
\label{Thm:QC}
Let $F\colon \U\to \V$ be a dynamically natural complex box mapping that is not renormalizable and for which all periodic points are repelling. Then 
\begin{enumerate}
\item 
\label{It:Linefield}
$F$ carries no measurable invariant line field on $J(F)$.
\item 
\label{It:Rig}
Suppose $\tilde F \colon \tilde \U \to \tilde \V$ is another dynamically natural complex box mapping for which there exists a quasiconformal homeomorphism $H \colon \V \to \tilde\V$ so that
\begin{enumerate}
\item
$H(\U) = \tilde \U$,
\item
$\tilde F \circ H = H \circ F$ on $\partial \U$,
\item
$\tilde F$ is combinatorially equivalent to $F$ w.r.t\ $H$.
\end{enumerate}
Moreover, assume that the boundary of each component of\/ $\U$, $\tilde \U$, $\V$, and $\tilde \V$ consists of piecewise smooth arcs. Then $F$ and $\tilde F$ are quasiconformally conjugate, and this conjugation can be chosen to agree with $H$ on $\V\setminus \U$.     \qed
\end{enumerate}
\end{theorem}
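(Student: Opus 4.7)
The strategy is to treat (1) and (2) as a single package, with (1) providing the crucial ingredient that closes up (2) across the Julia set. Since $F$ is dynamically natural, non-renormalizable, and has only repelling periodic points, Theorem~\ref{Thm:RigidityJo} gives that every point of $J(F)$ either has trivial fiber or converges to $\partial\U$; combined with the condition $K(F)=\Kwi(F)$ from dynamical naturality, this guarantees that almost every point of $J(F)$ has trivial fiber and is surrounded by puzzle pieces of controlled geometry shrinking to it.

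For (1), I would argue by contradiction via the classical density-point technique of Sullivan--McMullen. Suppose an invariant line field $\mu$ on $J(F)$ is supported on a set $E$ of positive Lebesgue measure, and let $x\in E$ be a Lebesgue density point at which $\mu$ is approximately constant and for which $\fib(x)=\{x\}$. Triviality of the fiber, together with the first-entry construction of Section~\ref{Sec:GeneralPuzzles} and the bounded-degree/Koebe estimates of Lemmas~\ref{Lem:FirstTime}--\ref{Lem:Diam}, produces, for some fixed puzzle piece $P_0$ and a subsequence of depths $m_k\to\infty$, branched covers $F^{m_k}\colon P_{m_k}(x)\to P_0$ of uniformly bounded degree; the intermediate annuli supply round discs $D_k\subset P_{m_k}(x)$ around $x$ of uniformly bounded shape and arbitrarily fine scale. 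Pushing $\mu|_{D_k}$ forward by $F^{m_k}$ yields, along a subsequence, an essentially constant line field on a fixed open set in $P_0$; holomorphicity of the dynamics then forces $\mu$ to coincide almost everywhere with a meromorphic line field, contradicting the fact that $\mathrm{supp}(\mu)\subset J(F)$ is nowhere dense.

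For (2), I would construct the conjugacy by the standard inductive pull-back. Set $H_0:=H$ on $\V\setminus\U$; by hypothesis this matches the dynamics on $\partial\U$ and respects the location of critical values outside $\U$. The combinatorial equivalence (Definition~\ref{Def:CombEquivBoxMappings}) provides, for each $n\geq 1$, a bijection between components of $F^{-n}(\V)$ and of $\tilde F^{-n}(\tilde\V)$ that respects critical itineraries; using this, lift $H_{n-1}$ across the proper maps $F$ and $\tilde F$ to obtain a quasiconformal $H_n\colon\V\to\tilde\V$ agreeing with $H_{n-1}$ on $\V\setminus F^{-n}(\U)$ and satisfying $\tilde F\circ H_n=H_n\circ F$ on $F^{-n}(\U)\setminus F^{-n-1}(\U)$. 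Univalent branches of pull-back contribute no new dilatation, so $K(H_n)\leq K(H)$ uniformly; branched pull-backs occur only at the finitely many critical components, where matching of critical itineraries makes the lift consistent. Letting $H_\infty:=\lim H_n$ on $\V\setminus J(F)=\bigcup_n(\V\setminus F^{-n}(\U))$ gives a qc conjugation of $F$ and $\tilde F$ there. The Beltrami form $\mu_{H_\infty}$ extends by zero across $J(F)$ to a bounded $F$-invariant Beltrami form; by (1) this is essentially unique, so integrating by the measurable Riemann mapping theorem yields a global qc map conjugating $F$ and $\tilde F$ on $\V$ and agreeing with $H$ on $\V\setminus\U$.

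The two principal obstacles I anticipate are: \emph{(i)} securing uniform boundedness of $K(H_n)$ in the pull-back, which requires using the hypothesis $K(F)=\Kwi(F)$ to guarantee that pull-backs through deep puzzle pieces respect definite moduli, plus careful accounting of critical lifts via the combinatorial equivalence; and \emph{(ii)} the continuous extension of $H_\infty$ across $J(F)$, which is immediate at trivial-fiber points from the shrinking controlled puzzle pieces but requires the piecewise-smooth boundary hypothesis on the measure-zero set of points converging to $\partial\U$. The interlocking of (1) and (2) --- where no-invariant-line-field is precisely what forces the pull-back Beltrami equation to have a unique bounded solution --- is essentially the Sullivan--McMullen--Lyubich rigidity scheme adapted to complex box mappings.
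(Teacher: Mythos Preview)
The paper does not prove this theorem; the \qed\ at the end of the statement signals that it is imported as a black box from \cite{DKvS} (and ultimately from the Kozlovski--Shen--van Strien machinery of \cite{KSS,KvS09}). So there is no in-paper proof to compare your proposal against.

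That said, your sketch follows the correct high-level architecture (McMullen-style density-point argument for~(1), inductive pull-back for~(2), with~(1) closing the argument on $J(F)$), but it elides precisely the hard step that makes this theorem deep. In your argument for~(1) you claim that triviality of $\fib(x)$ together with Lemmas~\ref{Lem:FirstTime}--\ref{Lem:Diam} yields branched covers $F^{m_k}\colon P_{m_k}(x)\to P_0$ of uniformly bounded degree onto a \emph{fixed} target. This is exactly what fails for combinatorially recurrent critical points (or points whose $\omega$-limit sets meet such critical fibers): Lemma~\ref{Lem:FirstTime} controls only the \emph{first} entry to a given piece, and once the orbit keeps returning, successive pull-backs can pick up unbounded degree. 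Obtaining bounded-degree maps to a fixed scale in the recurrent case is the content of the enhanced nest construction and the Kahn--Lyubich complex bounds, which is the core of \cite{KvS09,KSS} and is not available from the elementary lemmas of Section~\ref{Sec:GeneralPuzzles}. Theorem~\ref{Thm:RigidityJo} gives you shrinking of fibers, but not the bounded-geometry transfer you need for the line-field contradiction.

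A secondary issue: in your obstacle~(i) you worry about uniform boundedness of $K(H_n)$, but since the lifts are through holomorphic branched covers this is automatic; the real difficulty in~(2) is your obstacle~(ii), and its resolution again rests on the complex bounds feeding into Theorem~\ref{Thm:RigidityJo}. So the structure of your argument is right, but the load-bearing technical input --- the KSS/KvS complex bounds for recurrent critical orbits --- is missing, and it cannot be replaced by the tools developed in this paper.
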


By definition, the Julia set $J(F)$ carries a measurable invariant line field if there is an $F$-invariant measurable Beltrami differential supported on $J(F)$ (see \cite[$\S 3.5$]{McM}).

\subsection{Combinatorially equivalent Newton maps}
\label{Sub:CombinatoriallyEquivalent}

Let $N_p \colon \Cc \to \Cc$ be an attracting-critically-finite Newton map and let $\Delta_n$ be the Newton graph of level $n \ge 0$ for $N_p$. Let $\LL>0$ be the smallest level such that $\Delta_{\LL}$ contains all the critical points that eventually land on the Newton graph (either at $\infty$ or at a root), as well as all poles of $N_p$; such a level exists by Theorem~\ref{Thm:MagicLevels}.
Similarly, let $N_{\tilde p} \colon \Cc \to \Cc$ be another attracting-critically-finite Newton map with Newton graph $\tilde \Delta_n$ at level $n\ge 0$, and level $\tilde \LL$ analogous to $\LL$.

\begin{definition}[Combinatorial equivalence of Newton maps]
\label{Def:CombEquiv}
We call two attracting-critically-finite Newton maps $N_p$ and $N_{\tilde p}$ \emph{combinatorially equivalent} if 
\begin{enumerate}
\item
\label{It:CEItem1}
$\LL = \tilde \LL$, and the Newton maps restricted to $\Delta_{\LL}$ and $\tilde \Delta_{\LL}$ are topologically conjugate, respecting vertices;
\item
\label{It:CEItem2}
there is a bijection between the critical points of $N_p$ on $\Cc\sm \Delta_{\LL}$ and of $N_{\tilde p}$ on $\Cc\sm \tilde \Delta_{\LL}$ that respects degrees and itineraries with respect to (complementary components of) the Newton graphs. 
\end{enumerate}

Two Newton maps (not necessarily attracting-critically-finite) are combinatorially equivalent if the quasiconformal surgery in the basins of roots (described in Section~\ref{Sec:Puzzles}) turns them into combinatorially equivalent (attracting-critically-finite) Newton maps. This is clearly independent of all the choices in the surgery.
\end{definition}

\begin{remark}
Let us make several comments on the conditions in Definition~\ref{Def:CombEquiv}, phrased for convenience for attracting-critically-finite Newton maps. 
The first condition in the definition above means that there exists a graph homeomorphism, say $\phi \colon \Delta_{\LL} \to \tilde \Delta_{\LL}$ that sends vertices to vertices and edges to edges (preserving their cyclic order locally around vertices), and such that the diagram
\[
{\includegraphics[trim=200 608 200 128, clip]{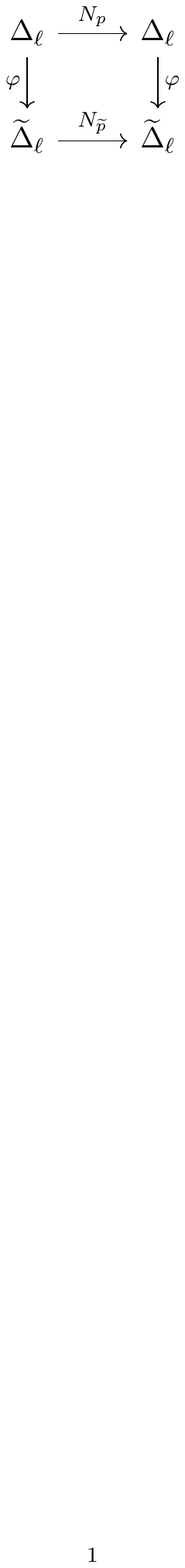}}
\]
is commutative. It follows that $\phi$ maps fixed points to fixed points  and hence the channel diagram $\Delta$ of $N_p$ edge-wise onto the channel diagram $\tilde \Delta$ of $N_{\tilde p}$. Since $\infty$ is the unique fixed point that is connected to all other fixed points by the channel diagram, it follows that $\phi (\infty) = \infty$, and hence roots are mapped to roots respecting their cyclic order around $\infty$, and pre-fixed points of order $l \ge 1$ are mapped to pre-fixed points of order $l$. 

In particular, the map $\phi$ provides a bijection between the eventually fixed critical points and preserves their local degrees. Combined with property \eqref{It:CEItem2} of Definition~\ref{Def:CombEquiv}, this means that for every pair of combinatorially equivalent (attracting-critically-finite) Newton maps there is a degree-preserving bijection between their critical sets. 
Therefore, combinatorially equivalent Newton maps have equal degrees. 
\end{remark}

The notion of combinatorial equivalence for Newton maps gives rise to the notion of \emph{corresponding puzzle pieces respecting boundary marking} as follows. 

Since Newton graphs are defined by pulling back the channel diagrams, the homeomorphism $\phi\colon\Delta_{\LL}\to\tilde\Delta_{\LL}$ extends to a homeomorphism $\phi_n \colon \Delta_n \to \tilde \Delta_n$ for all $n\ge \LL$ using $\phi_n = N_{\tilde p}^{-(n- \LL)} \circ \phi \circ N_p^{n-\LL}$ for appropriate choices of inverse branches (where $\phi_{\LL} = \phi$). This induces a bijection between the components of $\Cc \sm \Delta_n$ and $\Cc \sm \tilde \Delta_n$: we will call a pair of components  \emph{corresponding} if their boundaries consist of homeomorphic subsets of $\Delta_n$ and $\tilde\Delta_n$. The notion of corresponding components extends to the notion of {corresponding puzzle pieces} for the iterates $g$ and $\tilde g$ of $N_p$, resp.\ $N_{\tilde p}$ (see Section~\ref{Sec:Puzzles}). 

We want to upgrade this bijection to a homeomorphism between corresponding boundary pieces with good properties. To begin with, since there is a bijection between the roots of $p$ and $\tilde p$ respecting the degrees of the Newton maps in the immediate basins, there is a biholomorphic conjugation $\psi$ between the immediate basins of $N_p$ and $N_{\tilde p}$. This conjugation extends to the entire basins, and hence induces a correspondence of equipotentials there.

Consider a puzzle piece $P_s$ be of depth $s \ge 0$ defined for $g$, and let $\tilde P_s$ be the corresponding puzzle piece for $\tilde g$. By construction (see Section~\ref{Sec:Puzzles}), the boundary of $P_s$ consists of finitely many edges of $\Delta_k$ some $k = k(s)$, cropped and connected by arcs of equipotentials; those two types of boundary pieces alternate along $\partial P_s$. The same is true for $\tilde P_s$ with the same choice of $k$ and equipotentials. We say that a homeomorphism $h \colon \inter P_s \to \inter {\tilde P}_s$ \emph{respects the boundary marking} if $h$ extends to a continuous map $\ovl h \colon P_s \to \tilde P_s$, and this extension agrees with the maps $\phi_k$ and $\psi$ on the alternating boundary pieces of $P_s$ (along edges respectively equipotentials). 

We can now describe the good properties of the homeomorphisms between corresponding puzzle pieces. This is \cite[Lemma 5.3]{KSS} transferred almost verbatim to our setting, so we can omit the proof.

\begin{lemma}[Initial QC maps respecting boundary marking]
\label{Lem:AnyQCBdMarking}
For every pair of corresponding Newton puzzle pieces $P$ and $\tilde P$, there exists a quasiconformal homeomorphism $\phi \colon P \to \tilde P$ that respects the boundary marking. \qed
\end{lemma}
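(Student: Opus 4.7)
The plan is to build $\phi$ by first specifying a boundary homeomorphism $\bar\phi\colon \partial P\to\partial \tilde P$ with the required marking properties and then extending it quasiconformally to the interior by standard machinery. Recall that $\partial P$ decomposes into finitely many arcs that alternate between (sub)edges of the Newton graph $\Delta_k$ and arcs of equipotentials in components of the basins of roots, and by combinatorial equivalence the same decomposition, with the same number of pieces in the same cyclic order, exists on $\partial \tilde P$.

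First I would define $\bar\phi$ piecewise. On each graph-arc the graph homeomorphism $\phi_k\colon \Delta_k\to \tilde\Delta_k$ is an arbitrary combinatorial identification of two smooth arcs, so we are free to refine $\phi_k$ restricted to each edge to be a $C^\infty$ diffeomorphism (equivalently, a bi-Lipschitz homeomorphism in the intrinsic arc-length parametrizations); set $\bar\phi=\phi_k$ there. On each equipotential arc I would use the conjugation $\psi$ between the basins of roots of $N_p$ and $N_{\tilde p}$: since $\psi$ is biholomorphic, it sends equipotentials to equipotentials of the same potential level diffeomorphically, so $\psi$ is smooth (in fact real-analytic) on each equipotential arc. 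Continuity at the common endpoints follows from the fact that these endpoints are iterated preimages of roots (points on $\Delta_k\cap\overline{U_\xi}$), and both $\phi_k$ and $\psi$ map such a point to the same corresponding point in $\tilde P$, by the combinatorial equivalence. This gives a well-defined homeomorphism $\bar\phi\colon\partial P\to\partial\tilde P$ that respects the boundary marking tautologically.

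Next I would show that $\bar\phi$ is quasisymmetric. Uniformize $P$ and $\tilde P$ by the Riemann mapping theorem to the unit disk; because the boundaries are Jordan curves which are piecewise real-analytic with finitely many corners, the Riemann maps extend homeomorphically to the closed disks, and in the interior of each boundary arc they are biholomorphic across the boundary (hence quasisymmetric); at the corners they have the well-known power law behavior determined by the interior angle. Transporting $\bar\phi$ to a boundary map $S^1\to S^1$, one gets a piecewise bi-Lipschitz homeomorphism between arcs that meet at finitely many corners whose interior angles in $P$ and $\tilde P$ correspond under $\phi_k$ and $\psi$, so the corner exponents match. A direct local check (or the standard fact that a piecewise quasisymmetric homeomorphism of $S^1$ with matching corner behavior is globally quasisymmetric) yields quasisymmetry of the boundary map on $S^1$.

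Finally, Beurling--Ahlfors (applied to the disk model, or equivalently the Douady--Earle extension) produces a quasiconformal homeomorphism of the closed disks realizing this boundary map, and conjugating back by the Riemann maps gives a quasiconformal $\phi\colon P\to \tilde P$ whose continuous extension equals $\bar\phi$, hence respects the boundary marking. The main technical point is the verification of quasisymmetry at the finitely many corners where a graph-edge meets an equipotential arc; this reduces to a local comparison of the wedge-like geometry of $\partial P$ and $\partial \tilde P$ near a (pre)root, and is where the fact that $\phi_k$ preserves local vertex data and $\psi$ is conformal (so sends equipotentials-meeting-rays configurations to the same type) is used decisively.
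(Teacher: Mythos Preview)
The paper does not give its own argument here: the lemma is stated with a \qed\ and the surrounding text says it is \cite[Lemma~5.3]{KSS} transferred verbatim to the Newton setting, with the proof omitted. Your outline is exactly the standard construction behind that cited lemma---build the prescribed boundary map from $\phi_k$ and $\psi$, pass to the disk model, check quasisymmetry, and extend by Beurling--Ahlfors/Douady--Earle---so there is no substantive divergence to discuss.

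Two small corrections to your write-up. First, the meeting points of graph-arcs and equipotential-arcs are not iterated preimages of roots; they are ray--equipotential intersection points in the interior of basin components (the Julia-set vertices on $\partial P$ are the (pre)poles, where two ray-arcs meet). Compatibility of $\phi_k$ and $\psi$ at these meeting points holds because the graph edges inside a basin are internal rays and one takes $\phi_k$ there to agree with the conformal conjugacy $\psi$, which sends rays to rays and equipotentials to equipotentials. Second, at the (pre)pole vertices the interior angles of $\partial P$ and $\partial\tilde P$ need not coincide, so the ``corner exponents match'' claim is too strong. This does not hurt the argument: after uniformization the transported boundary map near such a vertex has the local form $t\mapsto t^{\alpha/\tilde\alpha}$ with $\alpha,\tilde\alpha\in(0,2)$, and any positive power is quasisymmetric on an interval. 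So quasisymmetry still follows, but the justification at the corners should be this rather than an appeal to equal angles.
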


\begin{definition}[Renormalizable in the same way]
\label{Def:SameWay}
Two combinatorially equivalent Newton maps $g$ and $\tilde g$ are \emph{renormalizable in the same way} if for every pair of corresponding renormalizable critical points $c \in \Crit(g)$ and $\tilde c \in \Crit(\tilde g)$ there exists a pair of corresponding puzzle pieces $P_n(c)  \supset P_{n+k}(c)\ni c $ and $\tilde P_n(\tilde c)\supset  \tilde P_{n+k}(\tilde c)\ni\tilde c $ and a homeomorphism $h \colon \inter P_n(c) \to \inter{\tilde P}_n(\tilde c)$ mapping $\inter P_{n+k}(c)$ onto $\inter{\tilde P}_{n+k}(\tilde c)$ such that:
\begin{enumerate}
\item
the restrictions $g^k \colon \inter P_{n+k}(c) \to \inter P_n(c)$ and $\tilde g^k \colon \inter{\tilde P}_{n+k}(\tilde c) \to \inter{\tilde P}_n(\tilde c)$ are polynomial-like mappings with connected Julia sets, and $k$ is the period of renormalization; 
\item
$h$ is a hybrid equivalence between $g^k$ and $\tilde g^k$ on $\inter P_{n}(c)$; 
\item
\label{It:BdMarking}
$h$ respects the boundary marking;
\item
\label{It:PrePeriodic}
moreover, if $c_0 \in \Crit(g)$ is a non-renormalizable critical point so that $g^s(c_0) \in \fib(c)$ for the minimal such $s$, then the hybrid conjugacy $h$ lifts to a quasiconformal homeomorphism $h_0 \colon \inter P_{n+s}(c_0) \to \inter{\tilde P}_{n+s}(\tilde c_0)$ that respects the boundary marking and such that its dilatation vanishes on $\fib(c_0)$.  
\end{enumerate}
\end{definition}

\begin{remark}
Condition~\eqref{It:BdMarking} means that $h$ respects the combinatorial positions of little filled Julia sets corresponding to renormalizations within the puzzle partition of the dynamical plane of the Newton map.

Condition~\eqref{It:PrePeriodic} ensures that the critical fibers that are not renormalizable but are mapped to renormalizable ones also respect the hybrid conjugacy coming from the polynomial-like restrictions, and hence the combinatorial position within the Newton puzzle.
\end{remark}

A critical point $c \in \Crit(g)$ is called \emph{eventually renormalizable} if it is either renormalizable, or is mapped be some finite iterate to a renormalizable fiber. We write $\Crit_{er}(g)$ for the set of all such critical points. Thus $c \in \Crit_{er}(g)$ if $c$ falls into case (\hyperref[It:R]{R}) of Theorem~\ref{Thm:RRP}.

\subsection{Combinatorially equivalent box mappings from Newton maps}
\label{SSec:ExtrCombEquiv}

In this subsection, we consider two combinatorially equivalent Newton maps $g$ and $\tilde g$ (not necessarily  renormalizable the same way). 

The first lemma, which is one of the key ingredients towards the proof of rigidity, allows us to ``spread'' certain partially defined quasiconformal homeomorphisms over the whole Riemann sphere so as to produce a partial conjugation with nice properties. 

\begin{lemma}[The Spreading Principle]
\label{Lem:Spreading}
Let $g$ and $\tilde g$ be two combinatorially equivalent Newton maps. 
Let $U$ be a finite union of open puzzle pieces for $g$ such that $U$ is nice and $\CritNF(g) \subset U$, and let $\tilde U$ be the union of the corresponding pieces for $\tilde g$. Let $\phi \colon U \to \tilde U$ be a $K$-quasiconformal homeomorphism that respects the boundary marking. Then $\phi$ extends to a $K$-quasiconformal homeomorphism $\Phi \colon \Cc \to \Cc$ such that:
\begin{enumerate}
\item
\label{It:SP1}
$\Phi = \phi$ on $U$;
\item
\label{It:SP3}
for each $z \not\in U$
\[
\tilde g \circ \Phi(z) = \Phi \circ g(z)\;; 
\]
\item
\label{It:SP2}
$\Phi$ maps every open puzzle piece $V$ which is not contained in $\DomL(U)$ onto the corresponding puzzle piece $\tilde V$, and the homeomorphism $\Phi \colon V \to \tilde V$ respects the boundary marking;
\item
\label{It:SP4}
the dilatation of $\Phi$ vanishes on $\Cc \sm \DomL(U)$.
\end{enumerate}
\end{lemma}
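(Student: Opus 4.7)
The plan is to build $\Phi$ by combining three types of partial definition: the given $\phi$ on $U$, a dynamical pullback on $\DomL(U) \setminus U$, and a holomorphic extension on the complement; continuity and consistency are then enforced by the boundary-marking hypothesis.

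First, set $\Phi := \phi$ on $U$; this gives \eqref{It:SP1}. For every component $Y$ of $\DomE(U) \setminus U$, let $k = k(Y) \ge 1$ be minimal with $g^k(Y)$ equal to a component of $U$. Since $U$ is nice and $\CritNF(g) \subset U$, Lemma~\ref{Lem:FirstReturnConstruction} shows that $g^k \colon Y \to g^k(Y)$ is a proper branched cover; combinatorial equivalence of $g$ and $\tilde g$ with matching critical-point degrees guarantees that $\tilde g^k \colon \tilde Y \to \tilde g^k(\tilde Y)$ is of the same topological type, where $\tilde Y$ is the puzzle piece corresponding to $Y$. Then $\Phi|_Y$ is defined to be the unique lift of $\phi \circ g^k|_Y$ through the branched cover $\tilde g^k \colon \tilde Y \to \tilde g^k(\tilde Y)$ that carries $Y$ to $\tilde Y$. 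Because $g^k$ and $\tilde g^k$ are holomorphic, this lift inherits the $K$-QC dilatation of $\phi$ and satisfies $\tilde g \circ \Phi = \Phi \circ g$ on $Y$.

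Outside $\DomL(U)$, I extend $\Phi$ in two parts. On components of basins of roots whose orbit never enters $U$, I use the biholomorphic conjugation $\psi$ between immediate basins propagated by the dynamics; this is holomorphic, so it contributes zero to the dilatation and conjugates $g$ to $\tilde g$. On puzzle pieces of $\ovl{S_0}$ that are not contained in $\DomL(U)$ and not part of a root basin component, I pick QC extensions via Lemma~\ref{Lem:AnyQCBdMarking} respecting boundary marking, and spread them by pullback to preserve the conjugation equation. The residual set $R$ (points whose orbit avoids both $U$ and every root basin) then satisfies, by Lemma~\ref{Lem:AvoidingCritPtsNewton} applied with $\CritNF(g) \subset U$, that $R$ has measure zero and every point of $R$ has trivial fiber; hence $\Phi|_R$ is forced pointwise by continuity, since each $z \in R$ is a nested intersection of puzzle pieces shrinking to $\{z\}$ on whose boundaries $\Phi$ is already defined.

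The main obstacle is to verify that these pieces glue into a continuous $K$-QC homeomorphism of $\Cc$. Adjacent puzzle pieces share boundary arcs lying in the Newton graph (meeting at (pre)poles) or along equipotentials in root basins. The value of $\Phi$ on such an arc is prescribed by the graph homeomorphism $\phi_n$ of the combinatorial equivalence, or by the basin conjugation $\psi$; and because $\phi$ respects boundary marking while the dynamics permutes boundary-marking data consistently (Newton graphs to Newton graphs, equipotentials to equipotentials), the pullbacks and the Lemma~\ref{Lem:AnyQCBdMarking} interpolations all agree on every shared boundary. Consequently $\Phi$ is a continuous $K$-QC homeomorphism of $\Cc$; conclusion~\eqref{It:SP3} then follows from the pullback construction together with the basin conjugation, \eqref{It:SP2} because both pullback and Lemma~\ref{Lem:AnyQCBdMarking} preserve boundary marking, and \eqref{It:SP4} because on $\Cc \setminus \DomL(U)$ only holomorphic maps (or measure-zero sets) contribute to the Beltrami differential.
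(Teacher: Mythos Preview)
Your outline follows the same route as the paper's (which defers to \cite[Section~5.3]{KSS}, based on Lemmas~\ref{Lem:ZeroArea} and~\ref{Lem:AnyQCBdMarking}), but the write-up has a genuine gap in how $\Phi$ is assembled and why it is globally $K$-quasiconformal.

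The sentence ``pick QC extensions via Lemma~\ref{Lem:AnyQCBdMarking} \dots\ and spread them by pullback'' does not specify a single map: pulling a depth-$0$ interpolant back to successive depths yields \emph{different} maps on the same nested region, so as written $\Phi$ is multiply defined outside $\DomL(U)$. What the KSS argument actually does is build a \emph{sequence} $(\Phi_n)$ of global homeomorphisms---each $\Phi_n$ agreeing with the dynamical lift of $\phi$ on the components of $\DomL(U)$ that have appeared by depth $n$, and equal to a lift of the Lemma~\ref{Lem:AnyQCBdMarking} interpolant elsewhere---and then pass to a limit. Each $\Phi_n$ is globally $\max(K,K_0)$-QC for a fixed $K_0$, so any subsequential limit is as well; and since the region carrying the $K_0$-dilatation shrinks (by Lemma~\ref{Lem:ZeroArea}) to a set of measure zero, the limit has Beltrami coefficient bounded by $(K-1)/(K+1)$ a.e., hence is $K$-QC.

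Your closing claim (``only holomorphic maps or measure-zero sets contribute to the Beltrami differential'') states the \emph{conclusion} of this argument but skips the step that makes it valid: one must first know $\Phi$ is quasiconformal with \emph{some} bound before its Beltrami coefficient is even defined. A homeomorphism that is $K$-QC off a compact, totally disconnected, measure-zero set $R$ need not be QC; removability of such sets is not automatic. This is why the limiting construction---or an explicit removability argument for $R$---cannot be omitted.
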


\begin{remark} 
It follows that the homeomorphism $\Phi$ has the following additional property:
\begin{enumerate}
\addtocounter{enumi}{4}
\item
\label{It:SP6}
\emph{for every component\/ $V$ of $\DomL(U) \sm U$, we have
$\Phi|_{V} = \tilde g^{-k}\circ \phi \circ g^k|_{V}$, where $g^k|_V$ is the restriction to $V$ of the first landing map $L \colon \DomL(U) \to U$.}
\end{enumerate}
In fact, claim~\eqref{It:SP6} follows for each $z\in \DomL(U)$ by iterating~\eqref{It:SP3} until $z$ reaches $U$. 
\end{remark}

\begin{proof}
The proof goes verbatim as in \cite[Section 5.3]{KSS}, based on Lemmas~\ref{Lem:ZeroArea} and \ref{Lem:AnyQCBdMarking}.
\end{proof}

The second lemma shows that for combinatorially equivalent Newton maps the box mappings from Lemma~\ref{Lem:ExtrNonRenormBoxMappings} can be chosen to be combinatorially equivalent as well.

\begin{lemma}[Extracting combinatorially equivalent box mappings]
\label{Lem:ExtrCombEquivBoxMappings}
If $g$ and $\tilde g$ are two combinatorially equivalent Newton maps, then in Lemma~\ref{Lem:ExtrNonRenormBoxMappings} one can choose two dynamically natural complex box mappings $F \colon \U \to \V$ and $\tilde F \colon \tilde \U \to \tilde \V$, and so that they are combinatorially equivalent with respect to some quasiconformal homeomorphism $H \colon \V \to \tilde \V$ such that $H(\U) = \tilde \U$ and $\tilde F \circ H = H \circ F$ on $\partial \U$.
\end{lemma}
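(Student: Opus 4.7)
The plan is to run the construction of Lemma~\ref{Lem:ExtrNonRenormBoxMappings} in parallel for $g$ and $\tilde g$, using combinatorial equivalence to match puzzle-piece choices, and then to realize the resulting combinatorial matching as a quasiconformal homeomorphism $H$ via the Spreading Principle (Lemma~\ref{Lem:Spreading}).

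First I would execute the construction of Lemma~\ref{Lem:ExtrNonRenormBoxMappings} for $g$, fixing specific puzzle pieces of $\V$ around each critical point of $\CritNF(g)\sm\Crit_{er}(g)$. Combinatorial equivalence of $g$ and $\tilde g$ supplies the graph isomorphism $\phi_n\colon\Delta_n\to\tilde\Delta_n$ extending the conjugacy of Definition~\ref{Def:CombEquiv}, which in turn induces a depth-preserving bijection between Newton puzzle pieces of the two maps. Using this bijection I would select the corresponding pieces for $\tilde g$ to define $\tilde\V$. Since combinatorial equivalence preserves the itineraries of critical orbits through Newton puzzles, it also preserves which critical points are renormalizable, eventually renormalizable, combinatorially recurrent, or non-recurrent, and the first-return times from a component of $\Dom(\V)$ into $\V$ agree with the corresponding return times on the $\tilde g$-side. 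Consequently, the resulting $\tilde F\colon\tilde\U\to\tilde\V$ is a dynamically natural, non-renormalizable complex box mapping with a canonical component-by-component correspondence to $F\colon\U\to\V$, under which $\tilde F|_{\tilde U'}=\tilde g^{k}|_{\tilde U'}$ whenever $F|_{U'}=g^{k}|_{U'}$ on the corresponding component $U'$.

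Next, I would build $H$ in two stages. First, apply Lemma~\ref{Lem:AnyQCBdMarking} to obtain a quasiconformal homeomorphism $\phi\colon\V\to\tilde\V$ that on each component of $\V$ respects boundary marking and sends every component of $\U$ sitting inside it to the corresponding component of $\tilde\U$; the latter condition can be arranged by first choosing boundary-marking-respecting homeomorphisms from each component of $\U$ to the corresponding component of $\tilde\U$ via Lemma~\ref{Lem:AnyQCBdMarking}, and then extending quasiconformally through the surrounding components of $\V$ using the piecewise-smooth structure of puzzle boundaries. Second, since $\V$ is a nice open set containing $\CritNF(g)$, the Spreading Principle applied with $U=\V$ extends $\phi$ to a quasiconformal homeomorphism $\Phi\colon\Cc\to\Cc$ satisfying $\tilde g\circ\Phi=\Phi\circ g$ off $\V$. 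I would then set $H:=\Phi|_\V$.

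Finally I would verify the claimed properties. The equality $H(\U)=\tilde\U$ is immediate from the construction of $\phi$. To establish $\tilde F\circ H=H\circ F$ on $\partial\U$, I would split into two cases: for a component $U'$ of $\U$ compactly contained in its component $V'$ of $\V$, the boundary $\partial U'$ sits in $\DomL(\V)\sm\V$, where the formula $\Phi=\tilde g^{-k}\circ\phi\circ g^k$ of property~\eqref{It:SP6} of the Spreading Principle immediately gives the conjugation; for a component $U'=V'$ coinciding with a component of $\V$, one has $\partial U'\subset\partial\V$, and here boundary marking forces $\phi$ to agree on the edge portions of $\partial V'$ with the graph conjugacy $\phi_k$ and on the equipotential portions with the holomorphic basin conjugacy $\psi$, both of which intertwine $g$ with $\tilde g$. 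The main obstacle I anticipate is precisely this second case: one must check that the static combinatorial ``boundary marking'' condition genuinely realizes the dynamical relation $\tilde g^k\circ\phi=\phi\circ g^k$ on $\partial V'$, which rests crucially on condition~\eqref{It:CEItem1} of Definition~\ref{Def:CombEquiv}. Combinatorial equivalence of $F$ and $\tilde F$ in the sense of Definition~\ref{Def:CombEquivBoxMappings} then follows since $\Phi$ maps every puzzle piece of $g$ at every depth onto the corresponding puzzle piece of $\tilde g$ (either by property~\eqref{It:SP2} or by the formula~\eqref{It:SP6}), so choosing $\tilde\Gamma:=H(\Gamma)$ makes the $\Gamma$-itinerary condition automatic.
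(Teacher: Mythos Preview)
Your first stage---running the construction of Lemma~\ref{Lem:ExtrNonRenormBoxMappings} in parallel for $g$ and $\tilde g$ and matching puzzle pieces via combinatorial equivalence---is correct and is exactly what the paper does. The gap is in how you build $H$ and verify the boundary conjugacy.

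First, a small but real error: $\V$ from Lemma~\ref{Lem:ExtrNonRenormBoxMappings} contains only $\CritNF(g)\sm\Crit_{er}(g)$ (see property~\eqref{It:Extr22} there), not all of $\CritNF(g)$, so the Spreading Principle with $U=\V$ does not apply as you state it. This could be patched by enlarging the set, but there is a more serious problem.

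Your case analysis for $\tilde F\circ H=H\circ F$ on $\partial\U$ is mistaken in the first case. For a component $U'$ of $\U$ compactly contained in its component $V'$ of $\V$, you claim $\partial U'\subset\DomL(\V)\sm\V$; but in fact $\partial U'\subset V'\subset\V$, so the Spreading Principle just returns $\Phi|_{\partial U'}=\phi|_{\partial U'}$ with no dynamical information whatsoever. Your $\phi$ was only required to respect boundary marking and send $\U$-components to $\tilde\U$-components setwise; nothing in that construction forces $\tilde g^k\circ\phi=\phi\circ g^k$ pointwise on $\partial U'$. So the conjugacy fails exactly where you need it. (Ironically, the case $U'=V'$ that you flag as the ``main obstacle'' is the one that could in principle work via boundary marking.)

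The paper bypasses the Spreading Principle entirely and uses a direct pull-back (lifting) argument: first build $H_0\colon\V\to\tilde\V$ respecting boundary marking \emph{and} sending critical values of $F$ to critical values of $\tilde F$ (and points of $\PC(F)\sm\U$ to the corresponding points of $\PC(\tilde F)\sm\tilde\U$); then, because $F$ and $\tilde F$ are branched covers on components, lift $H_0$ to $H_1\colon\U\to\tilde\U$ so that $\tilde F\circ H_1=H_0\circ F$ holds by construction; finally glue $H:=H_0$ on $\V\sm\U$ and $H:=H_1$ on $\U$. The boundary-marking compatibility makes $H$ continuous across $\partial\U$, and the conjugacy on $\partial\U$ is automatic from the lifting identity. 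This also delivers $H(\PC(F)\sm\U)=\PC(\tilde F)\sm\tilde\U$, which your construction omits and which is needed for Definition~\ref{Def:CombEquivBoxMappings}.
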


\begin{proof}
Let $F \colon \U \to \V$ be a box mapping given by Lemma~\ref{Lem:ExtrNonRenormBoxMappings}. If in that lemma we carry out the same construction for $\tilde g$ as we did for $g$ by picking the same iterates along the way, we obtain a non-renormalizable box mapping $\tilde F \colon \tilde \U \to \tilde \V$ for which all periodic points are repelling and that is dynamically natural. Since $g$ and $\tilde g$ are combinatorially equivalent, the sets $\U$ and $\tilde \U$, as well as $\V$ and $\tilde \V$, consist of corresponding puzzle pieces of $g$ and $\tilde g$, and the restrictions of $F$ and $\tilde F$ to the corresponding components of $\U$ and $\tilde \U$ are the same iterates of $g$ and $\tilde g$. Let us show that $F$ and $\tilde F$ obtained this way are combinatorially equivalent (in the sense of Definition~\ref{Def:CombEquivBoxMappings}) with respect to some quasiconformal homeomorphism $H \colon \V \to \tilde \V$. We apply the standard pull-back argument. 

For a pair $V \subset \ovl\V$, $\tilde V \subset \ovl{\tilde \V}$ of corresponding puzzle pieces, let $h \colon V \to \tilde V$ be some quasiconformal homeomorphism that respects the boundary marking (Lemma~\ref{Lem:AnyQCBdMarking}). Modify $h$ on $V$ so that: 1) $h$ is unchanged on $\partial V$; 2) $h$ maps the critical values of $F$ in $V$ to the corresponding critical values of $\tilde F$ in $\tilde V$; 3) $h$ maps every point in $\PC(F) \sm \U$ that happens to lie in $V$ to the corresponding point in $\tilde V$. The modification is possible because it requires to change the map at finitely many points while preserving the map on the boundary. Define a quasiconformal homeomorphism $H_0 \colon \V \to \tilde \V$ by setting $H_0|_V = h$ on each such component $V$. Since $H_0$ maps the critical values of $F$ to the corresponding critical values of $\tilde F$, we can lift $H_0$ to a quasiconformal map $H_1 \colon \U \to \tilde \U$ that also respects the boundary marking. Finally, construct a map $H \colon \V \to \tilde \V$ by setting 
\begin{equation*}
H(z) := \left\{
\begin{aligned}
&H_0(z), \text{ for }z \in \V \sm \U\;;\\
&H_1(z), \text{ for }z \in \U.
\end{aligned}
\right.
\end{equation*}
Defined in this way, $H$ is a quasiconformal homeomorphism between $\V$ and $\tilde \V$ such that $H(\U) = \tilde \U$, $H(\PC(F) \sm \U) = \PC(\tilde F) \sm \tilde \U$, $\tilde F \circ H = H \circ F$ on $\partial \U$, and $H$ respects the boundary marking. The last fact combined with the observation that $F$ resp.\ $\tilde F$ restricted to the components of $\U$ resp.\ $\tilde \U$ yield equal iterates of $g$ resp.\ $\tilde g$ implies that $H$ provides a combinatorial equivalence between $F$ and $\tilde F$ in the sense of curve itineraries in Definition~\ref{Def:CombEquivBoxMappings}. 
\end{proof}

\Newpage

\subsection{Proof of Theorem~\ref{Thm:QCRigidity}: the attracting-critically-finite case}
\label{SSec:QCRig}

We can restate the theorem as follows, using the notation developed so far: if $N_p$ and $N_{\tilde p}$ are two attracting-critically-finite Newton maps that are combinatorially equivalent and renormalizable the same way, then they are affinely conjugate. 

By Theorem~\ref{Thm:NewtonPuzzles}, there exists an iterate $M$ so that $g:=N_p^M$ has a well-defined Markov partition; choose $M$ to be the minimal with this property. By combinatorial equivalence, the same holds for $\tilde g:=N_{\tilde p}^M$ with the same iterate $M$; then every combinatorial property of the orbits for $g$, i.e.\ defined in terms of puzzle itineraries, immediately transfers to the corresponding property for $\tilde g$. 

For convenience, let us choose a depth $\LL_0$ so that
\begin{enumerate}
\item
$\LL_0 \ge \LL$, where $\LL$ is defined before Definition~\ref{Def:CombEquiv};
\item
any two critical points of $g$ that are not in the same fiber are in different puzzle pieces of depth $\LL_0$.
\end{enumerate}
The depth $\LL_0$ has the same properties for $\tilde g$. 

Since $g$ and $\tilde g$ are renormalizable in the same way, we can pull back the conjugating homeomorphisms from Definition~\ref{Def:SameWay} as follows. Define
\[
\mathcal O:= \bigcup_{c\in \Crit_{er}(c)} \bigcup_{i\ge 0}g^i(\fib(c))
\;.
\]
This is really a finite union of fibers because all renormalizable critical points have periodic fibers: every renormalizable $c$ has a $k=k(c)\ge 1$ so that $\fib(c)=g^{k}(\fib(c))$, and every non-renormalizable $c_0\in\Crit_{er}(g)$  has a minimal $s=s(c_0)$ so that $g^s(\fib(c_0))=g^{k_0}(c)$ for some renormalizable $c=c(c_0)$ and some $k_0=k_0(c_0)$. For each $n\ge \LL_0$ define the following open puzzle neighborhood of $\mathcal O$:
\begin{equation}
\label{Eq:Omega}
\Omega_n:=\bigcup_{c}\left( \bigcup_{i=0}^{k-1}\inter P_{n+k-i}\left(g^i(c)\right) \right)
\cup
\bigcup_{c_0}\left( \bigcup_{j=0}^{s-1}\inter P_{n+k-k_0+s-j}\left(g^j(c_0)\right) \right)
\;,
\end{equation}
where the first union is over all renormalizable critical points and the second one is over the others, and where we wrote $k$, $s$, and $k_0$ instead of $k(c)$, $s(c)$, and $k_0(c_0)$ for simplicity (in the last union, $k=k(c)$, where $c=c(c_0)$). The depths $n$ in this definition are large enough so that all $\Omega_n$  are nice sets. All puzzle pieces in $\Omega_n$ have depth $n+1$ or more. Of course we have analogous sets $\tilde{\mathcal O}$ and $\tilde\Omega_n$.

The homeomorphisms from Definition~\ref{Def:SameWay} extend by pull-backs to  $K$-quasiconformal homeomorphism $\psi_n \colon \Omega_n \to \tilde \Omega_n$ such that: 1) $K$ is independent of $n$; 2) $\psi_n(\mathcal O) = \tilde{\mathcal O}$; 3) $\psi_n$ respects the boundary marking; 4) $\psi_n$ conjugates $g$ and $\tilde g$ on $\mathcal O$; 5) the dilatation of $\psi_n$ vanishes on $\mathcal O$. By construction, for $m>n$ the map $\psi_m$ is just the restriction of $\psi_n$ to the smaller set $\Omega_m$. In particular, the restriction $\psi_n|_{\mathcal O}$ is independent of $n$. 

The proof of Theorem~\ref{Thm:QCRigidity} is based on the following key proposition, which is in analogy to the main claim in \cite[Section 6.4]{KSS}.

\begin{proposition}[Uniform control of dilatation]
\label{Prop:KeyClaim}
There exist $K > 0$ and $n_0 \ge \LL_0$ so that for every $c \in \CritNF(g) \sm \Crit_{er}(g)$ and $n \ge n_0$ there exists a $K$-quasiconformal homeomorphism
\[
\phi_{n,c} \colon \inter P_{n}(c) \to \inter{\tilde P}_{n}(\tilde c)
\]
that respects the boundary marking. 
\end{proposition}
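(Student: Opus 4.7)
The plan is to combine three ingredients—the rigidity theorem for non-renormalizable box mappings (Theorem~\ref{Thm:QC}~\eqref{It:Rig}), the hybrid conjugacies $\psi_n$ from the ``renormalizable in the same way'' hypothesis, and the Spreading Principle (Lemma~\ref{Lem:Spreading})—into a single quasiconformal homeomorphism $\Phi\colon\Cc\to\Cc$ whose restriction to each relevant puzzle piece will yield $\phi_{n,c}$. First, I will invoke Lemma~\ref{Lem:ExtrCombEquivBoxMappings} to obtain combinatorially equivalent dynamically natural non-renormalizable box mappings $F\colon\U\to\V$ and $\tilde F\colon\tilde\U\to\tilde\V$ via a quasiconformal $H$ that (as arranged in the proof of that lemma) respects the boundary marking, and with $\Crit(F)\subseteq\CritNF(g)\setminus\Crit_{er}(g)$. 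Theorem~\ref{Thm:QC}~\eqref{It:Rig} then upgrades $H$ to a $K_0$-quasiconformal conjugacy $\Phi_0\colon\V\to\tilde\V$ of $F$ and $\tilde F$ that agrees with $H$ on $\V\setminus\U$.

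Next, I will choose $n_0\ge\LL_0$ large enough that the components of $\Omega_{n_0}$ (defined in \eqref{Eq:Omega}) are either compactly contained in components of $\V$ or disjoint from $\V$, and so that the critical puzzle pieces making up $\Omega_{n_0}$ are precisely the ones from Definition~\ref{Def:SameWay}. Setting $U:=\V\cup\Omega_{n_0}$, the set $U$ is nice and contains $\CritNF(g)$; I define $\phi\colon U\to\tilde U$ to equal $\psi_{n_0}$ on $\Omega_{n_0}$ and $\Phi_0$ on $\V\setminus\Omega_{n_0}$. The compatibility of these two definitions on the common boundary $\partial\Omega_{n_0}\cap\V$ comes from both maps respecting the boundary marking there, which forces them to coincide since that marking is determined by the globally fixed graph homeomorphism $\phi_k$ and the basin conjugation $\psi$. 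The resulting $\phi$ is $K_1$-quasiconformal for $K_1:=\max\{K_0,K\}$ and respects the boundary marking, so the Spreading Principle will extend it to a $K_1$-qc $\Phi\colon\Cc\to\Cc$ that equals $\phi$ on $U$.

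Finally, for each $c\in\CritNF(g)\setminus\Crit_{er}(g)$ Theorem~\ref{Thm:RRP} gives $\fib(c)=\{c\}$, so $\bigcap_n P_n(c)=\{c\}$; by finiteness of this critical set, there is a uniform threshold $n_0'\ge n_0$ above which each $P_n(c)$ lies in the component $V_c$ of $\V$ through $c$, and then $\phi_{n,c}:=\Phi|_{\inter P_n(c)}=\Phi_0|_{\inter P_n(c)}$ is the desired $K_1$-qc homeomorphism onto $\inter{\tilde P}_n(\tilde c)$. The main obstacle will be verifying that $\Phi_0$ respects the boundary marking on the internal puzzle boundary $\partial P_n(c)\subset\inter\U$—not merely on $\partial\U$, where it equals $H$ by construction. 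The resolution is dynamical: for $n$ large enough some iterate $F^j$ sends $P_n(c)$ onto a component of $\V$, so $\partial P_n(c)\subset F^{-j}(\partial\V)$, and the conjugation relation $\tilde F^j\circ\Phi_0=\Phi_0\circ F^j$ pulls back the agreement $\Phi_0|_{\partial\V}=H|_{\partial\V}$ to $\partial P_n(c)$; since the global markers $\phi_m$ and $\psi$ on $\partial P_n(c)$ are themselves defined by the same Newton-dynamics pullback of $\phi_k|_{\Delta_k}$ and of $\psi$ inside the basins, the two notions of marking match.
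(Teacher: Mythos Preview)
Your argument essentially handles only the case $S:=\CritNF(g)\setminus(\Crit(F)\cup\Crit_{er}(g))=\emptyset$, which the paper itself singles out in a remark as immediate.  The gap is in the final paragraph.  For $c\in S$ one has $c\in\V$ (Lemma~\ref{Lem:ExtrNonRenormBoxMappings}\,(3)) but $c\notin\U$: if $c\in\U$ then $F$ on that component is some $g^k$, so $c$ would be a critical point of $F$, contradicting $c\notin\Crit(F)$.  Hence $c\in\V\setminus\U$ escapes from $F$ immediately, the only $F$-puzzle piece around $c$ is the component $V_c$ of $\V$, and for every $n$ beyond the $g$-depth of $V_c$ the piece $P_n(c)$ is \emph{not} an $F$-puzzle piece.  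Your crucial claim ``for $n$ large enough some iterate $F^j$ sends $P_n(c)$ onto a component of $\V$'' is therefore false for such $c$; in fact, since $c\notin\Crit(F)$, you do not even know that $\Phi_0(c)=\tilde c$, let alone that $\Phi_0(\inter P_n(c))=\inter\tilde P_n(\tilde c)$ or that $\Phi_0$ respects the boundary marking on $\partial P_n(c)$.  Even for $c\in\Crit(F)\cap K(F)$ the same claim fails for most $n$: only the $g$-depths matching successive $F$-return times give $F$-puzzle pieces, so the pull-back argument for the boundary marking works along a sparse subsequence of depths, not for every $n\ge n_0$ as the proposition requires.

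The paper's proof is organized around exactly this obstacle.  For each target depth $n$ it builds a nice neighborhood $U\supset\CritNF(g)$ out of puzzle pieces of depth \emph{greater than} $n$ (an $F$-puzzle piece of large $F$-depth around each $c\in\Crit(F)$, a $\psi_m$-piece around each $c\in\Crit_{er}$, and an arbitrarily chosen piece $W_c$ around each $c\in S$), and then spreads.  Because $P_n(c)\not\subset\DomL(U)$, the Spreading Principle itself (clause~\eqref{It:SP2}) guarantees that $\Phi$ respects the boundary marking on $\partial P_n(c)$---no box-mapping conjugacy needed there.  The price is that the dilatation of $\Phi$ is uncontrolled on the landing domains to $W_S$; the paper then invokes the QC-Criterion (Theorem~\ref{Thm:QCCr}), using the uniform shape and moduli bounds of Lemma~\ref{Lem:Lem68} and Proposition~\ref{Prop:Cor63} on those landing domains, to replace $\Phi|_{\inter P_n(c)}$ by a $K'$-qc map with the same (already correct) boundary values.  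Your proposal has no substitute for this geometric step around the points of $S$.
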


\begin{remark}
In Proposition~\ref{Prop:KeyClaim}, by our choice of $\LL_0$ all critical points in each $P_n(c)$ are contained in $\fib(c)$. However, the map $\phi_{n,c}$ does not depend on a choice of a critical point in $\fib(c)$ because each $c \in \CritNF(g) \sm \Crit_{er}(g)$ has trivial fiber by Theorem~\ref{Thm:RRP}.
\end{remark}

Conceptually, Proposition~\ref{Prop:KeyClaim} says that critical puzzle pieces do not degenerate as their depths increase, akin to ``a priori bounds'' for polynomial renormalization. 
We postpone the proof to Subsection~\ref{SSec:ProofKeyClaim} and use it first, together with the Spreading Principle (Lemma~\ref{Lem:Spreading}), to complete the proof of Theorem~\ref{Thm:QCRigidity} in the attracting-critically-finite case.

\begin{proof}[Proof of Theorem~\ref{Thm:QCRigidity}]

\setcounter{stepctr}{0}

The proof is done in three steps. First we construct a quasiconformal homeomorphism conjugating $g$ and $\tilde g$. Second, we show that this conjugation is affine. Finally, we conclude that $N_p$ and $N_{\tilde p}$ are also affine conjugate.

\begin{step}[a quasiconformal conjugation] 
Take a depth $n\ge n_0$, where $n_0$ is given by Proposition~\ref{Prop:KeyClaim}. Set $U := \Omega_{n+1} \cup \bigcup_{c \in \CritNF(g) \sm \Crit_{er}(g)} \inter P_{n}(c)$. This construction is such that $U$ is a nice open set containing $\CritNF(g)$. Let $\tilde U$ be the corresponding set for $\tilde g$. 

Define a homeomorphism $\phi \colon U \to \tilde U$ by setting $\phi|_{\Omega_{n+1}} := \psi_{n+1}$ and $\phi|_{\inter P_n(c)} := \phi_{n,c}$, where the latter map is given by Proposition~\ref{Prop:KeyClaim}. The map $\phi$ defined this way is a $K$-quasiconformal homeomorphism that respects the boundary marking. Therefore, we can apply the Spreading Principle (Lemma~\ref{Lem:Spreading}): it guarantees that $\phi$ extends to a $K$-quasiconformal homeomorphism $\Phi_n \colon \Cc \to \Cc$ that conjugates $g$ and $\tilde g$ on $\Cc \sm U$. Since all $\Phi_n$ have dilatation uniformly bounded by $K$, the sequence $(\Phi_n)$ has a convergent subsequence. A limiting map is a quasiconformal homeomorphism $\Phi_\infty \colon \Cc \to \Cc$ that conjugates $g$ and $\tilde g$ on $\Cc \sm (\mathcal O \cup \bigcup_{c \in \CritNF(g) \sm \Crit_{er}(g)} \fib(c))$. But on $\mathcal O$ the homeomorphism $\Phi_\infty$ coincides with $\psi_n |_{\mathcal O}$, and hence is a conjugacy there as well. Each of the finitely many critical fibers $\fib(c)$ for $c \in \CritNF(g) \sm \Crit_{er}(g)$ are trivial by Theorem~\ref{Thm:RRP}. Therefore, $\Phi_\infty$ extends to a global quasiconformal conjugacy between $g$ and $\tilde g$. Finally, any two limiting maps coincide on an everywhere dense open set, and hence must be equal.
\end{step}

\begin{step}[the conjugation $\Phi_\infty$ is affine]
By construction and the Spreading Principle, the dilatation of $\Phi_\infty$ vanishes on the set $\Cc \sm \bigcup_{c \in \CritNF(g), \, s \ge 0} g^{-s}(\fib(c))$. However, on the full backward orbit of a renormalizable fiber $\fib(c)$ the map $\Phi_{\infty}$ coincides with $\psi_{n}|_{\mathcal O}$ or its conformal lifts. Therefore, the dilatation of $\Phi_\infty$ vanishes on $\bigcup_{c \in \Crit_{er}(g), s\ge 0}g^{-s}(\fib(c))$. 

In order to conclude that $\Phi_\infty \colon \Cc \to \Cc$ is conformal, and thus affine, it suffices to show that the set of points in $J(g)$ that do not land in one of the renormalizable fibers under the iteration does not support a measurable invariant line field.

Let $A \subset J(g)$ be the set of points whose orbits do not land in one of the renormalizable fibers. Let $F \colon \U \to \V$ be the box mapping constructed in Lemma~\ref{Lem:ExtrNonRenormBoxMappings} for $g$. By Theorem~\ref{Thm:QC}~\eqref{It:Linefield}, the set of points in $A$ whose orbits land in $J(F)$ does not support a measurable invariant line fields, and all other points in $A$ have zero Lebesgue measure by Lemma~\ref{Lem:NothingLeft}.
\end{step}

\begin{step}[the Newton maps are affine conjugate] 
We still need to upgrade the affine conjugation from the iterates $g$ and $\tilde g$ to the Newton maps $N_p$ and $N_{\tilde p}$ themselves. Up to a M\"obius conjugation, we may assume that $g$ and $\tilde g$ have the same Fatou set, and hence the same unbounded Fatou components, with the same centers that are super-attracting fixed points. But then $N_p$ and $N_{\tilde p}$ also have identical Fatou set and identical immediate basins with identical fixed points, which are the roots of $p$ and $\tilde p$. Since these roots must be simple, we have $p=\tilde p$ up to a constant multiple, hence $N_p=N_{\tilde p}$ (up to M\"obius conjugation). 
\end{step}
\end{proof}

\subsection{Proof of Theorem~\ref{Thm:QCRigidity}: general case}
\label{SSec:General}

If $N_p$ and $N_{\tilde p}$ are not attracting-critically-finite, then we can perform the surgery in the basins of roots described in Subsection~\ref{SSec:ConstructionOfNewtonPuzzles} to turn them into attracting-critically-finite maps. By the previous arguments, these new maps are affine conjugate on the Riemann sphere. Hence, before the surgery the maps were quasiconformally conjugate in some neighborhood of the Julia set, and this neighborhood includes all the components of the Fatou set that are not in the basins of roots. The dilatation of this conjugation vanishes on those components as well as on the entire Julia set. This concludes the proof of Theorem~\ref{Thm:QCRigidity}. 
\qed

\Newpage

\subsection{Proof of Proposition~\ref{Prop:KeyClaim}}
\label{SSec:ProofKeyClaim}

In many cases, Proposition~\ref{Prop:KeyClaim} follows from quasiconformal rigidity of the box mappings constructed in Lemma~\ref{Lem:ExtrCombEquivBoxMappings}. However, these mappings do not capture all the critical points of the original maps. The main part of the proof below will be dealing with the ``run-away'' points that are not eventually renormalizable. 

Let $F \colon \U \to \V$ and $\tilde F \colon \tilde \U \to \tilde \V$ be the box mappings constructed in Lemma~\ref{Lem:ExtrCombEquivBoxMappings} for $g$ and $\tilde g$. By Lemma~\ref{Lem:ExtrNonRenormBoxMappings}~\eqref{It:Extr23}, $\Crit(F)$ is disjoint from $\Crit_{er}(g)$. The set $S:= \CritNF(g) \sm (\Crit(F) \cup \Crit_{er}(g))$ consists of critical points not in $\Crit_{er}(g)$ that do not accumulate on $\CritNF(g) \sm \Crit_{er}(g)$. Define the corresponding set $\tilde S$ for $\tilde g$. 

The depths of fibers have been chosen large enough so that every critical puzzle piece contains a single critical fiber. In view of Theorem~\ref{Thm:RRP}, the only case when this fiber can fail to be trivial, and hence contain more than one critical point, is when the fiber is eventually renormalizable, so all its critical points are in $\Crit_{er}(g)$. We are not dealing with this case in Proposition~\ref{Prop:KeyClaim}.

\subsubsection{Strategy of the proof}
\label{Sub:ProofPropHardCase}

In Lemma~\ref{Lem:Lem68} and Proposition \ref{Prop:Cor63} we will construct certain nice and arbitrary deep puzzle neighborhoods for points in $S$; these neighborhoods will be constructed with good control over the geometry of first landing domains to them. After that we extend these neighborhoods to arbitrary deep nice puzzle neighborhoods of the entire critical set of $g$ by adding deep enough critical puzzle pieces of the box mapping $F$ and renormalization domains containing $\Crit_{er}(g)$. These extended neighborhoods will also come with quasiconformal maps to the corresponding sets for $\tilde g$. Using the fact that these maps respect the boundary marking, they will be globalized by the Spreading Principle (Lemma~\ref{Lem:Spreading}). The dilatations of these globalized maps, however, will be uniformly controlled everywhere except on the first landing domains to the neighborhoods of $S$ (for puzzle pieces landing in the neighborhood of $\Crit(F)$ we will have uniform control by Theorem~\ref{Thm:QC}, and for puzzle pieces first landing in the neighborhood of $\Crit_{er}(g)$ this control will be given by the hypothesis on being renormalizable in the same way). The result of Proposition~\ref{Prop:Cor63} will allow us to use the QC-Criterion (Theorem~\ref{Thm:QCCr} below) to improve, in a uniform way, the ``uncontrolled'' dilatation caused by the landing domains to the neighborhoods of $S$. The required uniformly quasiconformal maps in Proposition~\ref{Prop:KeyClaim} will be then constructed as restriction of the improved maps to the required depth.

\begin{remark}
If $S = \emptyset$, then the proof of Proposition~\ref{Prop:KeyClaim} follows immediately by Theorem~\ref{Thm:QC} and the Spreading Principle.
\end{remark}

\subsubsection{Constructing puzzle neighborhoods of $S$ with uniformly bounded geometry and moduli bounds}

The following lemma gives the control over shapes of the critical puzzle pieces for points in $S$. 

\begin{lemma}[Geometric control for puzzle neighborhood of $S$]
\label{Lem:Lem68}
For every $c \in S$ there is a constant $\eta>0$ and for every $\eps > 0$ there exists a puzzle piece $W$ with $c \in \inter W$ such that $\diam W < \varepsilon$ and $W$ has $\eta$-bounded geometry. Moreover, the statement remains true if we replace $W$ by $\tilde W$ and $c$ by $\tilde c$.
\end{lemma}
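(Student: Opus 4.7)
The plan is to produce $W$ as a bounded-distortion pullback, via a suitable iterate of $g$, of a fixed puzzle piece with \emph{a priori} controlled geometry. The key observation is that for $c\in S$, Lemma~\ref{Lem:ExtrNonRenormBoxMappings}~\eqref{It:Extr23} forces $\omega(c)\cap(\CritNF(g)\setminus\Crit_{er}(g))=\emptyset$, while $\orb(c)$ cannot meet any renormalizable fiber (otherwise $c\in\Crit_{er}(g)$, a contradiction). Hence the only potential source of unbounded distortion along a backward pullback from $\omega(c)$ is accumulation at a renormalizable fiber, which is already controlled by Corollary~\ref{Cor:AccumAtPeriodDeg}. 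The argument splits into two cases depending on whether $\omega(c)$ meets a renormalizable fiber.

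In the first case, $\omega(c)\cap\fib(c_0)\neq\emptyset$ for some $c_0\in\Crit_{er}(g)$. Since every fiber lies in the interior of each of its puzzle pieces (Lemma~\ref{Lem:FibersCompContained}), the hypothesis of Lemma~\ref{Lem:AccumAtPeriod} is met, and Corollary~\ref{Cor:AccumAtPeriodDeg} supplies an increasing sequence of integers $(\nu_j)_{j\ge 0}$ together with two fixed puzzle pieces $Q,Q'$, with $\inter Q\subset Q'$ and $\modulus(\inter Q'\setminus Q)>0$, such that $g^{\nu_j}(c)\in\inter Q$ and each restriction $g^{\nu_j}\colon\inter Y_j\to\inter Q'$ (where $Y_j\ni c$ is the preimage component of $Q'$) is a branched covering of degree bounded by some $D$ independent of $j$. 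Let $W_j\subset Y_j$ be the preimage component of $Q$ containing $c$. Since $Q$ and $Q'$ are fixed puzzle pieces, Lemma~\ref{Lem:Fact} produces an $\eta=\eta(D,Q,Q')>0$ such that each $W_j$ has $\eta$-bounded geometry. By Theorem~\ref{Thm:RRP}, $\fib(c)=\{c\}$, so $\diam W_j\to 0$ and any $W_j$ with $j$ sufficiently large satisfies $\diam W_j<\eps$.

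In the second case, $\omega(c)$ avoids every renormalizable fiber, so $\omega(c)\cap\CritNF(g)=\emptyset$. Following Step~1 of the proof of Lemma~\ref{Lem:AvoidingCritPtsNewton}, pick a depth $n_0$ such that $\orb(g(c))$ avoids the interiors of all puzzle pieces of depth $n_0$ containing critical values, choose a point $w\in\omega(c)$ (in the generic situation not on any puzzle boundary; the degenerate case where $\omega(c)$ consists only of $\infty$ and (pre-)poles is handled by the second half of that Step), and fix $l>0$ so that $A:=\inter P_{n_0}(w)\setminus P_{n_0+l}(w)$ is a non-degenerate annulus. For every $k_i$ with $g^{k_i}(c)\in\inter P_{n_0+l}(w)$, the univalence argument of that Step shows that $g^{k_i}\colon\inter P_{n_0+k_i}(c)\to\inter P_{n_0}(w)$ is conformal. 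Lemma~\ref{Lem:Fact} (with degree $D=1$) then gives $\eta$-bounded geometry for $W:=P_{n_0+l+k_i}(c)$, the preimage component of $P_{n_0+l}(w)$ inside $P_{n_0+k_i}(c)$ containing $c$. Triviality of $\fib(c)$ again forces $\diam W\to 0$ as $i\to\infty$.

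The statement for $\tilde c\in\tilde S$ follows by running the identical arguments for $\tilde g$: the reference puzzle pieces ($Q,Q'$ in the first case, respectively $P_{n_0}(w),P_{n_0+l}(w)$ in the second), the entry-time sequences $(\nu_j)$ or $(k_i)$, and the corresponding degree bounds are all combinatorial data preserved under the combinatorial equivalence of $g$ and $\tilde g$. Hence the corresponding puzzle piece $\tilde W$ for $\tilde g$ has $\tilde\eta$-bounded geometry for some $\tilde\eta>0$; replacing $\eta$ by $\min(\eta,\tilde\eta)$ yields a common constant that works for both $W$ and $\tilde W$. The chief obstacle in implementing this plan is securing uniformity of the pullback degrees as the depth grows: in the first case this is exactly the content of Corollary~\ref{Cor:AccumAtPeriodDeg}, and in the second it follows from the combinatorial non-accumulation of $\orb(c)$ at critical points of $\CritNF(g)$ built into the definition of $S$.
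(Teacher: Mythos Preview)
Your proof is correct and follows essentially the same approach as the paper: both split into the two cases according to whether $\omega(c)$ meets a renormalizable fiber, invoke Corollary~\ref{Cor:AccumAtPeriodDeg} plus Lemma~\ref{Lem:Fact} in the first case, and use univalence (Koebe) in the second. Your write-up is somewhat more detailed than the paper's---in particular you spell out the univalent case by pointing back to Step~1 of Lemma~\ref{Lem:AvoidingCritPtsNewton} rather than just citing Koebe, and you address the transfer to $\tilde g$ explicitly---but the substance is the same.
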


Recall that an open topological disk $U$ in $\C$ has \emph{$\eta$-bounded geometry} if it contains a round Euclidean disk of radius $\eta \cdot \diam U$.

\begin{proof}[Proof of Lemma~\ref{Lem:Lem68}]
By definition of $S$, every critical point $c \in S$ is non-recurrent. Therefore, by Theorem~\ref{Thm:RRP}, the fiber of $c$ is trivial, and hence the critical puzzle pieces around $c$ shrink in diameter. Let us show that we can pick a shrinking nest of such puzzle pieces with uniformly bounded geometry. 	

If $\omega(c)$ is disjoint from critical fibers of all points in $\CritNF(g)$, then the claim follows by the Koebe Distortion Theorem. Otherwise, there exists at least one critical point $c' \in \Crit_{er}(g)$ such that $\orb(c)$ accumulates at but does not land in $\fib(c')$. In this situation, by Corollary~\ref{Cor:AccumAtPeriodDeg} there exist an increasing sequence of integers $(\nu_j)_{j \ge 0}$ and a pair of nested puzzle pieces $\inter P_s \subset P_0$ so that $g^{\nu_j}(c) \in \inter P_s$ and $g^{\nu_j} \colon \inter P_{\nu_j}(c) \to \inter P_0$ is a branched covering of uniformly bounded degree. The claim now follows by Lemma~\ref{Lem:Fact} applied to $V' := \inter P_0$, $V := \inter P_s$, $f := g^{\nu_j}$.
\end{proof}

For a subset $A$ of $\CritNF(g)$, an open set $V$ containing $A$ is a \emph{nice open puzzle neighborhood of $A$} if $V$ is nice and each component of $V$ is an open puzzle piece intersecting $A$. 

In what follows, we will need to distinguish puzzle pieces of $g$ from those of the box mapping $F$ (and similarly for $\tilde g$ and $\tilde F$). For this, we will say \emph{an $F$-puzzle piece of $F$-depth $n$} for the piece of depth $n$ view as a puzzle piece of $F$; from the point of view of the map $g$ this is a puzzle piece of depth (\emph{$g$-depth}) at least $n$.

\begin{proposition}[Good landing domains to puzzle neighborhood of $S$]
\label{Prop:Cor63}
There exists $\eta >0$ and an integer $n_0 \ge \LL_0$ such that for every depth $n \ge n_0$ there exists a nice open puzzle neighborhood $W$ of $\CritNF(g) \sm \Crit_{er}(g)$ with the following properties:
\begin{enumerate}
\item
\label{It:S1}
The depths of the components of $W$ are larger than $n$.
\item
\label{It:S2}
The components of $W$ intersecting $\Crit(F)$ are $F$-puzzle pieces of the same $F$-depth.
\item
\label{It:S3}
Let $L \colon \DomL(W) \to W$ be the first landing map to $W$ under $g$. If $W_S$ is the union of all components of $W$ intersecting $S$, then for every component $U$ of $\DomL(W)$ with $L(U) \subset W_S$ the following hold:
\begin{itemize}
\item
$U$ has $\eta$-bounded geometry;
\item
if $P$ is a puzzle piece of depth $n$ containing $U$, then $\modulus\left(\inter P \sm \ovl U\right) \ge \eta$.
\end{itemize}
\end{enumerate}
Moreover, the same statements hold true if we replace all the objects for the corresponding objects with tilde.
\end{proposition}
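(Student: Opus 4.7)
The proof combines the uniform geometric control near $S$ from Lemma~\ref{Lem:Lem68} with degree bounds for first landing maps to nice sets, in three stages plus the statement for $\tilde g$. \emph{Stage 1:} for each $c \in S$, Theorem~\ref{Thm:RRP} gives $\fib(c) = \{c\}$, since $c \notin \Crit_{er}(g)$ and, by definition of $S$, the orbit of $c$ does not accumulate on any fiber in $\CritNF(g) \sm \Crit_{er}(g)$. Using Lemma~\ref{Lem:Lem68} together with the shrinking of puzzle pieces from Lemma~\ref{Lem:FibersCompContained}, for every sufficiently large depth $m$ one obtains a pair of nested puzzle pieces $c \in V^c_m \subset \inter V^c_{m'}$, with $V^c_{m'}$ of a fixed depth (say $\LL_0$), such that $V^c_m$ has $\eta_c$-bounded geometry and the ratio $\diam V^c_{m'} / \diam V^c_m$ is so large that $\modulus(\inter V^c_{m'} \sm \ovl{V^c_m}) \ge \mu_c$. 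Since $S$ is finite, one extracts uniform constants $\eta_1, \mu_1 > 0$.

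\emph{Stage 2 (construction of $W$).} Choose an $F$-depth $N$ so large that every $F$-puzzle piece of $F$-depth $N$ has $g$-depth exceeding $n$; for each $c \in \Crit(F)$, include in $W$ the corresponding $F$-puzzle piece of $F$-depth $N$. For each $c \in S$, choose $m=m(c)$ so that both $V^c_m$ and its enclosing piece $V^c_{m'}$ from Stage 1 have $g$-depth exceeding $n$; include $V^c_m$ in $W$. Niceness of $W$ for $g$ is then derived, for $n_0$ large enough, from the Markov property of $g$-puzzles together with the first-entry construction that produced $F$ (Lemma~\ref{Lem:FirstReturnConstruction}): orbits of $\partial W$ map to boundaries of shallower puzzle pieces and cannot re-enter $\inter W$.

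\emph{Stage 3 (geometry of landing domains to $W_S$).} Let $U$ be a component of $\DomL(W)$ with $L(U)=V^c \subset W_S$, so that $L|_U = g^k|_U$. Since $W$ is nice and contains $\CritNF(g) \sm \Crit_{er}(g)$, Lemma~\ref{Lem:FirstReturnConstruction}~\eqref{It:FRM3} combined with the bounded-degree argument of Lemma~\ref{Lem:FirstTime} gives that $g^k \colon U \to V^c$ has degree at most some $D$ depending only on $g$. Let $U'$ be the component of $(g^k)^{-1}(\inter V^c_{m'})$ containing $U$. Lemma~\ref{Lem:Fact} then yields $\eta'$-bounded geometry of $U$ with $\eta' = \eta'(\eta_1, \mu_1, D)$, while Lemma~\ref{Lem:AnnulusLemma} gives $\modulus(U' \sm \ovl U) \ge \mu_1/D^2$. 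Since the $g$-depths of both $V^c$ and $V^c_{m'}$ exceed $n$, $U'$ is contained in the depth-$n$ puzzle piece $P$ containing $U$, so $\inter P \sm \ovl U$ contains $U' \sm \ovl U$ as an essential sub-annulus and has modulus at least $\mu_1/D^2$. Taking $\eta := \min(\eta', \mu_1/D^2)$ proves both bullets of~\eqref{It:S3}. The statement for $\tilde g$ follows verbatim by combinatorial equivalence, as every constant above depends only on combinatorial invariants or on degrees of critical points.

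\textbf{Main obstacle.} The chief technical difficulty lies in Stage 2: arranging simultaneously that $W$ is $g$-nice, that its $\Crit(F)$-components are $F$-puzzle pieces at one common $F$-depth, and that its $S$-components carry uniform bounded geometry and enclosing moduli. Because the components of $W$ live at various $g$-depths, $g$-niceness does not follow directly from the single-depth Markov property and must be derived from the nested pullback structure together with a sufficiently large choice of $n_0$.
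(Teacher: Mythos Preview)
Your overall strategy matches the paper's: build $W$ from small bounded-geometry pieces around $S$ (via Lemma~\ref{Lem:Lem68}) together with $F$-puzzle pieces of a common $F$-depth around $\Crit(F)$, then pull back bounded geometry and enclosing moduli along branches of the first landing map. However, there is a genuine gap in Stage~3, and it is not where you flagged your ``main obstacle''. You establish a degree bound $D$ for $g^k\colon U\to V^c$ via Lemma~\ref{Lem:FirstReturnConstruction}~\eqref{It:FRM3} and Lemma~\ref{Lem:FirstTime}; that is correct. But Lemma~\ref{Lem:Fact} and Lemma~\ref{Lem:AnnulusLemma} require a degree bound on the \emph{enlarged} map $g^k\colon U'\to V^c_{m'}$, where $U'\supsetneq U$ is the pullback of the enclosing piece $V^c_{m'}\supsetneq V^c$. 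The first-landing degree bound controls only critical points met by the orbit of $U$; it says nothing about critical points lying in the collar $g^i(U')\sm g^i(U)$ for $0<i<k$. In particular, critical points in $\Crit_{er}(g)$ are not in $W$ at all, so nothing you have written prevents the orbit of $U'$ from meeting them arbitrarily often.

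The paper closes this gap by imposing two explicit conditions on $n_0$: (i) for each $c\in S$ and each $c'\in S\cup\Crit(F)$, the forward orbit of $c$ avoids $P_{n_0}(c')$; and (ii) for each $c''\in\Crit_{er}(g)$, the forward orbit of $c''$ avoids $P_{n_0}(c)$ for every $c\in S$. With $V^c_{m'}\subset P_n(c)\subset P_{n_0}(c)$, condition~(ii) forbids any $c''\in\Crit_{er}(g)$ from appearing in $g^i(U')$, and condition~(i) together with the first-landing property forbids any $c'\in S$. Hence every critical point met by the orbit of $U'$ lies in $\Crit(F)$, and the first-landing property then forces the depth of $g^i(U')$ at such an encounter to be below the fixed depth of the corresponding component of $W\sm W_S$; only finitely many puzzle pieces have depth below this threshold, so the number of critical encounters---and hence the degree on $U'$---is bounded independently of $U$ and $n$. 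Your Stage~2 niceness argument in fact also hinges on condition~(i), so tightening Stage~2 and repairing Stage~3 are really the same task. (Separately, Stage~1 says $V^c_{m'}$ has fixed depth $\LL_0$ while Stage~3 needs its depth to exceed $n$; the paper resolves this by first choosing the outer piece $W'_c\subset P_n(c)$ and then shrinking $W_c$ inside it.)
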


\begin{proof}
Let us start by choosing $n_0 \ge \LL_0$ such that for every $c \in S$ the following holds:
\begin{enumerate}
\renewcommand{\labelenumi}{(\roman{enumi})}
\renewcommand{\theenumi}{\roman{enumi}}
\item
\label{It:Dis1}
for every $c' \in S \cup \Crit(F)$ the orbit of $F(c)$ is disjoint from $P_{n_0}(c')$;
\item
\label{It:Dis2}
for every $c'' \in \Crit_{er}(g)$ the orbit of $c''$ is disjoint from $P_{n_0}(c)$.
\end{enumerate}

The existence of $n_0$ satisfying \eqref{It:Dis1} follows by definition of $S$ and Lemma~\ref{Lem:ExtrNonRenormBoxMappings} \eqref{It:Extr23}: the orbits of points from $S$ do not accumulate on $\CritNF(g) \sm \Crit_{er}(g) = S \cup \Crit(F)$. Property~\eqref{It:Dis2} can be easily satisfied since $\fib(c'')$ has a (pre-)periodic itinerary. 

Let $n \ge n_0$. Since $n_0 \ge \LL_0$, every critical puzzle piece of depth $n$ of a point in $\CritNF(g) \sm \Crit_{er}(g)$ contains a single fiber and this fiber is equal to the critical point. By Lemma~\ref{Lem:Lem68}, for every $c \in S$ there exists a constant $\eta > 0$ and arbitrary small open puzzle piece $W_c$ containing $c$ and having $\eta$-bounded geometry. By choosing $W_c$ smaller if necessary, we can find an open puzzle piece $W_c' \ni c$ such that $W_c \subset W_c' \subset P_n(c)$ and $\modulus(W_c' \sm \ovl{W_c}) \ge \eta$. Since $S$ is finite, we can assume that $\eta$ is the same for all $c \in S$. Define
\[
W_S := \bigcup_{c \in S} W_c.
\]    

Choose $\kappa = \kappa(n)$ to be an integer such that all critical $F$-puzzle pieces of $F$-depth equal to $\kappa$ are of $g$-depth at least $n$. We write $P^F_\kappa(c)$ for such pieces. Define 
\[
W := W_S \cup \bigcup_{c \in \Crit(F)} \inter P^F_\kappa(c).
\]
So defined, $W$ is a nice open set. Indeed, the only thing we need to check is that $g^m(\partial W_c) \cap \inter P^F_\kappa(c') = \emptyset$ for all $m \ge 1$, $c \in S$ and $c' \in \Crit(F)$. This follows by assumption~\eqref{It:Dis1} on $n_0$. Hence $W$ is a nice open puzzle neighborhood of $\CritNF(g) \sm \Crit_{er}(g)$. By construction, it satisfies properties \eqref{It:S1} and \eqref{It:S2}. Let us check property \eqref{It:S3}.

Let $L \colon \DomL(W) \to W$ be the first landing map to $W$ under $g$ , and let $g^s \colon U \to W_c$, $c \in S$ be a branch of this map with the range in $W_S$. Define $U'$ to be the component of $g^{-s}(W_c')$ containing $U$. We claim that the degree of the map $g^s \colon U' \to W_c'$ is uniformly bounded independently of the chosen branch.

Indeed, suppose there exists $0 \le i < s$ such that $g^i(U')$ contains a critical point. This critical point, say $c'$, necessarily lies in $\Crit(F)$ because it cannot lie in $\Crit_{er}(g)$ by assumption \eqref{It:Dis2} on $n_0$, and it cannot lie in $S$ by assumption \eqref{It:Dis1} and the first landing property. Since $U$ is a component of the domain of the first landing map to $W$ under $g$, it follows that the depth of $g^i(U')$ must be not larger than the depth of $P^F_\kappa(c')$. And since there are only finitely many puzzle piece of depth smaller than the depths of the puzzle pieces in $\ovl{W \sm W_S}$, the degree of the map $g^s \colon U' \to W_c'$ is uniformly bounded above independently of the choice of the branch (compare the proof in Step~\ref{St:BoxProp} of Lemma~\ref{Lem:NewtonBox}). The claim now follows by Lemmas~~\ref{Lem:FirstTime} and \ref{Lem:Fact}: there exists $\eta' >0$ (depending only $\eta$ and the degrees of the critical points of $g$) such that $U$ have $\eta'$-bounded geometry and $\modulus(U' \sm \ovl U) \ge \eta'$. As the depth of $\ovl W'_c$ is at least $n$, the open puzzle piece $U'$ is contained in some puzzle piece $P$ of depth $n$ and $\modulus(\inter P \sm \ovl U) \ge \modulus (U' \sm \ovl U) \ge \eta'$.

The claim for the objects with tilde follows by just repeating the arguments above. 
\end{proof}

Finally, we will need the following QC-Criterion from \cite[Appendix 1]{KSS}.

\begin{theorem}[QC-Criterion]
\label{Thm:QCCr}
For any constants $0 \le K < 1$ and $\eta > 0$ there exists a constant $K'$ with the following property. Let $\psi \colon \Omega \to \tilde \Omega$ be a quasiconformal homeomorphism between two Jordan disks. Let $X$ be a subset of $\Omega$ consisting of pairwise disjoint topological disks. Assume that the following holds:

\begin{enumerate}
\item
If $V$ is a component of $X$, then both $V$ and $\psi(V)$ have $\eta$-bounded geometry, and moreover
\[
\modulus(\Omega \sm V) \ge \eta, \quad \modulus(\tilde \Omega \sm \psi(V)) \ge \eta;
\]
\item
$|\ovl \partial \psi| \le K |\partial \psi|$ holds almost everywhere on $\Omega \sm X$.
\end{enumerate}
Then there exits a $K'$-quasiconformal map $\Psi \colon \Omega \to \tilde \Omega$ such that $\Psi = \psi$ on $\partial \Omega$. \qed
\end{theorem}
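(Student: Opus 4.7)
The strategy is standard for such QC-criteria: keep $\psi$ unchanged outside $X$, where its Beltrami bound already controls dilatation, and replace it on each component of $X$ by a boundary-preserving quasiconformal extension with uniformly controlled dilatation. Explicitly, set $\Psi := \psi$ on $\Omega \sm X$, so by hypothesis the pointwise dilatation is already at most $K_0 := (1+K)/(1-K)$. On each component $V$ of $X$, writing $\tilde V := \psi(V)$, I would construct a $K''$-quasiconformal map $\Psi_V \colon V \to \tilde V$ satisfying $\Psi_V|_{\partial V} = \psi|_{\partial V}$ with $K'' = K''(K,\eta)$; patching these together yields $\Psi$ with $K' := \max(K_0, K'')$. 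Continuity across each $\partial V$ is forced by the boundary matching, and removability of Jordan arcs for QC maps (together with a standard limiting argument to accommodate possibly wild $\partial V$) upgrades continuity to global $K'$-quasiconformality.

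To construct $\Psi_V$, I would use conformal uniformization. Pick Riemann maps $R_V \colon \disk \to V$ and $R_{\tilde V} \colon \disk \to \tilde V$, and consider the pulled-back boundary map $h := R_{\tilde V}^{-1} \circ \psi \circ R_V \colon \Circle \to \Circle$. The plan is to show $h$ is $M$-quasisymmetric with $M = M(K,\eta)$: the $\eta$-bounded geometry of $V$ and $\tilde V$ prevents conformal degeneration of $R_V$ and $R_{\tilde V}$ near the boundary, while the modulus bounds $\modulus(\Omega \sm V) \ge \eta$ and $\modulus(\tilde\Omega \sm \tilde V) \ge \eta$ together with the Beltrami bound $|\ovl\partial \psi| \le K |\partial \psi|$ on $\Omega \sm X$ control the distortion of moduli of curve families crossing $\partial V$ in a uniform fashion. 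Once $h$ is uniformly quasisymmetric, the Beurling--Ahlfors (or Douady--Earle) extension theorem produces an $M'$-QC map $H \colon \disk \to \disk$ with $M' = M'(M)$, and setting $\Psi_V := R_{\tilde V} \circ H \circ R_V^{-1}$ gives the required extension with dilatation bounded only in terms of $M'$ and the Riemann-map distortions inherited from bounded geometry.

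The main obstacle is establishing the uniform quasisymmetry of the circle map $h$. The subtle point is that $\psi$ is not a priori $K_0$-quasiconformal on any annular neighborhood of $\partial V$, since other components of $X$ may accumulate on $V$; thus one cannot extract quasisymmetry from a purely local QC estimate. Instead, the bound must come from global extremal-length comparisons: three-point quasisymmetry of $h$ reduces to showing that the extremal lengths of curve families separating prescribed triples of points on $\partial V$ are transported with bounded distortion by $\psi$, which in turn follows from $\psi$ being a global homeomorphism satisfying the Beltrami bound $K$ on $\Omega \sm X$ together with the hypothesized moduli on $\Omega \sm V$ and $\tilde\Omega \sm \tilde V$. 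Carrying out this comparison rigorously, and tracking how the quasisymmetry constant of $h$ depends only on $K$ and $\eta$, is the heart of the argument and is where the final constant $K' = K'(K,\eta)$ is explicitly derived.
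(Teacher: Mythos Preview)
The paper does not prove this theorem: it is quoted verbatim from \cite[Appendix~1]{KSS} and marked with \qed. So there is no ``paper's own proof'' to compare against; your proposal is being measured against the cited source.

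Your architecture is the natural one and matches the broad outline of the argument in \cite{KSS}: leave $\psi$ alone on $\Omega\sm X$ and replace it on each component $V$ by a uniformly quasiconformal extension of the boundary homeomorphism. The reduction to uniform quasisymmetry of the circle map $h = R_{\tilde V}^{-1}\circ\psi\circ R_V$ is also correct in spirit. However, the step you flag as ``the heart of the argument'' has a genuine gap as written. You propose to deduce quasisymmetry of $h$ from the fact that $\psi$ transports extremal lengths of separating curve families with bounded distortion. But $\psi$ does \emph{not} do this: the relevant curve families near $\partial V$ will in general cross many other components of $X$, on which $\psi$ has no dilatation bound whatsoever, so the naive modulus inequality $\modulus(\psi(\Gamma)) \ge K_0^{-1}\modulus(\Gamma)$ is unavailable. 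The hypotheses $\modulus(\Omega\sm V)\ge\eta$ and $\eta$-bounded geometry constrain each component individually but say nothing directly about how densely the other components of $X$ can pack near $\partial V$.

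What is actually needed, and what the KSS argument supplies, is a mechanism showing that the bad set $X$ is \emph{sparse} enough (in a modulus sense) that its presence cannot significantly distort the relevant extremal lengths, regardless of what $\psi$ does there. This uses the uniform modulus and bounded-geometry hypotheses in a global packing-type estimate rather than a local one. Your sketch gestures at this (``global extremal-length comparisons'') but does not carry it out, and the sentence ``which in turn follows from $\psi$ being a global homeomorphism satisfying the Beltrami bound $K$ on $\Omega\sm X$'' is precisely where the argument breaks: that bound alone is insufficient without the sparseness input.
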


\subsubsection{Proof of Proposition~\ref{Prop:KeyClaim}}

Let $n_0$ be given by Proposition~\ref{Prop:Cor63}. For every $n \ge n_0$, construct an open puzzle neighborhood $W$ of $\CritNF(g) \sm \Crit_{er}(g)$ using that proposition. The set $W$ can be decomposed as
\[
W = W_S \cup \bigcup_{c \in \Crit(F)} \inter P^F_\kappa(c).
\]
Let $l$ be the maximal depth of the puzzle pieces in $W$; define
\[
U := W_S \cup \bigcup_{c \in \Crit(F)} \inter P^F_\kappa(c) \cup \bigcup_{c \in \Crit_{er}(g)} \inter P_l(c).
\]

By construction, $U$ is a nice open puzzle neighborhood of $\CritNF(g)$ consisting of open puzzle pieces of depths at least $n+1$.

Let us now define a quasiconformal map $\phi \colon U \to \tilde U$ that respects the boundary marking, specifying the map on each component in the decomposition for $U$ as follows. 

On each $\inter P_l(c)$ we define $\phi|_{\inter P_l(c)} := \psi_{m}|_{\inter P_l(c)}$, where $\psi_m$ is the map defined before Proposition~\ref{Prop:KeyClaim} and $m = m(l)$ is chosen so that $\inter P_l(c) \subset \Omega_m$.

On each $\inter P_{\kappa}^F(c)$ we will define the map using Theorem~\ref{Thm:QC}. By Lemma~\ref{Lem:ExtrCombEquivBoxMappings}, the box mappings $F$ and $\tilde F$ are combinatorially equivalent with respect to some quasiconformal homeomorphism $H$. Hence, by Theorem~\ref{Thm:QC}~\eqref{It:Rig}, there exists a quasiconformal map $\Psi \colon \V \to \tilde \V$ that respects the boundary marking, maps $\U$ onto $\tilde \U$, and is a conjugation between $F$ and $\tilde F$ on $\U$. We define $\phi|_{\inter P_{\kappa}^F(c)} := \Psi|_{\inter P_{\kappa}^F(c)}$.

The maps defined so far are $K$-quasiconformal and respect the boundary marking, with $K$ independent of $n$. 

On the remaining components in the decomposition, that is on the components of $W_S$, define $\phi$ by means of Lemma~\ref{Lem:AnyQCBdMarking}. We do not control the dilatation of $\phi$ on these components.

Since $U$ is a nice open set containing $\CritNF(g)$ and $\phi$ respects the boundary marking, we can spread this map around using Lemma~\ref{Lem:Spreading}. In this way we obtain a quasiconformal homeomorphism $\Phi \colon \Cc \to \Cc$ such that for every $c \in \CritNF(g) \sm \Crit_{er}(g)$ the restriction $\Phi \colon \inter P_n(c) \to \inter{\tilde P}_n(\tilde c)$ has the following properties:
\begin{itemize}
\item
$\Phi$ respects the boundary marking (by Lemma~\ref{Lem:Spreading} \eqref{It:SP2} using the fact that $\inter P_n(c)$ does not lie in $\DomL(U)$ as the depths of the components of $\DomL(U)$ are at least $n+1$);
\item
the dilatation of $\Phi$ vanishes on $\inter P_n(c) \sm \DomL(U)$ (by Lemma~\ref{Lem:Spreading} \eqref{It:SP4});
\item
$\Phi(V) = \tilde V$ for each component $V$ of $\DomL(U) \cap \inter P_n(c)$, and $\Phi|_{V}$ is either equal to $\phi|_{V}$, or is the corresponding lift of $\phi$ (by claims \eqref{It:SP3} and \eqref{It:SP6} of Lemma~\ref{Lem:Spreading}).
\end{itemize}

Let us now apply Theorem~\ref{Thm:QCCr}: set $\Omega := \inter P_n(c)$, $\tilde \Omega := \inter{\tilde P}_n(\tilde c)$, $\psi := \Phi$, and furthermore, let $X := \inter P_n(c) \cap \DomL^*(U)$, where $\DomL^*(U)$ is the union of the components $V$ of the domain $\DomL(U)$ of the first landing map $L \colon \DomL(U) \to U$ such that $L(V) \subset W_S$. Defined this way, the dilation of $\psi$ is bounded by $K$ outside of $X$, and hence the second assumption in Theorem~\ref{Thm:QCCr} is satisfied. To see that the first assumption is satisfied, note that every component $V$ of $X$ is a component of $\DomL(W)$, and thus, by Proposition~\ref{Prop:Cor63}, both $V$ and $\tilde V$ have $\eta$-bounded geometry and 
\[
\modulus\left(\inter P_n(c) \sm \ovl V\right) \ge \eta, \quad \modulus\left(\inter{\tilde P}_n(\tilde c) \sm \ovl{\tilde V}\right) \ge \eta
\]
for some constant $\eta > 0$ independent of $n$. 

Since both assumptions in Theorem~\ref{Thm:QCCr} are satisfied, we conclude that $\Phi \colon \partial P_n(c) \to \partial \tilde P_n(\tilde c)$ extends to a $K'$-quasiconformal map between $\inter P_n(c)$ and $\inter{\tilde P}_n(\tilde c)$, with $K'$ independent of $n$. This is the desired map $\phi_{n,c}$ in Proposition~\ref{Prop:KeyClaim}. \qed



\end{document}